\numberwithin{equation}{section}
\setlist{leftmargin=3\parindent,labelindent=3\parindent}
\setlist[enumerate]{%
  leftmargin=3\parindent,%
  align=left,%
  labelwidth=3\parindent,%
  labelsep=0pt%
}
\setlist[enumerate,1]{% 
  label={\normalfont (\thesection.\arabic{equation})}, ref={\normalfont \thesection.\arabic{equation}},
  resume%
}
\newtheorem{thm}[equation]{Theorem}
\newtheorem{cor}[equation]{Corollary}
\newtheorem{lem}[equation]{Lemma}
\newtheorem{prop}[equation]{Proposition}
\newtheorem{conj}[equation]{Conjecture}
\newtheorem{prob}[equation]{Problem}
\theoremstyle{definition}
\newtheorem{const}[equation]{Construction}
\theoremstyle{remark}
\title{Extremal Bounds for Three-Neighbour Bootstrap Percolation in Dimensions Two and Three}
\author{Peter J. Dukes\thanks{Research supported by NSERC Discovery Grant RGPIN-312595-2017.}}
\author{Jonathan A. Noel\thanks{Research supported by NSERC Discovery Grant RGPIN-2021-02460, NSERC Early Career Supplement DGECR-2021-00024 and a Start-Up Grant from the University of Victoria.}} 
\author{Abel E. Romer}
\affil{\normalsize{Department of Mathematics and Statistics, University of Victoria, Victoria, B.C., Canada.}}
\affil{\texttt{\{dukes,noelj,aeromer\}@uvic.ca}}
\begin{document}

\maketitle

\begin{abstract}
For $r\geq1$, the \emph{$r$-neighbour bootstrap process} in a graph $G$ starts with a set of infected vertices and, in each time step, every vertex with at least $r$ infected neighbours becomes infected. The initial infection \emph{percolates} if every vertex of $G$ is eventually infected. We exactly determine the minimum cardinality of a set that percolates for the $3$-neighbour bootstrap process when $G$ is a $3$-dimensional grid with minimum side-length at least $11$. We also characterize the integers $a$ and $b$ for which there is a set of cardinality $\frac{ab+a+b}{3}$ that percolates for the $3$-neighbour bootstrap process in the $a\times b$ grid; this solves a problem raised by Benevides, Bermond, Lesfari and Nisse~[HAL Research Report 03161419v4, 2021]. 
\end{abstract}

\section{Introduction}

Bootstrap percolation is a cellular automaton introduced in the late 1970s as a model of the dynamics of ferromagnetism~\cite{ChalupaLeathReich79}. For $r\geq0$, the \emph{$r$-neighbour bootstrap process} (or, for brevity, the \emph{$r$-neighbour process}) is described as follows. Given a graph $G$ and an initial set $A_0$ of \emph{infected} vertices (non-infected vertices are \emph{healthy}), at each time step, a healthy vertex becomes infected if it has at least $r$ infected neighbours. That is, for $t\geq1$, the set of vertices infected after $t$ time steps is precisely
\[A_t:=A_{t-1}\cup\{v\in V(G): |N_G(v)\cap A_{t-1}|\geq r\}\]
where $N_G(v)$ denotes the neighbourhood of $v$ in $G$. The \emph{closure} of $A_0$ with respect to the $r$-neighbour process in $G$ is $[A_0]_{r,G}:=\bigcup_{t=0}^\infty A_t$; we simply write $[A_0]$ when $r$ and $G$ are clear from context. We say that $A_0$ \emph{percolates} under the $r$-neighbour process in $G$ if $[A_0]=V(G)$. 

A natural extremal problem is to determine the cardinality of the smallest set of vertices which percolates under the $r$-neighbour process in a graph $G$, which is denoted by $m(G;r)$. Due to its origins in statistical physics, a central focus in the study of bootstrap percolation has been on the case that $G$ is a multidimensional grid~\cite{Holroyd06,Pete97,PeteMSc,ShapiroStephens91,Benevides+21+,HambardzumyanHatamiQian20,CerfCirillo99,CerfManzo02,Bollobas+17,Balister+16,GravnerHolroydSivakoff21,GravnerHolroyd08,HartarskyMorris19,Uzzell19,BollobasSmithUzzell15,AizenmanLebowitz88,vanEnter87,BaloghPete98,BenevidesPrzykucki15,PrzykuckiShelton20,MorrisonNoel18,BaloghBollobasMorris09,BaloghBollobasMorris09-maj,BaloghBollobasMorris10,BaloghBollobas06,Balogh+12,Holroyd03}. For $n\in\mathbb{N}$, let $[n]:=\{1,2,\dots,n\}$. Let $\prod_{i=1}^d[a_i]$ denote the \emph{$d$-dimensional grid} with vertex set $[a_1]\times [a_2]\times\cdots\times[a_d]$, where two vertices $u=(u_1,\dots,u_d)$ and $v=(v_1,\dots,v_d)$ are adjacent if there is an index $i\in [d]$ such that $|u_i-v_i|=1$ and $u_j=v_j$ for all $j\neq i$. We use the notation $\prod_{i=1}^d[a_i]$ to refer both to the $a_1\times a_2\times\cdots \times a_d$ grid graph and to its vertex set. The \emph{direction} of an edge $uv$ of a $d$-dimensional grid is defined to be the unique index $1\leq i\leq d$ such that $u_i$ differs from $v_i$. For convenience, define
\[m(a_1,a_2,\dots,a_d;r):= m\left(\prod_{i=1}^d[a_i];r\right).\]
A beautiful exercise (featured in, e.g.,~\cite[Problem~34]{Bollobas06}) is to show that
\begin{equation}\label{eq:nxn}m(n,n;2)=n\end{equation}
for all $n\geq1$. The standard proof of the lower estimate $m(n,n;2)\geq n$ generalizes to the following bound in any dimension $d\geq1$:
\begin{equation}\label{eq:n^d-1}m([n]^d;d)\geq n^{d-1};\end{equation}
see~\cite{PeteMSc} or Proposition~\ref{prop:SA} in the next section. The fact that the matching upper bound to \eqref{eq:n^d-1} holds has been part of the folklore of bootstrap percolation since the work of Pete~\cite{PeteMSc}, if not earlier. While it is not hard to recursively describe a set of cardinality $n^{d-1}$ that is likely to percolate, verifying this fact is technical. To our knowledge, the first published proof appears in a puzzle book of Winkler~\cite[pp.~97--99]{Winkler07}, where it is attributed to Schulman.\footnote{Another proof was recently published by Przykucki and Shelton~\cite{PrzykuckiShelton20}.}

\begin{thm}[Lower bound~\cite{PeteMSc}. Upper bound~\cite{Winkler07}]
\label{thm:nnn}
For any $n,d\geq1$,
\[m([n]^d;d)= n^{d-1}.\] 
\end{thm}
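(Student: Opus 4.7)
\emph{Lower bound.} This is the classical perimeter argument, which we invoke below as Proposition~\ref{prop:SA}. Embed $[n]^d$ in $\mathbb{Z}^d$ and let $p(A_t)$ denote the number of edges of $\mathbb{Z}^d$ with exactly one endpoint in $A_t$. One checks that $p(A_t)$ is non-increasing along the $d$-neighbour process: a newly infected vertex has degree $2d$ in $\mathbb{Z}^d$ and at least $d$ of its neighbours are already infected, so its addition to $A_t$ changes $p$ by at most $2d - 2d = 0$. Since $p(A_0)\leq 2d|A_0|$ and $p([n]^d) = 2d\,n^{d-1}$ (the surface area of the cube), we conclude $|A_0| \geq n^{d-1}$.

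\emph{Upper bound.} We construct a percolating set of cardinality $n^{d-1}$ by induction on $d$, with base cases $d=1$ (any single vertex) and $d=2$ (the diagonal $\{(i,i):i\in[n]\}$, cf.\ \eqref{eq:nxn}). For the inductive step, let $Q\subseteq[n]^{d-1}$ be a percolating set of size $n^{d-2}$ for the $(d-1)$-neighbour process, furnished by induction. Slice $[n]^d$ into layers $S_k:=[n]^{d-1}\times\{k\}$ for $k\in[n]$ and place a translated copy $Q_k\subseteq S_k$ of $Q$ in each layer, with shifts chosen to be cyclically offset between adjacent slices. The total initial set has size $n\cdot n^{d-2}=n^{d-1}$; in the case $d=3$, the shifts of $Q$ in the three slices produce a Latin-square-like pattern of cyclically rotated diagonals.

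\emph{Key idea and main obstacle.} Once any slice $S_{k-1}$ is fully infected, every vertex of $S_k$ acquires one ``free'' infected neighbour in the $d$th direction, so firing in $S_k$ thereafter reduces to the $(d-1)$-neighbour process on $S_k\cong[n]^{d-1}$ initiated from $Q_k$, which percolates by induction. The delicate step is bootstrapping the first slice, since $S_1$ has no pre-infected adjacent layer to draw on. The resolution is to show that $S_1$ and $S_2$ percolate in tandem: the cyclic offset between $Q_1$ and $Q_2$ is chosen so that whenever a vertex of $S_1$ accumulates $d-1$ infected neighbours within $S_1$ but cannot fire on its own, the missing $d$th neighbour has already been supplied by a vertex of $Q_2$ in $S_2$, and symmetrically $Q_1$ aids the infection of $S_2$. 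Once $S_1$ (and simultaneously $S_2$) is fully infected, the remaining layers follow by the clean inductive step above. Turning this sketch into a rigorous proof is technical but routine; the detailed bookkeeping was first carried out by Schulman and is recounted in Winkler~\cite[pp.~97--99]{Winkler07}, with an alternative proof given by Przykucki and Shelton~\cite{PrzykuckiShelton20}.
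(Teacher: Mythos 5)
Your proposal matches the paper's own treatment of this theorem. The lower bound via monotonicity of the $\mathbb{Z}^d$-perimeter along the process is correct and is the same double count as the paper's Lemma~\ref{lem:generalperim} and Proposition~\ref{prop:SA} (there phrased by summing back-degrees over an infection ordering and converting $\sum_v(2d-d(v))$ into the total face count). For the upper bound the paper likewise gives no proof, remarking that verifying the recursive construction is ``technical'' and citing Winkler/Schulman and Przykucki--Shelton; your sketch correctly identifies the bootstrapping of the boundary slice as the crux, but be aware that the ``tandem'' infection of $S_1$ and $S_2$ is the entire technical content of those cited proofs rather than a routine detail, so your argument is only as complete as the citation it leans on --- which is exactly the paper's position as well.
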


Another natural generalization of \eqref{eq:nxn} was obtained by Balogh and Bollob\'as~\cite{BaloghBollobas06} and applied as part of the proof of a probabilistic result for bootstrap percolation in the hypercube:
\begin{equation}\label{eq:r=2}m(a_1,a_2,\dots,a_d;2) = \left\lceil \frac{\sum_{i=1}^d(a_i-1)}{2}\right\rceil+1\end{equation}
for all $d\geq1$ and $a_1,\dots,a_d\geq1$. 

For $r\notin \{2,d\}$, tight bounds on $m(a_1,\dots,a_d;r)$ are harder to come by. The following rough estimate for fixed $d,r$ and $a_1=\cdots=a_d=n$ was proved by Pete~\cite{PeteMSc} see~\cite[Section~6]{BaloghPete98}):
\[m([n]^d;r)=\begin{cases} \Theta(n^{r-1})&\text{if }1\leq r\leq d,\\ \Theta(n^d) &\text{otherwise.}\end{cases}\]
For most choices of parameters, the best known lower bound is given by~\cite[Theorem~4.2 and (1.5)]{MorrisonNoel18} (see~\cite{HambardzumyanHatamiQian20} for an alternative proof and several extensions). In~\cite{MorrisonNoel18}, it was proved that this lower bound is tight in the case that $r=3$, $d\geq 3$ and $a_1=a_2=\cdots=a_d=2$:
\begin{equation}
\label{eq:hypercube}
m([2]^d;3)= \left\lceil\frac{d(d+3)}{6}\right\rceil.
\end{equation}

In this paper, we focus on the problems of computing $m(a_1,a_2;3)$ and $m(a_1,a_2,a_3;3)$. Our first result is that the standard lower bound on $m(a_1,a_2,a_3;3)$ (see Proposition~\ref{prop:SA}) is tight, provided that the side-lengths of the grid are sufficiently large. 

\begin{thm}
\label{thm:largeEnough}
If $a_1,a_2,a_3\geq 11$, then
\[m(a_1,a_2,a_3;3)= \left\lceil\frac{a_1a_2 + a_1a_3+a_2a_3}{3}\right\rceil.\]
\end{thm}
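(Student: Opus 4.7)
The plan is to handle the lower and upper bounds separately. The lower bound is immediate from Proposition~\ref{prop:SA}: the number of edges between infected and healthy vertices in the $3$-dimensional grid is non-increasing under the $3$-neighbour process (each newly infected vertex has at least $3$ infected and at most $3$ healthy neighbours), is at most $6|A_0|$ at time zero, and must equal $2(a_1a_2+a_1a_3+a_2a_3)$ once every vertex is infected. Rearranging gives $|A_0|\geq \lceil (a_1a_2+a_1a_3+a_2a_3)/3\rceil$, matching the claimed bound.

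The bulk of the theorem is the matching upper bound, for which I would construct an explicit percolating set of exactly this cardinality. Because the target meets the surface-area bound at equality, $A_0$ must be essentially an independent set in the grid and every subsequent infection event must use \emph{exactly} three infected neighbours, with no slack permitted. A natural template is a translation-invariant pattern such as a subset of
\[ \{(i,j,k)\in [a_1]\times[a_2]\times[a_3] : i+j+k\equiv 0 \pmod 3\}, \]
placed in thin slabs adjacent to the three pairs of coordinate faces so that each pair contributes roughly $\tfrac{1}{3}a_ia_j$ seeds, with the total cardinality corrected to $\lceil (a_1a_2+a_1a_3+a_2a_3)/3\rceil$ by perturbing the pattern near edges and corners of the grid. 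One would then verify that infection sweeps in from all six faces simultaneously and fills the whole box. Note that a naive layer-by-layer approach cannot succeed: once a single full layer is infected, the next layer still requires a $2$-neighbour percolating set of size $\Theta(a_1+a_2)$, and iterating this yields $\Theta(a_1a_2+a_3(a_1+a_2))$ seeds, strictly larger than the target whenever $a_1,a_2,a_3$ are comparable. The successful construction must therefore propagate infection in all three coordinate directions at once.

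The main obstacle is the verification of percolation. The difficulty is threefold. First, one must simultaneously control the cascade in all three coordinate directions and prove that no ``stuck'' configuration arises in the interior or along the faces. Second, one must handle the residue classes of $(a_1,a_2,a_3)\pmod 3$ (up to $27$ sub-cases, though presumably many can be unified) so that the cardinality matches the ceiling on the nose rather than merely asymptotically. Third, the tightness of the surface-area inequality forces essentially every infection event to be ``critical'', so the proof cannot afford any wasteful edges between infected vertices at any step. The hypothesis $a_i\geq 11$ appears tailored to guarantee that the boundary perturbations on the three pairs of faces fit inside the grid without colliding, providing just enough room for the cascade to propagate cleanly; smaller dimensions would require ad-hoc treatment. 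I expect the verification itself to be an induction over time, at each step arguing that the currently infected region is a monotone set whose boundary geometry still admits a vertex with three infected neighbours, until the grid is exhausted.
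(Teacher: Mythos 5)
Your lower bound is fine: it is exactly Proposition~\ref{prop:SA} (interpreting ``edges between infected and healthy'' to include the virtual edges leaving the grid, so that the terminal perimeter is $\sum_v(6-d(v))=2(a_1a_2+a_1a_3+a_2a_3)$ rather than zero). The problem is that for the upper bound you have written a research plan, not a proof. The entire content of the theorem is the construction of a percolating set of the exact cardinality $\lceil(a_1a_2+a_1a_3+a_2a_3)/3\rceil$ together with a verification that it percolates, and you explicitly defer both: no concrete set is specified, and you yourself identify the percolation check as ``the main obstacle'' without overcoming it. Worse, the specific template you propose --- a translation-invariant mod-$3$ pattern in thin slabs near the six faces --- is very unlikely to work as stated. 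The paper's own rigidity results show how delicate extremal sets are when the surface-area bound is met with equality (Lemma~\ref{lem:2DPerfect} and Theorem~\ref{thm:thickness1} show that in two dimensions equality forces $a_1=a_2=2^k-1$), so one should not expect a uniform lattice pattern, even perturbed near the boundary, to percolate; no evidence is offered that it does.

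For comparison, the paper does not attempt a single global construction at all. It first builds optimal percolating sets in many \emph{thin} grids ($2\times a_2\times a_3$ and $3\times a_2\times a_3$ for various residue classes) by taking percolating sets in two-dimensional grid graphs and ``folding'' them into three dimensions via local embeddings (Sections~\ref{sec:shapes} and~\ref{sec:folding}), supplemented by a handful of computer-found sporadic cases. It then assembles these via Corollary~\ref{cor:recursive2}, which splits $[a_1]\times[a_2]\times[a_3]$ into eight boxes indexed by $[2]^3$ and seeds the four boxes corresponding to a percolating set of the modified bootstrap process; choosing $a_{i,1}\in\{5,6,7\}$ with $a_{i,1}\equiv a_i\bmod 3$ makes the three off-diagonal triples class~$0$ and perfect (Theorem~\ref{thm:divisible}), and reduces everything to finitely many base cases with entries in $\{5,6,7\}$. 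This also explains the threshold $11$: it guarantees $a_{i,2}=a_i-a_{i,1}\geq 5$ so that Theorem~\ref{thm:divisible} applies, not anything about boundary perturbations fitting inside the grid. To turn your proposal into a proof you would need either to carry out your global construction and its cascade analysis in full (including all residue cases), or to adopt some divide-and-conquer mechanism of the above kind; as it stands the upper bound is unproved.
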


We conjecture that the hypothesis $a_1,a_2,a_3\geq11$ in Theorem~\ref{thm:largeEnough} can be relaxed to $a_1,a_2,a_3\geq3$; see Conjecture~\ref{conj:a1a2a3>=3}. Briefly, the proof of Theorem~\ref{thm:largeEnough} involves establishing several infinite families of optimal constructions in restricted cases and assembling them recursively using percolating sets for the so-called ``modified bootstrap process'' as a template.

The problem of determining $m(n,n;3)$ for $n\equiv 2\bmod 6$ appears as a puzzle in Bollob\'as' recreational mathematics book~\cite[Problem~65]{Bollobas06}. Recently, Benevides, Bermond, Lesfari and Nisse~\cite{Benevides+21+} made progress in determining the exact value of $m(n,n;3)$ for a wider range of $n$. In particular, they showed that
\[m(n,n;3)=\left\lceil\frac{n^2+2n+4}{3}\right\rceil\]
if $n$ is even and that
\begin{equation}\label{eq:oddn}\left\lceil\frac{n^2+2n}{3}\right\rceil\leq m(n,n;3)\leq \left\lceil\frac{n^2+2n}{3}\right\rceil+1\end{equation}
if $n$ is odd. They also showed that, in the odd case, the upper bound is tight for $n\in\{9,13\}$ and the lower bound is attained when $n=5\bmod 6$ or $n$ is of the form $2^k-1$ for $k\geq1$. The construction for $n$ of the form $2^k-1$ is particularly natural. For $k=1$, we have $n=1$ and the unique percolating set is $A_0=\{(1,1)\}$. Now, for $k\geq2$, deleting the middle row and column from the $(2^k-1)\times (2^k-1)$ grid leaves behind four $(2^{k-1}-1)\times(2^{k-1}-1)$ subgrids. Let $A_0$ be an infection in the the $(2^k-1)\times (2^k-1)$ grid obtained from taking an optimal percolating set within each of these four subgrids, plus the vertex in the centre of the grid. It is not hard to show that this infection percolates under the $3$-neighbour process and that it it has precisely $\frac{n^2+2n}{3}$ elements. See Figure~\ref{fig:Purina} for an illustration. In all of the diagrams in this paper, vertices of the grid graph are represented by cells, and pairs of cells which share a boundary edge are adjacent. The point $(i,j)$ is represented by the cell on the $i$th row (from the top) and $j$th column (from the left). Infected cells are shown in grey; these are typically the initially infected cells in $A_0$, though later it is also convenient to use grey on sets which we know will become infected later in the process.

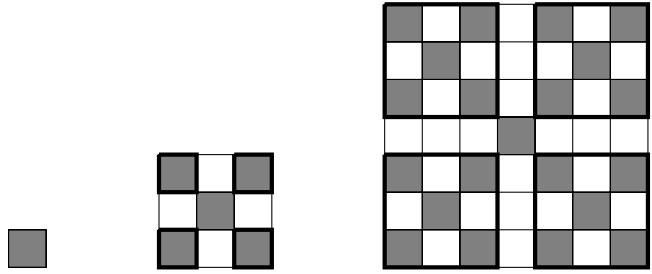
\begin{figure}[htbp]
\begin{center}
\begin{tikzpicture}
\begin{scope}[xshift=0.0cm,yshift=0cm]
\draw[step=0.5cm,color=black,fill=gray] (-0.001,-0.001) grid (0.501,0.501) (0.0,0.0) rectangle (0.5,0.5);
\end{scope}

\begin{scope}[xshift=2.0cm,yshift=0.0cm]
\draw[step=0.5cm,color=black,fill=gray] (-0.001,-0.001) grid (1.501,1.501) (0.0,0.0) rectangle (0.5,0.5)(1.0,0.0) rectangle (1.5,0.5)(0.5,0.5) rectangle (1.0,1.0)(0.0,1.0) rectangle (0.5,1.5)(1.0,1.0) rectangle (1.5,1.5);
\node at (0.75,0.25) {};\node at (0.25,0.75) {};\node at (1.25,0.75) {};\node at (0.75,1.25) {};

\draw[ultra thick](0,1.5)--(0,1)--(0.5,1)--(0.5,1.5)--(0,1.5);
\draw[ultra thick](0,0.5)--(0,0)--(0.5,0)--(0.5,0.5)--(0,0.5);
\draw[ultra thick](1,1.5)--(1,1)--(1.5,1)--(1.5,1.5)--(1,1.5);
\draw[ultra thick](1,0.5)--(1,0)--(1.5,0)--(1.5,0.5)--(1,0.5);
\end{scope}

\begin{scope}[xshift=5.0cm,yshift=0.0cm]
\draw[step=0.5cm,color=black,fill=gray] (-0.001,-0.001) grid (3.501,3.501) (0.0,0.0) rectangle (0.5,0.5)(1.0,0.0) rectangle (1.5,0.5)(2.0,0.0) rectangle (2.5,0.5)(3.0,0.0) rectangle (3.5,0.5)(0.5,0.5) rectangle (1.0,1.0)(2.5,0.5) rectangle (3.0,1.0)(0.0,1.0) rectangle (0.5,1.5)(1.0,1.0) rectangle (1.5,1.5)(2.0,1.0) rectangle (2.5,1.5)(3.0,1.0) rectangle (3.5,1.5)(1.5,1.5) rectangle (2.0,2.0)(0.0,2.0) rectangle (0.5,2.5)(1.0,2.0) rectangle (1.5,2.5)(2.0,2.0) rectangle (2.5,2.5)(3.0,2.0) rectangle (3.5,2.5)(0.5,2.5) rectangle (1.0,3.0)(2.5,2.5) rectangle (3.0,3.0)(0.0,3.0) rectangle (0.5,3.5)(1.0,3.0) rectangle (1.5,3.5)(2.0,3.0) rectangle (2.5,3.5)(3.0,3.0) rectangle (3.5,3.5);
\node at (0.75,0.25) {};\node at (1.75,0.25) {};\node at (2.75,0.25) {};\node at (0.25,0.75) {};\node at (1.25,0.75) {};\node at (1.75,0.75) {};\node at (2.25,0.75) {};\node at (3.25,0.75) {};\node at (0.75,1.25) {};\node at (1.75,1.25) {};\node at (2.75,1.25) {};\node at (0.25,1.75) {};\node at (0.75,1.75) {};\node at (1.25,1.75) {};\node at (2.25,1.75) {};\node at (2.75,1.75) {};\node at (3.25,1.75) {};\node at (0.75,2.25) {};\node at (1.75,2.25) {};\node at (2.75,2.25) {};\node at (0.25,2.75) {};\node at (1.25,2.75) {};\node at (1.75,2.75) {};\node at (2.25,2.75) {};\node at (3.25,2.75) {};\node at (0.75,3.25) {};\node at (1.75,3.25) {};\node at (2.75,3.25) {};

\draw[ultra thick](0,3.5)--(0,2)--(1.5,2)--(1.5,3.5)--(0,3.5);
\draw[ultra thick](0,1.5)--(0,0)--(1.5,0)--(1.5,1.5)--(0,1.5);
\draw[ultra thick](2,3.5)--(2,2)--(3.5,2)--(3.5,3.5)--(2,3.5);
\draw[ultra thick](2,1.5)--(2,0)--(3.5,0)--(3.5,1.5)--(2,1.5);
\end{scope}
\end{tikzpicture}
\end{center}
    \caption{Recursive constructions of optimal percolating sets for the $3$-neighbour process in the $n\times n$ grid, where $n=2^k-1$ and $k\in\{1,2,3\}$. For $k\in\{2,3\}$, the regions surrounded by bold lines indicate four copies of the previous construction. }
    \label{fig:Purina}
\end{figure}

Thus, the results of~\cite{Benevides+21+} nearly determined $m(n,n;3)$ in general. The only remaining problem, proposed in~\cite[Section~6]{Benevides+21+}, was to determine whether $m(n,n;3)$ is equal to the lower bound or the upper bound of \eqref{eq:oddn} when $n$ is odd and not congruent to $5\bmod 6$ or of the form $2^k-1$ for $k\geq1$. We complete the picture by proving that the upper bound is the truth in all such cases. This is a corollary of the following result.

\begin{thm}
\label{thm:thickness1}
Suppose that $a,b\geq1$ such that
\[m(a,b;3) = \frac{ab + a + b}{3}.\]
Then there exists $k\geq1$ such that $a=b=2^k-1$. 
\end{thm}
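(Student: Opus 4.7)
My plan is to proceed by strong induction on $a+b$, with base case $(a,b)=(1,1)=(2^1-1,2^1-1)$. For the inductive step, suppose $m(a,b;3)=(ab+a+b)/3$ with $a+b>2$, and fix a percolating set $A_0 \subseteq [a]\times[b]$ of cardinality $(ab+a+b)/3$. The first step is to unpack the equality case of the two-dimensional analog of Proposition~\ref{prop:SA}, which says $m(a,b;3) \geq \lceil(ab+a+b)/3\rceil$: its proof introduces a one-cell-wide ``phantom'' border around $[a]\times[b]$ and tracks the edge-boundary $p$ of the infected set, showing $p(A_0)\leq 4|A_0|$, $\Delta p \leq -2$ at each infection event, and $p_{\text{final}}=2(a+b)$. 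Tightness therefore forces (i) $A_0$ is an independent set in the grid; (ii) each of the four corners (being of degree $2$) lies in $A_0$; and (iii) every $v\notin A_0$ is infected at a moment when exactly three of its grid-neighbours have already been infected.

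Using (i)--(iii) I would then show $a,b$ are odd and pin down forced patterns near the boundary. Independence and $(1,1)\in A_0$ give $(1,2)\notin A_0$; condition (iii) applied to the degree-$3$ side vertex $(1,2)$ forces $(1,3)$ to be infected before $(1,2)$, which then forces $(1,3)\in A_0$ (otherwise applying (iii) to $(1,3)$ requires $(1,2)$ infected first). Iterating, $(1,j)\in A_0$ iff $j$ is odd; combined with $(1,b)\in A_0$, this gives $b$ odd, and symmetrically $a$ is odd. Write $a=2\alpha+1$, $b=2\beta+1$. A one-step-inward version shows the four near-corners $(2,2), (2,b-1), (a-1,2), (a-1,b-1)$ all lie in $A_0$: if $(2,2)\notin A_0$ then three of its four neighbours must be infected before it, but the side vertices $(1,2)$ and $(2,1)$ each require $(2,2)$ to be infected first, leaving only two candidates. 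The crux is then to push this ``propagate inward from the boundary'' reasoning all the way to the middle, proving that $A_0$ meets the centre row $\{\alpha+1\}\times[b]$ and centre column $[a]\times\{\beta+1\}$ only at the centre point $(\alpha+1,\beta+1)$.

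Once this cross structure is in hand, $[a]\times[b]$ decomposes into four $\alpha\times\beta$ corner subgrids plus the centre cross, and the identity $|A_0|=4\cdot(\alpha\beta+\alpha+\beta)/3+1=(ab+a+b)/3$ shows that $A_0$ restricted to each corner subgrid has the tight cardinality. A further use of (iii) --- ruling out a subgrid cell borrowing its third infected neighbour from an initially-empty cross cell (since the cross can only begin infecting from the centre, after the corner subgrids are complete) --- shows each such restriction percolates its corner subgrid internally, and is therefore itself tight. Hence $(\alpha,\beta)$ also satisfies the hypothesis of the theorem; the inductive hypothesis gives $\alpha=\beta=2^{k-1}-1$, and so $a=b=2^k-1$, as required.

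The main obstacle, by a wide margin, is establishing the cross-emptiness claim. The outermost layer and one-layer-inward step are handled cleanly by the arguments above, but deeper layers admit more configurations a priori and the case analysis branches according to the pattern already forced. I would expect the cleanest route is a further induction on distance from the boundary, propagating the forced alternation inward from all four sides simultaneously while tracking which cells are in $A_0$, which are ruled out, and the partial order in which they must be infected. A secondary, more routine obstacle is the internal-percolation check for the corner subgrids; I expect this to follow from (iii) by tracing the earliest cross infection back to the centre and exhibiting the rigidity forced by the ``exactly three infected neighbours'' requirement.
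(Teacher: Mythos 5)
Your first-stage analysis (independence of $A_0$, corners forced in, boundary alternation, oddness of $a$ and $b$, the ``exactly three earlier-infected neighbours'' condition from tightness of the counting bound) is sound and matches the content of Lemma~\ref{lem:2DPerfect}, and your overall plan of halving down to $\bigl(\tfrac{a-1}{2},\tfrac{b-1}{2}\bigr)$ is the right reduction. But the bridge you propose --- that $A_0$ meets the middle row and middle column only at the centre, so that the grid splits into four corner subgrids plus an essentially empty cross --- is exactly the step you have not proved, and I do not believe it can be proved by the inward layer-propagation you sketch. The difficulty is that cross-emptiness is a global statement about a row at depth $\tfrac{a-1}{2}$ from the boundary, a depth that grows with the grid: the local constraints you are propagating (independence, parity, the degree-three condition) look identical near row $4$ of the $7\times 7$ grid and near row $4$ of the $15\times 15$ grid, yet in the first case row $4$ is the empty cross row and in the second it is not. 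Any layer-by-layer induction would therefore have to carry an inductive hypothesis describing the entire forced configuration at each depth, which amounts to already knowing the recursive structure you are trying to establish. There is also a concrete failure of the claim as stated: if $a\equiv 1\bmod 4$ then $\tfrac{a+1}{2}$ is odd, so the endpoints $(\tfrac{a+1}{2},1)$ and $(\tfrac{a+1}{2},b)$ of the middle row are \emph{even} boundary vertices and hence forced into $A_0$ by your own alternation argument, contradicting cross-emptiness; your framework would need a separate contradiction argument for this case, which the proposal does not supply. Finally, your internal-percolation claim for the corner subgrids presupposes cross-emptiness, so the two unproved steps cannot rescue each other.

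The paper's proof avoids all of this with a different device: rather than excising a middle cross, it tiles $[a-1]\times[b-1]$ by the $2\times 2$ blocks $T_v=\{2v_1-1,2v_1\}\times\{2v_2-1,2v_2\}$, shows (via the extra structural fact that \emph{every} vertex of $A_0$ has even coordinate sum --- a fact your proposal never establishes and which itself requires a separate perimeter-counting argument applied to the closure of the odd vertices of $A_0$) that each tile meets $A_0$ in one or two cells of its main diagonal, and defines $B_0\subseteq[\tfrac{a-1}{2}]\times[\tfrac{b-1}{2}]$ as the set of two-cell tiles. A counting identity gives $|B_0|=\tfrac{b_1b_2+b_1+b_2}{3}$, and a short case analysis on tile types shows $B_0$ percolates, after which induction applies. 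The point is that the tiling converts the global percolation constraint on the big grid directly into a global percolation constraint on the half-size grid, so no rigidity of the middle of the big grid ever needs to be established at the top level. To repair your proposal you would need to either prove cross-emptiness by some genuinely global argument or replace it with a tiling-type reduction; as written, the core of the proof is missing.
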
 

\begin{cor}
\label{cor:sqGrids}
For all $n\geq1$,
\[m(n,n;3)=\begin{cases}\left\lceil\frac{n^2+2n+4}{3}\right\rceil &\text{if }n\text{ is even},\\
\frac{n^2+2n}{3} &\text{if }n=2^k-1\text{ for some }k\geq1,\\
\frac{n^2+2n+1}{3} &\text{if }n\equiv 5\bmod 6,\\
\frac{n^2+2n}{3}+1& \text{otherwise}.\end{cases}\]
\end{cor}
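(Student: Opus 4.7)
My plan is to deduce each of the four cases of the corollary from the results of Benevides, Bermond, Lesfari and Nisse~\cite{Benevides+21+} recalled in the excerpt above, together with Theorem~\ref{thm:thickness1} and an elementary calculation of $\lceil (n^2+2n)/3\rceil$ modulo $3$.

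For $n$ even, the formula $\lceil (n^2+2n+4)/3\rceil$ is already proved in~\cite{Benevides+21+}, so nothing needs to be added. For $n = 2^k-1$, the same paper exhibits a percolating set of size $\lceil (n^2+2n)/3\rceil$ (the construction in Figure~\ref{fig:Purina}), matching the lower bound in~\eqref{eq:oddn}, and the identity $n^2+2n = (2^k-1)(2^k+1) = 4^k-1 \equiv 0 \pmod 3$ confirms that this value is $(n^2+2n)/3$. For $n \equiv 5 \pmod 6$, \cite{Benevides+21+} again shows that the lower bound of~\eqref{eq:oddn} is attained, and since $n \equiv 2 \pmod 3$ we get $n^2+2n \equiv 1+1 \equiv 2 \pmod 3$, so $\lceil (n^2+2n)/3\rceil = (n^2+2n+1)/3$, as stated.

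The only substantive content lies in the ``otherwise'' branch, and it is here that Theorem~\ref{thm:thickness1} does the work. Suppose $n$ is odd with $n \not\equiv 5\pmod 6$ and $n \neq 2^k-1$ for every $k\geq 1$. The residue restrictions force $n \equiv 0$ or $1 \pmod 3$, so $n^2+2n \equiv 0 \pmod 3$, and hence the lower bound of~\eqref{eq:oddn} equals the integer $(n^2+2n)/3 = (n\cdot n + n + n)/3$. Applying Theorem~\ref{thm:thickness1} with $a = b = n$, equality $m(n,n;3) = (n \cdot n + n + n)/3$ would force $n = 2^k-1$ for some $k\geq 1$, contradicting the hypothesis; hence this lower bound is not attained. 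Combined with the upper bound in~\eqref{eq:oddn}, this yields $m(n,n;3) = (n^2+2n)/3 + 1$, establishing the final case. The principal obstacle in the whole argument is of course packaged inside Theorem~\ref{thm:thickness1}; once that theorem is in hand, the corollary is a direct bookkeeping consequence of~\eqref{eq:oddn} and residues modulo~$6$.
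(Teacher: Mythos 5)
Your proof is correct and follows essentially the same route as the paper's: the first three cases are quoted from Benevides, Bermond, Lesfari and Nisse, and the ``otherwise'' case combines the divisibility of $n^2+2n$ by $3$, the bounds in \eqref{eq:oddn}, and Theorem~\ref{thm:thickness1} to rule out the lower bound. The only difference is that you spell out the residue computations in slightly more detail, which does no harm.
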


\begin{proof}
The first three cases follow from~\cite[Theorem~1]{Benevides+21+} and the fact that, if $n\equiv 2\bmod3$, then $n^2+2n\equiv 2\bmod 3$.

Suppose that $n$ is odd and that it is congruent to $0$ or $1$ modulo $3$. Then $n^2+2n$ is divisible by three. The bound $m(n,n;3)\leq \frac{n^2+2n}{3}+1$ is also shown in \cite[Theorem~1]{Benevides+21+}.  If $n$ is not of the form $n=2^k-1$, then $m(n,n;3)>\frac{n^2+2n}{3}$ follows from the lower bound in \eqref{eq:oddn} and Theorem~\ref{thm:thickness1}. This completes the proof.
\end{proof}

Let us pause for some additional historical context. As alluded to above, bootstrap percolation is motivated in part by connections to questions originating in statistical physics. From this perspective, the most natural problem is to locate and analyze the ``phase transition'' of the model; i.e. to determine the density at which a random infection becomes likely to percolate. More concretely, define the \emph{critical probability} of the $r$-neighbour process in $G$, denoted $p_c(G;r)$, to be the infimal value of $p\in (0,1)$ such that a random set of vertices of $G$ sampled from the binomial distribution with parameter $p$ percolates with probability at least $1/2$. As it turns out, \eqref{eq:nxn} can be combined with a few elementary observations to obtain a short proof of a classical result of Aizenman and Lebowitz~\cite{AizenmanLebowitz88} that $p_c([n]^2;2) = \Theta\left(1/\log(n)\right)$.
In a groundbreaking paper of Holroyd~\cite{Holroyd03}, this estimate was sharpened to 
\begin{equation}\label{eq:pcn2}p_c([n]^2;2) = (1+o(1))\frac{\pi^2}{18\log(n)}\end{equation}
where $\log$ denotes the natural (base $e$) logarithm. Surprisingly, this bound is more than a factor of two larger than the value of $\frac{0.245\pm 0.015}{\log(n)}$ predicted in the physics literature based on simulations on grids of size up to $28,000\times 28,000$~\cite{AdlerStaufferAharony89}. This discrepancy is explained, in part, by the fact that the second order asymptotic term of \eqref{eq:pcn2} is a negative term of order $\log(n)^{-3/2}$; this was recently proved in an impressive paper of Hartarsky and Morris~\cite{HartarskyMorris19}, improving on earlier estimates in~\cite{GravnerHolroyd08,GravnerHolroydMorris12}. This second order term competes admirably with the main $\log(n)^{-1}$ term until $n$ is extremely large. The breakthrough of Holroyd~\cite{Holroyd03} ignited a series of tight results on the critical probability in grids of higher dimensions with different values of $r$; see~\cite{Holroyd06,BaloghBollobasMorris09-maj,BaloghBollobas06,BaloghBollobasMorris10,BaloghBollobasMorris09,Balogh+12}. Most notably, Balogh, Bollob\'as, Duminil-Copin and Morris~\cite{Balogh+12} determined $p_c([n]^d;r)$ up to a $(1+o(1))$ factor for all fixed $2\leq r\leq d$ as $n\to \infty$, which sharpened earlier results of~\cite{CerfCirillo99,CerfManzo02}. When $r>d$, the behaviour of a random infection is very different as, in order to percolate, each $d$-dimensional cube must contain at least one infected vertex. The analysis in this case becomes similar to that of ordinary percolation; see~\cite[Section~1.4.1]{HartarskyPhD} for more background. 

In the next section, we prove the standard lower bound on $m(a_1,\dots,a_d;d)$ and provide several key definitions. In Section~\ref{sec:2D}, we present the proof of Theorem~\ref{thm:thickness1}. Along the way, we build up key facts about percolating sets for the $3$-neighbour process in $2$-dimensional grids which will be used in Section~\ref{sec:shapes} to  construction percolating sets in induced subgraphs of $2$-dimensional grids. In Section~\ref{sec:folding}, we show how these percolating sets in the $2$-dimensional setting can be  ``folded up'' to yield optimal percolating sets in the $a_1\times a_2\times a_3$ grid for several infinite families of side-lengths $(a_1,a_2,a_3)$. In Section~\ref{sec:recursion}, we feed these infinite families, and several sporadic examples from Appendix~\ref{app:sporadic}, into a recursive lemma to complete the proof of Theorem~\ref{thm:largeEnough}. We conclude the paper in Section~\ref{sec:concl} by making a few final observations and stating several open problems.

\section{Preliminaries}
\label{sec:prelims}

We begin by proving a simple lemma which bounds the size of the smallest percolating set for the $r$-neighbour process in a graph $G$ in terms of its order and size and deriving a lower bound on $m(a_1,\dots,a_d;d)$ as a corollary. Given a graph $G$ and a set $A\subseteq V(G)$, let $e(A)$ be the number of edges of $G$ with both endpoints in $A$. We write the neighbourhood $N_G(v)$ of a vertex $v$ of $G$ simply as $N(v)$ when the graph $G$ clear from context and let $d(v):=|N(v)|$ be the \emph{degree} of $v$. 

\begin{lem}
\label{lem:generalperim}
Let $G$ be a graph and $r\geq1$. If $A_0\subseteq V(G)$ percolates with respect to the $r$-neighbour process, then
\[|A_0|\geq |V(G)|+ \left\lceil \frac{e(A_0) -|E(G)|}{r}\right\rceil.\]
\end{lem}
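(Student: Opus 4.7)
The plan is to track the growth of the number of internal edges $e(A_t)$ as the $r$-neighbour process proceeds, and use the fact that each newly infected vertex must contribute at least $r$ fresh internal edges.

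Concretely, fix $t\geq 0$ and consider the set $B:=A_{t+1}\setminus A_t$ of vertices infected at the $(t+1)$-st step. By definition of the $r$-neighbour process, every $v\in B$ satisfies $|N(v)\cap A_t|\geq r$. Counting edges between $B$ and $A_t$ from the $B$-side gives
\[
\bigl|\{uv\in E(G): u\in B,\ v\in A_t\}\bigr|\;\geq\;r\,|B|.
\]
Each such edge lies in $E(G[A_{t+1}])$ but not in $E(G[A_t])$, so
\[
e(A_{t+1})-e(A_t)\;\geq\;r\,(|A_{t+1}|-|A_t|).
\]

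Next, I would telescope this inequality. Since $A_0$ percolates, there exists $T$ with $A_T=V(G)$, and then $e(A_T)=|E(G)|$. Summing the above inequality from $t=0$ to $t=T-1$ yields
\[
|E(G)|-e(A_0)\;\geq\;r\bigl(|V(G)|-|A_0|\bigr),
\]
and rearranging gives $|A_0|\geq |V(G)|+\frac{e(A_0)-|E(G)|}{r}$. Because $|A_0|$ is an integer, the right-hand side may be replaced by its ceiling, yielding the stated bound.

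The argument is essentially a one-line discrete integration once the per-step edge increment is bounded correctly, so there is no real obstacle; the only subtlety is making sure that the $r$ in the lower bound for the edge increment is the \emph{total} $r\,|B|$ (counted with multiplicity from the $B$-side), not an overcount, which is fine because each edge incident to $B$ is counted once per $B$-endpoint it has, and distinct $v\in B$ give distinct incidences.
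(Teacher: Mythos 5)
Your proof is correct and rests on the same double-counting idea as the paper's: each newly infected vertex accounts for at least $r$ edges into the previously infected set, so the edge count inside the infected set grows by at least $r$ per new vertex, and comparing the start ($e(A_0)$) with the end ($|E(G)|$) gives the bound. The paper organizes this as a sum of back-degrees along a percolation ordering of the vertices rather than telescoping over time steps, but the two formulations are interchangeable.
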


\begin{proof}
Let $n:=|V(G)|$ and $k:=|A_0|$. Assuming that $A_0$ percolates in $G$, we can let $v_1,\dots,v_n$ be an ordering of the vertices of $G$ such that $\{v_1,\dots,v_k\}=A_0$ and, for each $k+1\leq i\leq n$, the vertex $v_i$ has at least $r$ neighbours in $\{v_1,\dots,v_{i-1}\}$. Then
\[|E(G)|= \sum_{i=1}^n|N(v_i)\cap \{v_1,\dots,v_{i-1}\}|\]
\[=\sum_{i=1}^k|N(v_i)\cap \{v_1,\dots,v_{i-1}\}| + \sum_{i=k+1}^n|N(v_i)\cap \{v_1,\dots,v_{i-1}\}|\geq e(A_0) + r(|V(G)|-|A_0|).\]
The result follows by solving for $|A_0|$ in the above inequality. 
\end{proof}

Next, we use Lemma~\ref{lem:generalperim} to obtain a lower bound on the cardinality of a percolating set in the $a_1\times a_2\times\cdots \times a_d$ grid in terms of the number of points on its boundary. In the case $d=2$ or $d=3$, this corresponds to a lower bound in terms of the ``perimeter'' or ``surface area'' of the grid, respectively. This bound is not new. Variants of it appear in~\cite{Pete97,PeteMSc,Bollobas06,BaloghPete98}, for example, and several generalizations are proved in~\cite{HambardzumyanHatamiQian20,MorrisonNoel18}. We provide a proof here for the sake of completeness. 

First, we need a few definitions. For $j\in [d]$ and $k\in [a_j]$, let 
\[L_{j,k}:=[a_1]\times \cdots\times[ a_{j-1}]\times\{k\}\times[a_{j+1}]\times\cdots\times [a_d].\]
We call $L_{j,k}$ the \emph{$k$th level in direction $j$} or, simply, a \emph{level} for short. The sets $L_{j,1}$ and $L_{j,a_j}$ are called the \emph{faces in direction $j$} or just \emph{faces} for short. A vertex $v$ contained in a face is called a \emph{boundary} vertex. Say that a vertex $v$ is a \emph{corner} if it is contained in a face in every direction $j\in[d]$. 

\begin{prop}
\label{prop:SA}
Let $d\geq1$ and $a_1,a_2,\dots,a_d\geq1$. If $A_0\subseteq \prod_{i=1}^d[a_i]$ percolates with respect to the $d$-neighbour process in $\prod_{i=1}^d[a_i]$, then
\[|A_0|\geq \left\lceil\frac{e(A_0) + \sum_{j=1}^d\prod_{i\neq j}^da_i}{d}\right\rceil.\]
In particular,
\[m(a_1,\dots,a_d;d) \geq\left\lceil\frac{\sum_{j=1}^d\prod_{i\neq j}^da_i}{d}\right\rceil.\]
\end{prop}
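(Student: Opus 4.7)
The plan is to apply Lemma~\ref{lem:generalperim} directly with $r=d$ and $G=\prod_{i=1}^d[a_i]$, and then simplify the resulting expression using the standard formulas for the number of vertices and edges of the grid. So the entire proof reduces to a short algebraic calculation on top of the previously established lemma.

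First I would record that $|V(G)|=\prod_{i=1}^d a_i$ and that the edges of $G$ split according to their direction: for each $j\in[d]$, the number of edges of direction $j$ is $(a_j-1)\prod_{i\neq j}a_i$, since each such edge is determined by choosing any point of the grid and incrementing its $j$th coordinate, with $a_j-1$ valid choices for that coordinate. Summing gives $|E(G)|=\sum_{j=1}^d (a_j-1)\prod_{i\neq j}a_i$. The key simplification is then
\[d\,|V(G)|-|E(G)|=\sum_{j=1}^d\Bigl(\prod_{i=1}^d a_i-(a_j-1)\prod_{i\neq j}a_i\Bigr)=\sum_{j=1}^d\bigl(a_j-(a_j-1)\bigr)\prod_{i\neq j}a_i=\sum_{j=1}^d\prod_{i\neq j}a_i.\]

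Plugging this into the bound from Lemma~\ref{lem:generalperim}, the right-hand side $|V(G)|+\lceil(e(A_0)-|E(G)|)/d\rceil$ becomes $\lceil(e(A_0)+\sum_{j=1}^d\prod_{i\neq j}a_i)/d\rceil$, which is precisely the claimed inequality. The ``in particular'' statement then follows immediately upon noting that $e(A_0)\geq 0$ for any set $A_0$.

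There is no real obstacle here: the content of the proposition is packaged in Lemma~\ref{lem:generalperim}, and the only thing to verify is that $d|V(G)|-|E(G)|$ equals the sum of $(d-1)$-dimensional face areas $\sum_j\prod_{i\neq j}a_i$. Intuitively, this identity reflects the fact that in a $d$-dimensional grid the total ``degree deficit'' $2d|V(G)|-2|E(G)|$ from the regular degree $2d$ is exactly twice the boundary size, because each boundary vertex is missing one neighbour per face on which it lies.
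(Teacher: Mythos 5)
Your proof is correct and follows essentially the same route as the paper: both apply Lemma~\ref{lem:generalperim} with $r=d$ and then verify that $d|V(G)|-|E(G)|=\sum_{j=1}^d\prod_{i\neq j}a_i$. The only (immaterial) difference is bookkeeping: you count edges directly by direction, whereas the paper rewrites the quantity as $\sum_{v}(2d-d(v))$ and double-counts vertex--face incidences.
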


\begin{proof}
Let $G=\prod_{i=1}^d[a_i]$. By Lemma~\ref{lem:generalperim}, we have that
\[|A_0|\geq |V(G)|+ \left\lceil \frac{e(A_0) -|E(G)|}{d}\right\rceil=\left\lceil \frac{2d|V(G)| + 2e(A_0) -2|E(G)|}{2d}\right\rceil.\]
\[=\left\lceil \frac{2e(A_0) + \sum_{v\in V(G)}(2d-d(v))}{2d}\right\rceil.\]
For each vertex $v\in V(G)$, the quantity $2d-d(v)$ is simply equal to the number of faces that $v$ belongs to (where, if $a_j=1$ for some $j\in [d]$, then we count the face $L_{j,1}=L_{j,a_j}$ twice). Thus, by double-counting, we see that $\sum_{v\in V(G)}(2d-d(v))$ is equal to the sum of the cardinalities of all of the faces of the grid; clearly, the two faces in direction $j$ each have cardinality $\prod_{i\neq j}^da_i$. The result now follows by substituting $2\sum_{j=1}^d\prod_{i\neq j}^da_i$ in for $\sum_{v\in V(G)}(2d-d(v))$ in the above expression.
\end{proof}

For $\ell\in\{0,1,2\}$, say that a triple $(a_1,a_2,a_3)$ of positive integers is \emph{class $\ell$} if 
\[a_1a_2 + a_1a_3+a_2a_3\equiv \ell\bmod 3.\]
A useful fact to keep in mind is that $(a_1,a_2,a_3)$ is class $0$ if and only if either at least two of $a_1,a_2$ and $a_3$ are multiples of three, or $a_1\equiv a_2\equiv a_3\bmod 3$. We say that $(a_1,a_2,a_3)$ is \emph{optimal} if $m(a_1,a_2,a_3;3)= \left\lceil\frac{a_1a_2 + a_1a_3+a_2a_3}{3}\right\rceil$; that is, $(a_1,a_2,a_3)$ is optimal if the lower bound on $m(a_1,a_2,a_3;3)$ implied by Proposition~\ref{prop:SA} is tight. Moreover, we say that $(a_1,a_2,a_3)$ is \emph{perfect} if it is optimal and class 0. Clearly, permuting the coordinates of a triple $(a_1,a_2,a_3)$ preserves  class and optimality. In this language, Theorem~\ref{thm:largeEnough} says that $(a_1,a_2,a_3)$ is optimal whenever $a_1,a_2,a_3\geq11$ and Theorem~\ref{thm:thickness1} says that, if $(a_1,a_2,1)$ is perfect, then there exists an integer $k\geq1$ such that $a_1=a_2=2^k-1$.

\section{Three Neighbours in Two Dimensions}
\label{sec:2D}

Our focus in this section is on proving Theorem~\ref{thm:thickness1}. As we shall see, percolating sets for the $3$-neighbour process in $[a_1]\times [a_2]$ must have a very special structure; specifically, the complement of such a set must induce a forest in which each component contains at most one boundary vertex. This is a special case of a more general phenomenon, as the next lemma illustrates. 

\begin{lem}
\label{lem:immuneRegions}
For $r\geq1$, let $G$ be a graph of maximum degree at most $r+1$ and let $A_0\subseteq V(G)$. Then $A_0$ percolates under the $r$-neighbour process in $G$ if and only if 
\begin{enumerate}
\stepcounter{equation}
    \item\label{eq:morethanoneface} $A_0$ contains every vertex of $G$ of degree less than $r$ and 
\stepcounter{equation}
    \item\label{eq:atmostoneboundary} every component of $G\setminus A_0$ is a tree containing at most one vertex of degree $r$.
\end{enumerate}
\end{lem}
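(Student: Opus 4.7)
The plan is to split the argument into necessity and sufficiency. Necessity of \eqref{eq:morethanoneface} is immediate, since a vertex $v$ with $d(v) < r$ can never acquire $r$ infected neighbours and so must lie in $A_0$. For the tree portion of \eqref{eq:atmostoneboundary}, I would argue by contradiction: if a component $C$ of $G \setminus A_0$ contains a cycle, then the first cycle-vertex $v$ to be infected has both cycle-neighbours still healthy at the moment of its infection, so $v$ has at most $d(v) - 2 \leq r - 1$ infected neighbours, contradicting the infection rule. For the ``at most one degree-$r$ vertex'' portion, observe that by \eqref{eq:morethanoneface} every $v \in C$ has $d(v) \in \{r, r+1\}$. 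For each $v \in C$ set $f(v) := r + d_C(v) - d(v)$, which is the minimum number of $v$'s $C$-neighbours that must be infected before $v$ (the remaining required infected neighbours of $v$ are supplied by $A_0$). Orient each edge of $C$ from its later-infected endpoint to the earlier one; then the out-degree of each $v$ is at least $f(v)$. Summing and using $|E(C)| = |V(C)| - 1$ (since $C$ is a tree) gives
\[\sum_{v \in C} f(v) = 2|E(C)| - |V(C)| + k = |V(C)| - 2 + k,\]
where $k$ is the number of vertices of $C$ with $d(v) = r$. Since $\sum_v f(v) \leq \sum_v \mathrm{outdeg}(v) = |E(C)| = |V(C)| - 1$, we conclude $k \leq 1$.

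For sufficiency, I would induct on $|V(G) \setminus A_0|$. The base case $A_0 = V(G)$ is trivial. Otherwise I exhibit a single vertex of $V(G) \setminus A_0$ that becomes infected in one step and verify that adding it to $A_0$ preserves \eqref{eq:morethanoneface} and \eqref{eq:atmostoneboundary}, so the inductive hypothesis applies. Choose any component $C$ of $G \setminus A_0$: if $|C| = 1$, its unique vertex has all of its $\geq r$ neighbours in $A_0$ and is infected immediately; if $|C| \geq 2$, the tree $C$ has at least two leaves and \eqref{eq:atmostoneboundary} forces at least one leaf $u$ to have $d(u) = r + 1$, so the $r$ neighbours of $u$ outside its unique $C$-neighbour lie in $A_0$ and $u$ becomes infected. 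Removing $u$ from $C$ (whether leaf or isolated vertex) leaves a smaller tree and introduces no new degree-$r$ vertices, so both conditions persist.

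I expect the main obstacle to be the counting argument establishing $k \leq 1$: the justification that $\mathrm{outdeg}(v) \geq f(v)$ for any valid infection order requires careful bookkeeping of $v$'s $G$-neighbours inside versus outside $C$, and the evaluation of $\sum_v f(v)$ must invoke $|E(C)| = |V(C)| - 1$ from the tree step. A small subtlety is that $f(v)$ defined by the formula above can equal $-1$ when $d_C(v) = 0$ and $d(v) = r+1$, but this occurs only when $C$ is a single isolated vertex, where $k \leq 1$ is trivial and the sufficiency step handles the vertex directly, so the bound is not affected.
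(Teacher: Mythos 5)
Your proof is correct, and the sufficiency direction together with the cycle argument and the necessity of \eqref{eq:morethanoneface} coincide with the paper's. Where you genuinely diverge is in proving that a component $C$ of $G\setminus A_0$ has at most one vertex of degree $r$. The paper treats this uniformly with the acyclicity claim: it takes a walk $W$ that is either a cycle or a path joining two degree-$r$ vertices, observes that every vertex of $W$ (endpoints included) has at most $r-1$ neighbours outside $W$, and derives a contradiction from the first vertex of $W$ to become infected. You instead run a global counting argument: each $v\in C$ needs at least $f(v)=r+d_C(v)-d(v)$ of its $C$-neighbours infected strictly before it (all neighbours outside $C$ lie in $A_0$, since $C$ is a full component of the complement), and summing $f$ over $C$ against the bound $\sum_v f(v)\le |E(C)|=|V(C)|-1$ yields $k\le 1$. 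Your computation checks out, and your handling of the $f(v)=-1$ edge case is correct (indeed $f(v)\ge 0$ automatically once $|C|\ge 2$, so no care is really needed). The paper's version buys a single local extremal argument covering both halves of \eqref{eq:atmostoneboundary} at once; yours buys a quantitative statement (the counting actually bounds the total ``degree deficit'' of $C$) at the cost of depending on the tree claim and of one small bookkeeping point: when two adjacent vertices of $C$ are infected at the same time step, your edge orientation is not defined, so you should either refine the infection times to a total order consistent with them or note that each edge contributes at most one to $\sum_v|\{\text{neighbours of }v\text{ infected strictly before }v\}|$, which is all the inequality requires.
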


\begin{proof}
First, we prove the ``only if'' direction. Observe that any healthy vertex with at most $r-1$ neighbours can never become infected by the $r$-neighbour process; therefore, if $A_0$ percolates, then every such vertex must be in $A_0$. Now, suppose that there is a component $X$ of $G\setminus A_0$ containing either a cycle or two vertices of degree $r$. In either case, we can let $W$ be a walk in $X$ such that either (a) $W$ is a cycle or (b) $W$ is a path between two distinct vertices of degree $r$. Each internal vertex of $W$ has at least two neighbours in $W$ and therefore has at most $r-1$ neighbours outside of $W$. Likewise, the starting and ending vertices of $W$ also have at most $r-1$ neighbours outside of $W$, regardless of whether (a) or (b) holds. Thus, by considering the first vertex of $W$ to become infected and deriving a contradiction, we see that no vertex of $W$ is infected by the $r$-neighbour process starting with $A_0$, and so $A_0$ does not percolate.

Now, suppose that \eqref{eq:morethanoneface} and \eqref{eq:atmostoneboundary} hold. We show that $A_0$ percolates by induction on $|V(G)\setminus A_0|$, where the base case $|V(G)\setminus A_0|=0$ is trivial. Let $X$ be any component of $G\setminus A_0$. If $|X|=1$, then, by \eqref{eq:morethanoneface}, the unique $v\in X$ has degree at least $r$ and so it becomes infected after one step; we are therefore done by applying induction to $A_0\cup\{v\}$. If $|X|\geq2$, then let $u$ and $v$ be distinct leaves of $G[X]$. By \eqref{eq:atmostoneboundary}, we can assume, without loss of generality, that $v$ has degree $r+1$. Since $v$ is a leaf of $G[X]$, it has at least $r$ neighbours in $A_0$ and so it is infected after one step; we are done by applying induction to $A_0\cup\{v\}$ once again.
\end{proof}

The following corollary is immediate from Lemma~\ref{lem:immuneRegions} since $2$-dimensional grids have maximum degree at most $4$. In this section, we are mainly interested in the ``only if'' direction of this corollary, but the ``if'' direction will be used several times in the next section. 

\begin{cor}
\label{cor:iff2D}
For $a_1,a_2\geq1$ a set $A_0\subseteq [a_1]\times [a_2]$ percolates under the $3$-neighbour process in $[a_1]\times [a_2]$ if and only if $A_0$ contains every corner and every component of $([a_1]\times [a_2])\setminus A_0$ is a tree containing at most one boundary vertex.
\end{cor}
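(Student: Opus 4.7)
The plan is to invoke Lemma~\ref{lem:immuneRegions} with $r=3$ and $G=[a_1]\times[a_2]$. Each vertex of a $2$-dimensional grid has at most two neighbours in each of the two coordinate directions, so the maximum degree of $G$ is at most $4=r+1$ and the hypothesis of the lemma holds. Percolation of $A_0$ is then equivalent to the conjunction of~\eqref{eq:morethanoneface} and~\eqref{eq:atmostoneboundary}, and the remaining task is simply to translate these two conditions into the corner/boundary language of Corollary~\ref{cor:iff2D}.

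For the translation I would first treat the non-degenerate case $a_1,a_2\geq 2$. Writing $v=(v_1,v_2)$, the degree of $v$ in $G$ equals $4$ minus the number of coordinates $j\in\{1,2\}$ for which $v_j\in\{1,a_j\}$; equivalently, $4$ minus the number of faces containing $v$. Consequently, the vertices of degree strictly less than $3$ are exactly those lying in a face in both coordinate directions, i.e., the corners, while the vertices of degree exactly $3$ are the boundary vertices that are not corners. Condition~\eqref{eq:morethanoneface} therefore reads ``$A_0$ contains every corner'', which is the first clause of the corollary. For~\eqref{eq:atmostoneboundary}, once~\eqref{eq:morethanoneface} holds no component of $G\setminus A_0$ contains a corner, so within these components the sets ``vertices of degree $3$'' and ``boundary vertices'' coincide; the condition thus becomes ``every component of $G\setminus A_0$ is a tree containing at most one boundary vertex'', exactly as stated.

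The hard part will be essentially nothing beyond this degree-versus-geometry bookkeeping; all the substantive content sits in Lemma~\ref{lem:immuneRegions}. The one point worth flagging is the degenerate case $\min\{a_1,a_2\}=1$, in which the grid collapses to a path and no vertex has degree exactly $3$. In that case every vertex of $G$ has degree less than $r=3$, so Lemma~\ref{lem:immuneRegions}\eqref{eq:morethanoneface} already forces $A_0=V(G)$; one can either verify the corollary directly in this trivial regime or simply restrict attention to $a_1,a_2\geq 2$, which is the only regime used in subsequent sections.
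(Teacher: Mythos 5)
Your proposal is correct and takes exactly the paper's route: the paper's entire proof is the observation that the corollary is immediate from Lemma~\ref{lem:immuneRegions} because $2$-dimensional grids have maximum degree at most $4$, and your degree-versus-boundary bookkeeping is just a careful spelling-out of that translation. Your flag on the degenerate case $\min\{a_1,a_2\}=1$ (where boundary vertices and degree-$3$ vertices no longer coincide, so the translation of \eqref{eq:atmostoneboundary} requires separate care) is a point the paper glosses over, and is worth the sentence you give it.
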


The next lemma provides some strong structural restrictions on percolating sets of cardinality $\frac{a_1a_2+a_1+a_2}{3}$ in $[a_1]\times [a_2]$. Say that an element $(v_1,v_2)\in \mathbb{Z}^2$ is \emph{even} or \emph{odd} depending on the parity of $v_1+v_2$. 

\begin{lem}
\label{lem:2DPerfect}
Let $a_1,a_2\geq2$ and $A_0\subseteq [a_1]\times [a_2]$ such that $|A_0|=\frac{a_1a_2+a_1+a_2}{3}$. If $A_0$ percolates under the $3$-neighbour process in $[a_1]\times [a_2]$, then
\begin{enumerate}
\stepcounter{equation}
    \item \label{eq:indep} $A_0$ is an independent set,
\stepcounter{equation}
    \item \label{eq:boundaryAlternates} every even boundary vertex is contained in $A_0$,
\stepcounter{equation}    
\item \label{eq:odd} $a_1$ and $a_2$ are both odd,
\stepcounter{equation}
    \item \label{eq:exactlyOneBoundary} every component of $([a_1]\times [a_2])\setminus A_0$ is a tree containing exactly one boundary vertex and
\stepcounter{equation}
    \item \label{eq:allEven} every $v\in A_0$ is even.
\end{enumerate}
\end{lem}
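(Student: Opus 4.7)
I plan to establish the five conclusions in the order \eqref{eq:indep}, \eqref{eq:exactlyOneBoundary}, then \eqref{eq:boundaryAlternates} and \eqref{eq:odd} jointly, and finally \eqref{eq:allEven}. For \eqref{eq:indep} I specialize Lemma~\ref{lem:generalperim} to the grid with $r=3$: using $|V(G)|=a_1a_2$ and $|E(G)|=2a_1a_2-a_1-a_2$, the inequality becomes $|A_0|\geq \lceil (a_1a_2+a_1+a_2+e(A_0))/3 \rceil$, and the hypothesis on $|A_0|$ forces $e(A_0)=0$. The same equality argument also records the sharper structural fact that along any valid infection order, every newly infected vertex has \emph{exactly} three previously infected neighbours at the moment of infection; this strengthening drives everything that follows. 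For \eqref{eq:exactlyOneBoundary} I set $H:=([a_1]\times[a_2])\setminus A_0$ and invoke Corollary~\ref{cor:iff2D}: $H$ is a forest with at most one boundary vertex per component, and $A_0$ contains every corner. Letting $s$, $c_3$, $c_4$ denote the number of components of $H$ and the numbers of non-corner boundary and interior vertices in $H$, the identities $e(H)=|H|-s$, $\sum_{v\in H}d_G(v)=3c_3+4c_4$, and $|E(G)|=e(A_0)+e(H)+e(A_0,H)$ together with the prescribed value of $|A_0|$ yield, after short arithmetic, $c_3=s$.

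For \eqref{eq:boundaryAlternates} and \eqref{eq:odd} I exploit \eqref{eq:exactlyOneBoundary} and \eqref{eq:indep} together. The boundary cycle $C$ has length $L=2(a_1+a_2)-4$. No edge of $C$ lies in $A_0$ (by \eqref{eq:indep}), and no edge of $C$ lies in $H$ (else two distinct boundary vertices would share a component of $H$). Hence every edge of $C$ is an $A_0$--$H$ edge, $A_0\cap C$ and $H\cap C$ are the two colour classes of $C$, and $|A_0\cap C|=L/2$. The four corners all lie in $A_0$, so they sit in a single colour class of $C$; along $C$ they are spaced at cyclic gaps $a_1-1$, $a_2-1$, $a_1-1$, $a_2-1$, and these are simultaneously even iff $a_1$ and $a_2$ are both odd, giving \eqref{eq:odd}. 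Under \eqref{eq:odd} the four corners are grid-even, and since grid-adjacent vertices have opposite grid-parity, the colour class of $C$ containing the corners is precisely the set of even boundary vertices, proving \eqref{eq:boundaryAlternates}.

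For \eqref{eq:allEven} I argue by contradiction: suppose some odd $u\in A_0$ exists. By \eqref{eq:boundaryAlternates} $u$ is interior; by \eqref{eq:indep} its four (even) grid-neighbours lie in $H$; and if $u$ were adjacent to the boundary then one of those neighbours would be an even boundary vertex, hence in $A_0$, contradicting independence. Thus $u$ sits at $\ell_\infty$-distance at least $2$ from the boundary. The eight vertices at $\ell_\infty$-distance $1$ from $u$, consisting of $u$'s four axial even neighbours together with $u$'s four odd diagonal neighbours, form an $8$-cycle $\Sigma$. The four axial vertices lie in $H$, so if all four diagonals also lay in $H$ then $\Sigma$ would be a cycle in $H$, contradicting the forest structure from \eqref{eq:exactlyOneBoundary}. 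Hence at least one diagonal of $u$ lies in $A_0$; but this diagonal is itself an odd interior vertex, and the same $8$-cycle obstruction applies around it. The plan is to track this cascade, observing that each odd interior vertex admitted to $A_0$ uses one ``odd slot'' but pushes fresh even interior cells out of $A_0$ (since its four even cardinal neighbours are forced into $H$ and only two of them can overlap with previously forbidden cells), and to combine this with the prescribed cardinality $|A_0|=(a_1a_2+a_1+a_2)/3$ from Step~3 to derive a numerical contradiction.

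The main obstacle is this last step. The parity-based edge-counts that drove the earlier steps turn out to be \emph{tautological} when one tries to separate the even and odd contributions to $|A_0|$: every natural double-count one writes down (edges from $E_H$ to $O_{A_0}$, or edges from $O_H$ to $E_{A_0}$) simplifies to the identity already encoded in $|A_0|=(a_1a_2+a_1+a_2)/3$. So the contradiction must instead come from the local $8$-cycle obstruction around each odd interior $A_0$-vertex, propagated by careful bookkeeping of which even cells are forced into $H$; getting the bookkeeping sharp enough to rule out $\alpha_o\geq 1$ for \emph{every} admissible geometry is the delicate part.
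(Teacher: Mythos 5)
Your treatment of \eqref{eq:indep}, \eqref{eq:exactlyOneBoundary}, \eqref{eq:boundaryAlternates} and \eqref{eq:odd} is correct, and your route to \eqref{eq:exactlyOneBoundary} is arguably cleaner than the paper's: writing $e(H)=|H|-s$ for the forest $H$ and comparing degree sums gives (number of components) $=$ (number of boundary vertices of $H$) directly, without first knowing \eqref{eq:boundaryAlternates} or \eqref{eq:odd}, whereas the paper computes the component count as $a_1+a_2-2$ and matches it against the count of odd boundary vertices, so it must prove those two properties first. Your boundary-cycle two-colouring argument for \eqref{eq:boundaryAlternates} and \eqref{eq:odd} is also sound and equivalent in substance to the paper's alternation argument.

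The gap is in \eqref{eq:allEven}, and it is genuine. The local $8$-cycle obstruction does not propagate: if $u$ and $u'$ are diagonally adjacent odd interior vertices both in $A_0$, then $u'$ lies on the $8$-cycle around $u$ and vice versa, so each discharges the other's constraint and the cascade terminates after one step without contradiction. Your fallback of counting ``odd slots'' against the prescribed cardinality is, as you yourself concede, tautological, so no contradiction is actually in hand. The paper's argument is global rather than local: let $X_0$ be the set of odd vertices of $A_0$ and $X:=[X_0]$ its closure under the $3$-neighbour process. Since $X_0$, and hence $X$, avoids the boundary, $N(X)$ contains a cycle; one then shows $N(X)\cap A_0=\emptyset$, so this cycle lies in the complement of $A_0$, contradicting Corollary~\ref{cor:iff2D}. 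The key step, showing that no $w\in A_0$ is adjacent to $X$, uses precisely the ``exactly three previously infected neighbours at infection time'' sharpening you recorded in your first paragraph: if $w\in A_0$ were adjacent to some $v_i\in X\setminus X_0$, the percolating set $Y=A_0\cup\{v_{\ell+1},\dots,v_i\}$ would satisfy $e(Y)\geq 3(i-\ell)+1$, and Proposition~\ref{prop:SA} would force $|Y|\geq |A_0|+i-\ell+1$, one more element than $Y$ has. You identified the right tool; it needs to be aimed at the closure of the odd vertices rather than at individual $8$-cycles.
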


\begin{proof}
We will establish each of the properties \eqref{eq:indep}--\eqref{eq:allEven} sequentially, where, in the proof of each of these statements, we may assume that all of the earlier statements hold. First, if $A_0$ is not an independent set, then $e(A_0)\geq1$ and so applying Proposition~\ref{prop:SA}  to $[a_1]\times[a_2]\times[1]$ which is isomorphic to $[a_1]\times[a_2]$, implies that $|A_0|\geq \frac{a_1a_2+a_1+a_2+1}{3}$, which contradicts our assumption on $|A_0|$. So, \eqref{eq:indep} holds. 

By Corollary~\ref{cor:iff2D}, we know that no component of $([a_1]\times[a_2])\setminus A_0$ can contain two boundary vertices. In particular, if $u$ and $v$ are consecutive boundary vertices, then $A_0$ must contain at least one of $u$ or $v$. However, by \eqref{eq:indep}, $A_0$ contains at most one of $u$ or $v$. Putting this together, we get that $A_0$ contains either every even boundary vertex or every odd boundary vertex. By Corollary~\ref{cor:iff2D}, we know that $A_0$ contains every corner vertex; in particular, it contains $(1,1)$, which is even. Thus, \eqref{eq:boundaryAlternates} holds. Now, since \eqref{eq:indep} and \eqref{eq:boundaryAlternates} hold and $A_0$ contains every corner vertex, we must have that $(a_1,1)$ and $(1,a_2)$ are even, and so \eqref{eq:odd} holds. 

Denote the graph $([a_1]\times [a_2])\setminus A_0$ by $H$. Our next goal is to show that the number of components of $H$ is equal to $a_1+a_2-2$ and use this to deduce \eqref{eq:exactlyOneBoundary}. By Corollary~\ref{cor:iff2D}, $H$ is a forest. Therefore, the number of components of $H$ is precisely 
\begin{equation}
\label{eq:V-E}
\begin{gathered}
|V(H)| - |E(H)| = a_1a_2 - |A_0| - |E(H)|= a_1a_2 - \left(\frac{a_1a_2+a_1+a_2}{3}\right) - |E(H)|\\
=\frac{2a_1a_2 - a_1 - a_2}{3} - |E(H)|.
\end{gathered}
\end{equation}
Let us now determine $|E(H)|$. By definition of $H$, $|E(H)|$ is equal to the number of edges in the $a_1\times a_2$ grid, which is $2a_1a_2-a_1-a_2$, minus the number of edges which have at least one endpoint in $A_0$. By \eqref{eq:indep}, we know that $A_0$ is an independent set, and so each edge has at most one endpoint in $A_0$. Therefore,
\[|E(H)|=2a_1a_2-a_1-a_2 - \sum_{v\in A_0}d(v).\]
We know by \eqref{eq:boundaryAlternates} and \eqref{eq:indep} that $A_0$ contains all of the even boundary vertices and all of the other vertices of $A_0$ have degree four. For illustration purposes, let us write $A_0=C\sqcup B\sqcup I$ where $C$ is the set of corner vertices, $B$ is the set of even boundary vertices that are not corner vertices and $I$ is the set of vertices of $A_0$ that are not on the boundary. Then
\[\sum_{v\in A_0}d(v) = 2|C|+3|B|+4|I|.\]
Clearly, $|C|=4$ and its not hard to see that $|B|=a_1+a_2-6$. Therefore,
\[|I|=|A_0| - \left(a_1+a_2-2\right) = \frac{a_1a_2-2a_1-2a_2}{3}+2.\]
Putting all of this together, we get that $|E(H)|$ is equal to
\[2a_1a_2-a_1-a_2-8-3(a_1+a_2-6)-4\left(\frac{a_1a_2-2a_1-2a_2}{3}+2\right)=\frac{2a_1a_2 - 4a_1 - 4a_2 + 6}{3}.\]
So, plugging this into \eqref{eq:V-E}, we get that the number of components of $H$ is
\[\frac{2a_1a_2 - a_1 - a_2}{3} -\frac{2a_1a_2 - 4a_1 - 4a_2 + 6}{3} = a_1+a_2-2.\]
So, the number of components of $H$ is equal to $a_1+a_2-2$. By \eqref{eq:indep}, \eqref{eq:boundaryAlternates} and \eqref{eq:odd}, $a_1+a_2-2$ is equal to the number of odd boundary vertices and, at the same time, every odd boundary vertex is contained in a component of $H$. Since each component of $H$ contains at most one boundary vertex by Corollary~\ref{cor:iff2D}, we see that every component of $H$ must contain exactly one odd boundary vertex; thus, \eqref{eq:exactlyOneBoundary} holds.

Finally, we prove \eqref{eq:allEven}. Let $X_0$ be the set of all odd vertices in $A_0$, suppose to the contrary that $X_0$ is non-empty, and let $X:=[X_0]$. By \eqref{eq:indep} and \eqref{eq:boundaryAlternates}, $X_0$ contains no boundary vertices, and so neither does $X$. Therefore, the \emph{neighbourhood} of $X$, i.e. the set
\[N(X):=\left(\bigcup_{x\in X}N(x)\right)\setminus X,\]
contains a cycle. Our goal is to show that $N(X)$ contains no vertex of $A_0$ which, by Corollary~\ref{cor:iff2D}, will contradict the fact that $A_0$ percolates. 

Let $\ell:=|X_0|$ and $m:=|X|$ and label the vertices of $X$ by $v_1,\dots,v_m$ so that $X_0=\{v_1,\dots,v_\ell\}$ and $v_i$ has at least $3$ neighbours in $\{v_1,\dots,v_{i-1}\}$ whenever $\ell+1\leq i\leq m$. Suppose to the contrary that there is a vertex $w\in A_0\cap N(X)$ which is adjacent to $v_i$ for some $1\leq i\leq m$. If $1\leq i\leq \ell$, then this immediately violates \eqref{eq:indep}, and so we must have $\ell+1\leq i\leq m$. Consider the set
\[Y:=A_0\cup\{v_{\ell+1},\dots,v_i\}.\]
By our choice of vertex ordering, the vertex $v_j$ for $\ell+1\leq j\leq i-1$ has at least $3$ neighbours in $A_0\cup\{v_{\ell+1},\dots,v_{j-1}\}$. Also, $v_i$ has at least $4$ neighbours in $A_0\cup\{v_{\ell+1},\dots,v_{i-1}\}$, three of which are in $X$ and the other is the vertex $w$, which is not in $X$ by assumption. Thus,
\[e(Y)\geq 3(i-\ell-1) + 4 = 3(i-\ell)+1.\]
Since $A_0$ percolates, so does $Y$. Note that, since $|A_0|=\frac{a_1a_2+a_1+a_2}{3}$, we have that $a_1a_2+a_1+a_2$ is divisible by three. So, by Proposition~\ref{prop:SA}, we get
\[|Y|\geq \left\lceil\frac{a_1a_2+a_1+a_2+3(i-\ell)+1}{3}\right\rceil= \frac{a_1a_2+a_1+a_2}{3} + i-\ell+1 = |A_0|+i-\ell+1.\]
However, $|Y|\leq |A_0|+i-\ell$, simply by definition. This contradiction completes the proof of \eqref{eq:allEven} and of the lemma. 
\end{proof}

We now present the proof of Theorem~\ref{thm:thickness1}.

\begin{proof}[Proof of Theorem~\ref{thm:thickness1}]
We proceed by induction on $a_1+a_2$. For the base case, suppose $a_1=1$. Then every vertex of $[a_1]\times[a_2]$ has degree at most two and so by Corollary~\ref{cor:iff2D}, the only set that percolates is $A_0=[a_1]\times [a_2]$. 
From $a_2 = |A_0|= \frac{a_1a_2+a_1+a_2}{3} = \frac{2a_2+1}{3}$, we immediately get that $a_2=1$.
The proof in the case $a_2=1$ is symmetric.

So, from here forward, we assume $a_1,a_2\geq 2$ and suppose that there is a set $A_0\subseteq [a_1]\times [a_2]$ such that $|A_0|=\frac{a_1a_2+a_1+a_2}{3}$ and $A_0$ percolates. By Lemma~\ref{lem:2DPerfect}, we have that $a_1$ and $a_2$ are both odd. Let $b_i=\frac{1}{2}(a_i-1)$ for $i\in\{1,2\}$. Our goal is to show that there is a percolating set $B_0$ in $[b_1]\times[b_2]$ of cardinality $\frac{b_1b_2+b_1+b_2}{3}$. Given this, we will get that $b_1=b_2=2^k-1$ for some $k\geq1$ by induction, and so $a_1=a_2=2^{k+1}-1$ and the proof will be complete.

Let us now describe the construction of $B_0$. For each $v=(v_1,v_2)\in [b_1]\times [b_2]$, we observe that the set 
\[T_v := \{2v_1-1,2v_1\}\times\{2v_2-1,2v_2\},\]
which we call the \emph{tile} corresponding to $v$, must contain either one or two elements of $A_0$. This is because $T_v$ induces a $4$-cycle in $[a_1]\times [a_2]$ and so $|A_0\cap T_v|\geq 1$ by Corollary~\ref{cor:iff2D} and $|A_0\cap T_v|\leq 2$ by \eqref{eq:indep}. We define
\[B_0:=\{v\in [b_1]\times[b_2]: |A_0\cap T_v|=2\}.\]
The sets $T_v$ for $v\in [b_1]\times [b_2]$ partition $[a_1-1]\times [a_2-1]$. By \eqref{eq:indep} and \eqref{eq:boundaryAlternates}, we have that $A_0$ contains precisely $\frac{a_1+a_2}{2}$ vertices outside of $[a_1-1]\times [a_2-1]$. Therefore,
\[|A_0| = 2|B_0| + (b_1b_2-|B_0|) + \frac{a_1+a_2}{2}.\]
Solving for $|B_0|$ yields
\[|B_0|=\frac{a_1a_2+a_1+a_2}{3} - \frac{a_1+a_2}{2} - b_1b_2\]
\[= \frac{(2b_1+1)(2b_2+1)+(2b_1+1)+(2b_2+1)}{3} -\frac{2b_1+1+2b_2+1}{2}- b_1b_2 = \frac{b_1b_2+b_1+b_2}{3}.\]

By \eqref{eq:allEven}, we know that, for $v=(v_1,v_2)$, 
\[T_v\cap A_0\subseteq \{(2v_1-1,2v_2-1),(2v_1,2v_2)\}.\]
For $v\in[b_1]\times[b_2]$, say that the tile $T_v$ is \emph{type $0$} if $T_v\cap A_0=\{(2v_1,2v_2)\}$, \emph{type $1$} if $T_v\cap A_0=\{(2v_1-1,2v_2-1)\}$ and \emph{type $2$} otherwise; see Figure~\ref{fig:types}. We claim that there are no type $0$ tiles. To this end, suppose that $T_v$ is type $0$ and, using \eqref{eq:exactlyOneBoundary}, let $w_0\cdots w_k$ be the unique path in $[a_1]\times [a_2]\setminus A_0$ from $w_0=(2v_1-1,2v_2-1)$ to a boundary vertex $w_k$. For $0\leq i\leq k-1$, let $T_i$ be the unique tile containing $w_i$; in particular, $T_0=T_v$. Note that there may be some indices $i$ such that $T_i$ and $T_{i+1}$ are the same tile. Also, $w_k$ may not be in $[a_1-1]\times[a_2-1]$ and so it is not necessarily contained in any tile. 

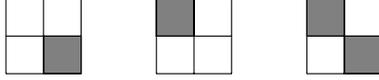
\begin{figure}[htbp]
\begin{center}
\begin{tikzpicture}
\begin{scope}[xshift=0.0cm,yshift=0.0cm]
\draw[step=0.5cm,color=black,fill=gray] (-0.001,-0.001) grid (1.001,1.001) (0.5,0.5) rectangle (1.0,0.0);
\end{scope}
\begin{scope}[xshift=2.0cm,yshift=0.0cm]
\draw[step=0.5cm,color=black,fill=gray] (-0.001,-0.001) grid (1.001,1.001) (0.5,0.5) rectangle (0.0,1.0);
\end{scope}
\begin{scope}[xshift=4.0cm,yshift=0.0cm]
\draw[step=0.5cm,color=black,fill=gray] (-0.001,-0.001) grid (1.001,1.001) (0.5,0.5) rectangle (0.0,1.0)(0.5,0.5) rectangle (1.0,0.0);
\end{scope}
\end{tikzpicture}
\end{center}
    \caption{Tiles of types 0, 1 and 2, respectively. Note that the cell in the top left corner represents a vertex of the grid whose coordinates are both odd.}
    \label{fig:types}
\end{figure}

We claim that $T_{k-1}$ cannot be type $0$. If $w_k$ is on the top or left face of the grid, then, by \eqref{eq:boundaryAlternates}, $w_{k}$ is an odd vertex of the form $(1,2\ell)$ or $(2\ell,1)$ for some $\ell$, respectively, and $w_{k-1}$ is either $(2,2\ell)$ or $(2\ell,2)$. This implies that $w_k$ is in the tile $T_{k-1}$. However, if $T_{k-1}$ is type $0$, then this gives us two consecutive boundary vertices that are not in $A_0$, contradicting \eqref{eq:boundaryAlternates}. On the other hand, if $w_k$ is on the bottom or the right, then it is an odd vertex of the form $(a_1,2\ell)$ or $(2\ell,a_2)$ for some $\ell$, respectively. However, applying \eqref{eq:boundaryAlternates} again, we see that $w_{k-1}$ would have to be  $(a_1-1,2\ell)$ or $(2\ell,a_2-1)$. Thus, both coordinates of $w_{k-1}$ are even, and so $w_{k-1}$ is the bottom right vertex of $T_{k-1}$. However, since $w_{k-1}\notin A_0$ by definition, this contradicts the assumption that $T_{k-1}$ has type 0. 

Therefore, there must exist $0\leq j\leq k-2$ such that $T_j$ is type $0$ and $T_{j+1}$ is not. In particular, the bottom right vertex of $T_j$ is in $A_0$ and the top left vertex of $T_{j+1}$ is in $A_0$. Since $w_j\in T_j\setminus A_0$ and $w_{j+1}\in T_{j+1}\setminus A_0$ are adjacent, we see that $T_{j+1}$ cannot be to the right or below $T_j$; see Figure~\ref{fig:type0}~(a). We assume that $T_{j+1}$ is to the left of $T_j$ and note that the other case is symmetric. If $T_{j+1}$ is type 1, then the two vertices on the right side of $T_{j+1}$ together with the two on the left side of $T_j$ induce a $4$-cycle in the complement of $A_0$ which contradicts Corollary~\ref{cor:iff2D}; see Figure~\ref{fig:type0}~(b). So, $T_{j+1}$ is type $2$. Now, we must have that $w_j$ is the top left vertex of $T_j$ and $w_{j+1}$ is the top right vertex of $T_{j+1}$. Note that $j+1\neq k$ because, if $w_{j+1}$ is a boundary vertex, then it must be on the top face, but then $w_j$ would also be on the top face, thereby contradicting \eqref{eq:boundaryAlternates}. The vertices to the left and below $w_{j+1}$ are both in $A_0$ and so the only possibility for $w_{j+2}$ is that it must be above $w_{j+1}$. This implies that the tile $T_{j+2}$ is of type $1$. However, we now get a $4$-cycle in the complement of $A_0$ containing the top left vertex of $T_j$, the bottom right vertex of $T_{j+2}$ and their two common neighbours in the grid (which are both odd and therefore not in $A_0$ by \eqref{eq:allEven}); see Figure~\ref{fig:type0} (c) for an illustration. This contradiction completes the proof of that all tiles are type $1$ or $2$. 

\begin{figure}[htbp]
\begin{center}
\begin{tikzpicture}
\begin{scope}[xshift=0.0cm,yshift=0.0cm]
\draw[step=0.5cm,color=black,fill=gray] (-0.001,-0.001) grid (2.001,1.001) (1.0,0.5) rectangle (1.5,1.0)(0.5,0.5) rectangle (1.0,0.0);
\draw[ultra thick](1.0,0.0)--(1.0,1.0);
\node at (0.5,-0.5) {$T_j$};
\node at (1.5,-0.5) {$T_{j+1}$};
\node at (1,-1.5) {(a)};
\end{scope}

\begin{scope}[xshift=3.0cm,yshift=0.0cm]
\draw[step=0.5cm,color=black,fill=gray] (-0.001,-0.001) grid (2.001,1.001) (0.0,0.5) rectangle (0.5,1.0)(1.5,0.5) rectangle (2.0,0.0);
\draw[ultra thick](1.0,0.0)--(1.0,1.0);
\node at (0.5,-0.5) {$T_{j+1}$};
\node at (1.5,-0.5) {$T_{j}$};
\node at (1,-1.5) {(b)};
\end{scope}

\begin{scope}[xshift=6.0cm,yshift=0.0cm]
\draw[step=0.5cm,color=black,fill=gray] (-0.001,-0.001) grid (2.001,2.001) (0.0,2.0) rectangle (0.5,1.5)(1.0,2.0) rectangle (1.5,1.5)(1.5,1.5) rectangle (2.0,1.0)(0.0,1.0) rectangle (0.5,0.5)(0.5,0.5) rectangle (1.0,0.0)(1.5,0.5) rectangle (2.0,0.0);
\draw[ultra thick](1.0,0.0)--(1.0,2.0);
\draw[ultra thick](0.0,1.0)--(2.0,1.0);
\node at (0.5,-0.5) {$T_{j+1}$};
\node at (1.5,-0.5) {$T_{j}$};
\node at (0.5,2.5) {$T_{j+2}$};
\node at (1,-1.5) {(c)};
\end{scope}
\end{tikzpicture}
\end{center}
    \caption{An illustration of three possible situations that arise in the proof that there are no tiles of type $0$: (a) $T_{j+1}$ is to the right of $T_j$, (b) $T_{j+1}$ is to the left of $T_j$ and is type 1, (c) $T_{j+1}$ is type 2. Bold lines are used to separate the vertices of distinct tiles.}
    \label{fig:type0}
\end{figure}
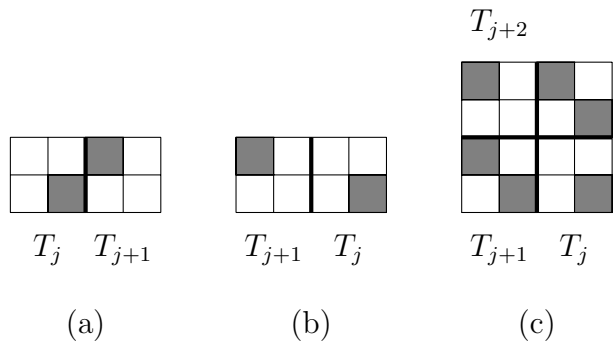

Now, finally, we show that $B_0$ percolates in $[b_1]\times [b_2]$. By Corollary~\ref{cor:iff2D}, this is equivalent to showing that every component of $[b_1]\times [b_2]\setminus B_0$ is a tree containing at most one boundary vertex. Suppose that $v_1\cdots v_m$ is a walk in $[b_1]\times [b_2]\setminus B_0$ that forms either a cycle or a path from one boundary vertex to another. By definition of $B_0$ and the fact that there are no type $0$ tiles, this means that all of the tiles $T_{v_1},\dots,T_{v_m}$ are of type $1$. Therefore, for any $1\leq i\leq m-1$, the subgraph of $[a_1]\times [a_2]$ induced by $T_{v_i}\cup T_{v_{i+1}}\setminus A_0$ is connected. So, there is a walk in $[a_1]\times [a_2]\setminus A_0$ from any vertex of $T_{v_1}$ to any vertex of $T_{v_m}$ that passes through all of the intermediate tiles. Using this, we either find a cycle or a or a path between two boundary vertices in $[a_1]\times [a_2]\setminus A_0$ which, by Corollary~\ref{cor:iff2D}, contradicts the fact that $A_0$ percolates and completes the proof of the theorem. 
\end{proof}

\section{2D Shapes with Small Percolating Sets}
\label{sec:shapes}

In contrast to the previous section, which focused on non-existence results for efficient percolating sets in two dimensions, our focus in this section will be on showing that certain induced subgraphs of $2$-dimensional grids do admit percolating sets of the smallest possible cardinality allowed by Lemma~\ref{lem:generalperim}. These constructions will be applied in the next section to construct numerous optimal percolating sets in three dimensions. We start with some rather simple constructions which form the ``building blocks'' of more involved constructions that come later. 

\begin{lem}
\label{lem:skinny}
For $a_2\geq 2$, let $s\in \{1,2\}$ such that $a_2\equiv s\bmod 2$. Then there exists a set $A_0\subseteq [2]\times[a_2]$ such that $|A_0|=a_2$ and $[A_0]=([2]\times[a_2])\setminus\{(1,1),(s,a_2)\}$. 
\end{lem}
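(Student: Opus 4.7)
The plan is to take $A_0$ to be a zig-zag set that alternates between the two rows along the length of the strip, starting from the corner $(2,1)$. Explicitly, I would set
\[ A_0 := \{(2, 2k-1) : 1 \leq k \leq \lceil a_2/2 \rceil\} \cup \{(1, 2k) : 1 \leq k \leq \lfloor a_2/2 \rfloor\}, \]
so that column $j$ of $[2]\times[a_2]$ contributes the cell $(2,j)$ to $A_0$ if $j$ is odd and $(1,j)$ if $j$ is even. Then $|A_0| = \lceil a_2/2 \rceil + \lfloor a_2/2 \rfloor = a_2$. The cell of $A_0$ in column $a_2$ is $(2,a_2)$ when $a_2$ is odd and $(1,a_2)$ when $a_2$ is even, which is precisely $(3-s, a_2)$; together with the corner $(2,1)$ that is also in $A_0$, this leaves $(1,1)$ and $(s, a_2)$ as the two ``diagonal'' corners omitted from $A_0$.

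For the closure computation, the key observation is that in each interior column $2 \leq j \leq a_2-1$ the unique cell $v_j \in ([2]\times[a_2]) \setminus A_0$ has all three of its grid-neighbours in $A_0$: the cell directly across the column lies in $A_0$ by construction, while the two cells in the adjacent columns $j-1$ and $j+1$ lie in $A_0$ because the zig-zag pattern places them in the row opposite to $v_j$. Hence each such $v_j$ is infected after a single step of the 3-neighbour process. The only cells of $[2]\times[a_2]$ that remain uninfected are the two corners $(1,1)$ and $(s,a_2)$, both of which have degree $2$ in the grid and therefore can never acquire three infected neighbours regardless of what is eventually infected around them. It follows that $[A_0] = ([2]\times[a_2]) \setminus \{(1,1), (s,a_2)\}$, as required.

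There is no real combinatorial obstacle in this proof; the only point requiring a bit of care is the parity bookkeeping that matches the condition $a_2 \equiv s \bmod 2$ to the identity of the far corner left outside the closure. Once the explicit construction is in hand, both the cardinality count and the one-step infection of every interior column follow immediately, and the two omitted corners are forced to stay healthy purely by their degree.
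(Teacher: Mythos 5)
Your proposal is correct and matches the paper's proof: your zig-zag set is exactly the set of odd vertices of $[2]\times[a_2]$ used in the paper, and both arguments infect every remaining interior-column cell in one step and exclude $(1,1)$ and $(s,a_2)$ because they have degree $2$.
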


\begin{proof}
Let $A_0$ be the set of all odd vertices of $[2]\times [a_2]$. That is, $A_0$ contains exactly one element from each column, alternating between the ``bottom'' and ``top'' vertex. See Figure~\ref{fig:skinny} for an illustration of some small cases. It is evident that every vertex in $[2]\times [a_2]$ is either in $A_0$ or has three neighbours in $A_0$, except for the vertices $(1,1)$ and $(s,a_2)$, which never become infected because they have degree 2. 
\end{proof}

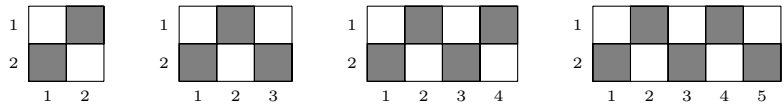
\begin{figure}[htbp]
\begin{center}
\begin{tikzpicture}
\begin{scope}[xshift=0.0cm,yshift=0.0cm]
\node at (-0.2,0.75) {\tiny $1$};
\node at (-0.2,0.25) {\tiny $2$};
\node at (0.25,-0.2) {\tiny $1$};
\node at (0.75,-0.2) {\tiny $2$};
\draw[step=0.5cm,color=black,fill=gray] (-0.001,-0.001) grid (1.001,1.001) (0.0,0.0) rectangle (0.5,0.5)(0.5,0.5) rectangle (1.0,1.0);
\end{scope}

\begin{scope}[xshift=2cm,yshift=0.0cm]
\node at (-0.2,0.75) {\tiny $1$};
\node at (-0.2,0.25) {\tiny $2$};
\node at (0.25,-0.2) {\tiny $1$};
\node at (0.75,-0.2) {\tiny $2$};
\node at (1.25,-0.2) {\tiny $3$};
\draw[step=0.5cm,color=black,fill=gray] (-0.001,-0.001) grid (1.501,1.001) (0.0,0.0) rectangle (0.5,0.5)(1.0,0.0) rectangle (1.5,0.5)(0.5,0.5) rectangle (1.0,1.0);
\end{scope}

\begin{scope}[xshift=4.5cm,yshift=0.0cm]
\node at (-0.2,0.75) {\tiny $1$};
\node at (-0.2,0.25) {\tiny $2$};
\node at (0.25,-0.2) {\tiny $1$};
\node at (0.75,-0.2) {\tiny $2$};
\node at (1.25,-0.2) {\tiny $3$};
\node at (1.75,-0.2) {\tiny $4$};
\draw[step=0.5cm,color=black,fill=gray] (-0.001,-0.001) grid (2.001,1.001) (0.0,0.0) rectangle (0.5,0.5)(1.0,0.0) rectangle (1.5,0.5)(0.5,0.5) rectangle (1.0,1.0)(1.5,0.5) rectangle (2.0,1.0);
\end{scope}

\begin{scope}[xshift=7.5cm,yshift=0.0cm]
\node at (-0.2,0.75) {\tiny $1$};
\node at (-0.2,0.25) {\tiny $2$};
\node at (0.25,-0.2) {\tiny $1$};
\node at (0.75,-0.2) {\tiny $2$};
\node at (1.25,-0.2) {\tiny $3$};
\node at (1.75,-0.2) {\tiny $4$};
\node at (2.25,-0.2) {\tiny $5$};
\draw[step=0.5cm,color=black,fill=gray] (-0.001,-0.001) grid (2.501,1.001) (0.0,0.0) rectangle (0.5,0.5)(1.0,0.0) rectangle (1.5,0.5)(2.0,0.0) rectangle (2.5,0.5)(0.5,0.5) rectangle (1.0,1.0)(1.5,0.5) rectangle (2.0,1.0);
\end{scope}
\end{tikzpicture}
\end{center}
    \caption{The sets $A_0$ in the proof of Lemma~\ref{lem:skinny} for $a_2=2,3,4$ and $5$.}
    \label{fig:skinny}
\end{figure}

\begin{lem}
\label{lem:fatter}
Let $a_1,a_2\geq 2$ such that $a_1\equiv 2\bmod 3$ and $a_2\equiv 0\bmod 2$ and let 
\[s:=\begin{cases}1&\text{if }a_1\text{ is odd},\\
a_2&\text{otherwise}.\end{cases}\] 
Then there exists $A_0\subseteq [a_1]\times [a_2]$ such that $|A_0|=\frac{a_1a_2+a_1+a_2-2}{3}$ and $[A_0]=([a_1]\times[a_2])\setminus\{(1,1),(a_1,s)\}$.
\end{lem}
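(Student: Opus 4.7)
Write $a_1 = 3k+2$ for some integer $k \geq 0$. The plan is to partition the rows of $[a_1] \times [a_2]$ into $k+1$ \emph{skinny strips} $S_i := \{3i+1, 3i+2\} \times [a_2]$ for $0 \leq i \leq k$, separated by $k$ \emph{bridge rows} $\{3j\} \times [a_2]$ for $1 \leq j \leq k$, and to place on each strip a copy of the construction of Lemma~\ref{lem:skinny} (possibly with a column reflection, depending on the parity of $i$), plus a single ``bridge cell'' on each bridge row. Concretely, I take
\[ A_0 := \{ (r,c) \in [a_1] \times [a_2] : r \not\equiv 0 \pmod 3 \text{ and } r+c \text{ is odd} \} \cup \{ (3j, c_j) : 1 \leq j \leq k \}, \]
where $c_j := a_2$ if $j$ is odd and $c_j := 1$ if $j$ is even. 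Since $a_2$ is even, each strip contributes $a_2$ cells to $A_0$, so $|A_0| = (k+1)a_2 + k = \frac{a_1 a_2 + a_1 + a_2 - 2}{3}$, as required.

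For the closure, I would first observe that $A_0 \cap S_i$ matches the set provided by Lemma~\ref{lem:skinny} on $S_i$, viewed as a $[2] \times [a_2]$ grid---directly when $i$ is even, and after a horizontal column reflection when $i$ is odd. Applying the lemma (and its reflective symmetry), the \emph{in-strip} infection fills all of $S_i$ except for two cells: $(3i+1, 1)$ and $(3i+2, a_2)$ when $i$ is even, and $(3i+1, a_2)$ and $(3i+2, 1)$ when $i$ is odd. A quick parity check then shows that the two \emph{inner} unfilled cells of adjacent strips---the unfilled cell of $S_{i-1}$ on row $3i-1$ and the unfilled cell of $S_i$ on row $3i+1$---both lie in column $c_i$, which is exactly the column of the bridge cell $(3i, c_i)$.

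With these in-strip closures in hand, the bridge cell $(3i, c_i)$ together with the two in-strip neighbors supplies the third infected neighbor needed for each of $(3i-1, c_i)$ and $(3i+1, c_i)$, so both inner gaps get infected. Once both are resolved, rows $3i-1$ and $3i+1$ are fully infected, and a sweep from column $c_i$ along bridge row $3i$ infects every remaining cell in that row: each subsequent cell has its two vertical neighbors already infected and gains an infected horizontal neighbor after one step. The only cells that never become infected are the two \emph{outermost} unfilled cells of $S_0$ and $S_k$, namely $(1,1)$ on the top face and $(3k+2, s) = (a_1, s)$ on the bottom face, where $s = a_2$ when $k$ is even (i.e.\ $a_1$ is even) and $s = 1$ when $k$ is odd (i.e.\ $a_1$ is odd)---matching the definition of $s$ in the statement.

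The main obstacle is the parity bookkeeping: one must carefully verify that for each parity combination of $k$ and bridge index $j$, the inner unfilled cells of adjacent strips align with the chosen bridge column $c_j$, and that the outer unfilled cells of $S_0$ and $S_k$ ultimately land at the prescribed positions $(1,1)$ and $(a_1, s)$.
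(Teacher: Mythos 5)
Your construction is exactly the one the paper builds: the paper's proof assembles the same set recursively (inducting on $k$ by peeling off the last three rows, i.e.\ one skinny strip from Lemma~\ref{lem:skinny} plus one bridge cell), whereas you describe it globally and verify the closure directly; the parity bookkeeping for the strip orientations, bridge columns, and the final positions $(1,1)$ and $(a_1,s)$ all checks out. So the proposal is correct and essentially the same approach, differing only in presentation (explicit/unrolled versus inductive).
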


\begin{proof}
Let $k\geq0$ such that $a_1=2+3k$. We proceed by induction on $k$. If $k=0$, then we are immediately done by Lemma~\ref{lem:skinny}; so, we assume $k\geq1$. By induction, there exists a set $A_0'\subseteq [a_1-3]\times [a_2]$ of cardinality $\frac{(a_1-3)a_2+(a_1-3)+a_2-2}{3}$ such that the closure of $A_0'$ contains all of $[a_1-3]\times [a_2]$ except for the point $(1,1)$ and $(a_1-3,t)$, where $t$ is the unique element of $\{1,a_2\}\setminus\{s\}$. We let $A_0$ be the subset of $[a_1]\times[a_2]$ consisting of the elements of $A_0'$ along with $(a_1-2,t)$ and a subset $A_0''$ of $\{a_1-1,a_1\}\times [a_2]$ of cardinality $a_2$ such that $[A_0'']$ consists of all of $\{a_1-1,a_1\}\times [a_2]$ except for the point $(a_1,s)$ and $(a_1-1,t)$. Then
\[|A_0|=|A_0'|+1+|A_0''| = \frac{(a_1-3)a_2+(a_1-3)+a_2-2}{3} + 1+a_2=\frac{a_1a_2+a_1+a_2-2}{3}.\]
The fact that $A_0$ has the desired closure is not hard to see; an illustration is provided in Figure~\ref{fig:fatter}. First, run the $3$-neighbour process in each of $A_0'$ and $A_0''$ separately. After doing so, the graph obtained by deleting the infected vertices consists of the corners $(1,1)$ and $(a_1,s)$ and trees (in fact, one path and two singletons) which each contain a unique boundary vertex. Thus, we are done by Lemma~\ref{lem:immuneRegions}.  
\end{proof}

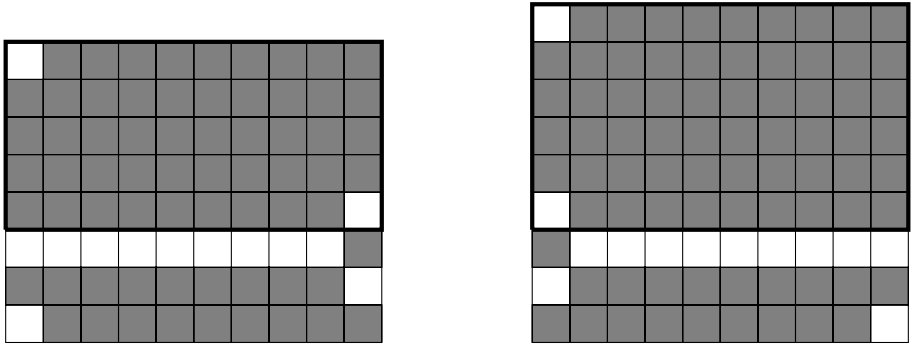
\begin{figure}[htbp]
\begin{center}
\begin{tikzpicture}
\begin{scope}[xshift=0.0cm,yshift=0.0cm]
\draw[step=0.5cm,color=black,fill=gray] (-0.001,-0.001) grid (5.001,4.001) (0.5,0.0) rectangle (1.0,0.5)(1.0,0.0) rectangle (1.5,0.5)(1.5,0.0) rectangle (2.0,0.5)(2.0,0.0) rectangle (2.5,0.5)(2.5,0.0) rectangle (3.0,0.5)(3.0,0.0) rectangle (3.5,0.5)(3.5,0.0) rectangle (4.0,0.5)(4.0,0.0) rectangle (4.5,0.5)(4.5,0.0) rectangle (5.0,0.5)(0.0,0.5) rectangle (0.5,1.0)(0.5,0.5) rectangle (1.0,1.0)(1.0,0.5) rectangle (1.5,1.0)(1.5,0.5) rectangle (2.0,1.0)(2.0,0.5) rectangle (2.5,1.0)(2.5,0.5) rectangle (3.0,1.0)(3.0,0.5) rectangle (3.5,1.0)(3.5,0.5) rectangle (4.0,1.0)(4.0,0.5) rectangle (4.5,1.0)(4.5,1.0) rectangle (5.0,1.5)(0.0,1.5) rectangle (0.5,2.0)(0.5,1.5) rectangle (1.0,2.0)(1.0,1.5) rectangle (1.5,2.0)(1.5,1.5) rectangle (2.0,2.0)(2.0,1.5) rectangle (2.5,2.0)(2.5,1.5) rectangle (3.0,2.0)(3.0,1.5) rectangle (3.5,2.0)(3.5,1.5) rectangle (4.0,2.0)(4.0,1.5) rectangle (4.5,2.0)(0.0,2.0) rectangle (0.5,2.5)(0.5,2.0) rectangle (1.0,2.5)(1.0,2.0) rectangle (1.5,2.5)(1.5,2.0) rectangle (2.0,2.5)(2.0,2.0) rectangle (2.5,2.5)(2.5,2.0) rectangle (3.0,2.5)(3.0,2.0) rectangle (3.5,2.5)(3.5,2.0) rectangle (4.0,2.5)(4.0,2.0) rectangle (4.5,2.5)(4.5,2.0) rectangle (5.0,2.5)(0.0,2.5) rectangle (0.5,3.0)(0.5,2.5) rectangle (1.0,3.0)(1.0,2.5) rectangle (1.5,3.0)(1.5,2.5) rectangle (2.0,3.0)(2.0,2.5) rectangle (2.5,3.0)(2.5,2.5) rectangle (3.0,3.0)(3.0,2.5) rectangle (3.5,3.0)(3.5,2.5) rectangle (4.0,3.0)(4.0,2.5) rectangle (4.5,3.0)(4.5,2.5) rectangle (5.0,3.0)(0.0,3.0) rectangle (0.5,3.5)(0.5,3.0) rectangle (1.0,3.5)(1.0,3.0) rectangle (1.5,3.5)(1.5,3.0) rectangle (2.0,3.5)(2.0,3.0) rectangle (2.5,3.5)(2.5,3.0) rectangle (3.0,3.5)(3.0,3.0) rectangle (3.5,3.5)(3.5,3.0) rectangle (4.0,3.5)(4.0,3.0) rectangle (4.5,3.5)(4.5,3.0) rectangle (5.0,3.5)(0.5,3.5) rectangle (1.0,4.0)(1.0,3.5) rectangle (1.5,4.0)(1.5,3.5) rectangle (2.0,4.0)(2.0,3.5) rectangle (2.5,4.0)(2.5,3.5) rectangle (3.0,4.0)(3.0,3.5) rectangle (3.5,4.0)(3.5,3.5) rectangle (4.0,4.0)(4.0,3.5) rectangle (4.5,4.0)(4.5,3.5) rectangle (5.0,4.0);
\draw[ultra thick](0,1.5)--(5,1.5)--(5,4)--(0,4)--(0,1.5);
\end{scope}

\begin{scope}[xshift=7.0cm,yshift=0.0cm]
\draw[step=0.5cm,color=black,fill=gray] (-0.001,-0.001) grid (5.001,4.501) (0.0,0.0) rectangle (0.5,0.5)(0.5,0.0) rectangle (1.0,0.5)(1.0,0.0) rectangle (1.5,0.5)(1.5,0.0) rectangle (2.0,0.5)(2.0,0.0) rectangle (2.5,0.5)(2.5,0.0) rectangle (3.0,0.5)(3.0,0.0) rectangle (3.5,0.5)(3.5,0.0) rectangle (4.0,0.5)(4.0,0.0) rectangle (4.5,0.5)(0.5,0.5) rectangle (1.0,1.0)(1.0,0.5) rectangle (1.5,1.0)(1.5,0.5) rectangle (2.0,1.0)(2.0,0.5) rectangle (2.5,1.0)(2.5,0.5) rectangle (3.0,1.0)(3.0,0.5) rectangle (3.5,1.0)(3.5,0.5) rectangle (4.0,1.0)(4.0,0.5) rectangle (4.5,1.0)(4.5,0.5) rectangle (5.0,1.0)(0.0,1.0) rectangle (0.5,1.5)(0.5,1.5) rectangle (1.0,2.0)(1.0,1.5) rectangle (1.5,2.0)(1.5,1.5) rectangle (2.0,2.0)(2.0,1.5) rectangle (2.5,2.0)(2.5,1.5) rectangle (3.0,2.0)(3.0,1.5) rectangle (3.5,2.0)(3.5,1.5) rectangle (4.0,2.0)(4.0,1.5) rectangle (4.5,2.0)(4.5,1.5) rectangle (5.0,2.0)(0.0,2.0) rectangle (0.5,2.5)(0.5,2.0) rectangle (1.0,2.5)(1.0,2.0) rectangle (1.5,2.5)(1.5,2.0) rectangle (2.0,2.5)(2.0,2.0) rectangle (2.5,2.5)(2.5,2.0) rectangle (3.0,2.5)(3.0,2.0) rectangle (3.5,2.5)(3.5,2.0) rectangle (4.0,2.5)(4.0,2.0) rectangle (4.5,2.5)(4.5,2.0) rectangle (5.0,2.5)(0.0,2.5) rectangle (0.5,3.0)(0.5,2.5) rectangle (1.0,3.0)(1.0,2.5) rectangle (1.5,3.0)(1.5,2.5) rectangle (2.0,3.0)(2.0,2.5) rectangle (2.5,3.0)(2.5,2.5) rectangle (3.0,3.0)(3.0,2.5) rectangle (3.5,3.0)(3.5,2.5) rectangle (4.0,3.0)(4.0,2.5) rectangle (4.5,3.0)(4.5,2.5) rectangle (5.0,3.0)(0.0,3.0) rectangle (0.5,3.5)(0.5,3.0) rectangle (1.0,3.5)(1.0,3.0) rectangle (1.5,3.5)(1.5,3.0) rectangle (2.0,3.5)(2.0,3.0) rectangle (2.5,3.5)(2.5,3.0) rectangle (3.0,3.5)(3.0,3.0) rectangle (3.5,3.5)(3.5,3.0) rectangle (4.0,3.5)(4.0,3.0) rectangle (4.5,3.5)(4.5,3.0) rectangle (5.0,3.5)(0.0,3.5) rectangle (0.5,4.0)(0.5,3.5) rectangle (1.0,4.0)(1.0,3.5) rectangle (1.5,4.0)(1.5,3.5) rectangle (2.0,4.0)(2.0,3.5) rectangle (2.5,4.0)(2.5,3.5) rectangle (3.0,4.0)(3.0,3.5) rectangle (3.5,4.0)(3.5,3.5) rectangle (4.0,4.0)(4.0,3.5) rectangle (4.5,4.0)(4.5,3.5) rectangle (5.0,4.0)(0.5,4.0) rectangle (1.0,4.5)(1.0,4.0) rectangle (1.5,4.5)(1.5,4.0) rectangle (2.0,4.5)(2.0,4.0) rectangle (2.5,4.5)(2.5,4.0) rectangle (3.0,4.5)(3.0,4.0) rectangle (3.5,4.5)(3.5,4.0) rectangle (4.0,4.5)(4.0,4.0) rectangle (4.5,4.5)(4.5,4.0) rectangle (5.0,4.5);
\draw[ultra thick](0,1.5)--(5,1.5)--(5,4.5)--(0,4.5)--(0,1.5);
\end{scope}

\end{tikzpicture}
\end{center}
    \caption{The infection after running the $3$-neighbour process on $A_0'$ and $A_0''$ separately when $a_1$ is even or odd, respectively. The subgrid surrounded by bold lines is $[a_1-3]\times[a_2]$.}
    \label{fig:fatter}
\end{figure}

\begin{lem}
\label{lem:otherFatter}
Let $a_1\geq 2$ and $a_2\geq 5$ such that $a_1\equiv 2\bmod 6$ and $a_2\equiv 1\bmod 2$. Then there is a set $A_0\subseteq [a_1]\times[a_2]$ such that $|A_0|=\frac{a_1a_2+a_1+a_2-2}{3}$ and $[A_0]=([a_1]\times[a_2])\setminus\{(1,1),(1,a_2)\}$.
\end{lem}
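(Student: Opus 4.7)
The plan is to split $[a_1]\times[a_2]$ into a \emph{left piece} $L=[a_1]\times[2]$ (columns $1,2$), a \emph{middle column} $M=[a_1]\times\{3\}$, and a \emph{right piece} $R=[a_1]\times\{4,\dots,a_2\}$ of width $c:=a_2-3$, then combine near-percolating sets on $L$ and $R$ produced by Lemma~\ref{lem:fatter} with a single ``bridge'' vertex in $M$. Both sides satisfy the hypotheses of Lemma~\ref{lem:fatter}: we have $a_1\equiv 2\bmod 3$, $2$ is even, $c=a_2-3$ is even, and $c\geq 2$ since $a_2\geq 5$ is odd.

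First I would apply Lemma~\ref{lem:fatter} to $L$ to obtain $A_0^L\subseteq L$ of cardinality $a_1$ whose closure in $L$ is $L\setminus\{(1,1),(a_1,2)\}$ (since $a_1$ is even, the parameter $s$ in Lemma~\ref{lem:fatter} equals $2$). Next, I would apply Lemma~\ref{lem:fatter} to an $a_1\times c$ grid, then reflect the resulting set horizontally (column $j\mapsto c-j+1$) and embed it into the columns $4,\dots,a_2$ of the full grid to get $A_0^R\subseteq R$; reflection is an automorphism of the $3$-neighbour process, so the closure of $A_0^R$ inside $R$ is $R\setminus\{(1,a_2),(a_1,4)\}$. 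Finally, take $A_0:=A_0^L\cup\{(a_1,3)\}\cup A_0^R$. A direct calculation yields
\[|A_0|=a_1+1+\tfrac{a_1c+a_1+c-2}{3}=\tfrac{a_1a_2+a_1+a_2-2}{3},\]
matching the claimed cardinality.

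To check the closure, note first that $A_0^L$ alone has no effect outside $L$ (a cell in column $3$ has only one neighbour in $L$, which is not enough to reach threshold $3$), and similarly for $A_0^R$, so the sub-processes close the two pieces to their respective complements of two holes. Then $(a_1,2)$ acquires three infected neighbours, namely $(a_1-1,2)$, $(a_1,1)$, and $(a_1,3)$; and $(a_1,4)$ acquires three infected neighbours $(a_1-1,4)$, $(a_1,3)$, and $(a_1,5)$, so both interior holes are patched. The infection then propagates up the middle column: for $i=a_1-1,a_1-2,\dots,1$ in turn, the cell $(i,3)$ has three already-infected neighbours $(i+1,3)$, $(i,2)$, and $(i,4)$. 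The only remaining uninfected vertices are the corners $(1,1)$ and $(1,a_2)$, which have degree $2$ in the grid and therefore can never acquire three infected neighbours.

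The main obstacle to navigate is the orientation of the right piece: without the horizontal reflection the surviving hole in $R$ would be the corner $(a_1,a_2)$ of the full grid, a degree-$2$ vertex that would then be permanently stranded and would spoil the target closure. A secondary point is that the bridge vertex is forced to sit at the bottom of column~$3$: this single vertex simultaneously supplies the missing third neighbour to both adjacent holes $(a_1,2)$ and $(a_1,4)$ and seeds the upward propagation along $M$ that closes up the rest of the middle column.
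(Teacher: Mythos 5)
Your proof is correct and is essentially the paper's argument in mirror image: the paper also splits off a width-$2$ strip, a single bridging column with one extra infected cell at its bottom, and a width-$(a_2-3)$ piece handled by Lemma~\ref{lem:fatter}, the only cosmetic differences being that the paper places the narrow strip on the right (citing Lemma~\ref{lem:skinny} for it, which is just the width-$2$ base case of Lemma~\ref{lem:fatter}). Your closure verification is in fact more detailed than the paper's, which simply says the argument is analogous to that of Lemma~\ref{lem:fatter}.
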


\begin{proof}
Since $a_2-3\geq2$ is even, by Lemma~\ref{lem:fatter}, there is a set $A_0'\subseteq [a_1]\times[a_2-3]$ of cardinality $\frac{a_1(a_2-3)+a_1+(a_2-3)-2}{3}$ such that $[A_0']$ contains all of $[a_1]\times[a_2-3]$ except for $(1,1)$ and $(a_1,a_2-3)$. Since $a_1$ is even, by Lemma~\ref{lem:skinny}, there is a set $A_0''\subseteq [a_1]\times\{a_2-1,a_2\}$ of cardinality $a_1$ whose closure leaves behind only $(a_1,a_2-1)$ and $(1,a_2)$. Taking $A_0'$ and $A_0''$ together with $(a_1,a_2-2)$ yields the desired set $A_0$, analogous to the proof of Lemma~\ref{lem:fatter}.
\end{proof}

Finally, we will start to consider infections whose closures leave behind more than just corner vertices. 

\begin{lem}
\label{lem:longerCorner}
Let $a_1,a_2\geq6$ and $a_1\equiv a_2\equiv 0\bmod 6$, then there is a set $A_0\subseteq[a_1]\times [a_2]$ such that $|A_0|=\frac{a_1a_2+a_1+a_2-6}{3}$ and $[A_0]$ contains all of $[a_1]\times [a_2]$ except for the following:
\[(1,1),(a_1-4,a_2),(a_1-3,a_2),(a_1-2,a_2),(a_1-1,a_2),(a_1,a_2).\]
\end{lem}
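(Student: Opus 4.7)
The plan is to split $[a_1]\times[a_2]$ into the top region $[a_1-4]\times[a_2]$ and the bottom strip $\{a_1-3,\dots,a_1\}\times[a_2]$, and build the infection $A_0$ as the union of a top part and a bottom part. For the top, apply Lemma~\ref{lem:fatter} with parameters $m=a_1-4$ and $n=a_2$: this is valid because $a_1-4\equiv 2\pmod 3$ (since $a_1\equiv 0\pmod 6$) and $a_2$ is even, and since $a_1-4$ is itself even we get $s=a_2$. This produces an infection of size $\frac{(a_1-4)a_2+(a_1-4)+a_2-2}{3}$ whose closure in the top is $[a_1-4]\times[a_2]\setminus\{(1,1),(a_1-4,a_2)\}$. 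In particular, once the top closes, row $a_1-4$ is fully infected in columns $1,\dots,a_2-1$ and uninfected at column $a_2$, providing a ``ghost row'' of help for the bottom strip.

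For the bottom strip I plan to place exactly $\frac{4a_2}{3}$ additional cells (an integer, since $a_2\equiv 0\pmod 6$), arranged in a periodic pattern of period $3$ in the column direction. Two cells are forced: $(a_1,1)$, a true grid corner of degree $2$, and $(a_1,a_2-1)$, which has effective degree $2$ once column $a_2$ is guaranteed to stay uninfected. The remaining cells are placed so that, combined with the ghost-row help, every cell in $\{a_1-3,\dots,a_1\}\times\{1,\dots,a_2-1\}$ eventually accumulates three infected neighbours, while every cell of $\{a_1-3,\dots,a_1\}\times\{a_2\}$ has at most two infected neighbours throughout the process (its top neighbour in row $a_1-4$ is uninfected, its left neighbour in column $a_2-1$ is infected, and its two column-$a_2$ neighbours remain uninfected). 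The cell $(a_1-4,a_2)$ from the top part similarly stays uninfected, because its only potential third infected neighbour $(a_1-3,a_2)$ is part of the uninfected bottom tail.

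Adding the sizes gives $\tfrac{(a_1-4)a_2+(a_1-4)+a_2-2}{3}+\tfrac{4a_2}{3}=\tfrac{a_1a_2+a_1+a_2-6}{3}$, matching the target, and the closure is $[a_1]\times[a_2]\setminus\{(1,1),(a_1-4,a_2),\dots,(a_1,a_2)\}$ as required. The main obstacle is the bottom-strip construction: a naive transposed application of Lemma~\ref{lem:fatter} to the $4\times(a_2-1)$ sub-region would cost $\frac{5a_2-3}{3}$ cells, which exceeds the target by $\frac{a_2-3}{3}$. The savings must be extracted from the ghost-row help---intuitively, one cell per period-$3$ block---and making this explicit while still guaranteeing that no cell of column $a_2$ ever gathers three infected neighbours is the delicate step. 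I anticipate handling a small base case (e.g.\ $a_2=6$) by a direct pattern and then extending by concatenating period-$3$ column blocks; at each interface the verification reduces to checking, by direct tracing and Corollary~\ref{cor:iff2D}-style considerations within the bottom strip, that the stuck components are exactly those cells intended to remain uninfected.
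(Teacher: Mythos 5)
There is a genuine gap here, and in fact the plan as stated cannot be repaired: no set of $\tfrac{4a_2}{3}$ cells in the bottom strip can do what you need. Run the edge count of Lemma~\ref{lem:generalperim} on the graph $G''$ induced by $\{a_1-4,\dots,a_1\}\times[a_2-1]$ (your four bottom rows restricted to the columns that must fill in, together with the ghost row), treating the ghost row as initially infected. A $5\times(a_2-1)$ grid has $9a_2-14$ edges; the ghost row contributes $a_2-2$ edges between initially infected cells; and the $\tfrac{8a_2-12}{3}$ strip cells infected later consume at least $3$ edges each, i.e.\ at least $8a_2-12$ edges in total. Since $(a_2-2)+(8a_2-12)=9a_2-14$, your budget is exactly the counting minimum, so it is attainable only if $A_0$ places \emph{no} cell in row $a_1-3$ (any such cell wastes its edge to the ghost row) and no two adjacent cells in the strip. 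But then the path $P=\{a_1-3\}\times\{1,\dots,a_2-1\}$ is entirely healthy at time $0$ and can never be infected: the first vertex of $P$ to become infected would need three infected neighbours, yet each internal vertex of $P$ has only two neighbours off $P$ (one above, one below), the endpoint $(a_1-3,1)$ likewise has only two, and $(a_1-3,a_2-1)$ has two potentially infectable neighbours off $P$ plus the permanently healthy $(a_1-3,a_2)$ from the tail. This is precisely the two-special-vertices obstruction in the proof of Lemma~\ref{lem:immuneRegions}. So the ``delicate step'' you deferred is not merely delicate --- the object you are looking for does not exist, and the $4$-row/$(a_1-4)$-row decomposition has to be abandoned.

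The correct move is to avoid ever cutting a thin slab against a fully infected row. The paper's base case $a_1=6$ instead applies Lemma~\ref{lem:fatter} (transposed, using $a_2-1\equiv 5\bmod 6$) to $[6]\times[a_2-1]$, i.e.\ it withholds the last \emph{column} rather than the last few rows, and then adds the single extra cell $(1,a_2)$; the five-cell tail in column $a_2$ is healthy from the start and never acquires a third infected neighbour. For $a_1\geq 12$, one splits off the bottom \emph{six} rows as a self-contained copy of this base case, places Lemma~\ref{lem:fatter}'s construction on the top $a_1-7$ rows, and joins the two blocks with one extra infected cell at $(a_1-6,1)$; the cardinalities then sum to $\tfrac{a_1a_2+a_1+a_2-6}{3}$. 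If you wish to salvage a top/bottom split, the bottom block must carry strictly more than the bare counting minimum so that its top row can contain initial infections --- which is exactly what the six-row block with its $\tfrac{7a_2}{3}$ cells achieves.
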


\begin{proof}
In the case that $a_1=6$, we use Lemma~\ref{lem:fatter} to get a set $A_0'\subseteq[6]\times[a_2-1]$ of cardinality $\frac{6(a_2-1)+6+(a_2-1)-2}{3} = \frac{6a_2+a_2-3}{3}$ whose closure contains all of $[6]\times[a_2-1]$ except for $(1,1)$ and $(1,a_2-1)$. Now, let $A_0:=A_0'\cup\{(1,a_2)\}$. Then $|A_0|=\frac{6a_2+a_2-3}{3} + 1 = \frac{a_1a_2+a_1+a_2-6}{3}$ since $a_1=6$ and it is not hard to see that $A_0$ has the correct closure; see Figure~\ref{fig:longerCorner}~(a). 

So, we assume that $a_1 \ge 12$. Let $a_1':=a_1-7$ and note that $a_1'\equiv 5\bmod 6$. By Lemma~\ref{lem:fatter}, there is a set $A_0'\subseteq[a_1']\times[a_2]$ of cardinality $\frac{a_1'a_2+a_1'+a_2-2}{3}=\frac{(a_1-7)a_2+(a_1-7)+a_2-2}{3}$ such that the closure of $A_0'$ contains all but the corners $(1,1)$ and $(a_1',1)$. By the result of the previous paragraph, there is a subset $A_0''$ of $\{a_1'+2,\dots,a_1\}\times [a_2]$ of cardinality $\frac{6a_2+a_2}{3}$ whose closure contains all but $(a_1'+2,1), (a_1-4,a_2),(a_1-3,a_2),(a_1-2,a_2),(a_1-1,a_2),(a_1,a_2)$. We let $A_0:=A_0'\cup A_0''\cup\{(a_1'+1,1)\}$. Then
\[|A_0|=\frac{(a_1-7)a_2+(a_1-7)+a_2-2}{3} + 1 + \frac{6a_2+a_2}{3} = \frac{a_1a_2+a_1+a_2-6}{3}.\]
To see that $A_0$ has the correct closure is a simple application of Lemma~\ref{lem:immuneRegions}; see Figure~\ref{fig:longerCorner}~(b). 
\end{proof}

\begin{figure}[htbp]
\begin{center}
\begin{tikzpicture}
\begin{scope}[xshift=0.0cm,yshift=0.0cm]
\draw[step=0.5cm,color=black,fill=gray] (-0.001,-0.001) grid (6.001,3.001) (0.0,0.0) rectangle (0.5,0.5)(0.5,0.0) rectangle (1.0,0.5)(1.0,0.0) rectangle (1.5,0.5)(1.5,0.0) rectangle (2.0,0.5)(2.0,0.0) rectangle (2.5,0.5)(2.5,0.0) rectangle (3.0,0.5)(3.0,0.0) rectangle (3.5,0.5)(3.5,0.0) rectangle (4.0,0.5)(4.0,0.0) rectangle (4.5,0.5)(4.5,0.0) rectangle (5.0,0.5)(5.0,0.0) rectangle (5.5,0.5)(0.0,0.5) rectangle (0.5,1.0)(0.5,0.5) rectangle (1.0,1.0)(1.0,0.5) rectangle (1.5,1.0)(1.5,0.5) rectangle (2.0,1.0)(2.0,0.5) rectangle (2.5,1.0)(2.5,0.5) rectangle (3.0,1.0)(3.0,0.5) rectangle (3.5,1.0)(3.5,0.5) rectangle (4.0,1.0)(4.0,0.5) rectangle (4.5,1.0)(4.5,0.5) rectangle (5.0,1.0)(5.0,0.5) rectangle (5.5,1.0)(0.0,1.0) rectangle (0.5,1.5)(0.5,1.0) rectangle (1.0,1.5)(1.0,1.0) rectangle (1.5,1.5)(1.5,1.0) rectangle (2.0,1.5)(2.0,1.0) rectangle (2.5,1.5)(2.5,1.0) rectangle (3.0,1.5)(3.0,1.0) rectangle (3.5,1.5)(3.5,1.0) rectangle (4.0,1.5)(4.0,1.0) rectangle (4.5,1.5)(4.5,1.0) rectangle (5.0,1.5)(5.0,1.0) rectangle (5.5,1.5)(0.0,1.5) rectangle (0.5,2.0)(0.5,1.5) rectangle (1.0,2.0)(1.0,1.5) rectangle (1.5,2.0)(1.5,1.5) rectangle (2.0,2.0)(2.0,1.5) rectangle (2.5,2.0)(2.5,1.5) rectangle (3.0,2.0)(3.0,1.5) rectangle (3.5,2.0)(3.5,1.5) rectangle (4.0,2.0)(4.0,1.5) rectangle (4.5,2.0)(4.5,1.5) rectangle (5.0,2.0)(5.0,1.5) rectangle (5.5,2.0)(0.0,2.0) rectangle (0.5,2.5)(0.5,2.0) rectangle (1.0,2.5)(1.0,2.0) rectangle (1.5,2.5)(1.5,2.0) rectangle (2.0,2.5)(2.0,2.0) rectangle (2.5,2.5)(2.5,2.0) rectangle (3.0,2.5)(3.0,2.0) rectangle (3.5,2.5)(3.5,2.0) rectangle (4.0,2.5)(4.0,2.0) rectangle (4.5,2.5)(4.5,2.0) rectangle (5.0,2.5)(5.0,2.0) rectangle (5.5,2.5)(0.5,2.5) rectangle (1.0,3.0)(1.0,2.5) rectangle (1.5,3.0)(1.5,2.5) rectangle (2.0,3.0)(2.0,2.5) rectangle (2.5,3.0)(2.5,2.5) rectangle (3.0,3.0)(3.0,2.5) rectangle (3.5,3.0)(3.5,2.5) rectangle (4.0,3.0)(4.0,2.5) rectangle (4.5,3.0)(4.5,2.5) rectangle (5.0,3.0)(5.5,2.5) rectangle (6.0,3.0);
\node at (3,-0.5) {(a)};
\end{scope}

\begin{scope}[xshift=7.0cm,yshift=0.0cm]
\draw[step=0.5cm,color=black,fill=gray] (-0.001,-0.001) grid (6.001,6.001) (0.0,0.0) rectangle (0.5,0.5)(0.5,0.0) rectangle (1.0,0.5)(1.0,0.0) rectangle (1.5,0.5)(1.5,0.0) rectangle (2.0,0.5)(2.0,0.0) rectangle (2.5,0.5)(2.5,0.0) rectangle (3.0,0.5)(3.0,0.0) rectangle (3.5,0.5)(3.5,0.0) rectangle (4.0,0.5)(4.0,0.0) rectangle (4.5,0.5)(4.5,0.0) rectangle (5.0,0.5)(5.0,0.0) rectangle (5.5,0.5)(0.0,0.5) rectangle (0.5,1.0)(0.5,0.5) rectangle (1.0,1.0)(1.0,0.5) rectangle (1.5,1.0)(1.5,0.5) rectangle (2.0,1.0)(2.0,0.5) rectangle (2.5,1.0)(2.5,0.5) rectangle (3.0,1.0)(3.0,0.5) rectangle (3.5,1.0)(3.5,0.5) rectangle (4.0,1.0)(4.0,0.5) rectangle (4.5,1.0)(4.5,0.5) rectangle (5.0,1.0)(5.0,0.5) rectangle (5.5,1.0)(0.0,1.0) rectangle (0.5,1.5)(0.5,1.0) rectangle (1.0,1.5)(1.0,1.0) rectangle (1.5,1.5)(1.5,1.0) rectangle (2.0,1.5)(2.0,1.0) rectangle (2.5,1.5)(2.5,1.0) rectangle (3.0,1.5)(3.0,1.0) rectangle (3.5,1.5)(3.5,1.0) rectangle (4.0,1.5)(4.0,1.0) rectangle (4.5,1.5)(4.5,1.0) rectangle (5.0,1.5)(5.0,1.0) rectangle (5.5,1.5)(0.0,1.5) rectangle (0.5,2.0)(0.5,1.5) rectangle (1.0,2.0)(1.0,1.5) rectangle (1.5,2.0)(1.5,1.5) rectangle (2.0,2.0)(2.0,1.5) rectangle (2.5,2.0)(2.5,1.5) rectangle (3.0,2.0)(3.0,1.5) rectangle (3.5,2.0)(3.5,1.5) rectangle (4.0,2.0)(4.0,1.5) rectangle (4.5,2.0)(4.5,1.5) rectangle (5.0,2.0)(5.0,1.5) rectangle (5.5,2.0)(0.0,2.0) rectangle (0.5,2.5)(0.5,2.0) rectangle (1.0,2.5)(1.0,2.0) rectangle (1.5,2.5)(1.5,2.0) rectangle (2.0,2.5)(2.0,2.0) rectangle (2.5,2.5)(2.5,2.0) rectangle (3.0,2.5)(3.0,2.0) rectangle (3.5,2.5)(3.5,2.0) rectangle (4.0,2.5)(4.0,2.0) rectangle (4.5,2.5)(4.5,2.0) rectangle (5.0,2.5)(5.0,2.0) rectangle (5.5,2.5)(0.5,2.5) rectangle (1.0,3.0)(1.0,2.5) rectangle (1.5,3.0)(1.5,2.5) rectangle (2.0,3.0)(2.0,2.5) rectangle (2.5,3.0)(2.5,2.5) rectangle (3.0,3.0)(3.0,2.5) rectangle (3.5,3.0)(3.5,2.5) rectangle (4.0,3.0)(4.0,2.5) rectangle (4.5,3.0)(4.5,2.5) rectangle (5.0,3.0)(5.0,2.5) rectangle (5.5,3.0)(5.5,2.5) rectangle (6.0,3.0)(0.0,3.0) rectangle (0.5,3.5)(0.5,3.5) rectangle (1.0,4.0)(1.0,3.5) rectangle (1.5,4.0)(1.5,3.5) rectangle (2.0,4.0)(2.0,3.5) rectangle (2.5,4.0)(2.5,3.5) rectangle (3.0,4.0)(3.0,3.5) rectangle (3.5,4.0)(3.5,3.5) rectangle (4.0,4.0)(4.0,3.5) rectangle (4.5,4.0)(4.5,3.5) rectangle (5.0,4.0)(5.0,3.5) rectangle (5.5,4.0)(5.5,3.5) rectangle (6.0,4.0)(0.0,4.0) rectangle (0.5,4.5)(0.5,4.0) rectangle (1.0,4.5)(1.0,4.0) rectangle (1.5,4.5)(1.5,4.0) rectangle (2.0,4.5)(2.0,4.0) rectangle (2.5,4.5)(2.5,4.0) rectangle (3.0,4.5)(3.0,4.0) rectangle (3.5,4.5)(3.5,4.0) rectangle (4.0,4.5)(4.0,4.0) rectangle (4.5,4.5)(4.5,4.0) rectangle (5.0,4.5)(5.0,4.0) rectangle (5.5,4.5)(5.5,4.0) rectangle (6.0,4.5)(0.0,4.5) rectangle (0.5,5.0)(0.5,4.5) rectangle (1.0,5.0)(1.0,4.5) rectangle (1.5,5.0)(1.5,4.5) rectangle (2.0,5.0)(2.0,4.5) rectangle (2.5,5.0)(2.5,4.5) rectangle (3.0,5.0)(3.0,4.5) rectangle (3.5,5.0)(3.5,4.5) rectangle (4.0,5.0)(4.0,4.5) rectangle (4.5,5.0)(4.5,4.5) rectangle (5.0,5.0)(5.0,4.5) rectangle (5.5,5.0)(5.5,4.5) rectangle (6.0,5.0)(0.0,5.0) rectangle (0.5,5.5)(0.5,5.0) rectangle (1.0,5.5)(1.0,5.0) rectangle (1.5,5.5)(1.5,5.0) rectangle (2.0,5.5)(2.0,5.0) rectangle (2.5,5.5)(2.5,5.0) rectangle (3.0,5.5)(3.0,5.0) rectangle (3.5,5.5)(3.5,5.0) rectangle (4.0,5.5)(4.0,5.0) rectangle (4.5,5.5)(4.5,5.0) rectangle (5.0,5.5)(5.0,5.0) rectangle (5.5,5.5)(5.5,5.0) rectangle (6.0,5.5)(0.5,5.5) rectangle (1.0,6.0)(1.0,5.5) rectangle (1.5,6.0)(1.5,5.5) rectangle (2.0,6.0)(2.0,5.5) rectangle (2.5,6.0)(2.5,5.5) rectangle (3.0,6.0)(3.0,5.5) rectangle (3.5,6.0)(3.5,5.5) rectangle (4.0,6.0)(4.0,5.5) rectangle (4.5,6.0)(4.5,5.5) rectangle (5.0,6.0)(5.0,5.5) rectangle (5.5,6.0)(5.5,5.5) rectangle (6.0,6.0);
\node at (3,-0.5) {(b)};
\end{scope}

\end{tikzpicture}
\end{center}
    \caption{Illustrations of the constructions that arise in the proof of Lemma~\ref{lem:longerCorner} in the cases (a) $a_1=6$ and $a_2=12$ and (b) $a_1=a_2=12$.}
    \label{fig:longerCorner}
\end{figure}
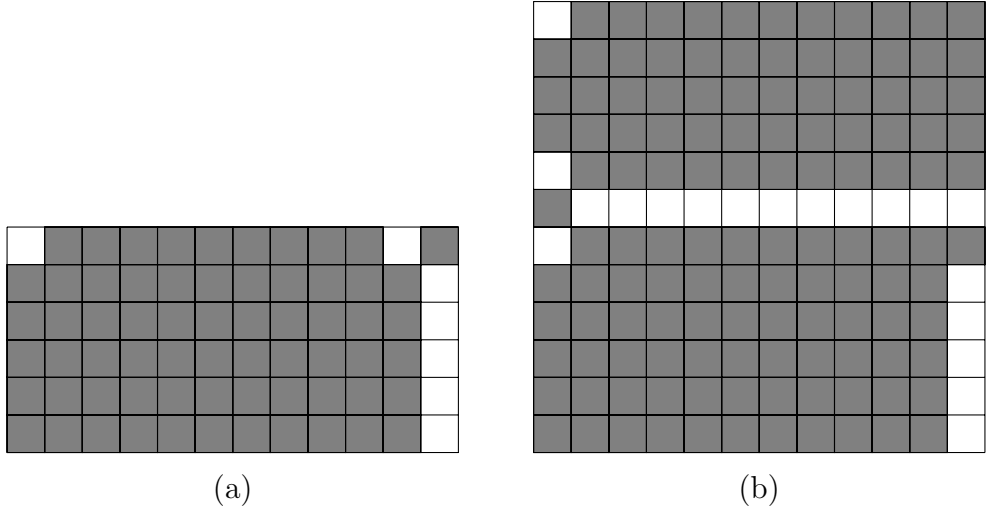

\begin{lem}
\label{lem:longerCorner2}
Let $a_1\geq6$ and $a_2\geq9$ such that $a_1\equiv 0\bmod 6$ and $a_2\equiv 3\bmod 6$. Then there is a set $A_0\subseteq[a_1]\times [a_2]$ such that $|A_0|=\frac{a_1a_2+a_1+a_2-6}{3}$ and $[A_0]$ contains all of $[a_1]\times [a_2]$ except for the following:
\[(a_1,1),(a_1-4,a_2),(a_1-3,a_2),(a_1-2,a_2),(a_1-1,a_2),(a_1,a_2).\]
\end{lem}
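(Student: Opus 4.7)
The plan is to partition $[a_1]\times[a_2]$ into three vertical strips of widths $a_2-7$, $1$, and $6$, and to assemble $A_0$ by combining a construction on each. Let $L := [a_1]\times[a_2-7]$, $M := [a_1]\times\{a_2-6\}$, and $R := [a_1]\times\{a_2-5,\dots,a_2\}$. Since $a_2\equiv 3\pmod 6$ and $a_1\equiv 0\pmod 6$, we have $a_2-7\equiv 2\pmod 6$, so $a_2-7$ is even and congruent to $2$ modulo $3$, while both dimensions of $R$ are divisible by $6$.

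For $L$, I would apply Lemma~\ref{lem:fatter} with the two axes swapped (viewing $L$ as an $[a_2-7]\times[a_1]$ grid, so that the first dimension is $\equiv 2 \pmod 3$ and the second is even) and then reflect the output through the horizontal midline of $L$ via $(i,j)\mapsto(a_1+1-i,j)$. This produces a set $A_0'\subseteq L$ of cardinality $\frac{a_1(a_2-7)+a_1+(a_2-7)-2}{3}$ whose closure in $L$ equals $L\setminus\{(a_1,1),(1,a_2-7)\}$. For $R$, Lemma~\ref{lem:longerCorner} applies directly and yields a set $A_0''\subseteq R$ of cardinality $\frac{7a_1}{3}$ whose closure in $R$ is $R\setminus\{(1,a_2-5),(a_1-4,a_2),(a_1-3,a_2),(a_1-2,a_2),(a_1-1,a_2),(a_1,a_2)\}$. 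Then let $A_0 := A_0'\cup\{(1,a_2-6)\}\cup A_0''$; a short arithmetic check confirms that $|A_0|=\frac{a_1a_2+a_1+a_2-6}{3}$.

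To verify the closure, first note that no cell of $B:=\{(a_1,1),(a_1-4,a_2),(a_1-3,a_2),(a_1-2,a_2),(a_1-1,a_2),(a_1,a_2)\}$ can ever become infected, because each such cell has at most two neighbours outside of $B$: the corner $(a_1,1)$ has degree two, while the five cells in column $a_2$ form a path whose interior members each have only a single neighbour outside $B$. Conversely, every cell outside $B$ is eventually infected: the two subprocesses on $L$ and $R$ leave exactly $(1,a_2-7)$ and $(1,a_2-5)$ uninfected in $L\cup R$, and the initially-infected cell $(1,a_2-6)$ supplies the third infected neighbour for each. Columns $a_2-7$ and $a_2-5$ then become fully infected, and the middle column $M$ is filled row-by-row starting from $(1,a_2-6)$: for each $i\geq 2$, the cell $(i,a_2-6)$ has three infected neighbours (its left, right, and above neighbours). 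The conclusion then follows from Lemma~\ref{lem:immuneRegions}.

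The main bookkeeping challenge is tracking coordinates through the transposition and reflection used to invoke Lemma~\ref{lem:fatter} on $L$, and confirming that the resulting $A_0'$ contains all of the low-degree boundary vertices of $L$ (such as the corner $(1,1)$ and the cells $(a_1-1,1)$ and $(a_1,2)$, which become degree-two after the removal of $(a_1,1)$) that could not otherwise be infected. Each of these containment facts follows from Corollary~\ref{cor:iff2D} applied to the subgrid $L$, since any cell of the subgrid's closure with fewer than three effective neighbours must already lie in the seed.
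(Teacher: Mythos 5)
Your proof is correct, but it takes a genuinely different route from the paper's. The paper splits the grid into column-strips of widths $2$, $1$ and $a_2-3$: it uses Lemma~\ref{lem:skinny} on the width-$2$ left strip, Lemma~\ref{lem:longerCorner} on the large right strip $[a_1]\times\{4,\dots,a_2\}$ (which carries the five-cell defect), and glues with the single extra infection $(1,3)$ in the buffer column. You instead split as $(a_2-7)+1+6$, isolating the defect in a fixed-size $a_1\times 6$ right block (still via Lemma~\ref{lem:longerCorner}, now with second parameter $6$) and handling the bulk with Lemma~\ref{lem:fatter} after a transpose-and-reflect; the gluing trick with one seed in the width-one buffer column is identical. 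Your congruence checks ($a_2-7\equiv 2\bmod 6$, so Lemma~\ref{lem:fatter} applies with even first parameter and hence $s=a_1$, giving missing cells $(a_1,1)$ and $(1,a_2-7)$ after reflection) and the cardinality arithmetic both work out, and your explicit verification of the closure is in fact more careful than the paper's one-line appeal to Lemma~\ref{lem:immuneRegions}. What the paper's version buys is lighter bookkeeping (no transposition or reflection of Lemma~\ref{lem:fatter}'s output); what yours buys is that the irregular piece has bounded size. Two cosmetic points: the claim that the subprocesses ``leave exactly $(1,a_2-7)$ and $(1,a_2-5)$ uninfected in $L\cup R$'' should also except the six cells of $B$, which the subprocesses likewise never infect; and your closing appeal to Lemma~\ref{lem:immuneRegions} is superfluous, since the explicit infection order you give already establishes that every cell outside $B$ is reached.
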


\begin{proof}
Applying Lemma~\ref{lem:skinny}, we get a set of cardinality $a_1$ in $[a_1]\times [2]$ whose closure contains all points of this grid except for $(a_1,1)$ and $(1,2)$. Also, by Lemma~\ref{lem:longerCorner}, we get a set in $[a_1]\times\{4,\dots,a_2\}$ of cardinality $\frac{a_1(a_2-3)+a_1+(a_2-3)-6}{3}$ whose closure contains all elements of this grid except for $(1,4),(a_1-4,a_2),(a_1-3,a_2),(a_1-2,a_2),(a_1-1,a_2)$ and $(a_1,a_2)$. Putting these two constructions together with an additional infection on $(1,3)$ yields a set with the desired closure of cardinality
\[a_1+\frac{a_1(a_2-3)+a_1+(a_2-3)-6}{3}+1=\frac{a_1a_2+a_1+a_2-6}{3}.\qedhere\]
\end{proof}

\begin{lem}
\label{lem:jagged}
If $a_1,a_2\geq7$ and $a_1\equiv a_2\equiv 1\bmod 6$, then there is a set $A_0\subseteq[a_1]\times[a_2]$ such that $|A_0|=\frac{a_1a_2+a_1+a_2-9}{3}$ and $[A_0]$ contains all of $[a_1]\times[a_2]$ except for the following:
\[(a_1-5,1),(a_1-4,1),(a_1-3,1),(a_1-2,1),(a_1-1,1),(a_1,1),(a_1-2,2),(a_1-1,2),(a_1,2).\]
\end{lem}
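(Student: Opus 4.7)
The plan is to follow the template of the preceding lemmas in this section: decompose $[a_1]\times[a_2]$ into a small number of rectangular sub-pieces whose dimensions match the hypotheses of Lemmas~\ref{lem:skinny}--\ref{lem:longerCorner2}, apply each lemma in a suitable rotation or reflection, and combine the pieces with a constant number of extra ``glue'' initial infections along the seams. Correctness is then verified via Corollary~\ref{cor:iff2D} applied to the induced subgraph $G' := ([a_1]\times[a_2])\setminus L$, where $L$ is the set of nine L-cells from the statement. The key structural observation making the set-up feasible is that $L$ is an immune region for the $3$-neighbour process: each cell of $L$ has at most two neighbours in $[a_1]\times[a_2]\setminus L$, so none of them can ever acquire three infected neighbours regardless of what is placed elsewhere. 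Hence it suffices to build $A_0$ of the claimed cardinality whose closure in $G'$ is all of $G'$.

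Without loss of generality assume $a_1 \le a_2$, and proceed by induction on $a_2$ with base case $a_1 = a_2 = 7$. The base case requires an explicit $18$-cell infection in the $7 \times 7$ grid, exhibited in a figure; note in particular that the five cells $(1,1), (1,7), (7,7), (4,2), (7,3)$ each have at most two neighbours in $G'$ and hence must lie in $A_0$, while the remaining thirteen cells can be chosen so that $G' \setminus A_0$ forms a forest in which each component contains at most one degree-$3$ vertex of $G'$ (the condition of Lemma~\ref{lem:immuneRegions} applied to $G'$, which has maximum degree $4$). For the inductive step $a_2 \ge 13$, split the grid into the left piece $[a_1] \times [a_2-6]$ and the right strip $[a_1] \times \{a_2-5,\ldots,a_2\}$. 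The inductive hypothesis applied to the left piece yields a set of cardinality $\tfrac{1}{3}(a_1(a_2-6) + a_1 + (a_2-6) - 9)$ whose closure omits exactly the L-cells, which all lie in the leftmost two columns and are therefore preserved by the split. It then remains to construct an infection of exactly $2a_1+2$ cells in the right strip which, aided by the fully infected column $a_2-6$ supplied by the left's closure, percolates the entire strip.

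The main obstacle is this right-strip construction: the dimensions $a_1 \times 6$ with $a_1 \equiv 1 \bmod 6$ fall outside the residue classes handled directly by any previous lemma. My approach would be to subdivide the strip further --- for instance, peel off the bottom row and apply Lemma~\ref{lem:longerCorner} to the remaining $(a_1-1) \times 6$ subpiece (whose dimensions are both $\equiv 0 \bmod 6$) in the orientation that places its five ``nuisance'' uninfected cells along the seam column $a_2-5$ adjacent to the left piece. Each such nuisance cell then acquires its third infected neighbour from the pre-infected column $a_2-6$ and is absorbed in cascade, starting from the topmost one and progressing downward. The bottom row of the strip, together with any residual uninfected corners (e.g.\ $(1,a_2)$ and $(a_1,a_2)$, which have degree $2$ and are forced into $A_0$), is handled by an explicit small configuration of glue cells. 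A careful application of Lemma~\ref{lem:immuneRegions} at each seam confirms that the combined process covers all of $G'$, while a direct arithmetic check shows that the cardinalities sum to $\tfrac{1}{3}(a_1(a_2-6) + a_1 + (a_2-6) - 9) + (2a_1+2) = \tfrac{1}{3}(a_1 a_2 + a_1 + a_2 - 9)$, as claimed.
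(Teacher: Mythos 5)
Your reduction to the induced subgraph $G'$ and the observation that the nine L-cells form an immune region are both correct, and your split into a left piece plus an $a_1\times 6$ right strip is arithmetically consistent: the strip's budget is indeed $2a_1+2$ cells. The fatal problem is the right-strip gadget itself. Lemma~\ref{lem:longerCorner} applied to the $(a_1-1)\times 6$ subpiece already costs $\tfrac{7(a_1-1)}{3}$ initial infections, and $\tfrac{7(a_1-1)}{3}-(2a_1+2)=\tfrac{a_1-13}{3}$; so for $a_1=13$ you have nothing left and for $a_1\geq 19$ you are over budget before touching the corner $(1,a_2)$ (which has degree $2$, is left uninfected by Lemma~\ref{lem:longerCorner} after your reflection, and is therefore forced into $A_0$) or the six cells of row $a_1$ in the strip, none of which ever acquires a third infected neighbour in your configuration. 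The underlying issue is that any standalone construction in a $k\times 6$ piece costs about $\tfrac{7k}{3}$ by Proposition~\ref{prop:SA}, whereas the budget $2a_1+2\approx\tfrac{6a_1}{3}$ is only attainable by a gadget that leans on the pre-infected column $a_2-6$ along its entire length; no such gadget is supplied.

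There are two further gaps. The ``without loss of generality $a_1\le a_2$'' is not available: the exceptional set is a vertical L occupying columns $1,2$ at rows $a_1-5,\dots,a_1$, and every symmetry of the rectangle that exchanges the roles of $a_1$ and $a_2$ converts it into a horizontal L, which is a different statement; without the WLOG your base case becomes the infinite family $[a_1]\times[7]$. Also, the $7\times 7$ base configuration is asserted but never exhibited. For comparison, the paper proves the lemma with no induction at all: an $(a_1-5)\times a_2$ top piece via Lemma~\ref{lem:otherFatter} (using $a_1-5\equiv 2\bmod 6$), a $4\times(a_2-2)$ bottom-right piece via Lemma~\ref{lem:fatter} (using $a_2-2\equiv 5\bmod 6$), and two glue cells $(a_1-3,2)$ and $(a_1-4,a_2)$, with row $a_1-4$ and the leftover corners filled in by a cascade and checked through Lemma~\ref{lem:immuneRegions}. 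If you want to salvage your strategy, you would need to design a genuinely ``assisted'' $a_1\times 6$ strip construction of size $2a_1+2$, at which point the direct decomposition is simpler.
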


\begin{proof}
Since $a_1-5\equiv 2\bmod 6$ and $a_2$ is odd, Lemma~\ref{lem:otherFatter} tells us that there is a subset of $[a_1-5]\times [a_2]$ of cardinality $\frac{(a_1-5)a_2+(a_1-5)+a_2-2}{3}$ whose closure contains all of this grid except for the corners $(a_1-5,1)$ and $(a_1-5,a_2)$. Also, since $a_2-2\equiv 5\bmod 6$ by Lemma~\ref{lem:fatter}, there is a subset of $\{a_1-3,\dots,a_1\},\times \{3,\dots,a_2\}$ of cardinality $\frac{4(a_2-2) + 4 + (a_2-2)-2}{3}$ whose closure contains all of this grid except for $(a_1-3,3)$ and $(a_1-3,a_2)$. We take these two constructions and add two additional infected points: $(a_1-3,2)$ and $(a_1-4,a_2)$. It is not hard to see that this infection has the desired closure; see Figure~\ref{fig:jagged}. The cardinality of this set is
\[\frac{(a_1-5)a_2+(a_1-5)+a_2-2}{3} + \frac{4(a_2-2) + 4 + (a_2-2)-2}{3} +2 = \frac{a_1a_2+a_1+a_2-9}{3}.\qedhere\]
\end{proof}

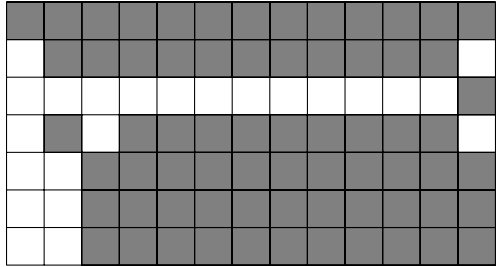
\begin{figure}[htbp]
\begin{center}
\begin{tikzpicture}
\begin{scope}[xshift=0.0cm,yshift=0.0cm]
\draw[step=0.5cm,color=black,fill=gray] (-0.001,-0.001) grid (6.501,3.501) (1.0,0.0) rectangle (1.5,0.5)(1.5,0.0) rectangle (2.0,0.5)(2.0,0.0) rectangle (2.5,0.5)(2.5,0.0) rectangle (3.0,0.5)(3.0,0.0) rectangle (3.5,0.5)(3.5,0.0) rectangle (4.0,0.5)(4.0,0.0) rectangle (4.5,0.5)(4.5,0.0) rectangle (5.0,0.5)(5.0,0.0) rectangle (5.5,0.5)(5.5,0.0) rectangle (6.0,0.5)(6.0,0.0) rectangle (6.5,0.5)(1.0,0.5) rectangle (1.5,1.0)(1.5,0.5) rectangle (2.0,1.0)(2.0,0.5) rectangle (2.5,1.0)(2.5,0.5) rectangle (3.0,1.0)(3.0,0.5) rectangle (3.5,1.0)(3.5,0.5) rectangle (4.0,1.0)(4.0,0.5) rectangle (4.5,1.0)(4.5,0.5) rectangle (5.0,1.0)(5.0,0.5) rectangle (5.5,1.0)(5.5,0.5) rectangle (6.0,1.0)(6.0,0.5) rectangle (6.5,1.0)(1.0,1.0) rectangle (1.5,1.5)(1.5,1.0) rectangle (2.0,1.5)(2.0,1.0) rectangle (2.5,1.5)(2.5,1.0) rectangle (3.0,1.5)(3.0,1.0) rectangle (3.5,1.5)(3.5,1.0) rectangle (4.0,1.5)(4.0,1.0) rectangle (4.5,1.5)(4.5,1.0) rectangle (5.0,1.5)(5.0,1.0) rectangle (5.5,1.5)(5.5,1.0) rectangle (6.0,1.5)(6.0,1.0) rectangle (6.5,1.5)(0.5,1.5) rectangle (1.0,2.0)(1.5,1.5) rectangle (2.0,2.0)(2.0,1.5) rectangle (2.5,2.0)(2.5,1.5) rectangle (3.0,2.0)(3.0,1.5) rectangle (3.5,2.0)(3.5,1.5) rectangle (4.0,2.0)(4.0,1.5) rectangle (4.5,2.0)(4.5,1.5) rectangle (5.0,2.0)(5.0,1.5) rectangle (5.5,2.0)(5.5,1.5) rectangle (6.0,2.0)(6.0,2.0) rectangle (6.5,2.5)(0.5,2.5) rectangle (1.0,3.0)(1.0,2.5) rectangle (1.5,3.0)(1.5,2.5) rectangle (2.0,3.0)(2.0,2.5) rectangle (2.5,3.0)(2.5,2.5) rectangle (3.0,3.0)(3.0,2.5) rectangle (3.5,3.0)(3.5,2.5) rectangle (4.0,3.0)(4.0,2.5) rectangle (4.5,3.0)(4.5,2.5) rectangle (5.0,3.0)(5.0,2.5) rectangle (5.5,3.0)(5.5,2.5) rectangle (6.0,3.0)(0.0,3.0) rectangle (0.5,3.5)(0.5,3.0) rectangle (1.0,3.5)(1.0,3.0) rectangle (1.5,3.5)(1.5,3.0) rectangle (2.0,3.5)(2.0,3.0) rectangle (2.5,3.5)(2.5,3.0) rectangle (3.0,3.5)(3.0,3.0) rectangle (3.5,3.5)(3.5,3.0) rectangle (4.0,3.5)(4.0,3.0) rectangle (4.5,3.5)(4.5,3.0) rectangle (5.0,3.5)(5.0,3.0) rectangle (5.5,3.5)(5.5,3.0) rectangle (6.0,3.5)(6.0,3.0) rectangle (6.5,3.5);
\end{scope}

\end{tikzpicture}
\end{center}
    \caption{Illustration of the construction in the proof of Lemma~\ref{lem:jagged} in the case $a_1=7$ and $a_2=13$.}
    \label{fig:jagged}
\end{figure}

\begin{lem}
\label{lem:cutcorner}
Let $a_1,a_2\geq5$ so that $a_1\equiv 1\bmod 2$ and $a_2\equiv 5\bmod 6$. Then there is a set $A_0\subseteq[a_1]\times[a_2]$ such that $|A_0|=\frac{a_1a_2+a_1+a_2-11}{3}$ and $[A_0]$ contains all of $[a_1]\times[a_2]$ except for the 11 elements of the following set:
\[\{(x,y): x+y\geq a_1+a_2-3\}\cup\{(a_1-2,a_2-2)\}.\]
\end{lem}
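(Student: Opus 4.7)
The plan is to construct $A_0$ as the disjoint union of a ``main'' contribution $A_0^{\mathrm{main}} \subseteq [a_1] \times [a_2-3]$ and an ``extra'' contribution $A_0^{\mathrm{extra}} \subseteq [a_1] \times \{a_2-2, a_2-1, a_2\}$ occupying the three rightmost columns. For the main block I would apply the version of Lemma~\ref{lem:otherFatter} obtained by swapping the two coordinates and then reflecting the columns of the sub-grid. The hypotheses are satisfied: $a_1 \geq 5$ is odd (playing the role of the original $a_2$), and $a_2 - 3 \geq 2$ with $a_2 - 3 \equiv 2 \bmod 6$ (playing the role of the original $a_1$). The output has cardinality $|A_0^{\mathrm{main}}| = \frac{a_1(a_2-3) + a_1 + (a_2-3) - 2}{3}$ and infects all of $[a_1] \times [a_2-3]$ except the two points $(1, a_2-3)$ and $(a_1, a_2-3)$.

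For the extra contribution I take
\[
A_0^{\mathrm{extra}} = \{(1, a_2-2)\} \cup \bigl\{ (2j-1, a_2), (2j, a_2-1) : 1 \leq j \leq \tfrac{a_1-3}{2} \bigr\},
\]
consisting of one ``rescue'' vertex together with a zig-zag staircase of $a_1-3$ vertices in the last two columns. A short algebraic check gives $|A_0^{\mathrm{main}}| + |A_0^{\mathrm{extra}}| = \frac{a_1 a_2 + a_1 + a_2 - 11}{3}$, as required.

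To verify the closure, I would first show that the $11$-vertex set $U := \{(x,y): x+y \geq a_1+a_2-3\} \cup \{(a_1-2, a_2-2)\}$ is stable under the $3$-neighbour process, meaning every $v \in U$ has at most $2$ neighbors outside $U$ in the grid; this is a short case check on the staircase shape of $U$. I would then argue that $A_0$ infects all of $([a_1] \times [a_2]) \setminus U$. By Lemma~\ref{lem:otherFatter}, $A_0^{\mathrm{main}}$ infects everything in the main block except $(1, a_2-3)$ and $(a_1, a_2-3)$; the former becomes infected once $(1, a_2-2) \in A_0^{\mathrm{extra}}$ fires, while the latter lies in $U$ as intended. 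The staircase, combined with the already-infected column $a_2-3$ of the main block, then propagates infection through columns $a_2-1$ and $a_2$ in a routine zig-zag pattern reminiscent of Lemma~\ref{lem:skinny}.

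The main obstacle is that several cells adjacent to $U$ have only two neighbors outside $U$ and therefore cannot be inferred by the $3$-neighbour rule, so they must lie in $A_0$. The critical such cells in $A_0^{\mathrm{extra}}$ are $(a_1-4, a_2)$ and $(a_1-3, a_2-1)$: the staircase is arranged so that these are precisely its two endpoint infections (at $j = (a_1-3)/2$), which handles them automatically. One must also unpack the construction of Lemma~\ref{lem:otherFatter} enough to verify, for instance, that $(a_1-1, a_2-3) \in A_0^{\mathrm{main}}$ (another cell with two neighbors in $U$); this follows from the fact that Lemma~\ref{lem:otherFatter} invokes Lemma~\ref{lem:skinny} to place exactly one initial infection in each row of the two columns nearest its unsaved corner, and after the transpose-and-reflect this places infections precisely in the row and column needed.
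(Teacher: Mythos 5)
Your construction is correct, and it checks out in detail: the cardinality count works ($\frac{a_1(a_2-3)+a_1+(a_2-3)-2}{3}+(a_1-2)=\frac{a_1a_2+a_1+a_2-11}{3}$), the hypotheses of Lemma~\ref{lem:otherFatter} are met after the transpose ($a_2-3\equiv 2\bmod 6$, $a_1\geq 5$ odd), the $11$-element set is indeed closed under the complement process, and the staircase plus the rescue vertex $(1,a_2-2)$ do propagate through the last three columns while also rescuing the corner $(1,a_2-3)$ left behind by the main block. However, your route is genuinely different from the paper's. You split off the last three \emph{columns}, using Lemma~\ref{lem:otherFatter} on $[a_1]\times[a_2-3]$ and a hand-built zig-zag in columns $a_2-1,a_2$; the paper instead splits off the last three \emph{rows}, applying Lemma~\ref{lem:fatter} to the block $[a_1-3]\times[a_2]$ (leaving only the two corners $(a_1-3,1)$ and $(a_1-3,a_2)$ uninfected), Lemma~\ref{lem:skinny} to the strip $\{a_1-1,a_1\}\times[a_2-3]$, and then adding the single connector $(a_1-2,1)$. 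The paper's decomposition is lighter: it reuses Lemma~\ref{lem:skinny} wholesale for the strip and closes with one application of Lemma~\ref{lem:immuneRegions}, whereas you must build and verify the staircase by hand and rely on Lemma~\ref{lem:otherFatter}, which is itself assembled from Lemmas~\ref{lem:fatter} and~\ref{lem:skinny}. One small simplification to your write-up: you do not actually need to unpack Lemma~\ref{lem:otherFatter} to see that $(a_1-1,a_2-3)$ and $(a_1,a_2-4)$ lie in $A_0^{\mathrm{main}}$ — each is a boundary vertex of $[a_1]\times[a_2-3]$ whose only ``missing'' neighbour is the permanently uninfected corner $(a_1,a_2-3)$, so the closure guarantee of the lemma already forces these vertices to belong to the initial set.
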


\begin{proof}
By Lemma~\ref{lem:fatter}, we can take a subset of $[a_1-3]\times [a_2]$ of cardinality $\frac{(a_1-3)a_2+(a_1-3)+a_2-2}{3}$ whose closure contains all of this grid except for the corners $(a_1-3,1)$ and $(a_1-3,a_2)$. Also, by Lemma~\ref{lem:skinny}, there is a subset of the grid $\{a_1-1,a_1\}\times[a_2-3]$ of cardinality $a_2-3$ whose closure contains all of this grid except for $(a_1-1,1)$ and $(a_1,a_2-3)$. We take the union of these two sets together with the point $(a_1-2,1)$. It is an easy application of Lemma~\ref{lem:immuneRegions} to show that this set has the correct closure; see Figure~\ref{fig:cutcorner}. The cardinality of this set is
\[\frac{(a_1-3)a_2+(a_1-3)+a_2-2}{3}+a_2-3+1=\frac{a_1a_2+a_1+a_2-11}{3}.\qedhere\]
\end{proof}

\begin{figure}[htbp]
\begin{center}
\begin{tikzpicture}
\begin{scope}[xshift=0.0cm,yshift=0.0cm]
\draw[step=0.5cm,color=black,fill=gray] (-0.001,-0.001) grid (5.501,3.501) (0.0,0.0) rectangle (0.5,0.5)(0.5,0.0) rectangle (1.0,0.5)(1.0,0.0) rectangle (1.5,0.5)(1.5,0.0) rectangle (2.0,0.5)(2.0,0.0) rectangle (2.5,0.5)(2.5,0.0) rectangle (3.0,0.5)(3.0,0.0) rectangle (3.5,0.5)(0.5,0.5) rectangle (1.0,1.0)(1.0,0.5) rectangle (1.5,1.0)(1.5,0.5) rectangle (2.0,1.0)(2.0,0.5) rectangle (2.5,1.0)(2.5,0.5) rectangle (3.0,1.0)(3.0,0.5) rectangle (3.5,1.0)(3.5,0.5) rectangle (4.0,1.0)(0.0,1.0) rectangle (0.5,1.5)(0.5,1.5) rectangle (1.0,2.0)(1.0,1.5) rectangle (1.5,2.0)(1.5,1.5) rectangle (2.0,2.0)(2.0,1.5) rectangle (2.5,2.0)(2.5,1.5) rectangle (3.0,2.0)(3.0,1.5) rectangle (3.5,2.0)(3.5,1.5) rectangle (4.0,2.0)(4.0,1.5) rectangle (4.5,2.0)(4.5,1.5) rectangle (5.0,2.0)(0.0,2.0) rectangle (0.5,2.5)(0.5,2.0) rectangle (1.0,2.5)(1.0,2.0) rectangle (1.5,2.5)(1.5,2.0) rectangle (2.0,2.5)(2.0,2.0) rectangle (2.5,2.5)(2.5,2.0) rectangle (3.0,2.5)(3.0,2.0) rectangle (3.5,2.5)(3.5,2.0) rectangle (4.0,2.5)(4.0,2.0) rectangle (4.5,2.5)(4.5,2.0) rectangle (5.0,2.5)(5.0,2.0) rectangle (5.5,2.5)(0.0,2.5) rectangle (0.5,3.0)(0.5,2.5) rectangle (1.0,3.0)(1.0,2.5) rectangle (1.5,3.0)(1.5,2.5) rectangle (2.0,3.0)(2.0,2.5) rectangle (2.5,3.0)(2.5,2.5) rectangle (3.0,3.0)(3.0,2.5) rectangle (3.5,3.0)(3.5,2.5) rectangle (4.0,3.0)(4.0,2.5) rectangle (4.5,3.0)(4.5,2.5) rectangle (5.0,3.0)(5.0,2.5) rectangle (5.5,3.0)(0.0,3.0) rectangle (0.5,3.5)(0.5,3.0) rectangle (1.0,3.5)(1.0,3.0) rectangle (1.5,3.5)(1.5,3.0) rectangle (2.0,3.5)(2.0,3.0) rectangle (2.5,3.5)(2.5,3.0) rectangle (3.0,3.5)(3.0,3.0) rectangle (3.5,3.5)(3.5,3.0) rectangle (4.0,3.5)(4.0,3.0) rectangle (4.5,3.5)(4.5,3.0) rectangle (5.0,3.5)(5.0,3.0) rectangle (5.5,3.5);
\end{scope}

\end{tikzpicture}
\end{center}
    \caption{Illustration of the construction in the proof of Lemma~\ref{lem:cutcorner} in the case $a_1=7$ and $a_2=11$.}
    \label{fig:cutcorner}
\end{figure}
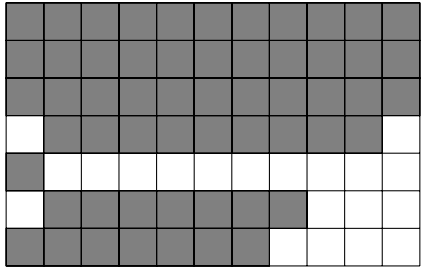

\section{Folding 2D Shapes Into the Third Dimension}
\label{sec:folding}

Our aim in this section is to present a strategy for ``folding'' the constructions from the previous section into three dimensions to yield a percolating set in $\prod_{i=1}^3[a_i]$. Some of the arguments involve showing that the folded $2$-dimensional structure ``separates'' the two faces in direction $j$ for all $j\in [3]$; the following lemma shows that this is sufficient. 

\begin{lem}
\label{lem:sidesSplit}
For $d\geq1$ and $a_1,\dots,a_d\geq1$, let $A$ be a subset of $\prod_{i=1}^d[a_i]$. If, for every $j\in [d]$, every path in $\prod_{i=1}^d[a_i]$ between the two faces in direction $j$ contains a vertex of $A$, then $A$ percolates under the $d$-neighbour process in $\prod_{i=1}^d[a_i]$. 
\end{lem}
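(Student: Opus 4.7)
The plan is a direct contradiction argument, reducing the whole statement to a single extremal computation. Suppose that $A$ satisfies the hypothesis of the lemma but does not percolate under the $d$-neighbour process, and let $H := V(G) \setminus [A]_{d,G}$ be the nonempty set of vertices that remain healthy forever. Since each $v \in H$ has fewer than $d$ neighbours in $[A]$, every $G$-neighbour of $v$ that lies outside $H$ must lie in $[A]$. Because $H \cap A = \emptyset$, any path in the induced subgraph $G[H]$ between two opposite faces $L_{j,1}$ and $L_{j,a_j}$ would avoid $A$ entirely and contradict the hypothesis, so the plan is to produce such a path.

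To do this I would prove the following key claim: some connected component $C$ of $G[H]$ satisfies $\min_j(C) = 1$ and $\max_j(C) = a_j$ for some direction $j \in [d]$. Any such $C$ is connected in $G[H]$ and meets both $L_{j,1}$ and $L_{j,a_j}$, so it contains the required path. Suppose for contradiction that no such component exists, i.e., every component $C$ of $G[H]$ and every $j \in [d]$ satisfy $\min_j(C) > 1$ or $\max_j(C) < a_j$. Pick any such component $C$ and define a sign vector $\sigma \in \{-1,+1\}^d$ by setting $\sigma_j = +1$ if $\max_j(C) < a_j$ and $\sigma_j = -1$ otherwise. The crucial feature of this choice is that, for every $v \in C$, the vertex $v + \sigma_j e_j$ still lies inside $\prod_i [a_i]$, where $e_j$ denotes the $j$-th standard basis vector of $\mathbb{Z}^d$.

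Now let $v^* \in C$ maximise the linear functional $\phi(v) := \sum_{k=1}^d \sigma_k v_k$ over $C$. For each $j \in [d]$ the neighbour $v^* + \sigma_j e_j$ of $v^*$ satisfies $\phi(v^* + \sigma_j e_j) = \phi(v^*) + 1$, so it cannot lie in $C$; being adjacent to $v^* \in H$, and using the fact that any neighbour of an $H$-vertex that itself lies in $H$ must belong to the same component of $G[H]$ as that vertex, it cannot lie in $H \setminus C$ either. Hence $v^* + \sigma_j e_j \in [A]$ for every $j \in [d]$, which exhibits $d$ distinct $[A]$-neighbours of $v^*$ and contradicts $v^* \in H$. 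I expect the only delicate bookkeeping in the whole proof to be checking that the sign choice really does keep $v^* + \sigma_j e_j$ inside the grid, which is exactly what the failure-to-span assumption supplies; once that is in place the extremal step is a single line.
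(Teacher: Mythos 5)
Your proof is correct and rests on the same key idea as the paper's: a component of the healthy set that fails to span some direction can be oriented coordinate-by-coordinate so that an extremal vertex of the component (your $\phi$-maximiser; the paper's lexicographically minimal vertex after reflecting coordinates) has a neighbour outside the component in every one of the $d$ directions, forcing $d$ infected neighbours and a contradiction. The only difference is packaging: you argue by contradiction on the closure $[A]$, whereas the paper runs an induction that infects one such extremal vertex at a time; both are sound.
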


\begin{proof}
We proceed by induction on $\prod_{i=1}^d a_i - |A|$, where the base case $\prod_{i=1}^d a_i - |A|=0$ is trivial. Let $X$ be any component of the subgraph of $\prod_{i=1}^d[a_i]$ induced by $\prod_{i=1}^d[a_i]\setminus A$. By the hypothesis of the lemma, for each $j\in [d]$, we have that either $X\cap L_{j,1}=\emptyset$ or $X\cap L_{j,a_j}=\emptyset$. Without loss of generality, $X\cap L_{j,1}=\emptyset$ for all $j\in [d]$. Now, let $v$ be the lexicographically minimal element of $X$. Then, for every $j\in [d]$, we see that $v$ is adjacent to a vertex of $A$ via an edge in direction $j$. Thus, if the vertices of $A$ are infected, then $v$ becomes infected in the first step of the $d$-neighbour process. The result follows by applying the induction hypothesis to $A\cup\{v\}$. 
\end{proof}

Let us now explain the manner in which we implant $2$-dimensional structures into $3$-dimensional grids. Given graphs $H$ and $G$, a \emph{local embedding} of $H$ into $G$ is a function $f:V(H)\to V(G)$ such that $f(u)f(v)\in E(G)$ whenever $uv\in E(H)$ (i.e. $f$ is a \emph{homomorphism}) and, for each $v\in V(H)$, the restriction of $f$ to $N(v)$ is injective. We remark that most, but not all, of the local embeddings used in this section will actually be injective on their entire domain.

\begin{lem}
\label{lem:embedding}
Let $H$ and $G$ be graphs, let $r\geq1$ and let $A_0\subseteq V(H)$. If $f$ is a local embedding of $H$ into $G$, then $f([A_0]_{r,H})\subseteq [f(A_0)]_{r,G}$. 
\end{lem}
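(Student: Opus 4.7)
The plan is to proceed by induction on the discrete time parameter of the bootstrap process. Write $A_t$ for the set of vertices infected at time $t$ in $H$ starting from $A_0$, and $B_t$ for the set of vertices infected at time $t$ in $G$ starting from $f(A_0)$. I will prove the stronger statement that $f(A_t)\subseteq B_t$ for every $t\geq 0$, from which the lemma follows by taking the union over $t$.

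The base case $t=0$ is immediate, since $f(A_0)=B_0$ by definition. For the inductive step, suppose $f(A_{t-1})\subseteq B_{t-1}$ and let $v\in A_t$. If $v\in A_{t-1}$, then $f(v)\in f(A_{t-1})\subseteq B_{t-1}\subseteq B_t$ and we are done. Otherwise, $v$ has at least $r$ neighbours in $A_{t-1}$, so we may choose pairwise distinct vertices $u_1,\dots,u_r\in N_H(v)\cap A_{t-1}$.

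Here is where the two defining properties of a local embedding come in. Because $f$ is a homomorphism, $f(u_i)f(v)\in E(G)$ for each $i$, so each $f(u_i)$ lies in $N_G(f(v))$. Because the restriction of $f$ to $N_H(v)$ is injective, the images $f(u_1),\dots,f(u_r)$ are pairwise distinct. By the inductive hypothesis, each $f(u_i)$ lies in $B_{t-1}$. Hence $f(v)$ has at least $r$ neighbours in $B_{t-1}$, which forces $f(v)\in B_t$ by the definition of the $r$-neighbour process.

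The argument is entirely routine; the only point that needs care is the role of local injectivity, and this is where a naive homomorphism would fail — two distinct neighbours of $v$ in $H$ could be identified in $G$, reducing the number of distinct infected neighbours of $f(v)$ below $r$. I do not anticipate any genuine obstacle beyond bookkeeping the induction correctly.
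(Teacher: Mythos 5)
Your proof is correct and is essentially the same argument as in the paper: the paper orders the vertices of $[A_0]_{r,H}$ so that each non-initial vertex has at least $r$ earlier neighbours and inducts along that ordering, whereas you induct on the time parameter, but in both cases the homomorphism property supplies the adjacencies and local injectivity guarantees that the $r$ infected neighbours remain distinct in $G$. No gaps.
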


\begin{proof}
Let $m$ be the cardinality of $[A_0]$ and let $v_1,\dots,v_m$ be the vertices of $[A_0]$ ordered so that $\{v_1,\dots,v_{|A_0|}\}=A_0$ and, if $|A_0|+1\leq i\leq m$, then $v_i$ has at least $r$ neighbours in $\{v_1,\dots,v_{i-1}\}$. Clearly, $[f(A_0)]_{r,G}$ contains $f(v_i)$ for all $1\leq i\leq |A_0|$. Since $f$ is a local embedding, for each $|A_0|+1\leq i\leq m$, the vertex $f(v_i)$ has at least $r$ neighbours in $\{f(v_1),\dots, f(v_{i-1})\}$. Thus, $[f(A_0)]_{r,G}$ contains $f(v_i)$ for all $1\leq i\leq m$ and the lemma follows. 
\end{proof}

Our aim in the rest of this section is to use Lemmas~\ref{lem:immuneRegions},~\ref{lem:sidesSplit} and~\ref{lem:embedding} to demonstrate that many tuples $(a_1,a_2,a_3)$ for $a_1\in \{2,3\}$ are perfect. Most of the arguments follow the same general pattern. First, we exhibit a set which percolates under the $3$-neighbour process in an induced subgraph $H$ of a $2$-dimensional grid; we call an induced subgraph of a $2$-dimensional grid a \emph{grid graph} for short. Second, we describe a local embedding of $H$ into $[a_1]\times [a_2]\times[a_3]$. Third, we argue that either the image of $H$ under the local embedding separates the faces of $[a_1]\times [a_2]\times [a_3]$ in direction $j$ for all $j\in [3]$, or that this property is satisfied after adding a few more infected vertices and/or performing a few steps of the $3$-neighbour process, and so percolation occurs by Lemma~\ref{lem:sidesSplit}. In some cases, the third step is replaced by an argument showing that the set percolates without appealing to Lemma~\ref{lem:sidesSplit}. In fact, some of the proofs do not follow these three steps at all.

As in the previous section, many of the proofs are best explained with a series of accompanying diagrams. In these diagrams, the way that we depict an $a\times b\times c$ grid in this section is as $c$ copies of an $a\times b$ grid listed left to right.\footnote{Particularly large constructions are sometimes written on two lines. The first $a\times b$ grid on the second line is viewed as being immediately to the right of the last one on the first line.} Each square in an $a\times b$ grid is viewed as being directly above the corresponding square in the grid to the right of it and below the corresponding square in the grid to the left of it. In some of these diagrams, some of the non-grey squares contain numbers corresponding to the number of steps the corresponding vertex takes to become infected. In other words, each square containing a $1$ is adjacent to three grey squares and, for $k\geq2$, each square containing the number $k$ is adjacent to three squares which are either grey or contain a number less than $k$, at least one of which contains the number $k-1$. We also indicate local embeddings by diagrams in which each cell is labelled by the image of the vertex that it is mapped to. 

As discussed in the previous paragraph, many of our arguments involves running the $3$-neighbour process to starting with a set that percolates in a grid graph $H$, mapping $H$ into $[a_1]\times [a_2]\times [a_3]$ via a local embedding, and then running the $3$-neighbour process in $[a_1]\times [a_2]\times [a_3]$ with the image of $H$ (perhaps with a few additional infections added) as the starting infection. For this reason, most of the diagrams depicting infections in 3-dimensional grids actually show the full image of a grid graph $H$ under a local embedding in grey; e.g., Figure~\ref{fig:236}~(c) shows the image of $V(H_6)$ in Figure~\ref{fig:236}~(a) under the local embedding described in Figure~\ref{fig:236}~(b). 

\begin{prop}
\label{prop:2,3,0mod6}
If $a_3\geq 6$ and $a_3\equiv 0\bmod 6$, then $(2,3,a_3)$ is perfect. 
\end{prop}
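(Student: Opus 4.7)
Since $a_3 \equiv 0\pmod 6$ gives $a_1 a_2 + a_1 a_3 + a_2 a_3 = 6 + 5 a_3 \equiv 0 \pmod 3$, the triple $(2,3,a_3)$ is class 0, and by Proposition~\ref{prop:SA} we have $m(2,3,a_3;3) \geq \frac{5 a_3 + 6}{3}$. The plan is to construct a percolating set in $[2]\times[3]\times[a_3]$ of exactly this cardinality, following the three-step template from the start of Section~\ref{sec:folding}: choose a grid graph $H$, give a local embedding $f\colon H \to [2]\times[3]\times[a_3]$, and verify that an appropriately chosen $f(A_0')$ percolates in the 3D grid.

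The natural choice is $H = [6]\times[a_3]$ together with the ``Hamiltonian cycle'' embedding $f$ sending the six rows of $H$ to the six vertices of the cycle $(1,1),(1,2),(1,3),(2,3),(2,2),(2,1)$ of the $2 \times 3$ cross-section, layer by layer in the $k$-direction. One checks quickly that $f$ is a vertex-bijection and a local embedding, and the key observation is that its image has two extra adjacencies per layer that are not present in $H$: the ``wrap-around'' edge $(1,1,k) \sim (2,1,k)$ (between the images of rows 1 and 6) and the ``chord'' edge $(1,2,k) \sim (2,2,k)$ (between the images of rows 2 and 5). These extra edges are essential, since Proposition~\ref{prop:SA} applied to $[6]\times[a_3]\times[1]$ gives $m([6]\times[a_3]; 3) \geq \frac{7a_3+6}{3}$, strictly exceeding the target $\frac{5a_3+6}{3}$; hence $A_0'$ cannot percolate in $H$ alone. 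The set $A_0'$ would be built from a period-$6$ pattern in the $a_3$-direction of $H$, in the spirit of the constructions of Section~\ref{sec:shapes} (in particular Lemmas~\ref{lem:skinny} and~\ref{lem:fatter}): approximately 10 infected vertices per block of 6 rows, with small boundary adjustments giving exactly $\frac{5a_3+6}{3}$ infected vertices in total. The pattern is engineered so that, once transferred to 3D via $f$, the extra adjacencies enable cascades both within each cross-sectional layer (via the chord edges) and between consecutive blocks along the $k$-direction.

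The main obstacle will be verifying that the cascade reaches the four ``corner pillars'' $\{(i, j, k) : k \in [a_3]\}$ for $(i,j) \in \{(1,1),(1,3),(2,1),(2,3)\}$. Each vertex on such a pillar has only three in-grid neighbours (two within the pillar and one in the middle column $j=2$), so the $3$-neighbour threshold is tight there and there is no margin for error. In particular, the cascade must exploit the wrap-around edges $(1,1,k)\sim(2,1,k)$ of the embedding to force infection from the already-infected interior into the corner pillars, and the period-$6$ pattern must be calibrated so that this happens layer by layer. As a fallback, if a fully explicit cascade analysis becomes unwieldy, one can run the process for a handful of steps to accumulate enough infected vertices and then invoke Lemma~\ref{lem:sidesSplit}, which certifies percolation once the infected set meets every path between each pair of opposing faces of $[2]\times[3]\times[a_3]$.
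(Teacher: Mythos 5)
Your framing is correct as far as it goes: the target cardinality $\frac{5a_3+6}{3}$ is right, the idea of folding a $2$-dimensional pattern into $[2]\times[3]\times[a_3]$ via a local embedding is exactly the method used in the paper, and you have correctly identified both the extra adjacencies created by the folding and the availability of Lemma~\ref{lem:sidesSplit} to finish. But there is a genuine gap at the centre of the argument: the percolating set is never exhibited. The phrases ``approximately 10 infected vertices per block of 6, with small boundary adjustments'' and ``the pattern is engineered so that\dots'' stand in for the entire content of the proposition. This content cannot be deferred: $(2,3,3)$ is class $0$ and the analogous count is available there, yet $(2,3,3)$ is \emph{not} perfect (see Corollary~\ref{cor:2,3,0mod3}), so the existence of a correctly sized pattern that actually percolates is precisely what has to be proved. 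Worse, because your embedding of $[6]\times[a_3]$ is a bijection onto the $3$-dimensional grid, your own (correct) observation that $\frac{5a_3+6}{3}$ points cannot percolate in $[6]\times[a_3]$ means Lemma~\ref{lem:embedding} does no work for you: the closure of $A_0'$ in $H$ is a proper subset of $H$, so the entire cascade, including the delicate behaviour at the degree-$3$ ends of the corner pillars, would have to be analysed directly in three dimensions. No such analysis is attempted, and the ``fallback'' of running a few steps and invoking Lemma~\ref{lem:sidesSplit} still presupposes knowing which vertices those few steps infect.

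For contrast, the paper's proof sidesteps exactly this difficulty by choosing a grid graph $H_{6k}$ whose image is \emph{not} all of $[2]\times[3]\times[6k]$: it is an irregular shape (assembled from Lemma~\ref{lem:fatter} plus a small explicit seed; see Figure~\ref{fig:236}) on which the chosen $10k$ points genuinely percolate in two dimensions by Lemma~\ref{lem:immuneRegions}. Lemma~\ref{lem:embedding} then transfers the full image of $H_{6k}$ into the $3$-dimensional grid at no cost, two further seeds bring the total to $10k+2$, and only a short explicit check (a few additional steps followed by Lemma~\ref{lem:sidesSplit}) is needed to infect the remaining half of the grid. To repair your version you would need either to write down the period-$6$ pattern explicitly and verify the cascade on the cylinder-with-chords, or to replace $H$ by a proper subgraph on which two-dimensional percolation can be certified combinatorially, as the paper does.
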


\begin{proof}
For the purposes of presenting the diagrams, we will work in the grid $[2]\times[6k]\times [3]$ for $k\geq1$. Let us first consider the case $k=1$. Our goal is to construct a percolating set of cardinality $\frac{2\cdot 3+2\cdot 6+3\cdot 6}{3} = 12$. Consider first the grid graph $H_6$ in Figure~\ref{fig:236}~(a). The infection depicted in the figure percolates in $H_6$ by Corollary~\ref{cor:iff2D}. Figure~\ref{fig:236}~(b) describes a local embedding of $H_6$ into $[2]\times[6]\times [3]$. By Lemma~\ref{lem:embedding}, if the 10 vertices of the image of the initial infection in $H_6$ under the embedding are infected in $[2]\times[6]\times[3]$, then the image of every vertex of $H_6$ under the embedding eventually becomes infected. The image of $H_6$ under this embedding is depicted in Figure~\ref{fig:236}~(c). In Figure~\ref{fig:236}~(d), it is shown that the set obtained by adding $(1,6,1)$ and $(2,1,3)$ to the image of $H_6$ percolates in $[2]\times [6]\times [3]$. Thus, there is a percolating set of cardinality $10+2=12$. 

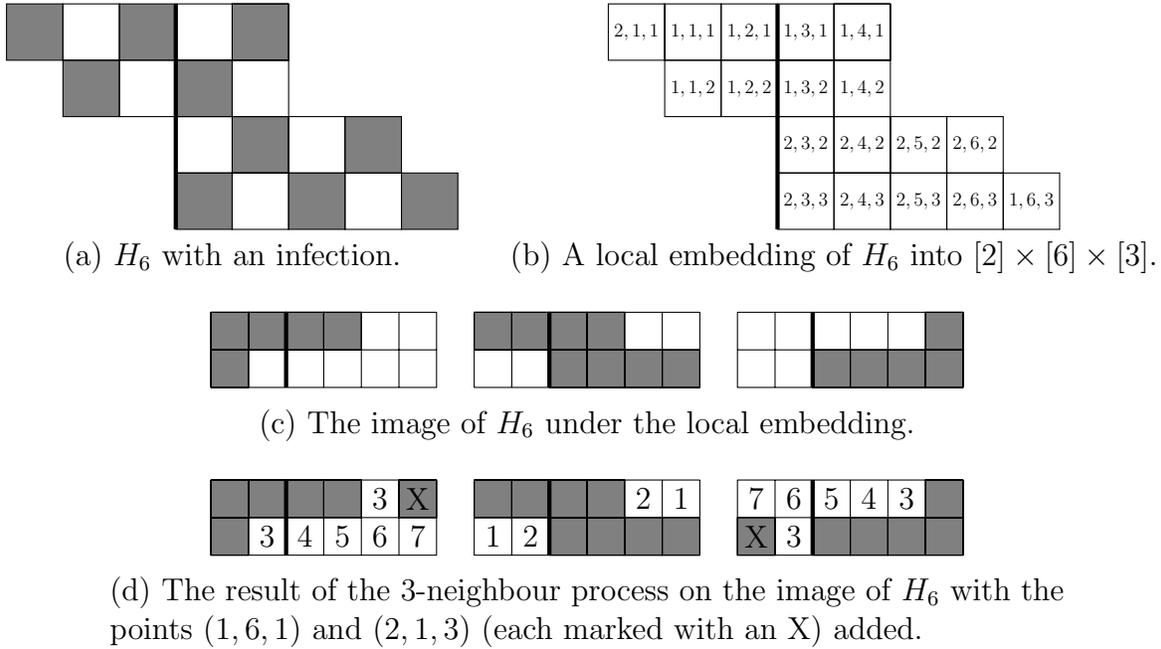
\begin{figure}[htbp]
    \begin{center}
\begin{tikzpicture}
\begin{scope}[xshift=0.0cm,yshift=0.0cm,scale=1.5]
\draw[step=0.5cm,color=black,fill=gray] (0.499,0.499) grid (1.501,2.001) (0.5,0.0) rectangle (1.0,0.5)(1.5,0.0) rectangle (2.0,0.5)(1.0,0.5) rectangle (1.5,1.0)(0.5,1.0) rectangle (1.0,1.5)(0.0,1.5) rectangle (0.5,2.0)(1.0,1.5) rectangle (1.5,2.0)(-0.5,1.0) rectangle (0.0,1.5) (-1.0,1.5) rectangle (-0.5,2.0) (2.0,0.5) rectangle (2.5,1.0) (2.5,0.0) rectangle (3.0,0.5);
\node at (1.25,0.25) {};\node at (0.75,0.75) {};\node at (1.25,1.25) {};\node at (0.75,1.75) {};
\draw  (1.0,0.0) rectangle (1.5,0.5) (2.0,0.0) rectangle (2.5,0.5) (1.5,0.5) rectangle (2.0,1.0) (0.0,1.0) rectangle (0.5,1.5) (-0.5,1.5) rectangle (0.0,2.0);
\draw[ultra thick](0.5,0)--(0.5,2);
\node at (1,-0.25) {(a) $H_6$ with an infection.};
\end{scope}
\begin{scope}[xshift=8.0cm,yshift=0.0cm,scale=1.5]
\draw[step=0.5cm,color=black] (0.499,0.499) grid (1.501,2.001) (0.5,0.0) rectangle (1.0,0.5)(1.5,0.0) rectangle (2.0,0.5)(1.0,0.5) rectangle (1.5,1.0)(0.5,1.0) rectangle (1.0,1.5)(0.0,1.5) rectangle (0.5,2.0)(1.0,1.5) rectangle (1.5,2.0)(-0.5,1.0) rectangle (0.0,1.5) (2.0,0.5) rectangle (2.5,1.0) (2.5,0.0) rectangle (3.0,0.5);
\node[scale=0.6] at (-0.25,1.25) {$1,1,2$};\node[scale=0.6] at (0.25,1.25) {$1,2,2$};\node[scale=0.6] at (0.75,1.25) {$1,3,2$};\node[scale=0.6] at (1.25,1.25) {$1,4,2$};\node[scale=0.6] at (-0.75,1.75) {$2,1,1$};\node[scale=0.6] at (-0.25,1.75) {$1,1,1$};\node[scale=0.6] at (0.25,1.75) {$1,2,1$};\node[scale=0.6] at (0.75,1.75) {$1,3,1$};\node[scale=0.6] at (1.25,1.75) {$1,4,1$};\node[scale=0.6] at (0.75,0.75) {$2,3,2$};\node[scale=0.6] at (1.25,0.75) {$2,4,2$};\node[scale=0.6] at (1.75,0.75) {$2,5,2$};\node[scale=0.6] at (2.25,0.75) {$2,6,2$};\node[scale=0.6] at (0.75,0.25) {$2,3,3$};\node[scale=0.6] at (1.25,0.25) {$2,4,3$};\node[scale=0.6] at (1.75,0.25) {$2,5,3$};\node[scale=0.6] at (2.25,0.25) {$2,6,3$};\node[scale=0.6] at (2.75,0.25) {$1,6,3$};
\draw  (1.0,0.0) rectangle (1.5,0.5) (2.0,0.0) rectangle (2.5,0.5) (1.5,0.5) rectangle (2.0,1.0) (0.0,1.0) rectangle (0.5,1.5) (-1,1.5) rectangle (-0.5,2.0) (-0.5,1.5) rectangle (0.0,2.0);
\draw[ultra thick](0.5,0)--(0.5,2);
\node at (1,-0.25) {(b) A local embedding of $H_6$ into $[2]\times[6]\times [3]$.};
\end{scope}
\end{tikzpicture}
\end{center}

\begin{center}
\begin{tikzpicture}
\begin{scope}[xshift=0.0cm,yshift=0.0cm]
\draw[step=0.5cm,color=black,fill=gray] (-0.001,-0.001) grid (3.001,1.001) (0.0,0.0) rectangle (0.5,0.5)(0.0,0.5) rectangle (0.5,1.0)(0.5,0.5) rectangle (1.0,1.0)(1.0,0.5) rectangle (1.5,1.0)(1.5,0.5) rectangle (2.0,1.0);
\draw[ultra thick](1,0)--(1,1);
\end{scope}

\begin{scope}[xshift=3.5cm,yshift=0.0cm]
\draw[step=0.5cm,color=black,fill=gray] (-0.001,-0.001) grid (3.001,1.001) (1.0,0.0) rectangle (1.5,0.5)(1.5,0.0) rectangle (2.0,0.5)(2.0,0.0) rectangle (2.5,0.5)(2.5,0.0) rectangle (3.0,0.5)(0.0,0.5) rectangle (0.5,1.0)(0.5,0.5) rectangle (1.0,1.0)(1.0,0.5) rectangle (1.5,1.0)(1.5,0.5) rectangle (2.0,1.0);
\draw[ultra thick](1,0)--(1,1);
\node at (1.5,-0.5) {(c) The image of $H_6$ under the local embedding.};
\end{scope}

\begin{scope}[xshift=7.0cm,yshift=0.0cm]
\draw[step=0.5cm,color=black,fill=gray] (-0.001,-0.001) grid (3.001,1.001) (1.0,0.0) rectangle (1.5,0.5)(1.5,0.0) rectangle (2.0,0.5)(2.0,0.0) rectangle (2.5,0.5)(2.5,0.0) rectangle (3.0,0.5)(2.5,0.5) rectangle (3.0,1.0);
\draw[ultra thick](1,0)--(1,1);
\end{scope}
\end{tikzpicture}
\end{center}

\begin{center}
\begin{tikzpicture}
\begin{scope}[xshift=0.0cm,yshift=0.0cm]
\draw[step=0.5cm,color=black,fill=gray] (-0.001,-0.001) grid (3.001,1.001) (0.0,0.0) rectangle (0.5,0.5)(0.0,0.5) rectangle (0.5,1.0)(0.5,0.5) rectangle (1.0,1.0)(1.0,0.5) rectangle (1.5,1.0)(1.5,0.5) rectangle (2.0,1.0)(2.5,0.5) rectangle (3.0,1.0);
\node at (2.75,0.75) {X};\node at (0.75,0.25) {3};\node at (1.25,0.25) {4};\node at (1.75,0.25) {5};\node at (2.25,0.25) {6};\node at (2.75,0.25) {7};\node at (2.25,0.75) {3};
\draw[ultra thick](1,0)--(1,1);
\end{scope}

\begin{scope}[xshift=3.5cm,yshift=0.0cm]
\draw[step=0.5cm,color=black,fill=gray] (-0.001,-0.001) grid (3.001,1.001) (1.0,0.0) rectangle (1.5,0.5)(1.5,0.0) rectangle (2.0,0.5)(2.0,0.0) rectangle (2.5,0.5)(2.5,0.0) rectangle (3.0,0.5)(0.0,0.5) rectangle (0.5,1.0)(0.5,0.5) rectangle (1.0,1.0)(1.0,0.5) rectangle (1.5,1.0)(1.5,0.5) rectangle (2.0,1.0);
\node at (0.25,0.25) {1};\node at (0.75,0.25) {2};\node at (2.25,0.75) {2};\node at (2.75,0.75) {1};
\draw[ultra thick](1,0)--(1,1);
\node[align=left] at (1.5,-0.75) {(d) The result of the $3$-neighbour process on the image of $H_6$ with the\\ points $(1,6,1)$ and $(2,1,3)$ (each marked with an X) added.};
\end{scope}

\begin{scope}[xshift=7.0cm,yshift=0.0cm]
\draw[step=0.5cm,color=black,fill=gray] (-0.001,-0.001) grid (3.001,1.001) (0.0,0.0) rectangle (0.5,0.5)(1.0,0.0) rectangle (1.5,0.5)(1.5,0.0) rectangle (2.0,0.5)(2.0,0.0) rectangle (2.5,0.5)(2.5,0.0) rectangle (3.0,0.5)(2.5,0.5) rectangle (3.0,1.0);
\node at (0.25,0.25) {X};\node at (0.75,0.25) {3};\node at (0.25,0.75) {7};\node at (0.75,0.75) {6};\node at (1.25,0.75) {5};\node at (1.75,0.75) {4};\node at (2.25,0.75) {3};
\draw[ultra thick](1,0)--(1,1);
\end{scope}
\end{tikzpicture}
\end{center}
    \caption{Diagrams used to verify that $m(2,3,6;3)=12$ in the proof of Proposition~\ref{prop:2,3,0mod6}.}
    \label{fig:236}
\end{figure}

For $k\geq2$, let $H_{6k}$ be the grid graph obtained from $H_6$ by inserting a $4\times 6(k-1)$ grid in the location of the bold vertical line in Figure~\ref{fig:236}~(a). Using Lemma~\ref{lem:fatter}, we place an infection of cardinality $\frac{4(6(k-1)-1) + 4 + (6(k-1)-1)-2}{3} = 10k-11$ in the first $6(k-1)-1$ columns of this inserted grid whose closure leaves behind only the top left and top right corner. We also infect the vertex on the first row of the last column of the inserted $4\times 6(k-1)$ grid. The infection on the rest of $H_{6k}$ is the same as in $H_6$. See Figure~\ref{fig:230mod6} for an illustration of the infection in $H_{12}$ after taking the closure of the infection on the first $6(k-1)-1$ columns of the inserted grid. It is easily observed that, after taking the closure of the infection on the first $6(k-1)-1$ columns of the inserted grid, the complement of the infection consists of trees, each of which contains a unique vertex of degree three in $H_{6k}$. So, by Lemma~\ref{lem:immuneRegions}, this infection percolates in $H_{6k}$. By construction, its cardinality is
\[10k-11 + 1 + 10 = 10k.\]
The local embedding of $H_{6k}$ into $[2]\times[6k]\times [3]$ is analogous to the local embedding of $H_6$ into $[2]\times[6]\times[3]$. Specifically, the vertices on the top row of the inserted $4\times 6(k-1)$ grid are mapped to $(1,3,1),\dots,(1,6(k-1)+2,1)$ from left to right. The vertices on the second, third and fourth rows are mapped to $(1,3,2),\dots,(1,6(k-1)+2,2)$, $(2,3,2),\dots,(2,6(k-1)+2,2)$ and $(2,3,3),\dots,(2,6(k-1)+2,3)$ from left to right, respectively. The vertices to the right of the inserted $4\times 6(k-1)$ grid are mapped to the same vertex as they are under the embedding of $H_6$, except that the second coordinate is increased by $6(k-1)$. The embedding in the case $k=2$ is provided in Figure~\ref{fig:230mod6}~(b) for reference. As in the case $k=1$, by adding two additional infected points, namely $(1,6k,1)$ and $(2,1,3)$, to the image of $H_{6k}$ under the local embedding, we obtain a set which percolates under the $3$-neighbour process in $[2]\times [6k]\times [3]$. To formally prove this, one can observe that, after three steps of the process, there are no paths between the two faces in direction $j$ for any $j\in[3]$, and so percolation occurs by Lemma~\ref{lem:sidesSplit}. An illustration in the case $k=2$ is given in Figure~\ref{fig:230mod6}~(c). Thus, $m(2,6k,3;3) \leq 10k+2 = \frac{2\cdot 6k + 2\cdot 3 + 3\cdot 6k}{3}$, as desired.
\end{proof}

\begin{figure}[htbp]
    \begin{center}
\begin{tikzpicture}
\begin{scope}[xshift=0.0cm,yshift=0.0cm,scale=1.2]
\draw[step=0.5cm,color=black,fill=gray]  (0.4999,-0.001) grid (3.501,2.001)(3.5,0.0) rectangle (4.0,0.5)(4.5,0.0) rectangle (5.0,0.5)(4.0,0.5) rectangle (4.5,1.0)(3.5,1.0) rectangle (4.0,1.5)(0.0,1.5) rectangle (0.5,2.0)(4.0,1.5) rectangle (4.5,2.0)(-0.5,1.0) rectangle (0.0,1.5) (-1.0,1.5) rectangle (-0.5,2.0) (5.0,0.5) rectangle (5.5,1.0) (5.5,0.0) rectangle (6.0,0.5)(0.5,0.0) rectangle (1,0.5)(1.5,0.0) rectangle (2,0.5)(2.5,0.0) rectangle (3,0.5)(1,0.5) rectangle (1.5,1)(2,0.5) rectangle (2.5,1)(0.5,1) rectangle (1,1.5)(2.5,1) rectangle (3,1.5)(1,1.5) rectangle (1.5,2)(2,1.5) rectangle (2.5,2)(3,1.5) rectangle (3.5,2)(1.5,1.5) rectangle (2,2)(0.5,0) rectangle (3.0,1.5);
\draw   (4.0,0.0) rectangle (4.5,0.5) (5.0,0.0) rectangle (5.5,0.5) (4.5,0.5) rectangle (5.0,1.0) (0.0,1.0) rectangle (0.5,1.5) (-0.5,1.5) rectangle (0.0,2.0)(3.5,1.5) rectangle (4,2)(4,1) rectangle (4.5,1.5)(3.5,0.5) rectangle (4,1);
\draw[ultra thick](0.5,0)--(0.5,2);
\draw[ultra thick](3.5,0)--(3.5,2);
\node[align=left] at (2.5,-0.5) {(a) $H_{12}$ with the infection after taking the closure of the infection in the first five\\columns of the inserted $4\times 6$ grid.};
\end{scope}
\end{tikzpicture}
\end{center}

\begin{center}
\begin{tikzpicture}
\begin{scope}[xshift=0.0cm,yshift=0.0cm,scale=1.2]
\draw[step=0.5cm,color=black]  (0.4999,-0.001) grid (3.501,2.001)(3.5,0.0) rectangle (4.0,0.5)(4.5,0.0) rectangle (5.0,0.5)(4.0,0.5) rectangle (4.5,1.0)(3.5,1.0) rectangle (4.0,1.5)(0.0,1.5) rectangle (0.5,2.0)(4.0,1.5) rectangle (4.5,2.0)(-0.5,1.0) rectangle (0.0,1.5) (-1.0,1.5) rectangle (-0.5,2.0) (5.0,0.5) rectangle (5.5,1.0) (5.5,0.0) rectangle (6.0,0.5)(0.5,0.0) rectangle (1,0.5)(1.5,0.0) rectangle (2,0.5)(2.5,0.0) rectangle (3,0.5)(1,0.5) rectangle (1.5,1)(2,0.5) rectangle (2.5,1)(0.5,1) rectangle (1,1.5)(2.5,1) rectangle (3,1.5)(1,1.5) rectangle (1.5,2)(2,1.5) rectangle (2.5,2)(3,1.5) rectangle (3.5,2);
\draw  (4.0,0.0) rectangle (4.5,0.5) (5.0,0.0) rectangle (5.5,0.5) (4.5,0.5) rectangle (5.0,1.0) (0.0,1.0) rectangle (0.5,1.5) (-0.5,1.5) rectangle (0.0,2.0)(3.5,1.5) rectangle (4,2)(4,1) rectangle (4.5,1.5)(3.5,0.5) rectangle (4,1);
\node[scale=0.4] at (-0.25,1.25) {$1,1,2$};\node[scale=0.4] at (0.25,1.25) {$1,2,2$};\node[scale=0.4] at (3.75,1.25) {$1,9,2$};\node[scale=0.4] at (4.25,1.25) {$1,10,2$};\node[scale=0.4] at (-0.75,1.75) {$2,1,1$};\node[scale=0.4] at (-0.25,1.75) {$1,1,1$};\node[scale=0.4] at (0.25,1.75) {$1,2,1$};\node[scale=0.4] at (0.75,1.75) {$1,3,1$};\node[scale=0.4] at (1.25,1.75) {$1,4,1$};\node[scale=0.4] at (1.75,1.75) {$1,5,1$};\node[scale=0.4] at (2.25,1.75) {$1,6,1$};\node[scale=0.4] at (2.75,1.75) {$1,7,1$};\node[scale=0.4] at (3.25,1.75) {$1,8,1$};\node[scale=0.4] at (0.75,1.25) {$1,3,2$};\node[scale=0.4] at (1.25,1.25) {$1,4,2$};\node[scale=0.4] at (1.75,1.25) {$1,5,2$};\node[scale=0.4] at (2.25,1.25) {$1,6,2$};\node[scale=0.4] at (2.75,1.25) {$1,7,2$};\node[scale=0.4] at (3.25,1.25) {$1,8,2$};\node[scale=0.4] at (0.75,0.75) {$2,3,2$};\node[scale=0.4] at (1.25,0.75) {$2,4,2$};\node[scale=0.4] at (1.75,0.75) {$2,5,2$};\node[scale=0.4] at (2.25,0.75) {$2,6,2$};\node[scale=0.4] at (2.75,0.75) {$2,7,2$};\node[scale=0.4] at (3.25,0.75) {$2,8,2$};\node[scale=0.4] at (0.75,0.25) {$2,3,3$};\node[scale=0.4] at (1.25,0.25) {$2,4,3$};\node[scale=0.4] at (1.75,0.25) {$2,5,3$};\node[scale=0.4] at (2.25,0.25) {$2,6,3$};\node[scale=0.4] at (2.75,0.25) {$2,7,3$};\node[scale=0.4] at (3.25,0.25) {$2,8,3$};\node[scale=0.4] at (3.75,1.75) {$1,9,1$};\node[scale=0.4] at (4.25,1.75) {$1,10,1$};\node[scale=0.4] at (3.75,0.75) {$2,9,2$};\node[scale=0.4] at (4.25,0.75) {$2,10,2$};\node[scale=0.4] at (4.75,0.75) {$2,11,2$};\node[scale=0.4] at (5.25,0.75) {$2,12,2$};\node[scale=0.4] at (3.75,0.25) {$2,9,3$};\node[scale=0.4] at (4.25,0.25) {$2,10,3$};\node[scale=0.4] at (4.75,0.25) {$2,11,3$};\node[scale=0.4] at (5.25,0.25) {$2,12,3$};\node[scale=0.4] at (5.75,0.25) {$1,12,3$};
\draw[ultra thick](0.5,0)--(0.5,2);
\draw[ultra thick](3.5,0)--(3.5,2);
\node at (2.5,-0.25) {(b) A local embedding of $H_{12}$ into $[2]\times[12]\times[3]$.};
\end{scope}
\end{tikzpicture}
\end{center}

\begin{center}
\begin{tikzpicture}
\begin{scope}[xshift=0.0cm,yshift=0.0cm, scale=0.7]
\draw[step=0.5cm,color=black,fill=gray] (-0.001,-0.001) grid (6.001,1.001) (0.0,0.0) rectangle (0.5,0.5)(0.0,0.5) rectangle (0.5,1.0)(0.5,0.5) rectangle (1.0,1.0)(1.0,0.5) rectangle (1.5,1.0)(1.5,0.5) rectangle (2.0,1.0)(2.0,0.5) rectangle (2.5,1.0)(2.5,0.5) rectangle (3.0,1.0)(3.0,0.5) rectangle (3.5,1.0)(3.5,0.5) rectangle (4.0,1.0)(4.0,0.5) rectangle (4.5,1.0)(4.5,0.5) rectangle (5.0,1.0)(5.5,0.5) rectangle (6.0,1.0);
\node[scale=0.7] at (0.75,0.25) {3};\node[scale=0.7] at (1.25,0.25) {4};\node[scale=0.7] at (1.75,0.25) {5};\node[scale=0.7] at (2.25,0.25) {6};\node[scale=0.7] at (2.75,0.25) {7};\node[scale=0.7] at (3.25,0.25) {8};\node[scale=0.7] at (3.75,0.25) {9};\node[scale=0.7] at (4.25,0.25) {10};\node[scale=0.7] at (4.75,0.25) {11};\node[scale=0.7] at (5.25,0.25) {12};\node[scale=0.7] at (5.75,0.25) {13};\node[scale=0.7] at (5.25,0.75) {3};\node[scale=0.7] at (5.75,0.75) {X};
\draw[ultra thick](1,0)--(1,1);
\draw[ultra thick](4,0)--(4,1);
\end{scope}

\begin{scope}[xshift=4.55cm,yshift=0.0cm, scale=0.7]
\draw[step=0.5cm,color=black,fill=gray] (-0.001,-0.001) grid (6.001,1.001) (1.0,0.0) rectangle (1.5,0.5)(1.5,0.0) rectangle (2.0,0.5)(2.0,0.0) rectangle (2.5,0.5)(2.5,0.0) rectangle (3.0,0.5)(3.0,0.0) rectangle (3.5,0.5)(3.5,0.0) rectangle (4.0,0.5)(4.0,0.0) rectangle (4.5,0.5)(4.5,0.0) rectangle (5.0,0.5)(5.0,0.0) rectangle (5.5,0.5)(5.5,0.0) rectangle (6.0,0.5)(0.0,0.5) rectangle (0.5,1.0)(0.5,0.5) rectangle (1.0,1.0)(1.0,0.5) rectangle (1.5,1.0)(1.5,0.5) rectangle (2.0,1.0)(2.0,0.5) rectangle (2.5,1.0)(2.5,0.5) rectangle (3.0,1.0)(3.0,0.5) rectangle (3.5,1.0)(3.5,0.5) rectangle (4.0,1.0)(4.0,0.5) rectangle (4.5,1.0)(4.5,0.5) rectangle (5.0,1.0);
\node[scale=0.7] at (0.25,0.25) {1};\node[scale=0.7] at (0.75,0.25) {2};\node[scale=0.7] at (5.25,0.75) {2};\node[scale=0.7] at (5.75,0.75) {1};
\draw[ultra thick](1,0)--(1,1);
\draw[ultra thick](4,0)--(4,1);
\end{scope}

\begin{scope}[xshift=0.0cm,yshift=-1.05cm, scale=0.7]
\draw[step=0.5cm,color=black,fill=gray] (-0.001,-0.001) grid (6.001,1.001) (0.0,0.0) rectangle (0.5,0.5)(1.0,0.0) rectangle (1.5,0.5)(1.5,0.0) rectangle (2.0,0.5)(2.0,0.0) rectangle (2.5,0.5)(2.5,0.0) rectangle (3.0,0.5)(3.0,0.0) rectangle (3.5,0.5)(3.5,0.0) rectangle (4.0,0.5)(4.0,0.0) rectangle (4.5,0.5)(4.5,0.0) rectangle (5.0,0.5)(5.0,0.0) rectangle (5.5,0.5)(5.5,0.0) rectangle (6.0,0.5)(5.5,0.5) rectangle (6.0,1.0);
\node[scale=0.7] at (0.75,0.25) {3};\node[scale=0.7] at (0.25,0.75) {13};\node[scale=0.7] at (0.75,0.75) {12};\node[scale=0.7] at (1.25,0.75) {11};\node[scale=0.7] at (1.75,0.75) {10};\node[scale=0.7] at (2.25,0.75) {9};\node[scale=0.7] at (2.75,0.75) {8};\node[scale=0.7] at (3.25,0.75) {7};\node[scale=0.7] at (3.75,0.75) {6};\node[scale=0.7] at (4.25,0.75) {5};\node[scale=0.7] at (4.75,0.75) {4};\node[scale=0.7] at (5.25,0.75) {3};\node[scale=0.7] at (0.25,0.25) {X};
\draw[ultra thick](1,0)--(1,1);
\draw[ultra thick](4,0)--(4,1);
\node[align=left] at (6.25,-0.75) {(c) The result of the $3$-neighbour process on the image of $H_{12}$ with the\\ points $(1,12,1)$ and $(2,1,3)$ (each marked with an X) added.};
\end{scope}

\end{tikzpicture}
\end{center}
    \caption{Diagrams used to verify that $m(2,3,6k;3)=10k+2$ for $k\geq2$ in the proof of Proposition~\ref{prop:2,3,0mod6}.}
    \label{fig:230mod6}
\end{figure}

\begin{prop}
\label{prop:2,3,3mod6}
If $a_3\geq 9$ and $a_3\equiv 3\bmod 6$, then $(2,3,a_3)$ is perfect. 
\end{prop}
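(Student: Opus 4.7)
The target size. Write $a_3 = 6k+3$ for some $k \geq 1$ and work in the grid $[2]\times[6k+3]\times[3]$. Since $(2,3,a_3)$ is class $0$ (as $6 + 5(6k+3) = 30k+21$ is divisible by $3$), we need to exhibit a percolating set of size exactly $(2\cdot3 + 2\cdot(6k+3) + 3\cdot(6k+3))/3 = 10k+7$. Proposition~\ref{prop:SA} provides the matching lower bound, so it suffices to construct an infection of this size that percolates.

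The plan is to adapt the strategy of Proposition~\ref{prop:2,3,0mod6}, using a base construction for the case $k=1$ and inserting a middle strip for $k \geq 2$. For the base case, I would define a grid graph $H^\star_9$ that behaves like the graph $H_6$ in Proposition~\ref{prop:2,3,0mod6}, but extended by three extra columns on one side (covering an additional $[2]\times[3]\times[3]$ block after embedding). I would equip $H^\star_9$ with an initial infection of size $15$ which percolates in $H^\star_9$ (verified by Corollary~\ref{cor:iff2D}), describe a local embedding $f\colon H^\star_9 \to [2]\times[9]\times[3]$ analogous to the one in Figure~\ref{fig:236}(b), and add two seed vertices at opposite corners (say $(1,9,1)$ and $(2,1,3)$) to obtain a $3$D infection of size $17$. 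For $k \geq 2$, I would insert a $4 \times 6(k-1)$ block at the same location as in the proof of Proposition~\ref{prop:2,3,0mod6}, infect its first $6(k-1)-1$ columns using Lemma~\ref{lem:fatter} (contributing $10(k-1)-1$ vertices) together with a single vertex on the top of the last inserted column, and appeal to Lemma~\ref{lem:immuneRegions} to conclude that this produces an infection of size $10(k-1) + 15 = 10k+5$ that percolates in the extended grid graph $H^\star_{6k+3}$. Extending the embedding and adding the same two seed vertices as before yields a $3$D infection of size $10k+7$.

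To verify that this $3$D infection percolates in $[2]\times[6k+3]\times[3]$, I would run a few explicit steps of the $3$-neighbour process starting from the image of $H^\star_{6k+3}$ together with the two seed vertices, and check that the resulting infected set separates the two faces in each coordinate direction, so that Lemma~\ref{lem:sidesSplit} finishes the proof. The main obstacle is designing the shape of $H^\star_9$ and its local embedding so that (i) the initial $15$-vertex infection percolates in $H^\star_9$, (ii) the image under embedding leaves only a small collection of uninfected cells near the two ends of the $3$D grid, and (iii) adding the two seeds cleans up precisely these remaining cells without leaving any face-to-face path. This is fundamentally a bookkeeping exercise of the same character as Proposition~\ref{prop:2,3,0mod6}, best presented via explicit diagrams, but some care is needed since the three extra columns introduce a parity mismatch between the two ends of the zigzag that must be absorbed into the choice of seed vertices.
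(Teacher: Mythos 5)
Your target count of $10k+7$ is correct and the general framework (a percolating set in a $2$-dimensional grid graph, a local embedding into the $3$-dimensional grid, and Lemma~\ref{lem:sidesSplit} to finish) is the right one, but the proposal never actually produces a proof: every object whose existence constitutes the content of this proposition --- the grid graph $H^\star_9$, the $15$-point infection that percolates in it, the local embedding, and the verification that the image plus two seeds has no face-to-face path in its complement --- is only asserted (``I would define\dots'', ``I would equip\dots'', ``I would run a few explicit steps\dots''). For a purely constructive statement like this, deferring the construction to ``a bookkeeping exercise best presented via explicit diagrams'' leaves the entire proof missing.

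Moreover, the specific plan you sketch is doubtful. You propose to take the graph $H_6$ from Proposition~\ref{prop:2,3,0mod6}, append three columns, and reuse the two corner seeds $(1,a_3,1)$ and $(2,1,3)$; you yourself note that the three extra columns create a parity mismatch in the zigzag. That mismatch is a real obstruction, and the paper does not resolve it by a three-column extension: its construction for $a_3\equiv 3\bmod 6$ uses a structurally different grid graph $H_{6k+3}$ built around a $4\times 6k$ block with a corner removed, an application of Lemma~\ref{lem:fatter} whose closure deliberately misses an entire column (not just two corners), a different folding of the grid (viewed as $[3]\times[a_3]\times[2]$ rather than $[2]\times[a_3]\times[3]$), and \emph{no} additional seed vertices in the $3$-dimensional grid --- the whole $10k+7$-point set is the image of a percolating set of the grid graph. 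The fact that a genuinely different construction was needed here is evidence that your ``extend $H_6$ by three columns and add the same seeds'' plan would not go through without substantial redesign, so the gap is not merely one of presentation.
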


\begin{proof}
In all diagrams in this proof, we view the grid as $[3]\times[6k+3]\times [2]$ for $k\geq1$. Let $H_9$ be the grid graph depicted in Figure~\ref{fig:239}~(a). Between the two bold vertical lines is a $4\times 6$ grid with the top right corner removed. For $k\geq2$, let $H_{6k+3}$ be the grid graph obtained from $H_9$ by replacing this $4\times 6$ grid with the top right corner removed with a $4\times 6k$ grid with the top right corner removed. 

By Lemma~\ref{lem:fatter}, we can infect a set of $\frac{4(6k-1) + 4 + (6k-1)-2}{3} = 10k-1$ points within a $4\times 6k$ grid whose closure contains all elements of the grid except for those in the first column, the top vertex in the second column and the top right vertex. We infect those $10k-1$ elements in the copy of the $4\times6k$ grid with the top corner removed within $H_{6k+3}$. Also, we infect the vertex on the first row and column of this particular grid graph within $H_{6k+3}$. The infection on the elements of $H_{6k+3}$ outside of this $4\times 6k$ grid are depicted in Figure~\ref{fig:239}~(a). By Lemma~\ref{lem:immuneRegions}, this infection spreads to every element of $H_{6k+3}$. Figure~\ref{fig:239}~(b) describes a local embedding of $H_9$ into $[3]\times[9]\times[2]$; the embedding of $H_{6k+3}$ for general $k$ is analogous. Figure~\ref{fig:239}~(c) verifies that the image of all vertices of $H_9$ under this local embedding percolates under the $3$-neighbour process. In general, the argument goes as follows. Each vertex of the form $(2,i,2)$ for $2\leq i\leq 6k$ is adjacent to  $(2,i,1)$ and $(1,i,2)$, each of which is in the image of $H_{6k+3}$ under the local embedding. Additionally, $(2,2,2)$ is adjacent to $(2,1,2)$, which is also in the local embedding. Using this, we see that every vertex of the form $(2,i,2)$ for $2\leq i\leq 6k$ becomes infected. Also, $(2,6k+2,2)$ becomes infected due to it being adjacent to $(2,6k+3,2),(3,6k+2,2)$ and $(2,6k+2,1)$, all of which are in the image of $H_{6k+3}$. Now that $(2,6k,2)$ and $(2,6k+2,2)$ are infected, $(2,6k+1,2)$ becomes infected as well, due to its adjacency to $(2,6k+1,1)$. At this point, we can invoke Lemma~\ref{lem:sidesSplit} to get that percolation occurs. Thus, 
\[m(2,3,6k+3;3)\leq 10k-1 + 1 + 7 = 10k+7 = \frac{2\cdot (6k+3) + 2\cdot 3 + 3\cdot (6k+3)}{3}\]
as desired.

\begin{figure}[htbp]
\begin{center}
\begin{tikzpicture}
\begin{scope}[xshift=0.0cm,yshift=0.0cm,scale=1.2]
\draw[step=0.5cm,color=black,fill=gray]  (4.0,0.0) rectangle (4.5,0.5)(0.5,0.5) rectangle (1.0,1.0)(1.5,0.5) rectangle (2.0,1.0)(2.5,0.5) rectangle (3.0,1.0)(3.5,0.5) rectangle (4.0,1.0)(4.5,0.5) rectangle (5.0,1.0)(0.0,1.0) rectangle (0.5,1.5)(2.0,1.0) rectangle (2.5,1.5)(3.0,1.0) rectangle (3.5,1.5)(5.0,1.0) rectangle (5.5,1.5)(0.5,1.5) rectangle (1.0,2.0)(1.5,1.5) rectangle (2.0,2.0)(3.5,1.5) rectangle (4.0,2.0)(4.5,1.5) rectangle (5.0,2.0)(1.0,2.0) rectangle (1.5,2.5)(2.0,2.0) rectangle (2.5,2.5)(3.0,2.0) rectangle (3.5,2.5)(1.5,0.5) rectangle (4,2) (2.5,2) rectangle (3,2.5);
\draw[step=0.5cm,color=black](1.0,0.5) rectangle(1.5,1)(2.0,0.5) rectangle(2.5,1.0)(3.0,0.5) rectangle(3.5,1.0)(4.0,0.5) rectangle(4.5,1.0)(0.5,1.0) rectangle(1.0,1.5)(1.0,1.0) rectangle(1.5,1.5)(1.5,1.0) rectangle(2,1.5)(2.5,1.0) rectangle(3,1.5)(3.5,1.0) rectangle(4,1.5)(4,1.0) rectangle(4.5,1.5)(4.5,1.0) rectangle(5,1.5)(1,1.5) rectangle(1.5,2)(2,1.5) rectangle(2.5,2)(2.5,1.5) rectangle(3,2)(3,1.5) rectangle(3.5,2)(4,1.5) rectangle(4.5,2)(1.5,2) rectangle(2,2.5)(2.5,2) rectangle(3,2.5) ;
\draw[ultra thick](1,0.5)--(1,2.5);
\draw[ultra thick](4,0.5)--(4,2.0);
\node[align=left] at (2.75,-0.5){(a) $H_{9}$ with the resulting infection after taking the closure of the infection in the last\\five columns of the grid graph between the bold vertical lines.};
\end{scope}
\end{tikzpicture}
\end{center}

\begin{center}
\begin{tikzpicture}
\begin{scope}[xshift=0.0cm,yshift=0.0cm,scale=1.2]
\draw[step=0.5cm,color=black]  (4.0,0.0) rectangle (4.5,0.5)(0.5,0.5) rectangle (1.0,1.0)(1.5,0.5) rectangle (2.0,1.0)(2.5,0.5) rectangle (3.0,1.0)(3.5,0.5) rectangle (4.0,1.0)(4.5,0.5) rectangle (5.0,1.0)(0.0,1.0) rectangle (0.5,1.5)(2.0,1.0) rectangle (2.5,1.5)(3.0,1.0) rectangle (3.5,1.5)(5.0,1.0) rectangle (5.5,1.5)(0.5,1.5) rectangle (1.0,2.0)(1.5,1.5) rectangle (2.0,2.0)(3.5,1.5) rectangle (4.0,2.0)(4.5,1.5) rectangle (5.0,2.0)(1.0,2.0) rectangle (1.5,2.5)(2.0,2.0) rectangle (2.5,2.5)(3.0,2.0) rectangle (3.5,2.5);
\draw[step=0.5cm,color=black](1.0,0.5) rectangle(1.5,1)(2.0,0.5) rectangle(2.5,1.0)(3.0,0.5) rectangle(3.5,1.0)(4.0,0.5) rectangle(4.5,1.0)(0.5,1.0) rectangle(1.0,1.5)(1.0,1.0) rectangle(1.5,1.5)(1.5,1.0) rectangle(2,1.5)(2.5,1.0) rectangle(3,1.5)(3.5,1.0) rectangle(4,1.5)(4,1.0) rectangle(4.5,1.5)(4.5,1.0) rectangle(5,1.5)(1,1.5) rectangle(1.5,2)(2,1.5) rectangle(2.5,2)(2.5,1.5) rectangle(3,2)(3,1.5) rectangle(3.5,2)(4,1.5) rectangle(4.5,2)(1.5,2) rectangle(2,2.5)(2.5,2) rectangle(3,2.5) ;
\node[scale=0.5] at (1.25,2.25) {$1,2,2$};\node[scale=0.5] at (1.75,2.25) {$1,3,2$};\node[scale=0.5] at (2.25,2.25) {$1,4,2$};\node[scale=0.5] at (2.75,2.25) {$1,5,2$};\node[scale=0.5] at (3.25,2.25) {$1,6,2$};
\node[scale=0.5] at (0.75,1.75) {$1,1,1$};\node[scale=0.5] at (1.25,1.75) {$1,2,1$};\node[scale=0.5] at (1.75,1.75) {$1,3,1$};\node[scale=0.5] at (2.25,1.75) {$1,4,1$};\node[scale=0.5] at (2.75,1.75) {$1,5,1$};\node[scale=0.5] at (3.25,1.75) {$1,6,1$};\node[scale=0.5] at (3.75,1.75) {$1,7,1$};\node[scale=0.5] at (4.25,1.75) {$1,8,1$};\node[scale=0.5] at (4.75,1.75) {$1,9,1$};
\node[scale=0.5] at (0.25,1.25) {$2,1,2$};\node[scale=0.5] at (0.75,1.25) {$2,1,1$};\node[scale=0.5] at (1.25,1.25) {$2,2,1$};\node[scale=0.5] at (1.75,1.25) {$2,3,1$};\node[scale=0.5] at (2.25,1.25) {$2,4,1$};\node[scale=0.5] at (2.75,1.25) {$2,5,1$};\node[scale=0.5] at (3.25,1.25) {$2,6,1$};\node[scale=0.5] at (3.75,1.25) {$2,7,1$};\node[scale=0.5] at (4.25,1.25) {$2,8,1$};\node[scale=0.5] at (4.75,1.25) {$2,9,1$};\node[scale=0.5] at (5.25,1.25) {$2,9,2$};
\node[scale=0.5] at (0.75,0.75) {$3,1,1$};\node[scale=0.5] at (1.25,0.75) {$3,2,1$};\node[scale=0.5] at (1.75,0.75) {$3,3,1$};\node[scale=0.5] at (2.25,0.75) {$3,4,1$};\node[scale=0.5] at (2.75,0.75) {$3,5,1$};\node[scale=0.5] at (3.25,0.75) {$3,6,1$};\node[scale=0.5] at (3.75,0.75) {$3,7,1$};\node[scale=0.5] at (4.25,0.75) {$3,8,1$};\node[scale=0.5] at (4.75,0.75) {$3,9,1$};\node[scale=0.5] at (4.25,0.25) {$3,8,2$};
\draw[ultra thick](1,0.5)--(1,2.5);
\draw[ultra thick](4,0.5)--(4,2.0);
\node at (2.75,-0.5){(b) A local embedding of $H_9$ into $[3]\times[9]\times[2]$.};
\end{scope}
\end{tikzpicture}
\end{center}

\begin{center}
\begin{tikzpicture}
\begin{scope}[xshift=0.0cm,yshift=0.0cm,scale=0.7]
\draw[step=0.5cm,color=black,fill=gray] (-0.001,-0.001) grid (4.501,1.501) (0.0,0.0) rectangle (0.5,0.5)(0.5,0.0) rectangle (1.0,0.5)(1.0,0.0) rectangle (1.5,0.5)(1.5,0.0) rectangle (2.0,0.5)(2.0,0.0) rectangle (2.5,0.5)(2.5,0.0) rectangle (3.0,0.5)(3.0,0.0) rectangle (3.5,0.5)(3.5,0.0) rectangle (4.0,0.5)(4.0,0.0) rectangle (4.5,0.5)(0.0,0.5) rectangle (0.5,1.0)(0.5,0.5) rectangle (1.0,1.0)(1.0,0.5) rectangle (1.5,1.0)(1.5,0.5) rectangle (2.0,1.0)(2.0,0.5) rectangle (2.5,1.0)(2.5,0.5) rectangle (3.0,1.0)(3.0,0.5) rectangle (3.5,1.0)(3.5,0.5) rectangle (4.0,1.0)(4.0,0.5) rectangle (4.5,1.0)(0.0,1.0) rectangle (0.5,1.5)(0.5,1.0) rectangle (1.0,1.5)(1.0,1.0) rectangle (1.5,1.5)(1.5,1.0) rectangle (2.0,1.5)(2.0,1.0) rectangle (2.5,1.5)(2.5,1.0) rectangle (3.0,1.5)(3.0,1.0) rectangle (3.5,1.5)(3.5,1.0) rectangle (4.0,1.5)(4.0,1.0) rectangle (4.5,1.5);
\draw[ultra thick](0.5,0)--(0.5,1.5);
\draw[ultra thick](3.5,0)--(3.5,1.5);
\end{scope}

\begin{scope}[xshift=5.0cm,yshift=0.0cm,scale=0.7]
\draw[step=0.5cm,color=black,fill=gray] (-0.001,-0.001) grid (4.501,1.501) (3.5,0.0) rectangle (4.0,0.5)(0.0,0.5) rectangle (0.5,1.0)(4.0,0.5) rectangle (4.5,1.0)(0.5,1.0) rectangle (1.0,1.5)(1.0,1.0) rectangle (1.5,1.5)(1.5,1.0) rectangle (2.0,1.5)(2.0,1.0) rectangle (2.5,1.5)(2.5,1.0) rectangle (3.0,1.5);
\node[scale=0.7] at (0.25,0.25) {13};\node[scale=0.7] at (0.75,0.25) {12};\node[scale=0.7] at (1.25,0.25) {11};\node[scale=0.7] at (1.75,0.25) {10};\node[scale=0.7] at (2.25,0.25) {9};\node[scale=0.7] at (2.75,0.25) {8};\node[scale=0.7] at (3.25,0.25) {7};\node[scale=0.7] at (4.25,0.25) {1};\node[scale=0.7] at (0.75,0.75) {1};\node[scale=0.7] at (1.25,0.75) {2};\node[scale=0.7] at (1.75,0.75) {3};\node[scale=0.7] at (2.25,0.75) {4};\node[scale=0.7] at (2.75,0.75) {5};\node[scale=0.7] at (3.25,0.75) {6};\node[scale=0.7] at (3.75,0.75) {1};\node[scale=0.7] at (0.25,1.25) {1};\node[scale=0.7] at (3.25,1.25) {7};\node[scale=0.7] at (3.75,1.25) {8};\node[scale=0.7] at (4.25,1.25) {9};
\draw[ultra thick](0.5,0)--(0.5,1.5);
\draw[ultra thick](3.5,0)--(3.5,1.5);
\end{scope}

\node at (4.75,-0.5){(c) The result of the $3$-neighbour process on the image of $H_9$.};
\end{tikzpicture}
\end{center}

    \caption{Diagrams used in the proof of Proposition~\ref{prop:2,3,3mod6}.}
    \label{fig:239}
\end{figure}
\end{proof}

An exhaustive computer search easily concludes that $(2,3,3)$ is not perfect. Therefore, the condition that $a_3$ is larger than $3$ is necessary in Proposition~\ref{prop:2,3,3mod6}. The following Corollary summarizes Propositions~\ref{prop:2,3,0mod6} and~\ref{prop:2,3,3mod6}. 

\begin{cor}
\label{cor:2,3,0mod3}
$(2,3,a_3)$ is perfect if and only if  $a_3\equiv 0\bmod 3$ and $a_3\geq 6$.
\end{cor}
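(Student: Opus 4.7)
The plan is to combine Propositions~\ref{prop:2,3,0mod6} and~\ref{prop:2,3,3mod6} with a parity/modular argument that pins down exactly which $a_3$ can possibly yield a perfect triple $(2,3,a_3)$. Since ``perfect'' bundles together two requirements (class $0$ and optimal), each direction of the biconditional gets handled by attending to these two pieces separately.

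For the forward (``if'') direction, assume $a_3 \equiv 0 \bmod 3$ and $a_3 \geq 6$. Then either $a_3 \equiv 0 \bmod 6$, in which case Proposition~\ref{prop:2,3,0mod6} directly gives that $(2,3,a_3)$ is perfect, or $a_3 \equiv 3 \bmod 6$, in which case Proposition~\ref{prop:2,3,3mod6} (which applies since $a_3 \geq 9$ in this subcase) gives the same conclusion. No further work is required here.

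For the reverse (``only if'') direction, suppose $(2,3,a_3)$ is perfect. By definition this triple must be class $0$, which forces
\[ 2\cdot 3 + 2 a_3 + 3 a_3 \;=\; 6 + 5 a_3 \;\equiv\; 0 \bmod 3,\]
equivalently $a_3 \equiv 0 \bmod 3$. It remains to rule out the single small value $a_3 = 3$. For this we invoke the parenthetical remark immediately preceding the corollary: an exhaustive computer search over all subsets of $[2]\times[3]\times[3]$ of the requisite cardinality $\lceil (6+6+9)/3\rceil = 7$ shows that none of them percolate, so $(2,3,3)$ is not optimal (hence not perfect). Combined with $a_3 \equiv 0 \bmod 3$, this gives $a_3 \geq 6$.

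There is no real obstacle to this argument; the ``heavy lifting'' lives entirely in Propositions~\ref{prop:2,3,0mod6} and~\ref{prop:2,3,3mod6}, which together cover every admissible residue class modulo $6$ for $a_3$ divisible by $3$. The only non-trivial check left for the corollary itself is the $(2,3,3)$ exclusion, which is routine by finite search and is already recorded in the surrounding text.
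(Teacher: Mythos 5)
Your proof is correct and follows essentially the same route as the paper: the ``only if'' direction comes from the class-$0$ congruence $6+5a_3\equiv 0\bmod 3$ plus the computer-verified exclusion of $(2,3,3)$, and the ``if'' direction splits on the residue of $a_3$ modulo $6$ to invoke Propositions~\ref{prop:2,3,0mod6} and~\ref{prop:2,3,3mod6}. No issues.
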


\begin{proof}
If $a_3\not\equiv 0\bmod 3$, then $(2,3,a_3)$ is not class $0$ and therefore cannot be perfect. Also, as discussed above, $(2,3,3)$ is not perfect. The result now follows by Propositions~\ref{prop:2,3,0mod6} and~\ref{prop:2,3,3mod6}.
\end{proof}

\begin{prop}
\label{prop:2,2mod3,2mod3}
Let $a_2,a_3\geq 5$. If $a_2,a_3\equiv 2\bmod 3$ and $a_2\not\equiv a_3\bmod 6$, then $(2,a_2,a_3)$ is perfect. 
\end{prop}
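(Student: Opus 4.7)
The plan is to construct an explicit percolating set of cardinality $\frac{a_2 a_3 + 2 a_2 + 2 a_3}{3}$ in $[2] \times [a_2] \times [a_3]$ by adapting the 2D-to-3D embedding technique of Propositions~\ref{prop:2,3,0mod6} and~\ref{prop:2,3,3mod6}. By the symmetry of the two parameters and the hypothesis $a_2 \not\equiv a_3 \pmod 6$, I may assume without loss of generality that $a_2 \equiv 2 \pmod 6$ and $a_3 \equiv 5 \pmod 6$---so $a_2$ is even and $a_3$ is odd---and, by a further use of symmetry, that $a_2 \ge a_3$.

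I will take the ``book'' grid graph $H = [a_3] \times [a_2 + 2]$ and define a local embedding $f \colon V(H) \to [2] \times [a_2] \times [a_3]$ by $f(k, 1) = (2, 1, k)$, $f(k, j) = (1, j-1, k)$ for $2 \le j \le a_2 + 1$, and $f(k, a_2 + 2) = (2, a_2, k)$, so that the image of $f$ consists of the top layer together with the two bottom-layer ribs $\{2\}\times\{1\}\times[a_3]$ and $\{2\}\times\{a_2\}\times[a_3]$. Since $a_3 \equiv 2 \pmod 3$ and $a_2 + 2$ is even, Lemma~\ref{lem:fatter} applies to $H$ and yields a 2D percolating set $S \subseteq V(H)$ of size $\frac{a_2 a_3 + 3 a_3 + a_2}{3}$ whose closure omits exactly the corners $(1, 1)$ and $(a_3, 1)$ of $H$, which $f$ sends to the 3D cells $(2, 1, 1)$ and $(2, 1, a_3)$. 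I would then augment $f(S)$ by $\frac{a_2 - a_3}{3}$ further cells---a positive odd integer by our congruence hypotheses---placed in the bottom-layer interior $\{2\} \times \{2, \dots, a_2 - 1\} \times [a_3]$, bringing the total to the required $\frac{a_2 a_3 + 2 a_2 + 2 a_3}{3}$. Natural candidates for the extras are a uniformly spaced collection of seeds along a single line of the bottom layer (for instance, cells $(2, j, k)$ with $k$ fixed at a central value and $j$ varying in an arithmetic progression), arranged so that, once Lemma~\ref{lem:embedding} propagates the 2D infection throughout the image of $f$, these seeds trigger a cascading 3-neighbour propagation that fills the bottom interior. This, in turn, allows the two missed corners $(2, 1, 1)$ and $(2, 1, a_3)$ to overcome their circular dependency with $(1, 1, 1)$ and $(1, 1, a_3)$ in the top: namely, each missed pair $\{(1,1,1),(2,1,1)\}$ (resp.\ at $k = a_3$) becomes simultaneously infectable as soon as the bottom-interior neighbour $(2, 2, 1)$ (resp.\ $(2, 2, a_3)$) has been infected through the propagation front.

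The main obstacle is to arrange the extras so that the propagation indeed reaches every bottom-interior cell and resolves the two missed corners. Verifying this requires a careful step-by-step trace of the 3-neighbour process, which can be streamlined at the end via Lemma~\ref{lem:sidesSplit} by checking that the eventual infected set separates every pair of opposite faces of $[2] \times [a_2] \times [a_3]$. A secondary complication is the tight subcase where $a_2 - a_3 = 3$ (so only one extra cell is available), which may require a direct hand verification as a base case---for example $(2, 8, 5)$---before the construction extends to larger dimensions by inserting additional evenly spaced seeds.
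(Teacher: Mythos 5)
Your folding is too lopsided to work: it places the entire top layer $\{1\}\times[a_2]\times[a_3]$ in the image of $f$ but only the two ribs $j\in\{1,a_2\}$ in the bottom layer, and then asks $\frac{a_2-a_3}{3}$ extra seeds to fill the bottom-layer interior. A counting argument shows this is impossible once $a_2-a_3\geq 9$, no matter where the seeds are placed. Every cell of the bottom layer $L=\{2\}\times[a_2]\times[a_3]$ has exactly one neighbour outside $L$, so under the $3$-neighbour process a cell of $L$ can only become infected after acquiring at least two infected neighbours inside $L$; hence the set of bottom-layer cells that are ever infected is contained in the closure, under the $2$-neighbour process in $L\cong[a_2]\times[a_3]$, of the initially infected cells of $L$. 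By Proposition~\ref{prop:SA} with $d=2$, a set $B$ percolating for the $2$-neighbour process in $[a_2]\times[a_3]$ must satisfy $|B|\geq\left\lceil\frac{e(B)+a_2+a_3}{2}\right\rceil$. Your $B$ consists of the two ribs (which already contribute $e(B)\geq 2(a_3-1)$; adjacencies involving the seeds only make the bound worse) plus $\frac{a_2-a_3}{3}$ seeds, so $|B|=\frac{5a_3+a_2}{3}$, whereas the bound demands $|B|\geq\frac{3a_3+a_2-1}{2}$; these are compatible only when $a_2-a_3\leq 3$. Concretely, for $(2,14,5)$ your bottom layer receives $13$ cells but needs at least $14$, so no placement of the three extras can work. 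A secondary problem: after using symmetry to fix $a_2\equiv 2$ and $a_3\equiv 5\bmod 6$ you cannot ``further'' assume $a_2\geq a_3$ --- the orientation of $H$ is forced by the parity hypothesis of Lemma~\ref{lem:fatter}, and when $a_3>a_2$ your surplus $\frac{a_2-a_3}{3}$ is negative.

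The paper's proof avoids this trap by folding so that a $2$-percolating configuration lands in \emph{both} layers: the last three rows of its grid graph $H$ are mapped onto three full lines of the second layer and one boundary column of $H$ runs down the edge of that layer, so the second layer's initial cells already close up under the effective $2$-neighbour process (they meet the bound above with equality), and the remaining healthy cells are then dispatched via Lemma~\ref{lem:sidesSplit}. Any repair of your construction must likewise move on the order of $\frac{a_2+3a_3}{2}$ initially infected cells into the second layer, rather than sprinkling $\frac{a_2-a_3}{3}$ seeds into its interior.
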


\begin{proof}
Without loss of generality, we assume that $a_2\equiv 5\bmod 6$ and $a_3\equiv 2\bmod 6$. In our diagrams, we view the grid as $[a_2]\times[a_3]\times[2]$ for convenience. Let $H$ be the grid graph obtained from a $(a_2+1)\times (a_3\times 1)$ grid by adding a cell to the left of the bottom left cell and deleting the bottom five cells in the rightmost column. By applying Lemma~\ref{lem:longerCorner2}, and adding an additional infection on the added cell, we get a percolating set $A_0$ in $H$ of cardinality $\frac{(a_2+1)(a_3+1)+(a_2+1)+(a_3+1)-3}{3} = \frac{a_2a_3+2a_2+2a_3}{3}$. See Figure~\ref{fig:22mod32mod3}~(a) for an illustration in the case $a_2=11$ and $a_3=8$. 

We define a local embedding of $H$ into $[a_2]\times[a_3]\times[2]$ as follows. For $1\leq i\leq a_2-4$, the first $a_3$ vertices on the $i$th row are mapped to $(i,1,1),(i,2,1),\dots, (i,a_3,1)$ from left to right and the last vertex on the $i$th row is mapped to $(i,a_3,2)$. For $i\in\{a_2-3,a_2-2\}$, the verties on the $i$th row are mapped to $(i,1,1),(i,2,1),\dots, (i,a_3,1)$ from left to right. For $a_2-1\leq i\leq a_2+1$, the vertices on the $i$th row, excluding the extra vertex added on the last row, are mapped to $(i-1,1,2),(i-1,2,2),\dots, (i-1,a_3,2)$. Finally, the extra vertex on the last row is mapped to $(a_2,1,1)$. See Figure~\ref{fig:22mod32mod3}~(b) and (c) for an illustration of the embedding and the image of $H$ under that embedding, respectively, in the case $a_2=11$ and $a_3=8$. Starting with the image of $H$ fully infected, the vertices $(a_2-1,1,1)$ and $(a_2-3,a_3,2)$ each have three infected neighbours. Thus, they become infected after one additional step of the $3$-neighbour process; each of these vertices is marked with an X in Figure~\ref{fig:22mod32mod3}~(c). Upon infecting these two vertices, we see that there is no path between the two faces in any of the three possible dimensions which avoids the current infection. Thus, percolation occurs by Lemma~\ref{lem:sidesSplit}. We conclude that $m(2,a_2,a_3;3)\leq \frac{a_2a_3+2a_2+2a_3}{3}$ for all such $a_2$ and $a_3$, which is exactly what we set out to prove. 

\begin{figure}[htbp]
\begin{center}
\begin{tikzpicture}[scale=1.2]
\begin{scope}[xshift=0.0cm,yshift=0.0cm]
\draw[step=0.5cm,color=black,fill=gray] (0.4999,-0.001) grid (1.001,0.501)(0.0,0.0) rectangle (0.5,0.5)(1.0,0.0) rectangle (1.5,0.5)(1.5,0.0) rectangle (2.0,0.5)(2.0,0.0) rectangle (2.5,0.5)(2.5,0.0) rectangle (3.0,0.5)(3.0,0.0) rectangle (3.5,0.5)(3.5,0.0) rectangle (4.0,0.5)(4.0,0.0) rectangle (4.5,0.5)(0.5,0.5) rectangle (1.0,1.0)(1.0,0.5) rectangle (1.5,1.0)(1.5,0.5) rectangle (2.0,1.0)(2.0,0.5) rectangle (2.5,1.0)(2.5,0.5) rectangle (3.0,1.0)(3.0,0.5) rectangle (3.5,1.0)(3.5,0.5) rectangle (4.0,1.0)(4.0,0.5) rectangle (4.5,1.0)(0.5,1.0) rectangle (1.0,1.5)(1.0,1.0) rectangle (1.5,1.5)(1.5,1.0) rectangle (2.0,1.5)(2.0,1.0) rectangle (2.5,1.5)(2.5,1.0) rectangle (3.0,1.5)(3.0,1.0) rectangle (3.5,1.5)(3.5,1.0) rectangle (4.0,1.5)(4.0,1.0) rectangle (4.5,1.5)(0.5,1.5) rectangle (1.0,2.0)(1.0,1.5) rectangle (1.5,2.0)(1.5,1.5) rectangle (2.0,2.0)(2.0,1.5) rectangle (2.5,2.0)(2.5,1.5) rectangle (3.0,2.0)(3.0,1.5) rectangle (3.5,2.0)(3.5,1.5) rectangle (4.0,2.0)(4.0,1.5) rectangle (4.5,2.0)(0.5,2.0) rectangle (1.0,2.5)(1.0,2.0) rectangle (1.5,2.5)(1.5,2.0) rectangle (2.0,2.5)(2.0,2.0) rectangle (2.5,2.5)(2.5,2.0) rectangle (3.0,2.5)(3.0,2.0) rectangle (3.5,2.5)(3.5,2.0) rectangle (4.0,2.5)(4.0,2.0) rectangle (4.5,2.5)(0.5,2.5) rectangle (1.0,3.0)(1.0,2.5) rectangle (1.5,3.0)(1.5,2.5) rectangle (2.0,3.0)(2.0,2.5) rectangle (2.5,3.0)(2.5,2.5) rectangle (3.0,3.0)(3.0,2.5) rectangle (3.5,3.0)(3.5,2.5) rectangle (4.0,3.0)(4.0,2.5) rectangle (4.5,3.0)(4.5,2.5) rectangle (5.0,3.0)(0.5,3.0) rectangle (1.0,3.5)(1.0,3.0) rectangle (1.5,3.5)(1.5,3.0) rectangle (2.0,3.5)(2.0,3.0) rectangle (2.5,3.5)(2.5,3.0) rectangle (3.0,3.5)(3.0,3.0) rectangle (3.5,3.5)(3.5,3.0) rectangle (4.0,3.5)(4.0,3.0) rectangle (4.5,3.5)(4.5,3.0) rectangle (5.0,3.5)(0.5,3.5) rectangle (1.0,4.0)(1.0,3.5) rectangle (1.5,4.0)(1.5,3.5) rectangle (2.0,4.0)(2.0,3.5) rectangle (2.5,4.0)(2.5,3.5) rectangle (3.0,4.0)(3.0,3.5) rectangle (3.5,4.0)(3.5,3.5) rectangle (4.0,4.0)(4.0,3.5) rectangle (4.5,4.0)(4.5,3.5) rectangle (5.0,4.0)(0.5,4.0) rectangle (1.0,4.5)(1.0,4.0) rectangle (1.5,4.5)(1.5,4.0) rectangle (2.0,4.5)(2.0,4.0) rectangle (2.5,4.5)(2.5,4.0) rectangle (3.0,4.5)(3.0,4.0) rectangle (3.5,4.5)(3.5,4.0) rectangle (4.0,4.5)(4.0,4.0) rectangle (4.5,4.5)(4.5,4.0) rectangle (5.0,4.5)(0.5,4.5) rectangle (1.0,5.0)(1.0,4.5) rectangle (1.5,5.0)(1.5,4.5) rectangle (2.0,5.0)(2.0,4.5) rectangle (2.5,5.0)(2.5,4.5) rectangle (3.0,5.0)(3.0,4.5) rectangle (3.5,5.0)(3.5,4.5) rectangle (4.0,5.0)(4.0,4.5) rectangle (4.5,5.0)(4.5,4.5) rectangle (5.0,5.0)(0.5,5.0) rectangle (1.0,5.5)(1.0,5.0) rectangle (1.5,5.5)(1.5,5.0) rectangle (2.0,5.5)(2.0,5.0) rectangle (2.5,5.5)(2.5,5.0) rectangle (3.0,5.5)(3.0,5.0) rectangle (3.5,5.5)(3.5,5.0) rectangle (4.0,5.5)(4.0,5.0) rectangle (4.5,5.5)(4.5,5.0) rectangle (5.0,5.5)(0.5,5.5) rectangle (1.0,6.0)(1.0,5.5) rectangle (1.5,6.0)(1.5,5.5) rectangle (2.0,6.0)(2.0,5.5) rectangle (2.5,6.0)(2.5,5.5) rectangle (3.0,6.0)(3.0,5.5) rectangle (3.5,6.0)(3.5,5.5) rectangle (4.0,6.0)(4.0,5.5) rectangle (4.5,6.0)(4.5,5.5) rectangle (5.0,6.0);

\node[align=left] at (2.5,-0.75){(a) The grid graph $H$ with an\\infection obtained from Lemma~\ref{lem:longerCorner2}\\in the case $a_2=11$ and $a_3=8$};
\end{scope}

\begin{scope}[xshift=6.0cm,yshift=0.0cm]
\draw[step=0.5cm,color=black] (0.4999,-0.001) grid (1.001,0.501) (0.0,0.0) rectangle (0.5,0.5)(1.0,0.0) rectangle (1.5,0.5)(1.5,0.0) rectangle (2.0,0.5)(2.0,0.0) rectangle (2.5,0.5)(2.5,0.0) rectangle (3.0,0.5)(3.0,0.0) rectangle (3.5,0.5)(3.5,0.0) rectangle (4.0,0.5)(4.0,0.0) rectangle (4.5,0.5)(0.5,0.5) rectangle (1.0,1.0)(1.0,0.5) rectangle (1.5,1.0)(1.5,0.5) rectangle (2.0,1.0)(2.0,0.5) rectangle (2.5,1.0)(2.5,0.5) rectangle (3.0,1.0)(3.0,0.5) rectangle (3.5,1.0)(3.5,0.5) rectangle (4.0,1.0)(4.0,0.5) rectangle (4.5,1.0)(0.5,1.0) rectangle (1.0,1.5)(1.0,1.0) rectangle (1.5,1.5)(1.5,1.0) rectangle (2.0,1.5)(2.0,1.0) rectangle (2.5,1.5)(2.5,1.0) rectangle (3.0,1.5)(3.0,1.0) rectangle (3.5,1.5)(3.5,1.0) rectangle (4.0,1.5)(4.0,1.0) rectangle (4.5,1.5)(0.5,1.5) rectangle (1.0,2.0)(1.0,1.5) rectangle (1.5,2.0)(1.5,1.5) rectangle (2.0,2.0)(2.0,1.5) rectangle (2.5,2.0)(2.5,1.5) rectangle (3.0,2.0)(3.0,1.5) rectangle (3.5,2.0)(3.5,1.5) rectangle (4.0,2.0)(4.0,1.5) rectangle (4.5,2.0)(0.5,2.0) rectangle (1.0,2.5)(1.0,2.0) rectangle (1.5,2.5)(1.5,2.0) rectangle (2.0,2.5)(2.0,2.0) rectangle (2.5,2.5)(2.5,2.0) rectangle (3.0,2.5)(3.0,2.0) rectangle (3.5,2.5)(3.5,2.0) rectangle (4.0,2.5)(4.0,2.0) rectangle (4.5,2.5)(0.5,2.5) rectangle (1.0,3.0)(1.0,2.5) rectangle (1.5,3.0)(1.5,2.5) rectangle (2.0,3.0)(2.0,2.5) rectangle (2.5,3.0)(2.5,2.5) rectangle (3.0,3.0)(3.0,2.5) rectangle (3.5,3.0)(3.5,2.5) rectangle (4.0,3.0)(4.0,2.5) rectangle (4.5,3.0)(4.5,2.5) rectangle (5.0,3.0)(0.5,3.0) rectangle (1.0,3.5)(1.0,3.0) rectangle (1.5,3.5)(1.5,3.0) rectangle (2.0,3.5)(2.0,3.0) rectangle (2.5,3.5)(2.5,3.0) rectangle (3.0,3.5)(3.0,3.0) rectangle (3.5,3.5)(3.5,3.0) rectangle (4.0,3.5)(4.0,3.0) rectangle (4.5,3.5)(4.5,3.0) rectangle (5.0,3.5)(0.5,3.5) rectangle (1.0,4.0)(1.0,3.5) rectangle (1.5,4.0)(1.5,3.5) rectangle (2.0,4.0)(2.0,3.5) rectangle (2.5,4.0)(2.5,3.5) rectangle (3.0,4.0)(3.0,3.5) rectangle (3.5,4.0)(3.5,3.5) rectangle (4.0,4.0)(4.0,3.5) rectangle (4.5,4.0)(4.5,3.5) rectangle (5.0,4.0)(0.5,4.0) rectangle (1.0,4.5)(1.0,4.0) rectangle (1.5,4.5)(1.5,4.0) rectangle (2.0,4.5)(2.0,4.0) rectangle (2.5,4.5)(2.5,4.0) rectangle (3.0,4.5)(3.0,4.0) rectangle (3.5,4.5)(3.5,4.0) rectangle (4.0,4.5)(4.0,4.0) rectangle (4.5,4.5)(4.5,4.0) rectangle (5.0,4.5)(0.5,4.5) rectangle (1.0,5.0)(1.0,4.5) rectangle (1.5,5.0)(1.5,4.5) rectangle (2.0,5.0)(2.0,4.5) rectangle (2.5,5.0)(2.5,4.5) rectangle (3.0,5.0)(3.0,4.5) rectangle (3.5,5.0)(3.5,4.5) rectangle (4.0,5.0)(4.0,4.5) rectangle (4.5,5.0)(4.5,4.5) rectangle (5.0,5.0)(0.5,5.0) rectangle (1.0,5.5)(1.0,5.0) rectangle (1.5,5.5)(1.5,5.0) rectangle (2.0,5.5)(2.0,5.0) rectangle (2.5,5.5)(2.5,5.0) rectangle (3.0,5.5)(3.0,5.0) rectangle (3.5,5.5)(3.5,5.0) rectangle (4.0,5.5)(4.0,5.0) rectangle (4.5,5.5)(4.5,5.0) rectangle (5.0,5.5)(0.5,5.5) rectangle (1.0,6.0)(1.0,5.5) rectangle (1.5,6.0)(1.5,5.5) rectangle (2.0,6.0)(2.0,5.5) rectangle (2.5,6.0)(2.5,5.5) rectangle (3.0,6.0)(3.0,5.5) rectangle (3.5,6.0)(3.5,5.5) rectangle (4.0,6.0)(4.0,5.5) rectangle (4.5,6.0)(4.5,5.5) rectangle (5.0,6.0);
\node[scale=0.4] at (0.75,5.75) {$1,1,1$};
\node[scale=0.4] at (1.25,5.75) {$1,2,1$};
\node[scale=0.4] at (1.75,5.75) {$1,3,1$};
\node[scale=0.4] at (2.25,5.75) {$1,4,1$};
\node[scale=0.4] at (2.75,5.75) {$1,5,1$};
\node[scale=0.4] at (3.25,5.75) {$1,6,1$};
\node[scale=0.4] at (3.75,5.75) {$1,7,1$};
\node[scale=0.4] at (4.25,5.75) {$1,8,1$};
\node[scale=0.4] at (4.75,5.75) {$1,8,2$};

\node[scale=0.4] at (0.75,5.25) {$2,1,1$};
\node[scale=0.4] at (1.25,5.25) {$2,2,1$};
\node[scale=0.4] at (1.75,5.25) {$2,3,1$};
\node[scale=0.4] at (2.25,5.25) {$2,4,1$};
\node[scale=0.4] at (2.75,5.25) {$2,5,1$};
\node[scale=0.4] at (3.25,5.25) {$2,6,1$};
\node[scale=0.4] at (3.75,5.25) {$2,7,1$};
\node[scale=0.4] at (4.25,5.25) {$2,8,1$};
\node[scale=0.4] at (4.75,5.25) {$2,8,2$};

\node[scale=0.4] at (0.75,4.75) {$3,1,1$};
\node[scale=0.4] at (1.25,4.75) {$3,2,1$};
\node[scale=0.4] at (1.75,4.75) {$3,3,1$};
\node[scale=0.4] at (2.25,4.75) {$3,4,1$};
\node[scale=0.4] at (2.75,4.75) {$3,5,1$};
\node[scale=0.4] at (3.25,4.75) {$3,6,1$};
\node[scale=0.4] at (3.75,4.75) {$3,7,1$};
\node[scale=0.4] at (4.25,4.75) {$3,8,1$};
\node[scale=0.4] at (4.75,4.75) {$3,8,2$};

\node[scale=0.4] at (0.75,4.25) {$4,1,1$};
\node[scale=0.4] at (1.25,4.25) {$4,2,1$};
\node[scale=0.4] at (1.75,4.25) {$4,3,1$};
\node[scale=0.4] at (2.25,4.25) {$4,4,1$};
\node[scale=0.4] at (2.75,4.25) {$4,5,1$};
\node[scale=0.4] at (3.25,4.25) {$4,6,1$};
\node[scale=0.4] at (3.75,4.25) {$4,7,1$};
\node[scale=0.4] at (4.25,4.25) {$4,8,1$};
\node[scale=0.4] at (4.75,4.25) {$4,8,2$};

\node[scale=0.4] at (0.75,3.75) {$5,1,1$};
\node[scale=0.4] at (1.25,3.75) {$5,2,1$};
\node[scale=0.4] at (1.75,3.75) {$5,3,1$};
\node[scale=0.4] at (2.25,3.75) {$5,4,1$};
\node[scale=0.4] at (2.75,3.75) {$5,5,1$};
\node[scale=0.4] at (3.25,3.75) {$5,6,1$};
\node[scale=0.4] at (3.75,3.75) {$5,7,1$};
\node[scale=0.4] at (4.25,3.75) {$5,8,1$};
\node[scale=0.4] at (4.75,3.75) {$5,8,2$};

\node[scale=0.4] at (0.75,3.25) {$6,1,1$};
\node[scale=0.4] at (1.25,3.25) {$6,2,1$};
\node[scale=0.4] at (1.75,3.25) {$6,3,1$};
\node[scale=0.4] at (2.25,3.25) {$6,4,1$};
\node[scale=0.4] at (2.75,3.25) {$6,5,1$};
\node[scale=0.4] at (3.25,3.25) {$6,6,1$};
\node[scale=0.4] at (3.75,3.25) {$6,7,1$};
\node[scale=0.4] at (4.25,3.25) {$6,8,1$};
\node[scale=0.4] at (4.75,3.25) {$6,8,2$};

\node[scale=0.4] at (0.75,2.75) {$7,1,1$};
\node[scale=0.4] at (1.25,2.75) {$7,2,1$};
\node[scale=0.4] at (1.75,2.75) {$7,3,1$};
\node[scale=0.4] at (2.25,2.75) {$7,4,1$};
\node[scale=0.4] at (2.75,2.75) {$7,5,1$};
\node[scale=0.4] at (3.25,2.75) {$7,6,1$};
\node[scale=0.4] at (3.75,2.75) {$7,7,1$};
\node[scale=0.4] at (4.25,2.75) {$7,8,1$};
\node[scale=0.4] at (4.75,2.75) {$7,8,2$};

\node[scale=0.4] at (0.75,2.25) {$8,1,1$};
\node[scale=0.4] at (1.25,2.25) {$8,2,1$};
\node[scale=0.4] at (1.75,2.25) {$8,3,1$};
\node[scale=0.4] at (2.25,2.25) {$8,4,1$};
\node[scale=0.4] at (2.75,2.25) {$8,5,1$};
\node[scale=0.4] at (3.25,2.25) {$8,6,1$};
\node[scale=0.4] at (3.75,2.25) {$8,7,1$};
\node[scale=0.4] at (4.25,2.25) {$8,8,1$};

\node[scale=0.4] at (0.75,1.75) {$9,1,1$};
\node[scale=0.4] at (1.25,1.75) {$9,2,1$};
\node[scale=0.4] at (1.75,1.75) {$9,3,1$};
\node[scale=0.4] at (2.25,1.75) {$9,4,1$};
\node[scale=0.4] at (2.75,1.75) {$9,5,1$};
\node[scale=0.4] at (3.25,1.75) {$9,6,1$};
\node[scale=0.4] at (3.75,1.75) {$9,7,1$};
\node[scale=0.4] at (4.25,1.75) {$9,8,1$};

\node[scale=0.4] at (0.75,1.25) {$9,1,2$};
\node[scale=0.4] at (1.25,1.25) {$9,2,2$};
\node[scale=0.4] at (1.75,1.25) {$9,3,2$};
\node[scale=0.4] at (2.25,1.25) {$9,4,2$};
\node[scale=0.4] at (2.75,1.25) {$9,5,2$};
\node[scale=0.4] at (3.25,1.25) {$9,6,2$};
\node[scale=0.4] at (3.75,1.25) {$9,7,2$};
\node[scale=0.4] at (4.25,1.25) {$9,8,2$};

\node[scale=0.4] at (0.75,0.75) {$10,1,2$};
\node[scale=0.4] at (1.25,0.75) {$10,2,2$};
\node[scale=0.4] at (1.75,0.75) {$10,3,2$};
\node[scale=0.4] at (2.25,0.75) {$10,4,2$};
\node[scale=0.4] at (2.75,0.75) {$10,5,2$};
\node[scale=0.4] at (3.25,0.75) {$10,6,2$};
\node[scale=0.4] at (3.75,0.75) {$10,7,2$};
\node[scale=0.4] at (4.25,0.75) {$10,8,2$};

\node[scale=0.4] at (0.25,0.25) {$11,1,1$};
\node[scale=0.4] at (0.75,0.25) {$11,1,2$};
\node[scale=0.4] at (1.25,0.25) {$11,2,2$};
\node[scale=0.4] at (1.75,0.25) {$11,3,2$};
\node[scale=0.4] at (2.25,0.25) {$11,4,2$};
\node[scale=0.4] at (2.75,0.25) {$11,5,2$};
\node[scale=0.4] at (3.25,0.25) {$11,6,2$};
\node[scale=0.4] at (3.75,0.25) {$11,7,2$};
\node[scale=0.4] at (4.25,0.25) {$11,8,2$};
\node[align=left] at (2.5,-0.5){(b) A local embedding of $H$ into\\$[11]\times[8]\times[2]$.};
\end{scope}
\end{tikzpicture}
\end{center}

\begin{center}
\begin{tikzpicture}
\begin{scope}[xshift=0.0cm,yshift=0.0cm]
\draw[step=0.5cm,color=black,fill=gray] (-0.001,-0.001) grid (4.001,5.501) (0.0,0.0) rectangle (0.5,0.5)(0.0,1.0) rectangle (0.5,1.5)(0.5,1.0) rectangle (1.0,1.5)(1.0,1.0) rectangle (1.5,1.5)(1.5,1.0) rectangle (2.0,1.5)(2.0,1.0) rectangle (2.5,1.5)(2.5,1.0) rectangle (3.0,1.5)(3.0,1.0) rectangle (3.5,1.5)(3.5,1.0) rectangle (4.0,1.5)(0.0,1.5) rectangle (0.5,2.0)(0.5,1.5) rectangle (1.0,2.0)(1.0,1.5) rectangle (1.5,2.0)(1.5,1.5) rectangle (2.0,2.0)(2.0,1.5) rectangle (2.5,2.0)(2.5,1.5) rectangle (3.0,2.0)(3.0,1.5) rectangle (3.5,2.0)(3.5,1.5) rectangle (4.0,2.0)(0.0,2.0) rectangle (0.5,2.5)(0.5,2.0) rectangle (1.0,2.5)(1.0,2.0) rectangle (1.5,2.5)(1.5,2.0) rectangle (2.0,2.5)(2.0,2.0) rectangle (2.5,2.5)(2.5,2.0) rectangle (3.0,2.5)(3.0,2.0) rectangle (3.5,2.5)(3.5,2.0) rectangle (4.0,2.5)(0.0,2.5) rectangle (0.5,3.0)(0.5,2.5) rectangle (1.0,3.0)(1.0,2.5) rectangle (1.5,3.0)(1.5,2.5) rectangle (2.0,3.0)(2.0,2.5) rectangle (2.5,3.0)(2.5,2.5) rectangle (3.0,3.0)(3.0,2.5) rectangle (3.5,3.0)(3.5,2.5) rectangle (4.0,3.0)(0.0,3.0) rectangle (0.5,3.5)(0.5,3.0) rectangle (1.0,3.5)(1.0,3.0) rectangle (1.5,3.5)(1.5,3.0) rectangle (2.0,3.5)(2.0,3.0) rectangle (2.5,3.5)(2.5,3.0) rectangle (3.0,3.5)(3.0,3.0) rectangle (3.5,3.5)(3.5,3.0) rectangle (4.0,3.5)(0.0,3.5) rectangle (0.5,4.0)(0.5,3.5) rectangle (1.0,4.0)(1.0,3.5) rectangle (1.5,4.0)(1.5,3.5) rectangle (2.0,4.0)(2.0,3.5) rectangle (2.5,4.0)(2.5,3.5) rectangle (3.0,4.0)(3.0,3.5) rectangle (3.5,4.0)(3.5,3.5) rectangle (4.0,4.0)(0.0,4.0) rectangle (0.5,4.5)(0.5,4.0) rectangle (1.0,4.5)(1.0,4.0) rectangle (1.5,4.5)(1.5,4.0) rectangle (2.0,4.5)(2.0,4.0) rectangle (2.5,4.5)(2.5,4.0) rectangle (3.0,4.5)(3.0,4.0) rectangle (3.5,4.5)(3.5,4.0) rectangle (4.0,4.5)(0.0,4.5) rectangle (0.5,5.0)(0.5,4.5) rectangle (1.0,5.0)(1.0,4.5) rectangle (1.5,5.0)(1.5,4.5) rectangle (2.0,5.0)(2.0,4.5) rectangle (2.5,5.0)(2.5,4.5) rectangle (3.0,5.0)(3.0,4.5) rectangle (3.5,5.0)(3.5,4.5) rectangle (4.0,5.0)(0.0,5.0) rectangle (0.5,5.5)(0.5,5.0) rectangle (1.0,5.5)(1.0,5.0) rectangle (1.5,5.5)(1.5,5.0) rectangle (2.0,5.5)(2.0,5.0) rectangle (2.5,5.5)(2.5,5.0) rectangle (3.0,5.5)(3.0,5.0) rectangle (3.5,5.5)(3.5,5.0) rectangle (4.0,5.5);
\node at (0.25,0.75) {X};
\end{scope}

\begin{scope}[xshift=4.5cm,yshift=0.0cm]
\draw[step=0.5cm,color=black,fill=gray] (-0.001,-0.001) grid (4.001,5.501) (0.0,0.0) rectangle (0.5,0.5)(0.5,0.0) rectangle (1.0,0.5)(1.0,0.0) rectangle (1.5,0.5)(1.5,0.0) rectangle (2.0,0.5)(2.0,0.0) rectangle (2.5,0.5)(2.5,0.0) rectangle (3.0,0.5)(3.0,0.0) rectangle (3.5,0.5)(3.5,0.0) rectangle (4.0,0.5)(0.0,0.5) rectangle (0.5,1.0)(0.5,0.5) rectangle (1.0,1.0)(1.0,0.5) rectangle (1.5,1.0)(1.5,0.5) rectangle (2.0,1.0)(2.0,0.5) rectangle (2.5,1.0)(2.5,0.5) rectangle (3.0,1.0)(3.0,0.5) rectangle (3.5,1.0)(3.5,0.5) rectangle (4.0,1.0)(0.0,1.0) rectangle (0.5,1.5)(0.5,1.0) rectangle (1.0,1.5)(1.0,1.0) rectangle (1.5,1.5)(1.5,1.0) rectangle (2.0,1.5)(2.0,1.0) rectangle (2.5,1.5)(2.5,1.0) rectangle (3.0,1.5)(3.0,1.0) rectangle (3.5,1.5)(3.5,1.0) rectangle (4.0,1.5)(3.5,2.0) rectangle (4.0,2.5)(3.5,2.5) rectangle (4.0,3.0)(3.5,3.0) rectangle (4.0,3.5)(3.5,3.5) rectangle (4.0,4.0)(3.5,4.0) rectangle (4.0,4.5)(3.5,4.5) rectangle (4.0,5.0)(3.5,5.0) rectangle (4.0,5.5);\node at (3.75,1.75) {X};
\end{scope}
\node[align=left] at (4.25,-0.75){(c) The image of $H$ under the local embedding and two vertices (X)\\that are infected after one time step.};

\end{tikzpicture}
\end{center}
    \caption{Diagrams used in the proof of Proposition~\ref{prop:2,2mod3,2mod3}.}
    \label{fig:22mod32mod3}
\end{figure}
\end{proof}

\begin{figure}[htbp]
\begin{center}
\begin{tikzpicture}
\begin{scope}[xshift=0.0cm,yshift=0.0cm,scale=1.2]
\draw[step=0.5cm,color=black]  (1.0,0.0) rectangle (1.5,0.5)(1.5,0.0) rectangle (2.0,0.5)(2.0,0.0) rectangle (2.5,0.5)(2.5,0.0) rectangle (3.0,0.5)(3.0,0.0) rectangle (3.5,0.5)(3.5,0.0) rectangle (4.0,0.5)(4.0,0.0) rectangle (4.5,0.5)(4.5,0.0) rectangle (5.0,0.5)(5.0,0.0) rectangle (5.5,0.5)(5.5,0.0) rectangle (6.0,0.5)(6.0,0.0) rectangle (6.5,0.5)(1.0,0.5) rectangle (1.5,1.0)(1.5,0.5) rectangle (2.0,1.0)(2.0,0.5) rectangle (2.5,1.0)(2.5,0.5) rectangle (3.0,1.0)(3.0,0.5) rectangle (3.5,1.0)(3.5,0.5) rectangle (4.0,1.0)(4.0,0.5) rectangle (4.5,1.0)(4.5,0.5) rectangle (5.0,1.0)(5.0,0.5) rectangle (5.5,1.0)(5.5,0.5) rectangle (6.0,1.0)(6.0,0.5) rectangle (6.5,1.0)(1.0,1.0) rectangle (1.5,1.5)(1.5,1.0) rectangle (2.0,1.5)(2.0,1.0) rectangle (2.5,1.5)(2.5,1.0) rectangle (3.0,1.5)(3.0,1.0) rectangle (3.5,1.5)(3.5,1.0) rectangle (4.0,1.5)(4.0,1.0) rectangle (4.5,1.5)(4.5,1.0) rectangle (5.0,1.5)(5.0,1.0) rectangle (5.5,1.5)(5.5,1.0) rectangle (6.0,1.5)(6.0,1.0) rectangle (6.5,1.5)(0.5,1.5) rectangle (1.0,2.0)(1.5,1.5) rectangle (2.0,2.0)(2.0,1.5) rectangle (2.5,2.0)(2.5,1.5) rectangle (3.0,2.0)(3.0,1.5) rectangle (3.5,2.0)(3.5,1.5) rectangle (4.0,2.0)(4.0,1.5) rectangle (4.5,2.0)(4.5,1.5) rectangle (5.0,2.0)(5.0,1.5) rectangle (5.5,2.0)(5.5,1.5) rectangle (6.0,2.0)(6.0,2.0) rectangle (6.5,2.5)(0.5,2.5) rectangle (1.0,3.0)(1.0,2.5) rectangle (1.5,3.0)(1.5,2.5) rectangle (2.0,3.0)(2.0,2.5) rectangle (2.5,3.0)(2.5,2.5) rectangle (3.0,3.0)(3.0,2.5) rectangle (3.5,3.0)(3.5,2.5) rectangle (4.0,3.0)(4.0,2.5) rectangle (4.5,3.0)(4.5,2.5) rectangle (5.0,3.0)(5.0,2.5) rectangle (5.5,3.0)(5.5,2.5) rectangle (6.0,3.0)(0.0,3.0) rectangle (0.5,3.5)(0.5,3.0) rectangle (1.0,3.5)(1.0,3.0) rectangle (1.5,3.5)(1.5,3.0) rectangle (2.0,3.5)(2.0,3.0) rectangle (2.5,3.5)(2.5,3.0) rectangle (3.0,3.5)(3.0,3.0) rectangle (3.5,3.5)(3.5,3.0) rectangle (4.0,3.5)(4.0,3.0) rectangle (4.5,3.5)(4.5,3.0) rectangle (5.0,3.5)(5.0,3.0) rectangle (5.5,3.5)(5.5,3.0) rectangle (6.0,3.5)(6.0,3.0) rectangle (6.5,3.5)(6.0,2.5) rectangle (6.5,3)(6.0,1.5) rectangle (6.5,2)
(0.5,2.0) rectangle (1.0,2.5)(1.0,2.0) rectangle (1.5,2.5)(1.5,2.0) rectangle (2.0,2.5)(2.0,2.0) rectangle (2.5,2.5)(2.5,2.0) rectangle (3.0,2.5)(3.0,2.0) rectangle (3.5,2.5)(3.5,2.0) rectangle (4.0,2.5)(4.0,2.0) rectangle (4.5,2.5)(4.5,2.0) rectangle (5.0,2.5)(5.0,2.0) rectangle (5.5,2.5)(5.5,2.0) rectangle (6.0,2.5)(1.0,1.5) rectangle (1.5,2.0);

\node[scale=0.4] at (0.25,3.25) {$1,3,1$};
\node[scale=0.4] at (0.75,3.25) {$1,4,1$};
\node[scale=0.4] at (1.25,3.25) {$1,5,1$};
\node[scale=0.4] at (1.75,3.25) {$1,6,1$};
\node[scale=0.4] at (2.25,3.25) {$1,7,1$};
\node[scale=0.4] at (2.75,3.25) {$1,8,1$};
\node[scale=0.4] at (3.25,3.25) {$1,9,1$};
\node[scale=0.4] at (3.75,3.25) {$1,10,1$};
\node[scale=0.4] at (4.25,3.25) {$1,11,1$};
\node[scale=0.4] at (4.75,3.25) {$1,12,1$};
\node[scale=0.4] at (5.25,3.25) {$1,13,1$};
\node[scale=0.4] at (5.75,3.25) {$1,14,1$};
\node[scale=0.4] at (6.25,3.25) {$1,15,1$};

\node[scale=0.4] at (0.75,2.75) {$2,4,1$};
\node[scale=0.4] at (1.25,2.75) {$2,5,1$};
\node[scale=0.4] at (1.75,2.75) {$2,6,1$};
\node[scale=0.4] at (2.25,2.75) {$2,7,1$};
\node[scale=0.4] at (2.75,2.75) {$2,8,1$};
\node[scale=0.4] at (3.25,2.75) {$2,9,1$};
\node[scale=0.4] at (3.75,2.75) {$2,10,1$};
\node[scale=0.4] at (4.25,2.75) {$2,11,1$};
\node[scale=0.4] at (4.75,2.75) {$2,12,1$};
\node[scale=0.4] at (5.25,2.75) {$2,13,1$};
\node[scale=0.4] at (5.75,2.75) {$2,14,1$};
\node[scale=0.4] at (6.25,2.75) {$2,15,1$};

\node[scale=0.4] at (0.75,2.25) {$3,4,1$};
\node[scale=0.4] at (1.25,2.25) {$3,5,1$};
\node[scale=0.4] at (1.75,2.25) {$3,6,1$};
\node[scale=0.4] at (2.25,2.25) {$3,7,1$};
\node[scale=0.4] at (2.75,2.25) {$3,8,1$};
\node[scale=0.4] at (3.25,2.25) {$3,9,1$};
\node[scale=0.4] at (3.75,2.25) {$3,10,1$};
\node[scale=0.4] at (4.25,2.25) {$3,11,1$};
\node[scale=0.4] at (4.75,2.25) {$3,12,1$};
\node[scale=0.4] at (5.25,2.25) {$3,13,1$};
\node[scale=0.4] at (5.75,2.25) {$3,14,1$};
\node[scale=0.4] at (6.25,2.25) {$3,15,1$};

\node[scale=0.4] at (0.75,1.75) {$4,4,1$};
\node[scale=0.4] at (1.25,1.75) {$4,5,1$};
\node[scale=0.4] at (1.75,1.75) {$4,6,1$};
\node[scale=0.4] at (2.25,1.75) {$4,7,1$};
\node[scale=0.4] at (2.75,1.75) {$4,8,1$};
\node[scale=0.4] at (3.25,1.75) {$4,9,1$};
\node[scale=0.4] at (3.75,1.75) {$4,10,1$};
\node[scale=0.4] at (4.25,1.75) {$4,11,1$};
\node[scale=0.4] at (4.75,1.75) {$4,12,1$};
\node[scale=0.4] at (5.25,1.75) {$4,13,1$};
\node[scale=0.4] at (5.75,1.75) {$4,14,1$};
\node[scale=0.4] at (6.25,1.75) {$4,15,1$};

\node[scale=0.4] at (1.25,1.25) {$5,5,1$};
\node[scale=0.4] at (1.75,1.25) {$5,6,1$};
\node[scale=0.4] at (2.25,1.25) {$5,7,1$};
\node[scale=0.4] at (2.75,1.25) {$5,8,1$};
\node[scale=0.4] at (3.25,1.25) {$5,9,1$};
\node[scale=0.4] at (3.75,1.25) {$5,10,1$};
\node[scale=0.4] at (4.25,1.25) {$5,11,1$};
\node[scale=0.4] at (4.75,1.25) {$5,12,1$};
\node[scale=0.4] at (5.25,1.25) {$5,13,1$};
\node[scale=0.4] at (5.75,1.25) {$5,14,1$};
\node[scale=0.4] at (6.25,1.25) {$5,15,1$};

\node[scale=0.4] at (1.25,0.75) {$5,5,2$};
\node[scale=0.4] at (1.75,0.75) {$5,6,2$};
\node[scale=0.4] at (2.25,0.75) {$5,7,2$};
\node[scale=0.4] at (2.75,0.75) {$5,8,2$};
\node[scale=0.4] at (3.25,0.75) {$5,9,2$};
\node[scale=0.4] at (3.75,0.75) {$5,10,2$};
\node[scale=0.4] at (4.25,0.75) {$5,11,2$};
\node[scale=0.4] at (4.75,0.75) {$5,12,2$};
\node[scale=0.4] at (5.25,0.75) {$5,13,2$};
\node[scale=0.4] at (5.75,0.75) {$5,14,2$};
\node[scale=0.4] at (6.25,0.75) {$5,15,2$};

\node[scale=0.4] at (1.25,0.25) {$6,5,2$};
\node[scale=0.4] at (1.75,0.25) {$6,6,2$};
\node[scale=0.4] at (2.25,0.25) {$6,7,2$};
\node[scale=0.4] at (2.75,0.25) {$6,8,2$};
\node[scale=0.4] at (3.25,0.25) {$6,9,2$};
\node[scale=0.4] at (3.75,0.25) {$6,10,2$};
\node[scale=0.4] at (4.25,0.25) {$6,11,2$};
\node[scale=0.4] at (4.75,0.25) {$6,12,2$};
\node[scale=0.4] at (5.25,0.25) {$6,13,2$};
\node[scale=0.4] at (5.75,0.25) {$6,14,2$};
\node[scale=0.4] at (6.25,0.25) {$6,15,2$};
\node[align=left] at (3.25,-0.5){(a) A local embedding of $H$ into $[6]\times[15]\times[2]$.};
\end{scope}

\begin{scope}[xshift=10.0cm,yshift=0.0cm]
\draw[step=0.5cm,color=black,fill=gray] (-0.001,-0.001) grid (2.501,3.001) (0.5,0.0) rectangle (1.0,0.5)(0.0,0.5) rectangle (0.5,1.0)(1.0,0.5) rectangle (1.5,1.0)(2.0,0.5) rectangle (2.5,1.0)(1.5,1.0) rectangle (2.0,1.5)(2.0,1.0) rectangle (2.5,1.5)(0.0,1.5) rectangle (0.5,2.0)(1.5,1.5) rectangle (2.0,2.0)(2.0,1.5) rectangle (2.5,2.0)(1.5,2.0) rectangle (2.0,2.5)(2.0,2.0) rectangle (2.5,2.5)(0.0,2.5) rectangle (0.5,3.0)(1.0,2.5) rectangle (1.5,3.0)(1.5,2.5) rectangle (2.0,3.0)(2.0,2.5) rectangle (2.5,3.0);
\node at (0.25,0.25) {1};\node at (1.25,0.25) {1};\node at (1.75,0.25) {6};\node at (2.25,0.25) {7};\node at (0.75,0.75) {1};\node at (1.75,0.75) {1};\node at (0.25,1.25) {1};\node at (0.75,1.25) {4};\node at (1.25,1.25) {5};\node at (0.75,1.75) {5};\node at (1.25,1.75) {6};\node at (0.25,2.25) {9};\node at (0.75,2.25) {8};\node at (1.25,2.25) {7};\node at (0.75,2.75) {1};
\end{scope}

\begin{scope}[xshift=13.0cm,yshift=0.0cm]
\draw[step=0.5cm,color=black,fill=gray] (-0.001,-0.001) grid (2.501,3.001) (0.0,0.0) rectangle (0.5,0.5)(1.0,0.0) rectangle (1.5,0.5)(2.0,0.0) rectangle (2.5,0.5)(2.0,0.5) rectangle (2.5,1.0)(0.0,1.0) rectangle (0.5,1.5)(0.5,1.5) rectangle (1.0,2.0)(0.5,2.5) rectangle (1.0,3.0);
\node at (0.75,0.25) {1};\node at (1.75,0.25) {5};\node at (0.25,0.75) {1};\node at (0.75,0.75) {2};\node at (1.25,0.75) {3};\node at (1.75,0.75) {4};\node at (0.75,1.25) {3};\node at (1.25,1.25) {6};\node at (1.75,1.25) {7};\node at (2.25,1.25) {8};\node at (0.25,1.75) {1};\node at (1.25,1.75) {7};\node at (1.75,1.75) {8};\node at (2.25,1.75) {9};\node at (0.25,2.25) {10};\node at (0.75,2.25) {9};\node at (1.25,2.25) {10};\node at (1.75,2.25) {11};\node at (2.25,2.25) {12};\node at (0.25,2.75) {11};\node at (1.25,2.75) {11};\node at (1.75,2.75) {12};\node at (2.25,2.75) {13};
\node[align=left] at (-0.25,-0.75){(b) An infection in the intersection of the\\last six rows and first five columns.};
\end{scope}

\end{tikzpicture}
\end{center}

\begin{center}
\begin{tikzpicture}
\begin{scope}[xshift=0.0cm,yshift=0.0cm]
\draw[step=0.5cm,color=black,fill=gray] (-0.001,-0.001) grid (4.501,6.001) (0.0,0.0) rectangle (0.5,0.5)(0.5,0.0) rectangle (1.0,0.5)(1.0,0.0) rectangle (1.5,0.5)(1.5,0.0) rectangle (2.0,0.5)(2.0,0.0) rectangle (2.5,0.5)(0.0,0.5) rectangle (0.5,1.0)(0.5,0.5) rectangle (1.0,1.0)(1.0,0.5) rectangle (1.5,1.0)(1.5,0.5) rectangle (2.0,1.0)(2.0,0.5) rectangle (2.5,1.0)(2.5,0.5) rectangle (3.0,1.0)(3.0,0.5) rectangle (3.5,1.0)(3.5,0.5) rectangle (4.0,1.0)(4.0,0.5) rectangle (4.5,1.0)(0.0,1.0) rectangle (0.5,1.5)(0.5,1.0) rectangle (1.0,1.5)(1.0,1.0) rectangle (1.5,1.5)(1.5,1.0) rectangle (2.0,1.5)(2.0,1.0) rectangle (2.5,1.5)(2.5,1.0) rectangle (3.0,1.5)(3.0,1.0) rectangle (3.5,1.5)(3.5,1.0) rectangle (4.0,1.5)(4.0,1.0) rectangle (4.5,1.5)(0.0,1.5) rectangle (0.5,2.0)(0.5,1.5) rectangle (1.0,2.0)(1.0,1.5) rectangle (1.5,2.0)(1.5,1.5) rectangle (2.0,2.0)(2.0,1.5) rectangle (2.5,2.0)(2.5,1.5) rectangle (3.0,2.0)(3.0,1.5) rectangle (3.5,2.0)(3.5,1.5) rectangle (4.0,2.0)(4.0,1.5) rectangle (4.5,2.0)(0.0,2.0) rectangle (0.5,2.5)(0.5,2.0) rectangle (1.0,2.5)(1.0,2.0) rectangle (1.5,2.5)(1.5,2.0) rectangle (2.0,2.5)(2.0,2.0) rectangle (2.5,2.5)(2.5,2.0) rectangle (3.0,2.5)(3.0,2.0) rectangle (3.5,2.5)(3.5,2.0) rectangle (4.0,2.5)(4.0,2.0) rectangle (4.5,2.5)(0.0,2.5) rectangle (0.5,3.0)(0.5,2.5) rectangle (1.0,3.0)(1.0,2.5) rectangle (1.5,3.0)(1.5,2.5) rectangle (2.0,3.0)(2.0,2.5) rectangle (2.5,3.0)(2.5,2.5) rectangle (3.0,3.0)(3.0,2.5) rectangle (3.5,3.0)(3.5,2.5) rectangle (4.0,3.0)(4.0,2.5) rectangle (4.5,3.0)(1.0,3.0) rectangle (1.5,3.5)(1.5,3.0) rectangle (2.0,3.5)(2.0,3.0) rectangle (2.5,3.5)(2.5,3.0) rectangle (3.0,3.5)(3.0,3.0) rectangle (3.5,3.5)(3.5,3.0) rectangle (4.0,3.5)(4.0,3.0) rectangle (4.5,3.5)(0.0,3.5) rectangle (0.5,4.0)(1.0,3.5) rectangle (1.5,4.0)(1.5,3.5) rectangle (2.0,4.0)(2.0,3.5) rectangle (2.5,4.0)(2.5,3.5) rectangle (3.0,4.0)(3.0,3.5) rectangle (3.5,4.0)(3.5,3.5) rectangle (4.0,4.0)(4.0,3.5) rectangle (4.5,4.0)(1.0,4.0) rectangle (1.5,4.5)(1.5,4.0) rectangle (2.0,4.5)(2.0,4.0) rectangle (2.5,4.5)(2.5,4.0) rectangle (3.0,4.5)(3.0,4.0) rectangle (3.5,4.5)(3.5,4.0) rectangle (4.0,4.5)(4.0,4.0) rectangle (4.5,4.5)(0.0,4.5) rectangle (0.5,5.0)(1.0,4.5) rectangle (1.5,5.0)(1.5,4.5) rectangle (2.0,5.0)(2.0,4.5) rectangle (2.5,5.0)(2.5,4.5) rectangle (3.0,5.0)(3.0,4.5) rectangle (3.5,5.0)(3.5,4.5) rectangle (4.0,5.0)(4.0,4.5) rectangle (4.5,5.0)(1.0,5.0) rectangle (1.5,5.5)(1.5,5.0) rectangle (2.0,5.5)(2.0,5.0) rectangle (2.5,5.5)(2.5,5.0) rectangle (3.0,5.5)(3.0,5.0) rectangle (3.5,5.5)(3.5,5.0) rectangle (4.0,5.5)(4.0,5.0) rectangle (4.5,5.5)(0.0,5.5) rectangle (0.5,6.0)(1.0,5.5) rectangle (1.5,6.0)(1.5,5.5) rectangle (2.0,6.0)(2.0,5.5) rectangle (2.5,6.0)(2.5,5.5) rectangle (3.0,6.0)(3.0,5.5) rectangle (3.5,6.0)(3.5,5.5) rectangle (4.0,6.0)(4.0,5.5) rectangle (4.5,6.0);
\node at (2.75,0.25) {1};\node at (3.25,0.25) {2};\node at (3.75,0.25) {3};\node at (4.25,0.25) {4};\node at (0.25,3.25) {3};\node at (0.75,3.25) {2};\node at (0.75,3.75) {1};\node at (0.25,4.25) {3};\node at (0.75,4.25) {2};\node at (0.75,4.75) {1};\node at (0.25,5.25) {3};\node at (0.75,5.25) {2};\node at (0.75,5.75) {1};
\end{scope}

\begin{scope}[xshift=5.0cm,yshift=0.0cm]
\draw[step=0.5cm,color=black,fill=gray] (-0.001,-0.001) grid (4.501,6.001) (0.0,0.0) rectangle (0.5,0.5)(0.5,0.0) rectangle (1.0,0.5)(1.0,0.0) rectangle (1.5,0.5)(1.5,0.0) rectangle (2.0,0.5)(2.0,0.0) rectangle (2.5,0.5)(2.5,0.0) rectangle (3.0,0.5)(3.0,0.0) rectangle (3.5,0.5)(3.5,0.0) rectangle (4.0,0.5)(4.0,0.0) rectangle (4.5,0.5)(0.0,0.5) rectangle (0.5,1.0)(0.5,0.5) rectangle (1.0,1.0)(1.0,0.5) rectangle (1.5,1.0)(1.5,0.5) rectangle (2.0,1.0)(2.0,0.5) rectangle (2.5,1.0)(2.5,0.5) rectangle (3.0,1.0)(3.0,0.5) rectangle (3.5,1.0)(3.5,0.5) rectangle (4.0,1.0)(4.0,0.5) rectangle (4.5,1.0)(0.0,1.0) rectangle (0.5,1.5)(0.5,1.0) rectangle (1.0,1.5)(1.0,1.0) rectangle (1.5,1.5)(1.5,1.0) rectangle (2.0,1.5)(2.0,1.0) rectangle (2.5,1.5)(0.0,1.5) rectangle (0.5,2.0)(0.5,1.5) rectangle (1.0,2.0)(1.0,1.5) rectangle (1.5,2.0)(1.5,1.5) rectangle (2.0,2.0)(2.0,1.5) rectangle (2.5,2.0)(0.0,2.0) rectangle (0.5,2.5)(0.5,2.0) rectangle (1.0,2.5)(1.0,2.0) rectangle (1.5,2.5)(1.5,2.0) rectangle (2.0,2.5)(2.0,2.0) rectangle (2.5,2.5)(0.0,2.5) rectangle (0.5,3.0)(0.5,2.5) rectangle (1.0,3.0)(1.0,2.5) rectangle (1.5,3.0)(1.5,2.5) rectangle (2.0,3.0)(2.0,2.5) rectangle (2.5,3.0)(0.5,3.5) rectangle (1.0,4.0)(0.5,4.5) rectangle (1.0,5.0)(0.5,5.5) rectangle (1.0,6.0);
\node at (2.75,1.25) {1};\node at (3.25,1.25) {2};\node at (3.75,1.25) {3};\node at (4.25,1.25) {4};\node at (2.75,1.75) {2};\node at (3.25,1.75) {3};\node at (3.75,1.75) {4};\node at (4.25,1.75) {5};\node at (2.75,2.25) {3};\node at (3.25,2.25) {4};\node at (3.75,2.25) {5};\node at (4.25,2.25) {6};\node at (2.75,2.75) {4};\node at (3.25,2.75) {5};\node at (3.75,2.75) {6};\node at (4.25,2.75) {7};\node at (0.25,3.25) {4};\node at (0.75,3.25) {3};\node at (1.25,3.25) {4};\node at (1.75,3.25) {5};\node at (2.25,3.25) {6};\node at (2.75,3.25) {7};\node at (3.25,3.25) {8};\node at (3.75,3.25) {9};\node at (4.25,3.25) {10};\node at (0.25,3.75) {5};\node at (1.25,3.75) {5};\node at (1.75,3.75) {6};\node at (2.25,3.75) {7};\node at (2.75,3.75) {8};\node at (3.25,3.75) {9};\node at (3.75,3.75) {10};\node at (4.25,3.75) {11};\node at (0.25,4.25) {6};\node at (0.75,4.25) {3};\node at (1.25,4.25) {6};\node at (1.75,4.25) {7};\node at (2.25,4.25) {8};\node at (2.75,4.25) {9};\node at (3.25,4.25) {10};\node at (3.75,4.25) {11};\node at (4.25,4.25) {12};\node at (0.25,4.75) {7};\node at (1.25,4.75) {7};\node at (1.75,4.75) {8};\node at (2.25,4.75) {9};\node at (2.75,4.75) {10};\node at (3.25,4.75) {11};\node at (3.75,4.75) {12};\node at (4.25,4.75) {13};\node at (0.25,5.25) {8};\node at (0.75,5.25) {3};\node at (1.25,5.25) {8};\node at (1.75,5.25) {9};\node at (2.25,5.25) {10};\node at (2.75,5.25) {11};\node at (3.25,5.25) {12};\node at (3.75,5.25) {13};\node at (4.25,5.25) {14};\node at (0.25,5.75) {9};\node at (1.25,5.75) {9};\node at (1.75,5.75) {10};\node at (2.25,5.75) {11};\node at (2.75,5.75) {12};\node at (3.25,5.75) {13};\node at (3.75,5.75) {14};\node at (4.25,5.75) {15};
\node[align=left] at (-0.25,-0.75){(c) Result of the $3$-neighbour process after running it within the image of $H$\\and within the intersection of the last six rows and first five columns.};
\end{scope}

\end{tikzpicture}
\end{center}
    \caption{Diagrams used in the proof of Proposition~\ref{prop:2,0mod3,0mod3}}
    \label{fig:23mod60mod6}
\end{figure}

\begin{prop}
\label{prop:2,0mod3,0mod3}
Let $a_2,a_3\geq 6$. If $a_2,a_3\equiv 0\bmod 3$ and $a_2\not\equiv a_3\bmod 6$, then $(2,a_2,a_3)$ is perfect. 
\end{prop}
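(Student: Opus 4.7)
Without loss of generality, assume $a_2 \equiv 0 \pmod 6$ and $a_3 \equiv 3 \pmod 6$; combined with $a_2, a_3 \geq 6$ this forces $a_3 \geq 9$. The plan is to use the ``fold a 2D shape into the 3D grid'' strategy from the earlier propositions of this section: construct a 2D grid graph $H$ together with a percolating set $A_0 \subseteq V(H)$ of the right size, define a local embedding $f: H \to [a_2] \times [a_3] \times [2]$, possibly add a few auxiliary initial infections, and conclude via Lemmas~\ref{lem:embedding} and~\ref{lem:sidesSplit}.

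Concretely, I would choose $H$ to be a grid graph whose image under $f$ is a staircase region of the 3D grid covering most of layer $1$ (as an $a_2 \times a_3$ rectangle with a small staircase missing near one corner) together with a short strip along one face of layer $2$, with one row of cells ``doubled'' in $H$ so that the embedding transitions between the two layers; Figure~\ref{fig:23mod60mod6}(a) suggests the correct template for the case $(a_2, a_3) = (6, 15)$. The percolating set $A_0$ for $H$ would be built by applying Lemma~\ref{lem:longerCorner2} to an $a_2 \times a_3$ sub-rectangle of $H$ (whose residues $(0, 3) \bmod 6$ are exactly what that lemma requires), using Lemma~\ref{lem:skinny} on the additional two-row strip associated with layer $2$, and placing a handful of hand-chosen infections to fill in the corners left uninfected by these two lemmas. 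After applying $f$ and adding one or two further initial infections near the staircase (as in Figure~\ref{fig:23mod60mod6}(b)), the $3$-neighbour process saturates the image of $H$ via Lemma~\ref{lem:embedding}, and then spreads into the remainder of both layers until every path between opposite faces in each of the three coordinate directions meets the infection; Lemma~\ref{lem:sidesSplit} then delivers percolation. A bookkeeping check using the cardinality formulas from Lemmas~\ref{lem:longerCorner2} and~\ref{lem:skinny} should show that the total is exactly $\frac{a_2 a_3 + 2a_2 + 2a_3}{3}$, matching the lower bound from Proposition~\ref{prop:SA}.

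The principal obstacle is the combinatorial design: $H$, $A_0$, $f$, and the auxiliary infections must all be chosen simultaneously so that $A_0$ genuinely percolates in $H$ (which requires verifying via Corollary~\ref{cor:iff2D} that the complement of $A_0$ in $H$ is a forest with at most one boundary vertex per component, especially near the staircase), so that $f$ really is a local embedding (in particular, the ``doubled'' row must genuinely sit at the interface between the two layers), and so that the arithmetic of $|A_0|$ together with the auxiliary points lands on the target. The residues of $a_2$ and $a_3$ modulo $6$ must align with the hypotheses of the ingredient lemmas, and this is precisely where the assumption $a_2 \not\equiv a_3 \pmod 6$ is used.
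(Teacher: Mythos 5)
Your proposal correctly identifies the general template shared by the propositions of Section~\ref{sec:folding} (build a 2D grid graph $H$ with a cheap percolating set, fold it into $[a_2]\times[a_3]\times[2]$ via a local embedding, add a few extra seeds, and finish with Lemmas~\ref{lem:embedding} and~\ref{lem:sidesSplit}), but it stops at the template: the entire content of the proposition is the explicit choice of $H$, its percolating set, the embedding and the auxiliary seeds, together with the verification that the count lands on $\frac{a_2a_3+2a_2+2a_3}{3}$, and all of this is deferred (``a bookkeeping check should show\dots'', ``the principal obstacle is the combinatorial design\dots''). Moreover, the specific ingredients you name cannot work. Applying Lemma~\ref{lem:longerCorner2} to an $a_2\times a_3$ rectangle already costs $\frac{a_2a_3+a_2+a_3-6}{3}$, leaving a budget of only $\frac{a_2+a_3+6}{3}$ for everything in layer~$2$. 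A strip handled by Lemma~\ref{lem:skinny} has density $1/2$ (a $2\times n$ strip costs $n$ seeds), so a strip running along the long face of layer~$2$ already exceeds the budget (for $(a_2,a_3)=(6,15)$ the budget is $9$ but a $2\times 15$ strip costs $15$), while a short strip leaves the rest of layer~$2$ unreachable: once layer~$1$ is infected, a vertex of layer~$2$ still needs two infected neighbours \emph{within} layer~$2$, so a fully infected strip plus ``a handful'' of corner seeds is stable and each of the remaining $\approx a_2$ rows (or columns) of layer~$2$ needs its own seed.

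The paper resolves both problems at once by taking $H$ to be a single $(a_2+1)\times(a_3-2)$ grid with a jagged corner removed and seeding it via Lemma~\ref{lem:jagged} — this is where $a_2+1\equiv a_3-2\equiv 1\bmod 6$, i.e.\ the hypothesis $a_2\not\equiv a_3\bmod 6$, actually enters. The last two rows of $H$ fold into layer~$2$, so the layer-$2$ strip is seeded at density $\approx 1/3$ rather than $1/2$, and the auxiliary set $S$ — which has $a_2+4$ elements, not a handful — includes an alternating column $\{(2i-1,2,2)\}$ of about $a_2/2$ seeds in layer~$2$ precisely so that every row of layer~$2$ is seeded and the infection can sweep across it column by column. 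Without an ingredient playing the role of Lemma~\ref{lem:jagged} and without those $\Theta(a_2)$ layer-$2$ seeds budgeted into the count, the construction you describe cannot reach the target cardinality, so there is a genuine gap.
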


\begin{proof}
Without loss of generality, $a_2\equiv 0\bmod 6$ and $a_3\equiv 3\bmod 6$. In our diagrams, we view the grid as $[a_2]\times[a_3]\times[2]$. Let $H$ be the graph obtained from the $(a_2+1)\times(a_3-2)$ grid by deleting the points
\[(a_2-4,1),(a_2-3,1),(a_2-2,1),(a_2-1,1),(a_2,1),(a_2+1,1),(a_2-1,2),(a_2,2),(a_2+1,2).\]
By Lemma~\ref{lem:jagged}, $H$ has a percolating set of cardinality $\frac{(a_2+1)(a_3-2)+(a_2+1)+(a_3-2)-9}{3}$. A local embedding of $H$ into $[a_2]\times[a_3]\times[2]$ in the case that $a_2=6$ and $a_3=15$ is given in Figure~\ref{fig:23mod60mod6}~(a); the local embedding in the general case is analogous. The percolating set consists of the image of the aforementioned infection in $H$ under this local embedding together with the points in the set
\[S:=\{(2i-1,1,1): 1\leq i\leq a_2/2\}\cup\{(a_2,2,1),(a_2-1,3,1)\}\]
\[\cup\{(2i-1,2,2): 1\leq i\leq (a_2-2)/2\}\cup\{(a_2-2,1,2),(a_2,1,2),(a_2,3,2)\}.\]
Consider the set $B_0$ consisting of the elements of the union of $S$ and the image of $H$ under the local embedding contained within $\{a_2-5,\dots,a_2\}\times[5]\times[2]$. Then the elements of $B_0$ contained in $\{a_2-5,\dots,a_2\}\times[5]\times[2]$ percolate within $\{a_2-5,\dots,a_2\}\times[5]\times[2]$, as shown in Figure~\ref{fig:23mod60mod6}~(b). After running the $3$-neighbour process within the image of $H$ and within $\{a_2-5,\dots,a_2\}\times[5]\times[2]$, it is easily observed that the resulting infection percolates. In particular, after running the process for three additional time steps, we reach an infection in which there are no paths between the two faces in any of the three directions. See Figure~\ref{fig:23mod60mod6}~(c) for an illustration of these final stages of the process when $a_2=12$ and $a_3=9$. 
\end{proof}

\begin{prop}
\label{prop:3,3,0mod2}
If $a_3\geq 4$ and $a_3\equiv 0\bmod 2$, then $(3,3,a_3)$ is perfect. 
\end{prop}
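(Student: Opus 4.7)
The plan is to establish the matching upper bound $m(3,3,a_3;3)\le 2a_3+3$ for every even $a_3\ge 4$; combined with Proposition~\ref{prop:SA} and the observation that $9+6a_3\equiv 0\pmod{3}$, this yields perfection. Writing $a_3=2k$ with $k\ge 2$, the goal is to construct a percolating set in $[3]\times[3]\times[2k]$ of cardinality $4k+3$.

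Following the template of the preceding propositions in this section, I would look for a $2$-dimensional grid graph $H$ (plausibly a width-$3$ strip, possibly with some cells removed or two such strips joined by a short bridge) together with a percolating set supplied by Lemma~\ref{lem:fatter}, Lemma~\ref{lem:otherFatter}, Lemma~\ref{lem:skinny}, or a close relative, and then define a local embedding $f\colon H\to[3]\times[3]\times[2k]$ winding along the long third coordinate. Lemmas~\ref{lem:embedding} and~\ref{lem:sidesSplit} would then be invoked to conclude that the image of the percolating set, together with a bounded number of additional initial infections, percolates under the $3$-neighbour process. Concretely, the first step is to dispose of the base case $k=2$ by exhibiting an explicit $11$-element percolating set in $[3]\times[3]\times[4]$ and verifying slice by slice. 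The second step is an inductive ``two-slice extension'': given a percolating infection for $[3]\times[3]\times[2k]$ of size $4k+3$ whose closure saturates the terminal slice $[3]\times[3]\times\{2k\}$, append two new slices together with four carefully placed additional infections so that the enlarged set percolates $[3]\times[3]\times[2k+2]$ and again saturates the new terminal slice. Iterating from the base case yields percolating sets of size $4k+3$ for every $k\ge 2$.

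The main obstacle is the choice of the four new vertices at each inductive step. By the tightness part of Lemma~\ref{lem:generalperim}, these vertices must form an independent set with no edges to the already-infected region, which rules out many natural configurations. Moreover, the interior cells $(2,2,2k+1)$ and $(2,2,2k+2)$ have degree $6$ and are slow to accrue three infected neighbours, so the four new infections cannot all lie in a single slice (for instance, placing four corners in one of the two new slices does not propagate inward). The construction will therefore need to split the four additions between the two new slices in a balanced way, and verifying that the resulting configuration percolates will proceed via the diagrammatic slice-by-slice tracking technique developed earlier in this section.
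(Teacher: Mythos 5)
You have correctly identified the target cardinality $2a_3+3=\frac{3\cdot3+3a_3+3a_3}{3}$ and a sensible high-level shape for the construction (a boundary block plus a part that repeats with period two in the long direction, contributing four vertices per two layers --- this is in fact exactly the periodicity of the paper's sets $A_0^{2,i}$). However, what you have written is a plan rather than a proof: the base case in $[3]\times[3]\times[4]$ is not exhibited, the four-vertex ``two-slice extension'' gadget is not specified, and you explicitly flag the placement of those four vertices as an unresolved obstacle. That placement, together with the verification that infection actually reaches the degree-$3$ corners and the slow interior cells of the appended slices, is the entire content of the proposition; without it nothing has been proved. For comparison, the paper does not induct or use a local embedding here at all: it writes down one explicit set $A_0=A_0^1\cup\bigl(\bigcup_i A_0^{2,i}\bigr)\cup A_0^3$ (an $8$-vertex initial block, $4$ vertices per two intermediate levels, and a $3$-vertex terminal block) and verifies percolation directly by tracking how the infection sweeps through the three layers. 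Note in particular that the paper's last \emph{two} levels carry five initial infections, not four, so your uniform ``add four per two slices'' induction would need a base case and terminal behaviour that differ from the paper's distribution; this is plausible arithmetically but is precisely the unverified step.

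Two smaller points. First, the constraint you extract from Proposition~\ref{prop:SA} is that the \emph{entire initial set} $A_0$ must be independent (since $e(A_0)\ge1$ would force $|A_0|\ge\frac{9+6a_3}{3}+1$); it does not and cannot require the new initial infections to have no edges into the already \emph{infected} (i.e., closed) region --- every vertex of slice $2k+1$ is adjacent to a vertex of the saturated slice $2k$, and the construction relies on exactly such adjacencies. If you meant independence only from the previously placed initial infections, the claim is correct but gives little guidance. Second, $(2,2,a_3)$ lies on the terminal face and has degree $5$, not $6$; only the interior centres $(2,2,j)$ with $1<j<a_3$ have degree $6$.
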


\begin{proof}
We view the grid as $[3]\times[a_3]\times [3]$. Define
\[A_0^1:=\{(1,1,1),(3,1,1),(2,3,1),(2,1,2),(1,2,2),(3,1,3),(2,2,3),(3,3,3)\}\text{ and}\]
\[A_0^3:=\{(1,a_3,1),(3,a_3,2),(1,a_3,3)\}.\]
Also, for $1\leq i\leq \frac{a_3-4}{2}$, define 
\[A_0^{2,i}:=\{(1,2+2i,1),(2,2+2i,3),(2,3+2i,1),(3,3+2i,3)\}.\]
Let $A_0:=A_0^1\cup \left(\bigcup_{i=1}^{(a_3-4)/2} A_0^{2,i}\right)\cup A_0^3$. Clearly,
\[|A_0|= 8+4 \cdot \frac{a_3-4}{2}+3=2a_3+3=\frac{3\cdot 3 + 3\cdot a_3+3\cdot a_3}{3}.\]
So, it suffices to show that $A_0$ percolates. The case $a_3=4$ is shown in Figure~\ref{fig:33k}~(a) and so we focus on the case $a_3\geq6$. The case $a_3=8$ is displayed in Figure~\ref{fig:33k}~(b) for reference. 

First, observe that all elements of the grid $[2]\times\{3,\dots,a_3\}\times \{1\}$, except for $(1,3,1)$ and $(2,a_3,1)$, become infected in one step. Likewise, all elements of $\{2,3\}\times[a_3-1]\times\{3\}$ get infected in one step, except for $(2,a_3-1,3)$; note that the infection of $(2,1,3)$ uses the fact that $(2,1,2)$ is in $A_0$. The vertices $(2,1,1),(1,1,2),(3,1,2)$ and $(2,1,1)$ also get infected in just one step. After infecting these vertices, we see that each vertex of the form $(2,i,2)$ for $3\leq i\leq a_3-2$ is adjacent to $(2,i,1)$ and $(2,i,3)$, each of which is currently infected. Additionally, $(2,3,2)$ is adjacent to $(2,2,2)$, which is also infected. Thus, we see that all of $(2,i,2)$ for $3\leq i\leq a_3-2$ become infected, one after the other. 

Each vertex $(3,i,2)$ for $2\leq i\leq a_3-2$ is now adjacent to infected vertices $(2,i,2)$ and $(3,i,3)$. Additionally, $(3,2,2)$ is adjacent to $(3,1,2)$, which is currently infected. So, all of $(3,i,2)$ for $2\leq i\leq a_3-2$ get infected. After this, $(3,a_3-1,2)$ becomes infected due to having infected neighbours $(3,a_3-2,2), (3,a_3,2)$ and $(3,a_3-1,3)$. 

Now, we observe that $(2,2,1)$ becomes infected due to being adjacent to $(2,1,1)$,$(2,3,1)$ and $(2,2,2)$, each of which is either in $A_0$ or was infected after one step. Using this, we get that $(1,2,1)$ and $(1,3,1)$ also get infected.  Also, each of $(3,i,1)$ for $2\leq i\leq a_3-1$ become infected, one by one. 

Currently, all of $[3]\times[a_3]\times\{1\}$ is infected, except for $(2,a_3,1)$ and $(3,a_3,1)$. It is now not hard to see that all of the remaining uninfected vertices of $[3]\times[a_3]\times\{2\}$ (i.e. the top row and the rightmost two vertices on the second row) become infected, one at a time. This triggers the last two vertices in $[3]\times[a_3]\times\{1\}$ to become infected. Finally, it becomes easy to see that all of $[3]\times[a_3]\times\{3\}$ becomes infected. This completes the proof. 
\end{proof}

\begin{figure}[htbp]

\begin{center}
\begin{tikzpicture}
\begin{scope}[xshift=0.0cm,yshift=0.0cm]
\draw[step=0.5cm,color=black,fill=gray] (-0.001,-0.001) grid (2.001,1.501) (0.0,0.0) rectangle (0.5,0.5)(1.0,0.5) rectangle (1.5,1.0)(0.0,1.0) rectangle (0.5,1.5)(1.5,1.0) rectangle (2.0,1.5);
\node at (0.75,0.25) {3};\node at (1.25,0.25) {4};\node at (1.75,0.25) {9};\node at (0.25,0.75) {1};\node at (0.75,0.75) {2};\node at (1.75,0.75) {8};\node at (0.75,1.25) {3};\node at (1.25,1.25) {4};
\end{scope}

\begin{scope}[xshift=2.5cm,yshift=0.0cm]
\draw[step=0.5cm,color=black,fill=gray] (-0.001,-0.001) grid (2.001,1.501) (1.5,0.0) rectangle (2.0,0.5)(0.0,0.5) rectangle (0.5,1.0)(0.5,1.0) rectangle (1.0,1.5);
\node at (0.25,0.25) {1};\node at (0.75,0.25) {2};\node at (1.25,0.25) {3};\node at (0.75,0.75) {1};\node at (1.25,0.75) {4};\node at (1.75,0.75) {7};\node at (0.25,1.25) {1};\node at (1.25,1.25) {5};\node at (1.75,1.25) {6};

\node[align=left] at (1,-0.5){(a) The case $a_3=4$.};
\end{scope}

\begin{scope}[xshift=5.0cm,yshift=0.0cm]
\draw[step=0.5cm,color=black,fill=gray] (-0.001,-0.001) grid (2.001,1.501) (0.0,0.0) rectangle (0.5,0.5)(1.0,0.0) rectangle (1.5,0.5)(0.5,0.5) rectangle (1.0,1.0)(1.5,1.0) rectangle (2.0,1.5);
\node at (0.75,0.25) {1};\node at (1.75,0.25) {9};\node at (0.25,0.75) {1};\node at (1.25,0.75) {5};\node at (1.75,0.75) {8};\node at (0.25,1.25) {8};\node at (0.75,1.25) {7};\node at (1.25,1.25) {6};
\end{scope}

\end{tikzpicture}
\end{center}

\begin{center}
\begin{tikzpicture}
\begin{scope}[xshift=0.0cm,yshift=0.0cm]
\draw[step=0.5cm,color=black,fill=gray] (-0.001,-0.001) grid (4.001,1.501) (0.0,0.0) rectangle (0.5,0.5)(1.0,0.5) rectangle (1.5,1.0)(2.0,0.5) rectangle (2.5,1.0)(3.0,0.5) rectangle (3.5,1.0)(0.0,1.0) rectangle (0.5,1.5)(1.5,1.0) rectangle (2.0,1.5)(2.5,1.0) rectangle (3.0,1.5)(3.5,1.0) rectangle (4.0,1.5);
\node at (0.75,0.25) {3};\node at (1.25,0.25) {4};\node at (1.75,0.25) {5};\node at (2.25,0.25) {6};\node at (2.75,0.25) {7};\node at (3.25,0.25) {8};\node at (3.75,0.25) {13};\node at (0.25,0.75) {1};\node at (0.75,0.75) {2};\node at (1.75,0.75) {1};\node at (2.75,0.75) {1};\node at (3.75,0.75) {12};\node at (0.75,1.25) {3};\node at (1.25,1.25) {4};\node at (2.25,1.25) {1};\node at (3.25,1.25) {1};
\end{scope}

\begin{scope}[xshift=4.5cm,yshift=0.0cm]
\draw[step=0.5cm,color=black,fill=gray] (-0.001,-0.001) grid (4.001,1.501) (3.5,0.0) rectangle (4.0,0.5)(0.0,0.5) rectangle (0.5,1.0)(0.5,1.0) rectangle (1.0,1.5);
\node at (0.25,0.25) {1};\node at (0.75,0.25) {2};\node at (1.25,0.25) {3};\node at (1.75,0.25) {4};\node at (2.25,0.25) {5};\node at (2.75,0.25) {6};\node at (3.25,0.25) {7};\node at (0.75,0.75) {1};\node at (1.25,0.75) {2};\node at (1.75,0.75) {3};\node at (2.25,0.75) {4};\node at (2.75,0.75) {5};\node at (3.25,0.75) {8};\node at (3.75,0.75) {11};\node at (0.25,1.25) {1};\node at (1.25,1.25) {5};\node at (1.75,1.25) {6};\node at (2.25,1.25) {7};\node at (2.75,1.25) {8};\node at (3.25,1.25) {9};\node at (3.75,1.25) {10};
\node[align=left] at (2,-0.5){(b) The case $a_3=8$.};
\end{scope}

\begin{scope}[xshift=9.0cm,yshift=0.0cm]
\draw[step=0.5cm,color=black,fill=gray] (-0.001,-0.001) grid (4.001,1.501) (0.0,0.0) rectangle (0.5,0.5)(1.0,0.0) rectangle (1.5,0.5)(2.0,0.0) rectangle (2.5,0.5)(3.0,0.0) rectangle (3.5,0.5)(0.5,0.5) rectangle (1.0,1.0)(1.5,0.5) rectangle (2.0,1.0)(2.5,0.5) rectangle (3.0,1.0)(3.5,1.0) rectangle (4.0,1.5);
\node at (0.75,0.25) {1};\node at (1.75,0.25) {1};\node at (2.75,0.25) {1};\node at (3.75,0.25) {13};\node at (0.25,0.75) {1};\node at (1.25,0.75) {1};\node at (2.25,0.75) {1};\node at (3.25,0.75) {9};\node at (3.75,0.75) {12};\node at (0.25,1.25) {16};\node at (0.75,1.25) {15};\node at (1.25,1.25) {14};\node at (1.75,1.25) {13};\node at (2.25,1.25) {12};\node at (2.75,1.25) {11};\node at (3.25,1.25) {10};
\end{scope}
\end{tikzpicture}
\end{center}

    \caption{Diagrams used in the proof of Proposition~\ref{prop:3,3,0mod2}}
    \label{fig:33k}
\end{figure}

\begin{prop}
\label{prop:3,4,3mod6}
If $a_3\geq 3$ and $a_3\equiv 3\bmod 6$, then $(3,4,a_3)$ is perfect. 
\end{prop}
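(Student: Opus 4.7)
The plan is to follow the ``local embedding'' strategy used in Propositions~\ref{prop:2,3,0mod6} and~\ref{prop:2,3,3mod6}, producing a percolating set of the target size $(7a_3+12)/3$ in $[3]\times[4]\times[a_3]$ and then invoking Lemma~\ref{lem:sidesSplit} to finish. (Note that $(3,4,a_3)$ is class $0$ whenever $3\mid a_3$, and $12+7a_3$ is divisible by $3$, so Proposition~\ref{prop:SA} gives the matching lower bound $(7a_3+12)/3$.)

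First I would verify the base case $a_3=3$ by displaying the three $3\times 4$ slabs $[3]\times[4]\times\{i\}$ with an explicit initial infection of $11$ cells and labelling the remaining cells with their infection times, in the style of Figure~\ref{fig:33k}. This case is small enough that a hand-checked construction (or a brief computer search) suffices.

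For the general case $a_3=6k+3$ with $k\geq 1$, I would build a 2D grid graph $H$ modeled on a $5\times a_3$ rectangle with a few cells removed at one end (analogous to the graph $H_{6k+3}$ of Proposition~\ref{prop:2,3,3mod6}). Since $5\equiv 2\bmod 3$, the long interior portion of $H$ is well-suited to Lemma~\ref{lem:fatter}, Lemma~\ref{lem:otherFatter}, or Lemma~\ref{lem:jagged}, and by combining one of these with a constant number of extra infected cells on the trailing portion one obtains a percolating set $A_0\subseteq V(H)$ of cardinality exactly $(7a_3+12)/3$. Next I would describe a local embedding $f:V(H)\to[3]\times[4]\times[a_3]$ that wraps the five rows of $H$ onto the $5=3+4-2$ ``edges'' of a helical strip around the $3\times 4$ cross-section, stepping forward in direction $3$ as one traverses $H$. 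By Lemma~\ref{lem:embedding}, the closure of $f(A_0)$ under the $3$-neighbour process contains all of $f(V(H))$. After adding a bounded number of extra infected vertices near the two $3\times 4$ endcaps (and possibly running the process a few steps to fill them in), the resulting set meets every path between opposite faces of $[3]\times[4]\times[a_3]$ in each of the three directions, so percolation follows from Lemma~\ref{lem:sidesSplit}.

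The main obstacle is the combinatorial bookkeeping at the two endcaps: we must design $H$ and $f$ so that the total number of initially infected vertices, inclusive of the endcap extras, is \emph{exactly} $(7a_3+12)/3$ rather than one larger, while simultaneously guaranteeing that $f(V(H))$ (after the endcap additions) separates all three pairs of opposite faces. Choosing how to fold the ends of $H$ onto the two $3\times 4$ faces, and how the extra infections should meet the image of $f$, is the delicate step; once these choices are fixed, the verification is a routine step-by-step check analogous to Figure~\ref{fig:239} and Figure~\ref{fig:23mod60mod6} in Section~\ref{sec:folding}.
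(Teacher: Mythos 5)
Your outline follows the right general strategy (fold a $2$-dimensional percolating set into the box via a local embedding), but the arithmetic of your strip is wrong in a way that kills the construction. You propose a grid graph $H$ modelled on a $5\times a_3$ rectangle, with $5=3+4-2$ rows. First, an L-shaped path across two adjacent faces of the $3\times 4$ cross-section has $3+4-1=6$ cells, not $5$; more importantly, the minimum percolating set of a $5$-row strip has cardinality $\left\lceil\frac{5a_3+5+a_3}{3}\right\rceil=2a_3+2$ by Lemma~\ref{lem:generalperim}, whereas the target is $\frac{7a_3+12}{3}=\frac{7a_3}{3}+4$. The deficit $\frac{a_3}{3}+2$ grows linearly in $a_3$, so a Lemma~\ref{lem:fatter}-type set on $H$ plus ``a constant number of extra infected cells'' at the endcaps can never total exactly $\frac{7a_3+12}{3}$; and if you did pad the infection up to that total, the padding would have to be distributed along the whole length of the grid, which is no longer your construction. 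The paper instead takes $H$ to be a $6\times(a_3-1)$ grid (minus three cells), whose minimal percolating set has size $\frac{6(a_3-1)+6+(a_3-1)-2}{3}=\frac{7a_3-3}{3}$ — exactly the right density — then saves one further cell by making the local embedding \emph{non-injective} (two infected cells of $H$ map to the single vertex $(2,1,2)$), and adds exactly six extra infections at the far end, giving $\frac{7a_3-3}{3}-1+6=\frac{7a_3+12}{3}$.

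A secondary gap: a surface strip wound around the box covers only the boundary positions of the $3\times 4$ cross-section it touches (six of the twelve positions in the paper's version, and your helix would likewise miss the two interior positions $(2,2)$ and $(2,3)$ entirely), so the initial set plus ``a few steps'' of the process does not separate the two faces in the long direction, and Lemma~\ref{lem:sidesSplit} cannot be invoked as you describe. The paper does not use Lemma~\ref{lem:sidesSplit} for this proposition at all; it verifies percolation directly by exhibiting infection times (Figure~\ref{fig:34c}(b)), where the infection propagates as a wave over roughly $4a_3$ steps. Your base case $a_3=3$ is fine, though the paper dispatches it more cheaply by citing Proposition~\ref{prop:3,3,0mod2}.
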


\begin{proof}
In the case that $a_3=3$, the result follows from Proposition~\ref{prop:3,3,0mod2}. So, from here forward, we assume that $a_3\geq9$. In our diagrams, we view the grid as $[4]\times[a_3]\times[3]$. Let $H$ be a $[6]\times[a_3-1]$ grid with $(1,1),(3,1)$ and $(6,a_3-1)$ removed. By Lemma~\ref{lem:fatter}, there is an infection of cardinality $\frac{6(a_3-1)+6+(a_3-1)-2}{3} = \frac{7a_3-3}{3}$ in $[6]\times[a_3-1]$ whose closure contains all but the corners $(1,1)$ and $(6,a_3-1)$. By inspecting the proof of Lemma~\ref{lem:fatter}, we observe that this set contains $(2,1)$ and $(4,1)$ and does not contain $(3,1)$.\footnote{In fact, rather than inspecting the proof, one could alternatively apply Lemma~\ref{lem:immuneRegions} to show that any such infection must alternate around the boundary of $[6]\times[a_3-1]$, analogous to the proof \eqref{eq:boundaryAlternates}.} Also, since $(3,1)$ has only three neighbours, it does not become infected until after $(4,2)$ becomes infected. Thus, this construction gives us a percolating set in $H$ of cardinality $\frac{7a_3-3}{3}$.

An illustration of a local embedding of $H$ into $[4]\times[a_3]\times[3]$ in the case $a_3=15$ is given in Figure~\ref{fig:34c}~(a); the local embedding in the general case is analogous. Note that, in contrast to the previous proofs in this section, this local embedding is not injective. Specifically, it maps both of $(2,1)$ and $(4,1)$ to $(2,1,2)$. Recall that both of $(2,1)$ and $(4,1)$ were contained in the initial infection of $H$. Thus, the image of the infection in $H$ under the local embedding has cardinality $\frac{7a_3-3}{3}-1=\frac{7a_3-6}{3}$. We add six additional elements to the infection: $(1,a_3,1),(2,a_3-1,1),(3,a_3,1),(4,a_3-1,1),(4,a_3,2)$ and $(3,a_3-1,3)$. It is tedious, but not difficult, to show that this set percolates. See Figure~\ref{fig:34c}~(b) for an illustration in the case that $a_3=15$; in this figure, the six additional infections are marked with an X and the other grey cells are the elements of the image of $H$ under the local embedding. Thus,
\[m(3,4,a_3;3)\leq \frac{7a_3-6}{3} + 6 = \frac{3a_3+4a_3+3\cdot 4}{3}\]
as desired.
\end{proof}

\begin{figure}[htbp]
\begin{center}
\begin{tikzpicture}
\begin{scope}[xshift=0.0cm,yshift=0.0cm,scale=1.2]
\draw[step=0.5cm,color=black,fill=gray]  (0.4999,-0.001) grid (6.501,3.001)(6.4999,0.4999) grid (7.001,3.001)(-0.001,-0.001) grid (0.501,1.501)(-0.001,1.999) grid (0.501,2.501);
\node[scale=0.4] at (0.75,2.75) {$1,1,1$};
\node[scale=0.4] at (1.25,2.75) {$1,2,1$};
\node[scale=0.4] at (1.75,2.75) {$1,3,1$};
\node[scale=0.4] at (2.25,2.75) {$1,4,1$};
\node[scale=0.4] at (2.75,2.75) {$1,5,1$};
\node[scale=0.4] at (3.25,2.75) {$1,6,1$};
\node[scale=0.4] at (3.75,2.75) {$1,7,1$};
\node[scale=0.4] at (4.25,2.75) {$1,8,1$};
\node[scale=0.4] at (4.75,2.75) {$1,9,1$};
\node[scale=0.4] at (5.25,2.75) {$1,10,1$};
\node[scale=0.4] at (5.75,2.75) {$1,11,1$};
\node[scale=0.4] at (6.25,2.75) {$1,12,1$};
\node[scale=0.4] at (6.75,2.75) {$1,13,1$};

\node[scale=0.4] at (0.75,2.25) {$1,1,2$};
\node[scale=0.4] at (1.25,2.25) {$1,2,2$};
\node[scale=0.4] at (1.75,2.25) {$1,3,2$};
\node[scale=0.4] at (2.25,2.25) {$1,4,2$};
\node[scale=0.4] at (2.75,2.25) {$1,5,2$};
\node[scale=0.4] at (3.25,2.25) {$1,6,2$};
\node[scale=0.4] at (3.75,2.25) {$1,7,2$};
\node[scale=0.4] at (4.25,2.25) {$1,8,2$};
\node[scale=0.4] at (4.75,2.25) {$1,9,2$};
\node[scale=0.4] at (5.25,2.25) {$1,10,2$};
\node[scale=0.4] at (5.75,2.25) {$1,11,2$};
\node[scale=0.4] at (6.25,2.25) {$1,12,2$};
\node[scale=0.4] at (6.75,2.25) {$1,13,2$};

\node[scale=0.4] at (0.75,1.75) {$1,1,3$};
\node[scale=0.4] at (1.25,1.75) {$1,2,3$};
\node[scale=0.4] at (1.75,1.75) {$1,3,3$};
\node[scale=0.4] at (2.25,1.75) {$1,4,3$};
\node[scale=0.4] at (2.75,1.75) {$1,5,3$};
\node[scale=0.4] at (3.25,1.75) {$1,6,3$};
\node[scale=0.4] at (3.75,1.75) {$1,7,3$};
\node[scale=0.4] at (4.25,1.75) {$1,8,3$};
\node[scale=0.4] at (4.75,1.75) {$1,9,3$};
\node[scale=0.4] at (5.25,1.75) {$1,10,3$};
\node[scale=0.4] at (5.75,1.75) {$1,11,3$};
\node[scale=0.4] at (6.25,1.75) {$1,12,3$};
\node[scale=0.4] at (6.75,1.75) {$1,13,3$};

\node[scale=0.4] at (0.75,1.25) {$2,1,3$};
\node[scale=0.4] at (1.25,1.25) {$2,2,3$};
\node[scale=0.4] at (1.75,1.25) {$2,3,3$};
\node[scale=0.4] at (2.25,1.25) {$2,4,3$};
\node[scale=0.4] at (2.75,1.25) {$2,5,3$};
\node[scale=0.4] at (3.25,1.25) {$2,6,3$};
\node[scale=0.4] at (3.75,1.25) {$2,7,3$};
\node[scale=0.4] at (4.25,1.25) {$2,8,3$};
\node[scale=0.4] at (4.75,1.25) {$2,9,3$};
\node[scale=0.4] at (5.25,1.25) {$2,10,3$};
\node[scale=0.4] at (5.75,1.25) {$2,11,3$};
\node[scale=0.4] at (6.25,1.25) {$2,12,3$};
\node[scale=0.4] at (6.75,1.25) {$2,13,3$};

\node[scale=0.4] at (0.75,0.75) {$3,1,3$};
\node[scale=0.4] at (1.25,0.75) {$3,2,3$};
\node[scale=0.4] at (1.75,0.75) {$3,3,3$};
\node[scale=0.4] at (2.25,0.75) {$3,4,3$};
\node[scale=0.4] at (2.75,0.75) {$3,5,3$};
\node[scale=0.4] at (3.25,0.75) {$3,6,3$};
\node[scale=0.4] at (3.75,0.75) {$3,7,3$};
\node[scale=0.4] at (4.25,0.75) {$3,8,3$};
\node[scale=0.4] at (4.75,0.75) {$3,9,3$};
\node[scale=0.4] at (5.25,0.75) {$3,10,3$};
\node[scale=0.4] at (5.75,0.75) {$3,11,3$};
\node[scale=0.4] at (6.25,0.75) {$3,12,3$};
\node[scale=0.4] at (6.75,0.75) {$3,13,3$};

\node[scale=0.4] at (0.75,0.25) {$4,1,3$};
\node[scale=0.4] at (1.25,0.25) {$4,2,3$};
\node[scale=0.4] at (1.75,0.25) {$4,3,3$};
\node[scale=0.4] at (2.25,0.25) {$4,4,3$};
\node[scale=0.4] at (2.75,0.25) {$4,5,3$};
\node[scale=0.4] at (3.25,0.25) {$4,6,3$};
\node[scale=0.4] at (3.75,0.25) {$4,7,3$};
\node[scale=0.4] at (4.25,0.25) {$4,8,3$};
\node[scale=0.4] at (4.75,0.25) {$4,9,3$};
\node[scale=0.4] at (5.25,0.25) {$4,10,3$};
\node[scale=0.4] at (5.75,0.25) {$4,11,3$};
\node[scale=0.4] at (6.25,0.25) {$4,12,3$};

\node[scale=0.4] at (0.25,2.25) {$2,1,2$};
\node[scale=0.4] at (0.25,1.25) {$2,1,2$};
\node[scale=0.4] at (0.25,0.75) {$3,1,2$};
\node[scale=0.4] at (0.25,0.25) {$4,1,2$};
\node[align=left] at (3.5,-0.5){(a) A local embedding of $H$ into $[4]\times[15]\times[3]$.};
\end{scope}
\end{tikzpicture}
\end{center}

\begin{center}
\begin{tikzpicture}
\begin{scope}[xshift=0.0cm,yshift=0.0cm]
\draw[step=0.5cm,color=black,fill=gray] (-0.001,-0.001) grid (7.501,2.001) (6.5,0.0) rectangle (7.0,0.5)(7.0,0.5) rectangle (7.5,1.0)(6.5,1.0) rectangle (7.0,1.5)(0.0,1.5) rectangle (0.5,2.0)(0.5,1.5) rectangle (1.0,2.0)(1.0,1.5) rectangle (1.5,2.0)(1.5,1.5) rectangle (2.0,2.0)(2.0,1.5) rectangle (2.5,2.0)(2.5,1.5) rectangle (3.0,2.0)(3.0,1.5) rectangle (3.5,2.0)(3.5,1.5) rectangle (4.0,2.0)(4.0,1.5) rectangle (4.5,2.0)(4.5,1.5) rectangle (5.0,2.0)(5.0,1.5) rectangle (5.5,2.0)(5.5,1.5) rectangle (6.0,2.0)(6.0,1.5) rectangle (6.5,2.0)(7.0,1.5) rectangle (7.5,2.0);
\node at (0.25,0.25) {29};\node at (0.75,0.25) {28};\node at (1.25,0.25) {27};\node at (1.75,0.25) {26};\node at (2.25,0.25) {25};\node at (2.75,0.25) {24};\node at (3.25,0.25) {23};\node at (3.75,0.25) {22};\node at (4.25,0.25) {21};\node at (4.75,0.25) {20};\node at (5.25,0.25) {19};\node at (5.75,0.25) {18};\node at (6.25,0.25) {17};\node at (7.25,0.25) {1};\node at (0.25,0.75) {26};\node at (0.75,0.75) {25};\node at (1.25,0.75) {24};\node at (1.75,0.75) {23};\node at (2.25,0.75) {22};\node at (2.75,0.75) {21};\node at (3.25,0.75) {20};\node at (3.75,0.75) {19};\node at (4.25,0.75) {18};\node at (4.75,0.75) {17};\node at (5.25,0.75) {16};\node at (5.75,0.75) {15};\node at (6.25,0.75) {14};\node at (6.75,0.75) {1};\node at (0.25,1.25) {25};\node at (0.75,1.25) {24};\node at (1.25,1.25) {23};\node at (1.75,1.25) {22};\node at (2.25,1.25) {21};\node at (2.75,1.25) {20};\node at (3.25,1.25) {19};\node at (3.75,1.25) {18};\node at (4.25,1.25) {17};\node at (4.75,1.25) {16};\node at (5.25,1.25) {15};\node at (5.75,1.25) {14};\node at (6.25,1.25) {13};\node at (7.25,1.25) {1};\node at (6.75,1.75) {1};
\node at (6.75,1.25) {X};
\node at (6.75,0.25) {X};
\node at (7.25,1.75) {X};
\node at (7.25,0.75) {X};
\end{scope}

\begin{scope}[xshift=8.0cm,yshift=0.0cm]
\draw[step=0.5cm,color=black,fill=gray] (-0.001,-0.001) grid (7.501,2.001) (0.0,0.0) rectangle (0.5,0.5)(7.0,0.0) rectangle (7.5,0.5)(0.0,0.5) rectangle (0.5,1.0)(0.0,1.0) rectangle (0.5,1.5)(0.0,1.5) rectangle (0.5,2.0)(0.5,1.5) rectangle (1.0,2.0)(1.0,1.5) rectangle (1.5,2.0)(1.5,1.5) rectangle (2.0,2.0)(2.0,1.5) rectangle (2.5,2.0)(2.5,1.5) rectangle (3.0,2.0)(3.0,1.5) rectangle (3.5,2.0)(3.5,1.5) rectangle (4.0,2.0)(4.0,1.5) rectangle (4.5,2.0)(4.5,1.5) rectangle (5.0,2.0)(5.0,1.5) rectangle (5.5,2.0)(5.5,1.5) rectangle (6.0,2.0)(6.0,1.5) rectangle (6.5,2.0);
\node at (0.75,0.25) {3};\node at (1.25,0.25) {4};\node at (1.75,0.25) {5};\node at (2.25,0.25) {6};\node at (2.75,0.25) {7};\node at (3.25,0.25) {8};\node at (3.75,0.25) {9};\node at (4.25,0.25) {10};\node at (4.75,0.25) {11};\node at (5.25,0.25) {12};\node at (5.75,0.25) {13};\node at (6.25,0.25) {16};\node at (6.75,0.25) {15};\node at (0.75,0.75) {2};\node at (1.25,0.75) {3};\node at (1.75,0.75) {4};\node at (2.25,0.75) {5};\node at (2.75,0.75) {6};\node at (3.25,0.75) {7};\node at (3.75,0.75) {8};\node at (4.25,0.75) {9};\node at (4.75,0.75) {10};\node at (5.25,0.75) {11};\node at (5.75,0.75) {12};\node at (6.25,0.75) {13};\node at (6.75,0.75) {14};\node at (7.25,0.75) {1};\node at (0.75,1.25) {1};\node at (1.25,1.25) {2};\node at (1.75,1.25) {3};\node at (2.25,1.25) {4};\node at (2.75,1.25) {5};\node at (3.25,1.25) {6};\node at (3.75,1.25) {7};\node at (4.25,1.25) {8};\node at (4.75,1.25) {9};\node at (5.25,1.25) {10};\node at (5.75,1.25) {11};\node at (6.25,1.25) {12};\node at (6.75,1.25) {15};\node at (7.25,1.25) {16};\node at (6.75,1.75) {16};\node at (7.25,1.75) {17};
\node at (7.25,0.25) {X};
\end{scope}

\begin{scope}[xshift=0.0cm,yshift=-2.5cm]
\draw[step=0.5cm,color=black,fill=gray] (-0.001,-0.001) grid (7.501,2.001) (0.0,0.0) rectangle (0.5,0.5)(0.5,0.0) rectangle (1.0,0.5)(1.0,0.0) rectangle (1.5,0.5)(1.5,0.0) rectangle (2.0,0.5)(2.0,0.0) rectangle (2.5,0.5)(2.5,0.0) rectangle (3.0,0.5)(3.0,0.0) rectangle (3.5,0.5)(3.5,0.0) rectangle (4.0,0.5)(4.0,0.0) rectangle (4.5,0.5)(4.5,0.0) rectangle (5.0,0.5)(5.0,0.0) rectangle (5.5,0.5)(5.5,0.0) rectangle (6.0,0.5)(0.0,0.5) rectangle (0.5,1.0)(0.5,0.5) rectangle (1.0,1.0)(1.0,0.5) rectangle (1.5,1.0)(1.5,0.5) rectangle (2.0,1.0)(2.0,0.5) rectangle (2.5,1.0)(2.5,0.5) rectangle (3.0,1.0)(3.0,0.5) rectangle (3.5,1.0)(3.5,0.5) rectangle (4.0,1.0)(4.0,0.5) rectangle (4.5,1.0)(4.5,0.5) rectangle (5.0,1.0)(5.0,0.5) rectangle (5.5,1.0)(5.5,0.5) rectangle (6.0,1.0)(6.0,0.5) rectangle (6.5,1.0)(7.0,0.5) rectangle (7.5,1.0)(0.0,1.0) rectangle (0.5,1.5)(0.5,1.0) rectangle (1.0,1.5)(1.0,1.0) rectangle (1.5,1.5)(1.5,1.0) rectangle (2.0,1.5)(2.0,1.0) rectangle (2.5,1.5)(2.5,1.0) rectangle (3.0,1.5)(3.0,1.0) rectangle (3.5,1.5)(3.5,1.0) rectangle (4.0,1.5)(4.0,1.0) rectangle (4.5,1.5)(4.5,1.0) rectangle (5.0,1.5)(5.0,1.0) rectangle (5.5,1.5)(5.5,1.0) rectangle (6.0,1.5)(6.0,1.0) rectangle (6.5,1.5)(0.0,1.5) rectangle (0.5,2.0)(0.5,1.5) rectangle (1.0,2.0)(1.0,1.5) rectangle (1.5,2.0)(1.5,1.5) rectangle (2.0,2.0)(2.0,1.5) rectangle (2.5,2.0)(2.5,1.5) rectangle (3.0,2.0)(3.0,1.5) rectangle (3.5,2.0)(3.5,1.5) rectangle (4.0,2.0)(4.0,1.5) rectangle (4.5,2.0)(4.5,1.5) rectangle (5.0,2.0)(5.0,1.5) rectangle (5.5,2.0)(5.5,1.5) rectangle (6.0,2.0)(6.0,1.5) rectangle (6.5,2.0);
\node at (6.25,0.25) {17};\node at (6.75,0.25) {18};\node at (7.25,0.25) {19};\node at (6.75,0.75) {15};\node at (6.75,1.25) {16};\node at (7.25,1.25) {17};\node at (6.75,1.75) {17};\node at (7.25,1.75) {18};
\node at (7.25,0.75) {X};

\node[align=left] at (7.25,-0.75){(b) The result of the $3$-neighbour process on the image of $H$ and six additional\\infections under the local embedding.};
\end{scope}

\end{tikzpicture}
\end{center}
    \caption{Diagrams used in the proof of Proposition~\ref{prop:3,4,3mod6}.}
    \label{fig:34c}
\end{figure}

\begin{prop}
\label{prop:3,6,0mod2}
If $a_3\geq 2$ and $a_3\equiv 0\bmod 2$, then $(3,6,a_3)$ is perfect. 
\end{prop}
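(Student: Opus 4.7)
The base case $a_3 = 2$ is immediate: up to permutation of coordinates, $(3,6,2)$ coincides with $(2,3,6)$, which is perfect by Proposition~\ref{prop:2,3,0mod6}. For $a_3 \geq 4$ even, the plan is to follow the folding strategy used throughout this section. Concretely, I would exhibit a $2$-dimensional grid graph $H$ together with a percolating set $A_0 \subseteq V(H)$ and a local embedding $f \colon V(H) \to [3]\times[6]\times[a_3]$ so that $f(A_0)$, augmented by a small number of explicit extra seeds, has cardinality exactly $3a_3+6$ and percolates in $[3]\times[6]\times[a_3]$ under the $3$-neighbour process.

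The grid graph $H$ will be approximately a $5 \times a_3$ rectangle with small modifications near the corners. Three of its rows would embed injectively onto the three ``slabs'' $\{i\}\times[6]\times[a_3]$ for $i\in[3]$, and the two remaining rows would act as ``hinges'' identified via $f$ with vertices of adjacent slabs. The initial infection $A_0$ will be assembled mainly from Lemma~\ref{lem:fatter} on the bulk of each slab, with Lemma~\ref{lem:skinny} or Lemma~\ref{lem:longerCorner} along the hinges and a few additional cells to repair percolation locally; Corollary~\ref{cor:iff2D} then certifies that $A_0$ percolates within $H$.

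Once $f(A_0)$ is in place in the three-dimensional grid, I would add a constant number of supplementary seeds near the two ``ends'' of the $a_3$-direction (in the spirit of Propositions~\ref{prop:2,3,0mod6} and~\ref{prop:3,4,3mod6}), and then invoke Lemma~\ref{lem:sidesSplit}: after a few steps of the process, the current infection should separate each pair of opposite faces of $[3]\times[6]\times[a_3]$, forcing percolation. Because the divisibility hypotheses of Lemma~\ref{lem:fatter} depend on the value of $a_3 \bmod 6$, it will be natural to split the argument into the three subcases $a_3 \equiv 0, 2, 4 \pmod 6$, with the precise shape of $H$ and the positions of the supplementary seeds adjusted in each.

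The main obstacle will be the hinge rows, where the (generally non-injective) $f$ forces each hinge vertex to accumulate its three required infected neighbours in the $3$-dimensional grid from multiple preimages in $H$; the identifications therefore have to be chosen delicately to preserve adjacencies. Verifying this, together with the bookkeeping to confirm that the total count is $|f(A_0)| + (\text{extras}) = 3a_3 + 6 = \lceil (18 + 9a_3)/3 \rceil$, is where I expect most of the work to lie. Otherwise, the argument follows the template already in place in earlier propositions of this section.
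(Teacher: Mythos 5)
There is a genuine gap: for $a_3\geq 4$ your proposal is a plan rather than a proof. Every substantive object is deferred (``I would exhibit\dots'', ``the precise shape\dots adjusted in each''), so there is no grid graph $H$, no percolating set, no local embedding, and no list of supplementary seeds that one could actually check. Worse, the rough parameters you do commit to are quantitatively inconsistent with the target. A percolating set in (roughly) a $5\times a_3$ grid graph has cardinality about $\frac{5a_3+5+a_3}{3}\approx 2a_3$, whereas the required total is $3a_3+6$; the deficit of about $a_3$ cannot be absorbed by ``a constant number of supplementary seeds near the two ends.'' Relatedly, a row of $H$ has only $a_3$ vertices and so cannot be mapped onto a slab $\{i\}\times[6]\times[a_3]$, which has $6a_3$ vertices; consequently the image of $H$ could not separate the two faces in the first direction, and Lemma~\ref{lem:sidesSplit} would not apply. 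To make a folding argument work here the 2D shape would have to have on the order of $8$ rows of length $a_3$ (so that $\frac{ha_3+h+a_3}{3}\approx 3a_3+6$ forces $h\approx 8$), and the hinge identifications would have to be specified exactly.

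Your base case $a_3=2$ via Proposition~\ref{prop:2,3,0mod6} is correct and matches the paper. For $a_3\geq 4$ the paper does not use the folding machinery at all: it writes down a completely explicit set $A_0=A_0^1\cup\bigl(\bigcup_i A_0^{2,i}\bigr)\cup A_0^3$ in $[6]\times[a_3]\times[3]$, counts it directly to $3a_3+6$, and verifies percolation by tracking the infection layer by layer. If you want to salvage your approach, you must either produce the explicit $H$, embedding, and seeds with a verified count and separation argument, or switch to a direct construction as the paper does.
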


\begin{proof}
In the case that $a_3=2$, the result follows from Proposition~\ref{prop:2,3,0mod6}. So, from here forward, we assume that $a_3\geq4$. We view the grid as $[6]\times[a_3]\times[3]$. Define 
\[A_0^1:=\{(2,1,1),(4,1,1),(6,1,1),(1,1,2),(3,1,2),(6,1,3)\}\text{ and}\]
\[A_0^3:=\{(1,a_3,1),(4,a_3,2),(6,a_3,2),(1,a_3,3),(3,a_3,3),(5,a_3,3)\}.\]
Also, for $1\leq i\leq \frac{a_3-2}{2}$, define
\[A_0^{2,i}:=\{(1,2i,1),(2,2i+1,1),(4,2i,2),(3,2i+1,2),(5,2i,3),(6,2i+1,3)\}.\]
Also, let $A_0:=A_0^1\cup\left(\bigcup_{i=0}^{(a_3-2)/2}A_0^{2,i}\right)\cup A_0^3$. We have
\[|A_0|=6+6\left(\frac{a_3-2}{2}\right)+6=\frac{3\cdot 6+3\cdot a_3+6\cdot a_3}{3}.\]
So, it suffices to show that $A_0$ percolates. See Figure~\ref{fig:360mod2} for a depiction of the case $a_3=8$. 

First, note that every vertex within $[2]\times[a_3]\times\{1\}$ becomes infected in one step except for $(2,a_3,1)$; note that the infection of $(1,1,1)$ uses the fact that $(1,1,2)$ is in $A_0$. Similarly, every vertex of $\{3,4\}\times [a_3]\times\{2\}$ becomes infected after one step, and every vertex of $\{5,6\}\times[a_3]\times\{3\}$ becomes infected after one step, except for $(5,1,3)$. 

At this point, each vertex of the form $(2,i,2)$ for $1\leq i\leq a_3-1$ is adjacent to infected vertices $(3,i,2)$ and $(2,i,1)$. In addition, $(2,1,2)$ is adjacent to $(1,1,2)$, which is in $A_0$. So, all of the vertices $(2,i,2)$ for $1\leq i\leq a_3-1$ become infected. Once these vertices are infected, we see that the vertices $(1,i,2)$ for $2\leq i\leq a_3-1$ become infected in a similar manner. The vertex $(1,a_3,2)$ now has infected neighbours $(1,a_3-1,2),(1,a_3,1)$ and $(1,a_3,3)$ and so it becomes infected, and then $(2,a_3,2)$ becomes infected as well. In a very similar way, the infection spreads to all vertices in $\{5,6\}\times[a_3]\times\{2\}$. Now, all vertices of $[6]\times[a_3]\times\{2\}$ are infected and it is not hard to see that  the other two layers eventually become infected as well.
\end{proof}

\begin{figure}[htbp]
\begin{center}
\begin{tikzpicture}
\begin{scope}[xshift=0.0cm,yshift=0.0cm]
\draw[step=0.5cm,color=black,fill=gray] (-0.001,-0.001) grid (4.001,3.001) (0.0,0.0) rectangle (0.5,0.5)(0.0,1.0) rectangle (0.5,1.5)(0.0,2.0) rectangle (0.5,2.5)(1.0,2.0) rectangle (1.5,2.5)(2.0,2.0) rectangle (2.5,2.5)(3.0,2.0) rectangle (3.5,2.5)(0.5,2.5) rectangle (1.0,3.0)(1.5,2.5) rectangle (2.0,3.0)(2.5,2.5) rectangle (3.0,3.0)(3.5,2.5) rectangle (4.0,3.0);
\node at (0.75,0.25) {13};\node at (1.25,0.25) {14};\node at (1.75,0.25) {15};\node at (2.25,0.25) {16};\node at (2.75,0.25) {17};\node at (3.25,0.25) {18};\node at (3.75,0.25) {19};\node at (0.25,0.75) {11};\node at (0.75,0.75) {12};\node at (1.25,0.75) {13};\node at (1.75,0.75) {14};\node at (2.25,0.75) {15};\node at (2.75,0.75) {16};\node at (3.25,0.75) {17};\node at (3.75,0.75) {18};\node at (0.75,1.25) {3};\node at (1.25,1.25) {4};\node at (1.75,1.25) {5};\node at (2.25,1.25) {6};\node at (2.75,1.25) {7};\node at (3.25,1.25) {8};\node at (3.75,1.25) {13};\node at (0.25,1.75) {1};\node at (0.75,1.75) {2};\node at (1.25,1.75) {3};\node at (1.75,1.75) {4};\node at (2.25,1.75) {5};\node at (2.75,1.75) {6};\node at (3.25,1.75) {7};\node at (3.75,1.75) {12};\node at (0.75,2.25) {1};\node at (1.75,2.25) {1};\node at (2.75,2.25) {1};\node at (3.75,2.25) {11};\node at (0.25,2.75) {1};\node at (1.25,2.75) {1};\node at (2.25,2.75) {1};\node at (3.25,2.75) {1};
\end{scope}

\begin{scope}[xshift=4.5cm,yshift=0.0cm]
\draw[step=0.5cm,color=black,fill=gray] (-0.001,-0.001) grid (4.001,3.001) (3.5,0.0) rectangle (4.0,0.5)(0.5,1.0) rectangle (1.0,1.5)(1.5,1.0) rectangle (2.0,1.5)(2.5,1.0) rectangle (3.0,1.5)(3.5,1.0) rectangle (4.0,1.5)(0.0,1.5) rectangle (0.5,2.0)(1.0,1.5) rectangle (1.5,2.0)(2.0,1.5) rectangle (2.5,2.0)(3.0,1.5) rectangle (3.5,2.0)(0.0,2.5) rectangle (0.5,3.0);
\node at (0.25,0.25) {9};\node at (0.75,0.25) {8};\node at (1.25,0.25) {7};\node at (1.75,0.25) {6};\node at (2.25,0.25) {5};\node at (2.75,0.25) {4};\node at (3.25,0.25) {3};\node at (0.25,0.75) {10};\node at (0.75,0.75) {7};\node at (1.25,0.75) {6};\node at (1.75,0.75) {5};\node at (2.25,0.75) {4};\node at (2.75,0.75) {3};\node at (3.25,0.75) {2};\node at (3.75,0.75) {1};\node at (0.25,1.25) {1};\node at (1.25,1.25) {1};\node at (2.25,1.25) {1};\node at (3.25,1.25) {1};\node at (0.75,1.75) {1};\node at (1.75,1.75) {1};\node at (2.75,1.75) {1};\node at (3.75,1.75) {1};\node at (0.25,2.25) {1};\node at (0.75,2.25) {2};\node at (1.25,2.25) {3};\node at (1.75,2.25) {4};\node at (2.25,2.25) {5};\node at (2.75,2.25) {6};\node at (3.25,2.25) {7};\node at (3.75,2.25) {10};\node at (0.75,2.75) {3};\node at (1.25,2.75) {4};\node at (1.75,2.75) {5};\node at (2.25,2.75) {6};\node at (2.75,2.75) {7};\node at (3.25,2.75) {8};\node at (3.75,2.75) {9};
\end{scope}

\begin{scope}[xshift=9.0cm,yshift=0.0cm]
\draw[step=0.5cm,color=black,fill=gray] (-0.001,-0.001) grid (4.001,3.001) (0.0,0.0) rectangle (0.5,0.5)(1.0,0.0) rectangle (1.5,0.5)(2.0,0.0) rectangle (2.5,0.5)(3.0,0.0) rectangle (3.5,0.5)(0.5,0.5) rectangle (1.0,1.0)(1.5,0.5) rectangle (2.0,1.0)(2.5,0.5) rectangle (3.0,1.0)(3.5,0.5) rectangle (4.0,1.0)(3.5,1.5) rectangle (4.0,2.0)(3.5,2.5) rectangle (4.0,3.0);
\node at (0.75,0.25) {1};\node at (1.75,0.25) {1};\node at (2.75,0.25) {1};\node at (3.75,0.25) {1};\node at (0.25,0.75) {11};\node at (1.25,0.75) {1};\node at (2.25,0.75) {1};\node at (3.25,0.75) {1};\node at (0.25,1.25) {12};\node at (0.75,1.25) {7};\node at (1.25,1.25) {6};\node at (1.75,1.25) {5};\node at (2.25,1.25) {4};\node at (2.75,1.25) {3};\node at (3.25,1.25) {2};\node at (3.75,1.25) {1};\node at (0.25,1.75) {13};\node at (0.75,1.75) {8};\node at (1.25,1.75) {7};\node at (1.75,1.75) {6};\node at (2.25,1.75) {5};\node at (2.75,1.75) {4};\node at (3.25,1.75) {3};\node at (0.25,2.25) {18};\node at (0.75,2.25) {17};\node at (1.25,2.25) {16};\node at (1.75,2.25) {15};\node at (2.25,2.25) {14};\node at (2.75,2.25) {13};\node at (3.25,2.25) {12};\node at (3.75,2.25) {11};\node at (0.25,2.75) {19};\node at (0.75,2.75) {18};\node at (1.25,2.75) {17};\node at (1.75,2.75) {16};\node at (2.25,2.75) {15};\node at (2.75,2.75) {14};\node at (3.25,2.75) {13};
\end{scope}

\end{tikzpicture}
\end{center}
    \caption{Diagram used in the proof of Proposition~\ref{prop:3,6,0mod2}}
    \label{fig:360mod2}
\end{figure}
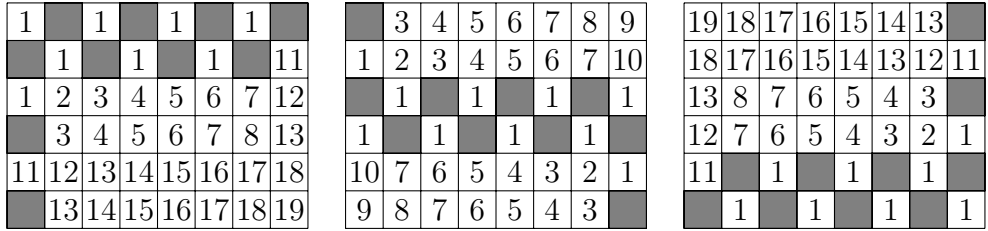

\begin{prop}
\label{prop:3,6,1mod2}
If $a_3\geq 3$ and $a_3\equiv 1\bmod 2$, then $(3,6,a_3)$ is perfect. 
\end{prop}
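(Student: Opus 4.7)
The plan is to handle the base case $a_3 = 3$ separately, and then to construct an explicit percolating set for $a_3 \geq 5$ odd that closely mirrors the construction from Proposition~\ref{prop:3,6,0mod2}. First, for $a_3 = 3$, the claim follows immediately from Proposition~\ref{prop:3,3,0mod2}: that proposition gives $(3,3,6)$ perfect, and since $m(a_1,a_2,a_3;3)$ is symmetric in its arguments, $(3,6,3)$ is also perfect.

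For $a_3 \geq 5$ odd, I would construct an explicit percolating set $A_0 \subseteq [6] \times [a_3] \times [3]$ of cardinality $6 + 3 a_3 = \frac{3 \cdot 6 + 3 a_3 + 6 a_3}{3}$ following the template of Proposition~\ref{prop:3,6,0mod2}. The ``middle pair'' sets $A_0^{2,i}$ would be reused essentially verbatim for $i = 1, \ldots, (a_3 - 1)/2$, spanning the pairs of adjacent columns $(2,3), (4,5), \ldots, (a_3-1, a_3)$ in direction 2 (valid because $a_3 - 1$ is even). After accounting for $6 \cdot (a_3-1)/2 = 3(a_3-1)$ infections in the middle sets, exactly $9$ infections remain for a single ``boundary'' set $A_0^1$ placed on the leftmost column. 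The set $A_0^1$ would extend the original $6$-infection pattern from Proposition~\ref{prop:3,6,0mod2} with $3$ additional carefully chosen infections on column $1$ (with no separate $A_0^3$, since the last middle pair already covers column $a_3$).

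The verification that $A_0$ percolates would proceed in the same stages as in the proof of Proposition~\ref{prop:3,6,0mod2}: after one or two time steps, rows $1$--$2$ of slice $z=1$, rows $3$--$4$ of slice $z=2$, and rows $5$--$6$ of slice $z=3$ each become fully infected except for at most one vertex per slice at the far boundary; the middle slice $z=2$ then propagates infection column by column from column~$1$ to column~$a_3$; and finally the outer slices fill in via their now-infected neighbours in the middle slice.

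The main obstacle will be specifying the three extra initial infections in $A_0^1$ so that the cascade near column~$1$ is correctly initialized and so that the infection seamlessly meets the first middle pair $A_0^{2,1}$ on columns~$(2,3)$ without creating a gap. This requires a detailed verification of the first few time steps at the left boundary, analogous to but slightly more involved than the verification in Proposition~\ref{prop:3,6,0mod2}, since the denser singleton column introduces new local interactions. Once this left-boundary initialization is checked, the rest of the propagation argument (middle-slice cascade and outer-slice fill-in) is essentially identical to the even case.
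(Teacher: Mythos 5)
Your base case and the overall ``cap plus alternating middle pairs'' template match the paper's strategy, but the specific way you redistribute the nine surplus infections breaks the construction at the right-hand boundary, and this is not a matter of checking a few time steps: it is an absolute obstruction. If the middle pairs $A_0^{2,i}$ of Proposition~\ref{prop:3,6,0mod2} run all the way to the column pair $(a_3-1,a_3)$ and there is no separate cap $A_0^3$, then the face $F=[6]\times\{a_3\}\times[3]$ contains exactly three initially infected vertices (one per slice, coming from the last middle pair). Every vertex of $F$ has at most one neighbour outside $F$, so any vertex of $F$ that becomes infected in the $3$-neighbour process must have at least two infected neighbours inside $F$; by induction on infection time, the closure of $A_0$ restricted to $F$ is contained in the closure, under the $2$-neighbour process on the $6\times 3$ grid $F$, of $A_0\cap F$. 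By \eqref{eq:r=2}, percolation of the $2$-neighbour process in a $6\times 3$ grid requires at least $\lceil 7/2\rceil+1=5$ seeds, so three seeds can never infect all of $F$, and your set cannot percolate no matter how cleverly the three extra infections are placed in column $1$. (The same face argument explains why both ends need at least five seeds each, and why the left column can absorb at most nine independent vertices; your budget of $9+3+3=15$ for the two end columns is workable only if at least five land in column $a_3$.)

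This is exactly the asymmetry the paper's proof is built around: it keeps the run of middle pairs short (stopping at column $a_3-2$) and spends a nine-element cap $A_0^3$ spread over the last two columns, six of whose elements lie in column $a_3$. Note also that the paper's odd-case middle pattern places its slice-$2$ and slice-$3$ infections in different rows than the even-case pattern (rows $5$--$6$ in slice $2$ and rows $3$--$4$ in slice $3$, rather than rows $3$--$4$ and $5$--$6$), so the propagation analysis is not literally ``identical to the even case'' even once the endpoints are fixed. To repair your argument you would need to move a cap of at least five (in fact, by an independence count on the two last columns, the paper uses nine over columns $a_3-1$ and $a_3$) to the right end and re-verify the cascade there, which is essentially what the paper does.
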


\begin{proof}
In the case that $a_3=3$, the result follows from Proposition~\ref{prop:3,3,0mod2}. So, from here forward, we assume that $a_3\geq5$. We view the grid as $[6]\times[a_3]\times [3]$. Define
\[A_0^1:=\{(2,1,1),(6,1,1), (1,1,2), (3,1,2),(5,1,2),(4,1,3)\}\text{ and}\]
\begin{align*}
A_0^3:= \{(2,a_3,1),&(4,a_3,1),(6,a_3-1,2),(1,a_3,2),(5,a_3,2),(1,a_3-1,3),\\
&(3,a_3-1,3),(2,a_3,3),(6,a_3,3)\}.
\end{align*}
Also, for $1\leq i\leq \frac{a_3-3}{2}$, define
\[A_0^{2,i}:=\{(1,2i,1),(1,2i+1,1),(6,2i,2),(5,2i+1,2),(3,2i,3),(4,2i+1,3)\}.\]
Now, let $A_0:=A_0^1\cup\left(\bigcup_{i=2}^{(a_3-1)/2}A_0^{2,i}\right)\cup A_0^3$. We have
\[|A_0|=6+6\left(\frac{a_3-3}{2}\right) + 9=\frac{3\cdot 6 + 3\cdot a_3+6\cdot a_3}{3}.\]
Let us show that $A_0$ percolates. See Figure~\ref{fig:361mod2} for a depiction of the case $a_3=9$. 

First, observe that all vertices of $[2]\times[a_3-2]\times\{1\}$ becomes infected in one step, apart from $(1,a_3-2,1)$; the infection of $(1,1,1)$ uses the fact that $(1,1,2)$ is in $A_0$. Similarly, all of $\{5,6\}\times[a_3]\times\{2\}$ becomes infected in one step and all of $\{3,4\}\times[a_3-1]\times\{3\}$ except for $(4,a_3-1,3)$ becomes infected. 

At this point, each vertex of the form $(4,i,2)$ for $1\leq i\leq a_3-2$ is adjacent to infected vertices $(5,i,2)$ and $(4,i,3)$. Additionally, $(4,1,2)$ is adjacent to $(3,1,2)$, which is in $A_0$. So, each vertex of the form $(4,i,2)$ for $1\leq i\leq a_3-2$ becomes infected, one after the other. Likewise, each vertex of the form $(3,i,2)$ for $1\leq i\leq a_3-2$ becomes infected, as does each vertex of the form $(2,i,2)$ for $1\leq i\leq a_3-2$. Each vertex of the form $(1,i,2)$ for $1\leq i\leq a_3-3$ also gets infected; this time, the infection does not spread immediately to $(1,a_3-2,2)$ because $(1,a_3-2,1)$ is  not currently infected. 

At this point, we have shown that every vertex in $[6]\times[a_3]\times \{2\}$ becomes infected, except for $(1,a_3-2,1)$ and seven vertices in $[6]\times\{a_3-1,a_3\}\times\{2\}$. It is not hard to see that these eight vertices all become infected; see Figure~\ref{fig:361mod2} for an illustration of this. Now, once all of $[6]\times[a_3]\times \{2\}$ is infected, it is not hard to see that the other two levels also become infected. This completes the proof. 
\end{proof}

\begin{figure}[htbp]
\begin{center}
\begin{tikzpicture}
\begin{scope}[xshift=0.0cm,yshift=0.0cm]
\draw[step=0.5cm,color=black,fill=gray] (-0.001,-0.001) grid (4.501,3.001) (0.0,0.0) rectangle (0.5,0.5)(4.0,1.0) rectangle (4.5,1.5)(0.0,2.0) rectangle (0.5,2.5)(1.0,2.0) rectangle (1.5,2.5)(2.0,2.0) rectangle (2.5,2.5)(3.0,2.0) rectangle (3.5,2.5)(4.0,2.0) rectangle (4.5,2.5)(0.5,2.5) rectangle (1.0,3.0)(1.5,2.5) rectangle (2.0,3.0)(2.5,2.5) rectangle (3.0,3.0);
\node at (0.75,0.25) {27};\node at (1.25,0.25) {28};\node at (1.75,0.25) {29};\node at (2.25,0.25) {30};\node at (2.75,0.25) {31};\node at (3.25,0.25) {32};\node at (3.75,0.25) {33};\node at (4.25,0.25) {34};\node at (0.25,0.75) {25};\node at (0.75,0.75) {26};\node at (1.25,0.75) {27};\node at (1.75,0.75) {28};\node at (2.25,0.75) {29};\node at (2.75,0.75) {30};\node at (3.25,0.75) {31};\node at (3.75,0.75) {32};\node at (4.25,0.75) {33};\node at (0.25,1.25) {24};\node at (0.75,1.25) {23};\node at (1.25,1.25) {22};\node at (1.75,1.25) {21};\node at (2.25,1.25) {20};\node at (2.75,1.25) {19};\node at (3.25,1.25) {18};\node at (3.75,1.25) {17};\node at (0.25,1.75) {23};\node at (0.75,1.75) {22};\node at (1.25,1.75) {21};\node at (1.75,1.75) {20};\node at (2.25,1.75) {19};\node at (2.75,1.75) {18};\node at (3.25,1.75) {17};\node at (3.75,1.75) {16};\node at (4.25,1.75) {15};\node at (0.75,2.25) {1};\node at (1.75,2.25) {1};\node at (2.75,2.25) {1};\node at (3.75,2.25) {11};\node at (0.25,2.75) {1};\node at (1.25,2.75) {1};\node at (2.25,2.75) {1};\node at (3.25,2.75) {13};\node at (3.75,2.75) {14};\node at (4.25,2.75) {15};
\end{scope}

\begin{scope}[xshift=5.0cm,yshift=0.0cm]
\draw[step=0.5cm,color=black,fill=gray] (-0.001,-0.001) grid (4.501,3.001) (0.5,0.0) rectangle (1.0,0.5)(1.5,0.0) rectangle (2.0,0.5)(2.5,0.0) rectangle (3.0,0.5)(3.5,0.0) rectangle (4.0,0.5)(0.0,0.5) rectangle (0.5,1.0)(1.0,0.5) rectangle (1.5,1.0)(2.0,0.5) rectangle (2.5,1.0)(3.0,0.5) rectangle (3.5,1.0)(4.0,0.5) rectangle (4.5,1.0)(0.0,1.5) rectangle (0.5,2.0)(0.0,2.5) rectangle (0.5,3.0)(4.0,2.5) rectangle (4.5,3.0);
\node at (0.25,0.25) {1};\node at (1.25,0.25) {1};\node at (2.25,0.25) {1};\node at (3.25,0.25) {1};\node at (4.25,0.25) {1};\node at (0.75,0.75) {1};\node at (1.75,0.75) {1};\node at (2.75,0.75) {1};\node at (3.75,0.75) {1};\node at (0.25,1.25) {1};\node at (0.75,1.25) {2};\node at (1.25,1.25) {3};\node at (1.75,1.25) {4};\node at (2.25,1.25) {5};\node at (2.75,1.25) {6};\node at (3.25,1.25) {7};\node at (3.75,1.25) {12};\node at (4.25,1.25) {13};\node at (0.75,1.75) {3};\node at (1.25,1.75) {4};\node at (1.75,1.75) {5};\node at (2.25,1.75) {6};\node at (2.75,1.75) {7};\node at (3.25,1.75) {8};\node at (3.75,1.75) {11};\node at (4.25,1.75) {14};\node at (0.25,2.25) {1};\node at (0.75,2.25) {4};\node at (1.25,2.25) {5};\node at (1.75,2.25) {6};\node at (2.25,2.25) {7};\node at (2.75,2.25) {8};\node at (3.25,2.25) {9};\node at (3.75,2.25) {10};\node at (4.25,2.25) {1};\node at (0.75,2.75) {5};\node at (1.25,2.75) {6};\node at (1.75,2.75) {7};\node at (2.25,2.75) {8};\node at (2.75,2.75) {9};\node at (3.25,2.75) {12};\node at (3.75,2.75) {11};
\end{scope}

\begin{scope}[xshift=10.0cm,yshift=0.0cm]
\draw[step=0.5cm,color=black,fill=gray] (-0.001,-0.001) grid (4.501,3.001) (4.0,0.0) rectangle (4.5,0.5)(0.0,1.0) rectangle (0.5,1.5)(1.0,1.0) rectangle (1.5,1.5)(2.0,1.0) rectangle (2.5,1.5)(3.0,1.0) rectangle (3.5,1.5)(0.5,1.5) rectangle (1.0,2.0)(1.5,1.5) rectangle (2.0,2.0)(2.5,1.5) rectangle (3.0,2.0)(3.5,1.5) rectangle (4.0,2.0)(4.0,2.0) rectangle (4.5,2.5)(3.5,2.5) rectangle (4.0,3.0);
\node at (0.25,0.25) {26};\node at (0.75,0.25) {25};\node at (1.25,0.25) {24};\node at (1.75,0.25) {23};\node at (2.25,0.25) {22};\node at (2.75,0.25) {21};\node at (3.25,0.25) {20};\node at (3.75,0.25) {19};\node at (0.25,0.75) {25};\node at (0.75,0.75) {24};\node at (1.25,0.75) {23};\node at (1.75,0.75) {22};\node at (2.25,0.75) {21};\node at (2.75,0.75) {20};\node at (3.25,0.75) {19};\node at (3.75,0.75) {18};\node at (4.25,0.75) {17};\node at (0.75,1.25) {1};\node at (1.75,1.25) {1};\node at (2.75,1.25) {1};\node at (3.75,1.25) {13};\node at (4.25,1.25) {16};\node at (0.25,1.75) {1};\node at (1.25,1.75) {1};\node at (2.25,1.75) {1};\node at (3.25,1.75) {1};\node at (4.25,1.75) {15};\node at (0.25,2.25) {16};\node at (0.75,2.25) {15};\node at (1.25,2.25) {14};\node at (1.75,2.25) {13};\node at (2.25,2.25) {12};\node at (2.75,2.25) {11};\node at (3.25,2.25) {10};\node at (3.75,2.25) {1};\node at (0.25,2.75) {19};\node at (0.75,2.75) {18};\node at (1.25,2.75) {17};\node at (1.75,2.75) {16};\node at (2.25,2.75) {15};\node at (2.75,2.75) {14};\node at (3.25,2.75) {13};\node at (4.25,2.75) {1};
\end{scope}

\end{tikzpicture}
\end{center}
    \caption{Diagram used in the proof of Proposition~\ref{prop:3,6,1mod2}}
    \label{fig:361mod2}
\end{figure}
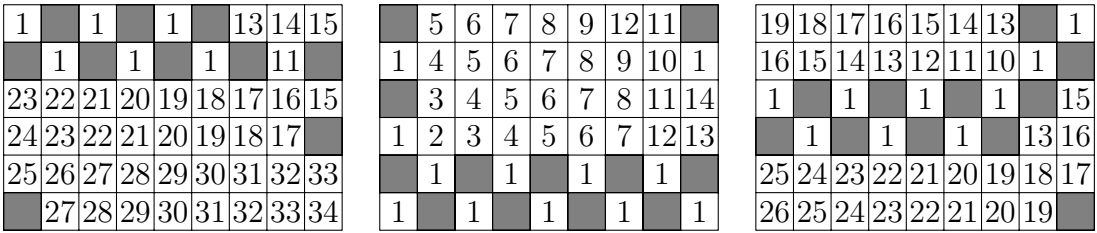

We can now characterize the perfect triples of the form $(3,6,a_3)$. 

\begin{cor}
\label{cor:3,6,c}
$(3,6,a_3)$ is perfect if and only if  $a_3\geq2$.
\end{cor}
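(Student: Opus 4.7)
The plan is to split into the ``if'' and ``only if'' directions and observe first that class is automatic here. For any $a_3\geq1$, we have
\[3\cdot 6 + 3a_3 + 6a_3 = 18 + 9a_3 \equiv 0 \bmod 3,\]
so $(3,6,a_3)$ is class $0$ regardless of $a_3$. Consequently, ``perfect'' is equivalent to ``optimal'' for these triples, which is exactly what the already-proved propositions give (or fail to give) us.

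For the ``if'' direction, I would simply combine Propositions~\ref{prop:3,6,0mod2} and~\ref{prop:3,6,1mod2}: the first covers $a_3\equiv 0\bmod 2$ with $a_3\geq 2$, and the second covers $a_3\equiv 1\bmod 2$ with $a_3\geq 3$ (each of those propositions internally handles its own base case via Propositions~\ref{prop:2,3,0mod6} and~\ref{prop:3,3,0mod2}, respectively). Together they exhaust all integers $a_3\geq 2$.

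For the ``only if'' direction, it remains to rule out $a_3=1$. Since $\prod_{i=1}^3[a_i]$ with $a_3=1$ is isomorphic to the $2$-dimensional grid $[3]\times[6]$, optimality of $(3,6,1)$ would mean the existence of a percolating set for the $3$-neighbour process in $[3]\times[6]$ of cardinality $\tfrac{3\cdot 6+3+6}{3}=9$, i.e.\ a set of size $\tfrac{ab+a+b}{3}$ with $a=3,b=6$. Theorem~\ref{thm:thickness1} then forces $3=6=2^k-1$ for some $k\geq1$, which is absurd. Hence $(3,6,1)$ is not perfect.

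No step here is really an obstacle — the substantive work was done in the previous two propositions and in Theorem~\ref{thm:thickness1}; the corollary is purely a matter of assembling these results and checking the class condition. The only mild care needed is to verify that the base cases $a_3\in\{2,3\}$ are not accidentally excluded by the hypotheses of the two propositions, but both propositions explicitly allow those values and reduce them to earlier results.
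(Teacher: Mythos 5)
Your proposal is correct and follows the same route as the paper: Theorem~\ref{thm:thickness1} rules out $a_3=1$, and Propositions~\ref{prop:3,6,0mod2} and~\ref{prop:3,6,1mod2} cover $a_3\geq2$ according to parity. The extra remarks about the class-$0$ condition and the base cases of the two propositions are accurate but not needed beyond what the paper records.
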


\begin{proof}
The fact that $(3,6,1)$ is not perfect follows from Theorem~\ref{thm:thickness1}. We have that $(3,6,a_3)$ is perfect for all $a_3\geq2$ by either Proposition~\ref{prop:3,6,0mod2} or ~\ref{prop:3,6,1mod2} depending on the parity of $a_3$. 
\end{proof}

\begin{prop}
\label{prop:3,3mod6,1mod2}
If $a_2,a_3\geq 3$, $a_2\equiv 1\bmod 2$ and $a_3\equiv 3\bmod 6$, then $(3,a_2,a_3)$ is perfect. 
\end{prop}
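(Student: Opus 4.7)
My plan is to follow the explicit-construction style used in Propositions~\ref{prop:3,3,0mod2}, \ref{prop:3,6,0mod2}, and \ref{prop:3,6,1mod2}, writing $A_0 \subseteq [3]\times[a_2]\times[a_3]$ as a disjoint union of structured pieces and then tracing the $3$-neighbour process step by step. The target size is $\lceil\frac{3a_2+3a_3+a_2a_3}{3}\rceil = a_2 + a_3 + \frac{a_2 a_3}{3}$, which is an integer because $3 \mid a_3$, and $(3,a_2,a_3)$ is automatically class $0$ since $a_3 \equiv 0 \bmod 3$.

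First I would dispose of the base case $a_2=3$. When $a_3=3$, $(3,3,3)$ is perfect by Theorem~\ref{thm:nnn} (which gives $m([3]^3;3)=9$). For $a_3\geq 9$ with $a_3\equiv 3\bmod 6$, Proposition~\ref{prop:3,3,0mod2} does not apply (it needs $a_3$ even), so I would give a parallel explicit construction in $[3]\times[3]\times[a_3]$: seed infections $A_0^1$ and $A_0^{a_3}$ on the two end-layers $z=1$ and $z=a_3$ of size roughly $a_3/2$ each, together with $(a_3-3)/2$ repeating interior blocks $A_0^{2,i}$ of constant size. The counts are arranged to total exactly $2a_3+3$, and verification mirrors that of Proposition~\ref{prop:3,3,0mod2}, with the parity of $a_3$ handled by a small adjustment at one end-cap.

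For $a_2\geq 5$ odd I would give the analogous layered construction in $[3]\times[a_2]\times[a_3]$. Set $A_0 = A_0^1 \cup \left(\bigcup_{i=1}^{(a_3-3)/2} A_0^{2,i}\right) \cup A_0^{a_3}$, where $A_0^1$ and $A_0^{a_3}$ are infections on the two faces in direction $3$ chosen to alternate around the boundary of the $[3]\times[a_2]$ slice (the odd parity of $a_2$ ensures this closes up consistently at both corners), and each $A_0^{2,i}$ is a sparse interior block of size $\tfrac{2}{3}(a_2+1)$ supported on the two slices $z=2i,\,2i+1$. A direct count then gives $|A_0| = a_2 + a_3 + \frac{a_2 a_3}{3}$ as required. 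Percolation is verified in two passes: first show that each interior slice $[3]\times[a_2]\times\{j\}$ for $2\leq j\leq a_3-1$ is filled by a row-by-row cascade from the $A_0^{2,i}$ seeds (essentially identical to the ``diagonal'' cascades in the proofs of Propositions~\ref{prop:3,6,0mod2} and \ref{prop:3,6,1mod2}); then show that the end-faces $z\in\{1,a_3\}$ are filled from $A_0^1$ and $A_0^{a_3}$ with the assistance of the now-infected slices at $z=2$ and $z=a_3-1$, invoking Lemma~\ref{lem:sidesSplit} to finish.

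The principal obstacle will be aligning the end-cap patterns with the interior blocks so that the total count is exactly $a_2 + a_3 + \frac{a_2a_3}{3}$ while still achieving percolation. The two residue conditions $a_2\equiv 1\bmod 2$ and $a_3\equiv 3\bmod 6$ are both essential: oddness of $a_2$ is needed so that the alternating corner pattern on each end-cap closes up, and $a_3\equiv 3\bmod 6$ guarantees that $(a_3-3)/2$ interior blocks fit with the correct parity and produce the right total. In practice the bookkeeping will require splitting into subcases based on $a_2\bmod 6\in\{1,3,5\}$, with the middle value handled by symmetry against the already-established Corollary~\ref{cor:3,6,c} (which treats $(3,6,a_2)$), and with Proposition~\ref{prop:3,4,3mod6} giving a useful template for the odd-residue subcases after minor modification. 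As with the earlier propositions in this section, detailed verification will be most transparent via an accompanying figure showing the infection times on each slice, analogous to Figures~\ref{fig:33k} and~\ref{fig:361mod2}.
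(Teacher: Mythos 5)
There is a genuine gap: what you have written is a strategy outline rather than a proof. The objects that would carry the argument---the end-cap sets $A_0^1$, $A_0^{a_3}$ and the interior blocks $A_0^{2,i}$---are never actually defined, and every verification step (``the counts are arranged to total exactly...'', ``verification mirrors that of...'', ``a direct count then gives...'') is deferred. Worse, the one quantitative commitment you do make cannot work. If you use $(a_3-3)/2$ interior blocks of a fixed size $s$, then increasing $a_3$ by $2$ must increase $|A_0|$ by exactly the increment of the target $a_2+a_3+\tfrac{a_2a_3}{3}$, namely $2+\tfrac{2a_2}{3}$; this is an integer only when $3\mid a_2$, so a constant block size is impossible for $a_2\equiv 1,5\bmod 6$. (Your proposed size $\tfrac{2}{3}(a_2+1)$ is itself an integer only when $a_2\equiv 2\bmod 3$, and even then it is not the required increment.) Your fallback for the residue $a_2\equiv 3\bmod 6$ via Corollary~\ref{cor:3,6,c} also does not apply: that corollary concerns triples of the form $(3,6,c)$, and swapping $a_2$ with $a_3$ does not produce a $6$. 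So the case analysis you sketch does not close, and the base case $a_2=3$, $a_3\geq 9$ is likewise left entirely unconstructed.

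For comparison, the paper avoids all of this layer-by-layer bookkeeping by folding a single two-dimensional construction. It takes the grid graph $H$ obtained from $(a_2+2)\times(a_3+2)$ by cutting the corner $\{(x,y):x+y\geq a_2+a_3+1\}\cup\{(a_2,a_3)\}$, applies Lemma~\ref{lem:cutcorner} to get a percolating set in $H$ of cardinality $\tfrac{a_2a_3+3a_2+3a_3-3}{3}$, pushes it into $[a_2]\times[a_3]\times[3]$ via a local embedding (Lemma~\ref{lem:embedding}), adds the single point $(a_2,a_3,3)$, and concludes with Lemma~\ref{lem:sidesSplit}. This is precisely where the hypotheses enter: $a_2$ odd and $a_3\equiv 3\bmod 6$ make $a_2+2\equiv 1\bmod 2$ and $a_3+2\equiv 5\bmod 6$, which are exactly the conditions of Lemma~\ref{lem:cutcorner}, and the argument is uniform in $a_2\bmod 6$. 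If you want to salvage your approach you would need to exhibit the seed sets explicitly for each residue class and redo the counting; as it stands the proposal does not constitute a proof.
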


\begin{proof}
In our diagrams, we view the grid as $[a_2]\times[a_3]\times[3]$. Let $H$ be the grid graph obtained from $(a_2+2)\times(a_3+2)$ by deleting all of the points $(x,y)$ such that either $x+y\geq a_2+a_3+1$ or $(x,y)=(a_2,a_3)$; see Figure~\ref{fig:3bc}~(a) for the case $a_2=5$ and $a_3=9$. By Lemma~\ref{lem:cutcorner}, there is a percolating set in $H$ of cardinality $\frac{(a_2+2)(a_3+2)+(a_2+2)+(a_3+2)-11}{3} = \frac{a_2a_3+3a_2+3a_3-3}{3}$. A local embedding of $H$ into $[a_2]\times [a_3]\times[3]$ in the case $a_2=5$ and $a_3=9$ is provided in Figure~\ref{fig:3bc}~(a); the local embedding in the general case is analogous. The percolating set that we consider is the image of the aforementioned percolating set in $H$ under the local embedding together with the point $(a_2,a_3,3)$; the case $a_2=5$ and $a_3=9$ is shown in Figure~\ref{fig:3bc}~(b), where the additional point is labelled with an X. The cells in that figure labelled with Y are infected after one step of the $3$-neighbour process. At this point, we can invoke~\ref{lem:sidesSplit} to get that this set percolates. The cardinality is $\frac{a_2a_3+3a_2+3a_3}{3}$, as desired.
\end{proof}

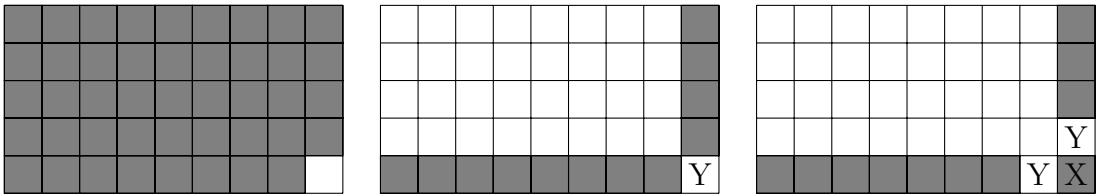
\begin{figure}[htbp]
\begin{center}
\begin{tikzpicture}
\begin{scope}[xshift=0.0cm,yshift=0.0cm,scale=1.2]
\draw[step=0.5cm,color=black]  (0.0,0.0) rectangle (0.5,0.5)(0.5,0.0) rectangle (1.0,0.5)(1.0,0.0) rectangle (1.5,0.5)(1.5,0.0) rectangle (2.0,0.5)(2.0,0.0) rectangle (2.5,0.5)(2.5,0.0) rectangle (3.0,0.5)(3.0,0.0) rectangle (3.5,0.5)(0.5,0.5) rectangle (1.0,1.0)(1.0,0.5) rectangle (1.5,1.0)(1.5,0.5) rectangle (2.0,1.0)(2.0,0.5) rectangle (2.5,1.0)(2.5,0.5) rectangle (3.0,1.0)(3.0,0.5) rectangle (3.5,1.0)(3.5,0.5) rectangle (4.0,1.0)(0.0,1.0) rectangle (0.5,1.5)(0.5,1.5) rectangle (1.0,2.0)(1.0,1.5) rectangle (1.5,2.0)(1.5,1.5) rectangle (2.0,2.0)(2.0,1.5) rectangle (2.5,2.0)(2.5,1.5) rectangle (3.0,2.0)(3.0,1.5) rectangle (3.5,2.0)(3.5,1.5) rectangle (4.0,2.0)(4.0,1.5) rectangle (4.5,2.0)(4.5,1.5) rectangle (5.0,2.0)(0.0,2.0) rectangle (0.5,2.5)(0.5,2.0) rectangle (1.0,2.5)(1.0,2.0) rectangle (1.5,2.5)(1.5,2.0) rectangle (2.0,2.5)(2.0,2.0) rectangle (2.5,2.5)(2.5,2.0) rectangle (3.0,2.5)(3.0,2.0) rectangle (3.5,2.5)(3.5,2.0) rectangle (4.0,2.5)(4.0,2.0) rectangle (4.5,2.5)(4.5,2.0) rectangle (5.0,2.5)(5.0,2.0) rectangle (5.5,2.5)(0.0,2.5) rectangle (0.5,3.0)(0.5,2.5) rectangle (1.0,3.0)(1.0,2.5) rectangle (1.5,3.0)(1.5,2.5) rectangle (2.0,3.0)(2.0,2.5) rectangle (2.5,3.0)(2.5,2.5) rectangle (3.0,3.0)(3.0,2.5) rectangle (3.5,3.0)(3.5,2.5) rectangle (4.0,3.0)(4.0,2.5) rectangle (4.5,3.0)(4.5,2.5) rectangle (5.0,3.0)(5.0,2.5) rectangle (5.5,3.0)(0.0,3.0) rectangle (0.5,3.5)(0.5,3.0) rectangle (1.0,3.5)(1.0,3.0) rectangle (1.5,3.5)(1.5,3.0) rectangle (2.0,3.5)(2.0,3.0) rectangle (2.5,3.5)(2.5,3.0) rectangle (3.0,3.5)(3.0,3.0) rectangle (3.5,3.5)(3.5,3.0) rectangle (4.0,3.5)(4.0,3.0) rectangle (4.5,3.5)(4.5,3.0) rectangle (5.0,3.5)(5.0,3.0) rectangle (5.5,3.5)(0.0,0.5) rectangle (0.5,1)(0.0,1.5) rectangle (0.5,2)(0.5,1) rectangle (1,1.5)(1,1) rectangle (1.5,1.5)(1.5,1) rectangle (2,1.5)(2,1) rectangle (2.5,1.5)(2.5,1) rectangle (3,1.5)(3,1) rectangle (3.5,1.5)(3.5,1) rectangle (4,1.5);
\node[scale=0.4] at (0.25,3.25) {$1,1,1$};
\node[scale=0.4] at (0.75,3.25) {$1,2,1$};
\node[scale=0.4] at (1.25,3.25) {$1,3,1$};
\node[scale=0.4] at (1.75,3.25) {$1,4,1$};
\node[scale=0.4] at (2.25,3.25) {$1,5,1$};
\node[scale=0.4] at (2.75,3.25) {$1,6,1$};
\node[scale=0.4] at (3.25,3.25) {$1,7,1$};
\node[scale=0.4] at (3.75,3.25) {$1,8,1$};
\node[scale=0.4] at (4.25,3.25) {$1,9,1$};
\node[scale=0.4] at (4.75,3.25) {$1,9,2$};
\node[scale=0.4] at (5.25,3.25) {$1,9,3$};

\node[scale=0.4] at (0.25,2.75) {$2,1,1$};
\node[scale=0.4] at (0.75,2.75) {$2,2,1$};
\node[scale=0.4] at (1.25,2.75) {$2,3,1$};
\node[scale=0.4] at (1.75,2.75) {$2,4,1$};
\node[scale=0.4] at (2.25,2.75) {$2,5,1$};
\node[scale=0.4] at (2.75,2.75) {$2,6,1$};
\node[scale=0.4] at (3.25,2.75) {$2,7,1$};
\node[scale=0.4] at (3.75,2.75) {$2,8,1$};
\node[scale=0.4] at (4.25,2.75) {$2,9,1$};
\node[scale=0.4] at (4.75,2.75) {$2,9,2$};
\node[scale=0.4] at (5.25,2.75) {$2,9,3$};

\node[scale=0.4] at (0.25,2.25) {$3,1,1$};
\node[scale=0.4] at (0.75,2.25) {$3,2,1$};
\node[scale=0.4] at (1.25,2.25) {$3,3,1$};
\node[scale=0.4] at (1.75,2.25) {$3,4,1$};
\node[scale=0.4] at (2.25,2.25) {$3,5,1$};
\node[scale=0.4] at (2.75,2.25) {$3,6,1$};
\node[scale=0.4] at (3.25,2.25) {$3,7,1$};
\node[scale=0.4] at (3.75,2.25) {$3,8,1$};
\node[scale=0.4] at (4.25,2.25) {$3,9,1$};
\node[scale=0.4] at (4.75,2.25) {$3,9,2$};
\node[scale=0.4] at (5.25,2.25) {$3,9,3$};

\node[scale=0.4] at (0.25,1.75) {$4,1,1$};
\node[scale=0.4] at (0.75,1.75) {$4,2,1$};
\node[scale=0.4] at (1.25,1.75) {$4,3,1$};
\node[scale=0.4] at (1.75,1.75) {$4,4,1$};
\node[scale=0.4] at (2.25,1.75) {$4,5,1$};
\node[scale=0.4] at (2.75,1.75) {$4,6,1$};
\node[scale=0.4] at (3.25,1.75) {$4,7,1$};
\node[scale=0.4] at (3.75,1.75) {$4,8,1$};
\node[scale=0.4] at (4.25,1.75) {$4,9,1$};
\node[scale=0.4] at (4.75,1.75) {$4,9,2$};

\node[scale=0.4] at (0.25,1.25) {$5,1,1$};
\node[scale=0.4] at (0.75,1.25) {$5,2,1$};
\node[scale=0.4] at (1.25,1.25) {$5,3,1$};
\node[scale=0.4] at (1.75,1.25) {$5,4,1$};
\node[scale=0.4] at (2.25,1.25) {$5,5,1$};
\node[scale=0.4] at (2.75,1.25) {$5,6,1$};
\node[scale=0.4] at (3.25,1.25) {$5,7,1$};
\node[scale=0.4] at (3.75,1.25) {$5,8,1$};

\node[scale=0.4] at (0.25,0.75) {$5,1,2$};
\node[scale=0.4] at (0.75,0.75) {$5,2,2$};
\node[scale=0.4] at (1.25,0.75) {$5,3,2$};
\node[scale=0.4] at (1.75,0.75) {$5,4,2$};
\node[scale=0.4] at (2.25,0.75) {$5,5,2$};
\node[scale=0.4] at (2.75,0.75) {$5,6,2$};
\node[scale=0.4] at (3.25,0.75) {$5,7,2$};
\node[scale=0.4] at (3.75,0.75) {$5,8,2$};

\node[scale=0.4] at (0.25,0.25) {$5,1,3$};
\node[scale=0.4] at (0.75,0.25) {$5,2,3$};
\node[scale=0.4] at (1.25,0.25) {$5,3,3$};
\node[scale=0.4] at (1.75,0.25) {$5,4,3$};
\node[scale=0.4] at (2.25,0.25) {$5,5,3$};
\node[scale=0.4] at (2.75,0.25) {$5,6,3$};
\node[scale=0.4] at (3.25,0.25) {$5,7,3$};
\node[align=left] at (2.75,-0.5){(a) A local embedding of $H$ into $[5]\times[9]\times[3]$.};
\end{scope}
\end{tikzpicture}
\end{center}

\begin{center}
\begin{tikzpicture}
\begin{scope}[xshift=0.0cm,yshift=0.0cm]
\draw[step=0.5cm,color=black,fill=gray] (-0.001,-0.001) grid (4.501,2.501) (0.0,0.0) rectangle (0.5,0.5)(0.5,0.0) rectangle (1.0,0.5)(1.0,0.0) rectangle (1.5,0.5)(1.5,0.0) rectangle (2.0,0.5)(2.0,0.0) rectangle (2.5,0.5)(2.5,0.0) rectangle (3.0,0.5)(3.0,0.0) rectangle (3.5,0.5)(3.5,0.0) rectangle (4.0,0.5)(0.0,0.5) rectangle (0.5,1.0)(0.5,0.5) rectangle (1.0,1.0)(1.0,0.5) rectangle (1.5,1.0)(1.5,0.5) rectangle (2.0,1.0)(2.0,0.5) rectangle (2.5,1.0)(2.5,0.5) rectangle (3.0,1.0)(3.0,0.5) rectangle (3.5,1.0)(3.5,0.5) rectangle (4.0,1.0)(4.0,0.5) rectangle (4.5,1.0)(0.0,1.0) rectangle (0.5,1.5)(0.5,1.0) rectangle (1.0,1.5)(1.0,1.0) rectangle (1.5,1.5)(1.5,1.0) rectangle (2.0,1.5)(2.0,1.0) rectangle (2.5,1.5)(2.5,1.0) rectangle (3.0,1.5)(3.0,1.0) rectangle (3.5,1.5)(3.5,1.0) rectangle (4.0,1.5)(4.0,1.0) rectangle (4.5,1.5)(0.0,1.5) rectangle (0.5,2.0)(0.5,1.5) rectangle (1.0,2.0)(1.0,1.5) rectangle (1.5,2.0)(1.5,1.5) rectangle (2.0,2.0)(2.0,1.5) rectangle (2.5,2.0)(2.5,1.5) rectangle (3.0,2.0)(3.0,1.5) rectangle (3.5,2.0)(3.5,1.5) rectangle (4.0,2.0)(4.0,1.5) rectangle (4.5,2.0)(0.0,2.0) rectangle (0.5,2.5)(0.5,2.0) rectangle (1.0,2.5)(1.0,2.0) rectangle (1.5,2.5)(1.5,2.0) rectangle (2.0,2.5)(2.0,2.0) rectangle (2.5,2.5)(2.5,2.0) rectangle (3.0,2.5)(3.0,2.0) rectangle (3.5,2.5)(3.5,2.0) rectangle (4.0,2.5)(4.0,2.0) rectangle (4.5,2.5);

\end{scope}

\begin{scope}[xshift=5.0cm,yshift=0.0cm]
\draw[step=0.5cm,color=black,fill=gray] (-0.001,-0.001) grid (4.501,2.501) (0.0,0.0) rectangle (0.5,0.5)(0.5,0.0) rectangle (1.0,0.5)(1.0,0.0) rectangle (1.5,0.5)(1.5,0.0) rectangle (2.0,0.5)(2.0,0.0) rectangle (2.5,0.5)(2.5,0.0) rectangle (3.0,0.5)(3.0,0.0) rectangle (3.5,0.5)(3.5,0.0) rectangle (4.0,0.5)(4.0,0.5) rectangle (4.5,1.0)(4.0,1.0) rectangle (4.5,1.5)(4.0,1.5) rectangle (4.5,2.0)(4.0,2.0) rectangle (4.5,2.5);
\node at (4.25,0.25) {Y};
\node[align=left] at (2.25,-0.75){(b) The image of $H$ under the local embedding with one infection added (X)\\and three vertices (Y) that are infected after one step.};
\end{scope}

\begin{scope}[xshift=10.0cm,yshift=0.0cm]
\draw[step=0.5cm,color=black,fill=gray] (-0.001,-0.001) grid (4.501,2.501) (0.0,0.0) rectangle (0.5,0.5)(0.5,0.0) rectangle (1.0,0.5)(1.0,0.0) rectangle (1.5,0.5)(1.5,0.0) rectangle (2.0,0.5)(2.0,0.0) rectangle (2.5,0.5)(2.5,0.0) rectangle (3.0,0.5)(3.0,0.0) rectangle (3.5,0.5)(4.0,1.0) rectangle (4.5,1.5)(4.0,1.5) rectangle (4.5,2.0)(4.0,2.0) rectangle (4.5,2.5)(4.0,0.0) rectangle (4.5,0.5);
\node at (3.75,0.25) {Y};\node at (4.25,0.75) {Y};\node at (4.25,0.25) {X};
\end{scope}

\end{tikzpicture}
\end{center}
    \caption{Diagrams used in the proof of Proposition~\ref{prop:3,3mod6,1mod2}.}
    \label{fig:3bc}
\end{figure}

The following corollary characterizes the perfect triples of the form $(3,3,a_3)$. 

\begin{cor}
\label{cor:3,3,c}
$(3,3,a_3)$ is perfect if and only if  $a_3\neq 2$.
\end{cor}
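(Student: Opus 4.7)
The proof is a short case analysis that combines the earlier propositions in this section with Corollary~\ref{cor:2,3,0mod3} and the $2^k-1$ construction. The plan is as follows.

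First, observe that $(3,3,a_3)$ is automatically class~$0$: we have $a_1a_2+a_1a_3+a_2a_3 = 9+6a_3 \equiv 0 \bmod 3$. Hence ``perfect'' is equivalent to ``optimal'' in this situation, so the only remaining task is to determine the values of $a_3$ for which the lower bound $\lceil(9+6a_3)/3\rceil = 3+2a_3$ from Proposition~\ref{prop:SA} is attained.

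Next, I would dispatch the small and exceptional cases. For $a_3=1$ we have $3 = 2^2-1$, so the iterative ``Purina'' construction described in the introduction (see Figure~\ref{fig:Purina}) produces a percolating set in $[3]\times[3]$ of size $\frac{3^2+2\cdot 3}{3} = 5 = 3+2\cdot 1$, showing $(3,3,1)$ is perfect. For $a_3=2$, note that $(3,3,2)$ is a permutation of $(2,3,3)$, and Corollary~\ref{cor:2,3,0mod3} tells us $(2,3,a_3')$ is perfect iff $a_3' \equiv 0 \bmod 3$ and $a_3' \geq 6$; since $a_3'=3$ fails the second condition, $(3,3,2)$ is not perfect. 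Since optimality (and class) is invariant under permutation of coordinates, this is enough to rule out $a_3=2$.

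Finally, for $a_3\geq 3$, I would split on parity. When $a_3$ is even (and $\geq 4$), Proposition~\ref{prop:3,3,0mod2} immediately gives that $(3,3,a_3)$ is perfect. When $a_3$ is odd (and $\geq 3$), I apply Proposition~\ref{prop:3,3mod6,1mod2} in one of two ways depending on $a_3 \bmod 6$: if $a_3 \equiv 3 \bmod 6$, apply it directly with $a_2=3$ (odd) and the given $a_3$; if $a_3 \equiv 1$ or $5 \bmod 6$, instead apply it to the permuted tuple $(3, a_3, 3)$, taking ``$a_2$'' to be $a_3$ (which is odd and $\geq 3$) and ``$a_3$'' to be $3\equiv 3\bmod 6$. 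Either way, $(3,3,a_3)$ is perfect.

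No step here is a real obstacle---the work has all been done in the earlier propositions. The only thing to be careful about is the case $a_3=1$: one must invoke the classical $2^k-1$ recursive construction rather than any of the propositions in this section, since none of them handles the degenerate $a_3=1$ ``two-dimensional'' case.
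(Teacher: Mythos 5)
Your proposal is correct and follows essentially the same route as the paper: the $a_3=1$ case via the Figure~\ref{fig:Purina} construction, the $a_3=2$ exclusion via Corollary~\ref{cor:2,3,0mod3}, and the remaining cases via Propositions~\ref{prop:3,3,0mod2} and~\ref{prop:3,3mod6,1mod2} according to parity. Your extra care in permuting the coordinates so that the $a_3\equiv 3\bmod 6$ hypothesis of Proposition~\ref{prop:3,3mod6,1mod2} is met is a detail the paper leaves implicit, but the argument is the same.
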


\begin{proof}
If $a_3=1$, then $(3,3,a_3)$ is perfect by the second construction in Figure~\ref{fig:Purina}. The fact that $(3,3,2)$ is not perfect follows from Corollary~\ref{cor:2,3,0mod3}. If $a_3\neq2$, then $(3,3,a_3)$ is perfect by either Proposition~\ref{prop:3,3mod6,1mod2} or~\ref{prop:3,3,0mod2}, depending on the parity of $a_3$.
\end{proof}

\section{Recursive Constructions}
\label{sec:recursion}

To complete the proof of Theorem~\ref{thm:largeEnough}, we will blend together the constructions from the previous section and a few sporadic examples from Appendix~\ref{app:sporadic} using a recursive strategy to show that we can handle all sufficiently large choices of parameters. This recursive strategy makes use of the so-called \emph{modified bootstrap process} in $[n]^d$, which is similar to the $d$-neighbour process except that a vertex  becomes infected if it is adjacent to infected vertices via at least one edge in all of the $d$ possible directions. For instance, in two dimensions, a vertex becomes infected if it has at least one infected neighbour to the North or South  and at least one to the West or East, but does not become infected if it only has one to the West and one to the East.  This process is fairly well studied; in particular, Holroyd~\cite{Holroyd03,Holroyd06} obtained sharp estimates on the critical probability of this process. 

We state the following lemma in full generality, but remark that we will only apply it in the cases $d=3$ and $n=2$.  This special case is highlighted by the corollary that follows it.

\begin{lem}
\label{lem:recursion}
For $n\geq1$ and $d\geq1$, let $A=(a_{i,j})$ be a $d\times n$ matrix of positive integers and, for $1\leq i\leq d$, let $a_i=\sum_{j=1}^na_{i,j}$. Let $S$ be a percolating set for the modified bootstrap process in $[n]^d$ and, for each $x=(x_1,\dots,x_d)\in S$, let $T_{x}$ be a percolating set for the $d$-neighbour process in $\prod_{i=1}^d[a_{i,x_i}]$. Then
\[m(a_1,a_2,\dots,a_d;d)\leq \sum_{x\in S}\left|T_{x}\right|.\]
\end{lem}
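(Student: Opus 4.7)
The plan is to partition the grid $\prod_{i=1}^d[a_i]$ into $n^d$ rectangular blocks indexed by $[n]^d$, place the given percolating sets inside the blocks indexed by $S$, and then show that the block-level dynamics of the $d$-neighbour process in $\prod_{i=1}^d[a_i]$ dominates the modified bootstrap process in $[n]^d$.

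Concretely, for $i\in[d]$ and $j\in[n]$, let $I_{i,j}:=\left\{\sum_{k=1}^{j-1}a_{i,k}+1,\dots,\sum_{k=1}^ja_{i,k}\right\}$, and for $y=(y_1,\dots,y_d)\in[n]^d$ define the block $B_y:=\prod_{i=1}^dI_{i,y_i}$. The blocks partition $\prod_{i=1}^d[a_i]$, and $B_y$ induces a grid graph isomorphic to $\prod_{i=1}^d[a_{i,y_i}]$. For each $x\in S$, identify $T_x$ with its image inside $B_x$, and take $A_0$ to be the union of these images. Then $|A_0|=\sum_{x\in S}|T_x|$, so it suffices to prove that $A_0$ percolates in $\prod_{i=1}^d[a_i]$. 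Since the $d$-neighbour process is monotone and the subgraph induced on $B_x$ is isomorphic to $\prod_{i=1}^d[a_{i,x_i}]$, running the process restricted to $B_x$ starting from $T_x$ infects all of $B_x$; hence after finitely many steps every block $B_x$ with $x\in S$ is entirely infected.

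Next I will prove by induction on the time $t$ at which $y\in[n]^d$ becomes infected under the modified bootstrap process in $[n]^d$ started from $S$ that $B_y$ becomes entirely infected in $\prod_{i=1}^d[a_i]$. The base case $y\in S$ is handled above. For the inductive step, by the definition of the modified process there exists, for each $i\in[d]$, a sign $\epsilon_i\in\{-1,+1\}$ with $y+\epsilon_ie_i\in[n]^d$ such that $y+\epsilon_ie_i$ was infected strictly before time $t$. By induction, each $B_{y+\epsilon_ie_i}$ is fully infected. I then need the following sub-claim: if all $d$ of these adjacent blocks are infected, then $B_y$ becomes fully infected. To see this, after translating we may assume $B_y=\prod_{i=1}^d[a_{i,y_i}]$ and that the face of $B_y$ adjacent to $B_{y+\epsilon_ie_i}$ corresponds to a fixed extreme coordinate in direction $i$ (WLOG the coordinate is $1$, so $\epsilon_i=-1$). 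Order the vertices of $B_y$ by increasing $\sum_{i=1}^dv_i$; then for each $v\in B_y$ and each $i\in[d]$, either $v_i=1$ and $v$ has an infected neighbour in $B_{y+\epsilon_ie_i}$, or $v_i>1$ and $v$ has a neighbour $v-e_i\in B_y$ that precedes $v$ in the ordering. Thus in the ordering, when $v$ is reached it has at least $d$ already-infected neighbours, so it becomes infected; proceeding through the ordering shows $B_y$ becomes fully infected.

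Since $S$ percolates in $[n]^d$ under the modified process, every $y\in[n]^d$ is eventually infected in that process, so by the induction every block $B_y$ is eventually infected in $\prod_{i=1}^d[a_i]$. As the blocks partition $\prod_{i=1}^d[a_i]$, we conclude $[A_0]=\prod_{i=1}^d[a_i]$, giving the desired bound on $m(a_1,\dots,a_d;d)$. The main (and only non-bookkeeping) obstacle is the sub-claim that a block surrounded by one infected block in each direction gets fully infected, which I have resolved above by exhibiting a compatible linear ordering of its vertices.
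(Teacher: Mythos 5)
Your proof is correct and follows essentially the same strategy as the paper: partition $\prod_{i=1}^d[a_i]$ into blocks indexed by $[n]^d$, infect the blocks indexed by $S$, and propagate along the modified bootstrap dynamics. The only cosmetic difference is that you prove the key sub-claim (a block with a fully infected neighbouring block in every direction becomes fully infected) directly via an explicit coordinate-sum ordering, whereas the paper derives it from its Lemma~\ref{lem:sidesSplit} applied to a slightly enlarged block.
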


\begin{proof}
We view the cube $\prod_{i=1}^d[a_i]$ as being partitioned into $n^d$ cubes by viewing the $i$th dimension as being partitioned into intervals of length $a_{i,1},a_{i,2},\dots,a_{i,n}$ and considering products of these intervals. Formally, for $(x_1,x_2,\dots,x_d)\in [n]^d$, we let $G_{x}$ be the grid with vertex set
\[\prod_{i=1}^d\left\{1+\sum_{j=1}^{x_i-1}a_{i,j},\dots,\sum_{j=1}^{x_i}a_{i,j}\right\}.\]
We note that $G_{x}$ is isomorphic to the cube $\prod_{i=1}^d[a_{i,x_i}]$. 

Now, for every $x\in S$, suppose that we infect the vertices of $G_{x}$ corresponding to the vertices of $T_{x}$ under the natural isomorphism from $\prod_{i=1}^d[a_{i,x_i}]$ to $G_{x}$. Let $A_0$ be the set of all such vertices. To see that $A_0$ percolates in $\prod_{i=1}^d[a_i]$, we start by running the $d$-neighbour process for $t$ steps where $t$ is the minimum integer such that $A_t\supseteq\bigcup_{x\in S}G_{x}$. Such an integer $t$ clearly exists by construction of $A_0$.

Our final task is to show that $A_t$ percolates in $\prod_{i=1}^d[a_i]$. Let $x_1,\dots,x_{n^d}$ be the vertices of $[n]^d$ listed so that $x_1,\dots,x_{|S|}$ are the vertices of $S$ and, for $|S|+1\leq i\leq n^d$, $x_i$ has neighbours in $\{x_1,\dots,x_{i-1}\}$ via edges in at least $d$ different directions. Now, for $1\leq i\leq n^d$, we show that every vertex of $G_{x_i}$ is eventually infected by induction on $i$. As we concluded in the previous paragraph, every vertex of $G_{x_i}$ for $1\leq i\leq |S|$ is in $A_t$; this settles the base case. For $i\geq |S|+1$, without loss of generality, $\{x_1,\dots,x_{i-1}\}$ contains the neighbour of $x_i$ whose $j$th coordinate is smaller than the $j$th coordinate of $x_i$ for all $j\in [d]$. Let $H_{x_i}$ be the grid with vertex set 
\[\prod_{i=1}^d\left\{\sum_{j=1}^{x_i-1}a_{i,j},\dots,\sum_{j=1}^{x_i}a_{i,j}\right\}.\]
By induction on $i$, for every $j\in [d]$, at least one of the faces of $H_{x_i}$ in direction $j$ eventually becomes fully infected. Thus, by Lemma~\ref{lem:sidesSplit}, $H_{x_i}$ eventually becomes fully infected, and so $G_{x_i}$ becomes fully infected. This completes the proof. 
\end{proof}

The following corollary describes the main way in which we apply Lemma~\ref{lem:recursion}. 

\begin{cor}
\label{cor:recursive2}
Let $A=(a_{i,j})$ be a $3\times 2$ matrix of positive integers. Let $a_i=a_{i,1}+a_{i,2}$ for $1\leq i\leq 3$. Then $m(a_1,a_2,a_3;3)$ is at most
\[m(a_{1,1},a_{2,1}, a_{3,1};3)+m(a_{1,1},a_{2,2}, a_{3,2};3)+m(a_{1,2},a_{2,1}, a_{3,2};3)+m(a_{1,2},a_{2,2}, a_{3,1};3).\]
Moreover, if $(a_{1,1},a_{2,1}, a_{3,1})$ is optimal and each of the triples $(a_{1,1},a_{2,2}, a_{3,2}), (a_{1,2},a_{2,1}, a_{3,2})$, $(a_{1,2},a_{2,2}, a_{3,1})$ is perfect, then $(a_1,a_2,a_3)$ is optimal. 
\end{cor}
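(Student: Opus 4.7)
The plan is to apply Lemma~\ref{lem:recursion} with $d=3$ and $n=2$. The key input will be a small percolating set $S$ for the modified bootstrap process on $[2]^3$; the natural choice is one of the two color classes of the cube,
\[S:=\{(1,1,1),\,(1,2,2),\,(2,1,2),\,(2,2,1)\}.\]
To see that $S$ percolates I would observe that each of the four complementary vertices $(1,1,2),(1,2,1),(2,1,1),(2,2,2)$ has exactly one neighbour in each coordinate direction (since each coordinate of $[2]^3$ has length two), and in all four cases that neighbour lies in $S$. Hence each of these four vertices becomes infected in a single step of the modified process. Feeding this $S$ into Lemma~\ref{lem:recursion}, with each $T_x$ taken to be an optimal percolating set in the corresponding sub-box, yields the stated upper bound directly: the four elements of $S$ index sub-boxes whose dimensions match the four triples in the statement exactly.

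For the ``moreover'' assertion, the next step is an algebraic identity. I would check that the sum of the four ``surface-area'' expressions from the four inner triples equals $a_1a_2+a_1a_3+a_2a_3$ exactly. This is a finite bookkeeping calculation: one verifies that each of the twelve cross-terms $a_{r,s}a_{r',s'}$ with $r\neq r'$ appears in exactly one of the four triples indexed by $S$. Writing $P_1,P_2,P_3,P_4$ for the four surface-area expressions (in the order listed in the statement), this gives $P_1+P_2+P_3+P_4 = a_1a_2+a_1a_3+a_2a_3$, and the perfectness hypothesis on the last three triples means that $P_2,P_3,P_4$ are each divisible by $3$.

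The final ingredient is a short arithmetic observation. Since $P_2+P_3+P_4\equiv 0\pmod{3}$, we have $P_1\equiv P_1+P_2+P_3+P_4\pmod{3}$, and a three-case check on $P_1\bmod 3$ shows that
\[\left\lceil\frac{P_1}{3}\right\rceil+\frac{P_2+P_3+P_4}{3}=\left\lceil\frac{P_1+P_2+P_3+P_4}{3}\right\rceil.\]
Combining this with the upper bound from the first part of the corollary and the matching lower bound from Proposition~\ref{prop:SA} yields optimality of $(a_1,a_2,a_3)$. The only step that is at all conceptual is identifying and checking the correct percolating set $S$ for the modified process on $[2]^3$; everything else is arithmetic bookkeeping, and I expect no substantial obstacle.
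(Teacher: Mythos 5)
Your proposal is correct and follows essentially the same route as the paper: it uses the same percolating set $\{(1,1,1),(1,2,2),(2,1,2),(2,2,1)\}$ for the modified process in $[2]^3$ as input to Lemma~\ref{lem:recursion}, and the same expansion of $a_1a_2+a_1a_3+a_2a_3$ into the four sub-box surface areas for the ``moreover'' part. The only differences are cosmetic: you spell out the one-step verification that $S$ percolates and the modular-arithmetic step absorbing the ceiling, both of which the paper leaves implicit.
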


\begin{proof}
The general upper bound on $m(a_1,a_2,a_3;3)$ follows from Lemma~\ref{lem:recursion} since the vertices $(1,1,1),(1,2,2),(2,1,2)$ and $(2,2,1)$ form a set which percolates under the modified bootstrap process in $[2]^3$. Regarding the ``moreover'' part of the statement, we observe that, if $(a_{1,1},a_{2,1}, a_{3,1})$ is optimal and all of $(a_{1,1},a_{2,2}, a_{3,2})$, $(a_{1,2},a_{2,1}, a_{3,2})$ and $(a_{1,2},a_{2,2}, a_{3,1})$ are perfect, then $\left\lceil\frac{a_{1}a_{2}+a_{1}a_{3}+a_{2},a_{3}}{3}\right\rceil$ can be rewritten as
\[\left\lceil\frac{(a_{1,1}+a_{1,2})(a_{2,1}+a_{2,2})+(a_{1,1}+a_{1,2})(a_{3,1}+a_{3,2})+(a_{2,1}+a_{2,2})(a_{3,1}+a_{3,2})}{3}\right\rceil\]
\[=\left\lceil\frac{a_{1,1}a_{2,1}+a_{1,1}a_{3,1}+a_{2,1},a_{3,1}}{3}\right\rceil +\frac{a_{1,1}a_{2,2}+a_{1,1}a_{3,2}+a_{2,2},a_{3,2}}{3}\]
\[+\frac{a_{1,2}a_{2,1}+a_{1,2}a_{3,2}+a_{2,1},a_{3,2}}{3}+\frac{a_{1,2}a_{2,2}+a_{1,2}a_{3,1}+a_{2,2},a_{3,1}}{3}.\]
This completes the proof.
\end{proof}

We remark that, in order to apply Lemma~\ref{lem:recursion} to obtain optimal bounds (as in Corollary~\ref{cor:recursive2}), one requires a percolating set of cardinality $n^{d-1}$ for the modified bootstrap process in $[n]^d$. For $n=2$, this is a triviality, as we saw in the proof of Corollary~\ref{cor:recursive2}. For larger $n$, it seems likely that the construction in the proofs of~\cite{Winkler07,PrzykuckiShelton20} of the matching upper bound to \eqref{eq:n^d-1} actually percolates under the modified process; however, we have not verified this. If so, then their construction can be used in Lemma~\ref{lem:recursion}. However, Corollary~\ref{cor:recursive2} will be sufficient for us.

In the special case that $(a_1,a_2,a_3)$ is class zero, we will prove the following stronger form of Theorem~\ref{thm:largeEnough}. 

\begin{thm}
\label{thm:divisible}
If $a_1,a_2,a_3\geq 5$ and $(a_1,a_2,a_3)$ is class $0$, then
\[m(a_1,a_2,a_3;3)=\frac{a_1a_2+a_1a_3+a_2a_3}{3}.\]
\end{thm}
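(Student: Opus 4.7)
The plan is to induct on $a_1+a_2+a_3$. The lower bound $m(a_1,a_2,a_3;3)\ge\frac{a_1a_2+a_1a_3+a_2a_3}{3}$ is supplied by Proposition~\ref{prop:SA}, and since $(a_1,a_2,a_3)$ is class $0$ no ceiling is needed. The matching upper bound will come from Corollary~\ref{cor:recursive2}, which reduces the task to producing three \emph{perfect} sub-triples together with a fourth sub-triple that is merely \emph{optimal}. The base cases of the induction are the class-$0$ triples with all coordinates in some small bounded range; these will be disposed of using the constructions of Section~\ref{sec:folding} together with a finite collection of sporadic examples that we collect in Appendix~\ref{app:sporadic}.

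For the inductive step, the default strategy is to pick $a_{i,2}=3$ in all three directions. This split has two convenient features. First, subtracting $3$ from any coordinate does not change its residue modulo $3$, so the ``big'' sub-triple $(a_1-3,a_2-3,a_3-3)$ inherits the class-$0$ property and, provided each $a_i\ge 8$, has all coordinates $\ge 5$ and a strictly smaller sum---so induction applies. Second, each of the three ``slab'' sub-triples has the form $(3,3,a_i-3)$, which by Corollary~\ref{cor:3,3,c} is perfect as soon as $a_i-3\ne 2$, i.e.\ $a_i\ne 5$. Hence for triples with all $a_i\ge 8$, Corollary~\ref{cor:recursive2} immediately closes the inductive step.

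The remaining work is to handle triples in which some coordinate lies in $\{5,6,7\}$: here the above split either leaves a coordinate $\le 4$ (which the inductive hypothesis may not cover) or produces a slab with $a_i-3=2$ (not covered by Corollary~\ref{cor:3,3,c}). I will treat these triples by replacing the corresponding $c_i=3$ with $c_i=2$ in the offending direction and invoking the more intricate perfect-triple families of Section~\ref{sec:folding} (Corollary~\ref{cor:2,3,0mod3} and Propositions~\ref{prop:2,2mod3,2mod3}, \ref{prop:2,0mod3,0mod3}, \ref{prop:3,3mod6,1mod2}, \ref{prop:3,4,3mod6}), whose hypotheses are sensitive to the residues of the coordinates modulo $6$. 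The congruence bookkeeping---ensuring simultaneously that the residual sub-triple remains class $0$ with all coordinates $\ge 5$ and that each slab lands in one of the perfect families available from Section~\ref{sec:folding}---is the main obstacle. It leads to a case analysis organised by the mod-$3$ pattern of $(a_1,a_2,a_3)$ (all $\equiv 1$, all $\equiv 2$, all $\equiv 0$, or exactly two $\equiv 0$) and refined by mod-$6$ residues; the finitely many stubborn triples where no convenient split exists will be absorbed directly into the base-case library of Appendix~\ref{app:sporadic}.
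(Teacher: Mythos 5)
Your overall architecture coincides with the paper's: the lower bound is Proposition~\ref{prop:SA}; the inductive step is the split $a_i=(a_i-3)+3$ fed into Corollary~\ref{cor:recursive2}, with the three slabs $(a_i-3,3,3)$ handled by Corollary~\ref{cor:3,3,c} (this is precisely the paper's Lemma~\ref{lem:add3}); and everything reduces to triples whose minimum coordinate lies in $\{5,6,7\}$. Inducting on $a_1+a_2+a_3$ rather than on $\min\{a_1,a_2,a_3\}$ is a cosmetic difference, and that part of your argument is sound.

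The gap is that the reduction target is not finite: for each $m\in\{5,6,7\}$ the class-$0$ triples $(m,a_2,a_3)$ form infinite two-parameter families, so they cannot be ``absorbed into the base-case library''; each family needs its own decomposition, and you have asserted rather than verified that the congruence bookkeeping closes. This is where the bulk of the paper's proof lives (Lemmas~\ref{lem:thickness5}, \ref{lem:thickness6} and \ref{lem:thickness7}), and the working splits there are not obtained by simply ``replacing $3$ by $2$ in the offending direction.'' For instance, for $(5,a_2,a_3)$ with $a_2\equiv a_3\equiv 0\bmod 3$ one must also split $a_2$ and $a_3$ as $6+(a_2-6)$ or $3+(a_2-3)$ according to the parities of $a_2$ and $a_3$, because Propositions~\ref{prop:2,0mod3,0mod3} and~\ref{prop:2,2mod3,2mod3} require $a_2\not\equiv a_3\bmod 6$; and several necessary ingredients do not come from Section~\ref{sec:folding} at all, but are sporadic computer-found constructions ($(2,2,5)$, $(2,5,5)$, $(5,5,8)$, $(5,6,9)$) or further applications of Corollary~\ref{cor:recursive2} (e.g.\ $(2,6,6)$ from doubling $(1,3,3)$, and $(4,4,7)$ from $(2,2,2)$ and $(2,2,5)$). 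Until that case analysis is actually executed and the required sporadic percolating sets are exhibited, what you have is a plan for the hardest part of the proof rather than the proof itself.
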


As a simple application of Corollary~\ref{cor:recursive2}, let us now derive Theorem~\ref{thm:largeEnough} from Theorem~\ref{thm:divisible}; the rest of the section will then be devoted to proving Theorem~\ref{thm:divisible}.

\begin{proof}[Proof of Theorem~\ref{thm:largeEnough}]
Let $a_1,a_2,a_3\geq11$ be given and choose $a_{1,1},a_{2,1},a_{3,1}\in \{5,6,7\}$ such that $a_{i,1}\equiv a_i\bmod 3$ for all $i\in [3]$. Also, define $a_{i,2}:=a_i-a_{i,1}$ for $i\in [3]$. Our goal is to argue that all of $(a_{1,1},a_{2,2},a_{3,2}),(a_{1,2},a_{2,1},a_{3,2})$ and $(a_{1,2},a_{2,2},a_{3,1})$ are perfect and that $(a_{1,1},a_{2,1},a_{3,1})$ is optimal. Given this, we will be done by Corollary~\ref{cor:recursive2}. 

By definition, each of $a_{1,2},a_{2,2}$ and $a_{3,2}$ is a multiple of three. Therefore, 
\[(a_{1,1},a_{2,2},a_{3,2}),(a_{1,2},a_{2,1},a_{3,2})\text{ and }(a_{1,2},a_{2,2},a_{3,1})\]
are all class zero. Also, by definition, and the fact that $a_1,a_2,a_3\geq11$, all of the integers $a_{i,j}$ for $(i,j)\in [3]\times [2]$ are at least $5$. Therefore, by Theorem~\ref{thm:divisible}, $(a_{1,1},a_{2,2},a_{3,2}),(a_{1,2},a_{2,1},a_{3,2})$ and $(a_{1,2},a_{2,2},a_{3,1})$ are perfect.

Thus, all that remains is to show that $(a,b,c)$ is optimal whenever $a,b,c\in\{5,6,7\}$. If $a=b=c$ or two of $a,b,c$ are equal to $6$, then $(a,b,c)$ is class $0$ and so, in these cases, the result follows from Theorem~\ref{thm:divisible}.\footnote{Note that the case $a=b=c$ also follows from Theorem~\ref{thm:nnn}.} Up to symmetry, the remaining cases are $(5,5,6), (5,5,7), (5,6,7), (5,7,7)$ and $(6,7,7)$. Ad hoc constructions (found by computer) for each of the first four cases are provided in the appendix as Constructions~\ref{const:556},~\ref{const:557},~\ref{const:567} and~\ref{const:577}, respectively. For the case $(a,b,c)=(6,7,7)$, we apply Corollary~\ref{cor:recursive2} with 
\[\begin{bmatrix}a_{1,1} & a_{1,2}\\ a_{2,1} & a_{2,2}\\a_{3,1} & a_{3,2}\end{bmatrix} = \begin{bmatrix}3 & 3\\ 3 & 4\\3 & 4\end{bmatrix}.\]
For this, we use the fact that the tuples $(3,3,3),(3,3,4)$ and $(3,4,3)$ are perfect by Corollary~\ref{cor:3,3,c} and $(3,4,4)$ is optimal by Construction~\ref{const:344}.
\end{proof}

Our final goal is to prove Theorem~\ref{thm:divisible}. The following lemma allows us to reduce to the case that one of $a_1,a_2,a_3$ is in $\{5,6,7\}$.

\begin{lem}
\label{lem:add3}
Suppose $a_1,a_2,a_3\in\mathbb{N}\setminus\{2\}$.  If $(a_1,a_2,a_3)$ is optimal, then $(a_1+3,a_2+3,a_3+3)$ is optimal.  Moreover, if $(a_1,a_2,a_3)$ is perfect, then $(a_1+3,a_2+3,a_3+3)$ is perfect.
\end{lem}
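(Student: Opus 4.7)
The plan is to apply Corollary~\ref{cor:recursive2} with the $3\times 2$ partition matrix
\[
A = \begin{bmatrix} a_{1,1} & a_{1,2} \\ a_{2,1} & a_{2,2} \\ a_{3,1} & a_{3,2} \end{bmatrix} = \begin{bmatrix} a_1 & 3 \\ a_2 & 3 \\ a_3 & 3 \end{bmatrix}.
\]
With this choice, the four triples arising in Corollary~\ref{cor:recursive2} are $(a_{1,1},a_{2,1},a_{3,1}) = (a_1,a_2,a_3)$, which is optimal by hypothesis, along with $(a_{1,1},a_{2,2},a_{3,2}) = (a_1,3,3)$, $(a_{1,2},a_{2,1},a_{3,2}) = (3,a_2,3)$, and $(a_{1,2},a_{2,2},a_{3,1}) = (3,3,a_3)$. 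Since each $a_i\in\mathbb{N}\setminus\{2\}$, Corollary~\ref{cor:3,3,c} tells us that each of these last three triples is perfect. So Corollary~\ref{cor:recursive2} immediately yields that $(a_1+3,a_2+3,a_3+3)$ is optimal, which proves the first assertion.

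For the ``moreover'' part, I only need to verify that class~0 is preserved under translating each side-length by $3$. Expanding gives
\[
\sum_{i<j}(a_i+3)(a_j+3) = \sum_{i<j}a_ia_j + 6(a_1+a_2+a_3) + 27,
\]
so this sum is congruent to $a_1a_2+a_1a_3+a_2a_3$ modulo $3$. Hence if $(a_1,a_2,a_3)$ is class~0 then so is $(a_1+3,a_2+3,a_3+3)$, and combining this with optimality established above yields perfectness.

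There is no substantive obstacle: the lemma is essentially a direct bookkeeping application of the recursive framework of Corollary~\ref{cor:recursive2}, where the hypothesis $a_i\neq 2$ is used exactly once, to invoke Corollary~\ref{cor:3,3,c} on all three auxiliary triples.
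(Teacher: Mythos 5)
Your proof is correct and follows essentially the same route as the paper: both set $a_{i,1}=a_i$ and $a_{i,2}=3$, invoke Corollary~\ref{cor:3,3,c} (using $a_i\neq 2$) for the three auxiliary triples, and conclude via Corollary~\ref{cor:recursive2}. Your explicit verification that class~$0$ is preserved under adding $3$ to each coordinate is a small piece of bookkeeping the paper leaves implicit, but the argument is the same.
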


\begin{proof}
Define $a_{i,1}:=a_i$ for $i\in [3]$. By Corollary~\ref{cor:3,3,c}, all of the triples $(a_{1,1},3,3),(3,a_{2,1},3)$ and $(3,3,a_{3,1})$ are perfect. So, setting $a_{i,2}:=3$ for $i\in[3]$, we are done by Corollary~\ref{cor:recursive2}.
\end{proof}

We now deal with the cases in which $\min\{a_1,a_2,a_3\}$ is equal to $5,6$ or $7$ separately.

\begin{lem}
\label{lem:thickness5}
If $(5,a_2,a_3)$ is class 0 and $a_2,a_3\geq5$, then $(5,a_2,a_3)$ is perfect. 
\end{lem}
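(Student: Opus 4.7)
The plan is to apply Corollary~\ref{cor:recursive2} with the decomposition of the first coordinate as $5 = 2 + 3$, together with carefully chosen splits $a_2 = a_{2,1} + a_{2,2}$ and $a_3 = a_{3,1} + a_{3,2}$. Since $5 \equiv 2 \bmod 3$, the class-$0$ hypothesis forces either (A) $a_2 \equiv a_3 \equiv 0 \bmod 3$ or (B) $a_2 \equiv a_3 \equiv 2 \bmod 3$. Subdividing further by residues modulo $6$ and invoking the symmetry $a_2 \leftrightarrow a_3$ leaves six subcases.

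In each subcase I would pick small values $a_{2,1}, a_{3,1} \in \{3, 4, 5, 6, 9\}$ so that the four triples Corollary~\ref{cor:recursive2} produces, namely
\[(2, a_{2,1}, a_{3,1}), \quad (2, a_{2,2}, a_{3,2}), \quad (3, a_{2,1}, a_{3,2}), \quad (3, a_{2,2}, a_{3,1}),\]
are all either optimal (the first) or perfect (the other three). The first triple is a small base case covered by Corollary~\ref{cor:2,3,0mod3} (e.g.\ $(2,3,6)$ or $(2,3,9)$) or by a sporadic construction from Appendix~\ref{app:sporadic}. The remaining three triples have first coordinate $2$ or $3$, so perfection will follow from the results of Section~\ref{sec:folding}, specifically Corollaries~\ref{cor:2,3,0mod3},~\ref{cor:3,6,c},~\ref{cor:3,3,c} and Propositions~\ref{prop:2,2mod3,2mod3},~\ref{prop:2,0mod3,0mod3},~\ref{prop:3,3mod6,1mod2},~\ref{prop:3,4,3mod6}, selected according to the residues of $a_{2,2}$ and $a_{3,2}$ modulo $6$.

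The main obstacle is the subcases in which $a_2 \equiv a_3 \bmod 6$: Propositions~\ref{prop:2,2mod3,2mod3} and~\ref{prop:2,0mod3,0mod3} both require that the last two coordinates be \emph{non-congruent} modulo $6$, so a naive split such as $(a_{2,1}, a_{3,1}) = (3, 3)$ would break the triple $(2, a_{2,2}, a_{3,2})$. I would address this by forcing $a_{2,1}$ and $a_{3,1}$ to differ by $3$ modulo $6$ (for instance $(6, 3)$ in Case A with $a_2 \equiv a_3 \equiv 0 \bmod 6$, or $(3, 9)$ in Case A with $a_2 \equiv 0, a_3 \equiv 3 \bmod 6$), which drags the residues of the remainders apart and brings the relevant proposition to bear.

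A secondary bookkeeping obstacle is that when $a_2$ or $a_3$ is near the lower bound $5$, the intended split may produce a remainder smaller than the hypothesis of some Section~\ref{sec:folding} result (the cases $a_{2,2} = 0$ or the residue-matching threshold for Proposition~\ref{prop:2,0mod3,0mod3} being $6$ are the typical offenders). The finitely many sporadic $(5, a_2, a_3)$ falling below the split threshold---those with $\min\{a_2,a_3\} \leq 11$---would be dealt with by listing explicit optimal constructions in Appendix~\ref{app:sporadic}, analogous to the sporadic cases used in the proof of Theorem~\ref{thm:largeEnough}.
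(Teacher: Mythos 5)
Your plan is correct and matches the paper's proof in all essentials: the paper also applies Corollary~\ref{cor:recursive2} with the split $5=2+3$ in the first coordinate, breaks the mod-$6$ congruence obstruction for the $(2,a_{2,2},a_{3,2})$ triple by offsetting $a_{2,1}$ and $a_{3,1}$ by $3$ when $a_2\equiv a_3\bmod 2$ (using splits such as $(3,6)$ and $(2,5)$), and falls back on sporadic constructions for the small leftover cases. The only difference is bookkeeping: the paper needs far fewer sporadic cases than your $\min\{a_2,a_3\}\leq 11$ threshold suggests, namely just $(5,6,6)$ (handled by a further recursion on $(1,3,3)$ and $(4,3,3)$), $(5,6,9)$, $(5,5,5)$ and $(5,5,8)$, plus $(2,2,5)$ as an ingredient triple.
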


\begin{proof}
In order for $(5,a_2,a_3)$ to be class $0$, we must have $a_2 \equiv a_3 \equiv 0$ or $2 \bmod{3}$.  Our argument applies Corollary~\ref{cor:recursive2} to various choices of four perfect triples according to the values of $a_2$ and $a_3$.

Suppose $a_2,a_3 \equiv 0\bmod{3}$ and each is at least $9$. If $a_2 \not\equiv a_3 \bmod{2}$, we set up an application of Corollary~\ref{cor:recursive2} using the triples
\[(2,6,6),(2,a_2-6,a_3-6),(3,6,a_3-6),(3,a_2-6,6).\]
The first of these is perfect from doubling $(1,3,3)$ via Corollary~\ref{cor:recursive2}.  The second triple in the list is perfect from either  Corollary~\ref{cor:2,3,0mod3} (if one of $a_2$ or $a_3$ equals 9) or Proposition~\ref{prop:2,0mod3,0mod3} (if both are at least $12$).  Finally, the last two triples are perfect from Corollary~\ref{cor:3,6,c}.  Using these in Corollary~\ref{cor:recursive2}, it follows that $(5,a_2,a_3)$ is perfect.
On the other hand, if $a_2 \equiv a_3 \bmod{2}$, we instead use  
\[(2,3,6),(2,a_2-3,a_3-6),(3,3,a_3-6),(3,a_2-3,6),\]
appealing to Corollary~\ref{cor:2,3,0mod3}, Proposition~\ref{prop:2,0mod3,0mod3}, Corollary~\ref{cor:3,3,c} and Corollary~\ref{cor:3,6,c}. 
This list of triples also works for $a_2=6$ and any $a_3 \ge 12$, where here Corollary~\ref{cor:2,3,0mod3} is used for the second triple.  Up to reordering, the remaining cases are $(a_1,a_2,a_3) \in \{(5,6,6),(5,6,9)\}$.  The first of these is obtained using the perfect triples
$(1,3,3),(4,3,3)$ (two of each) in Corollary~\ref{cor:recursive2}.  Finally, a construction for $(5,6,9)$ is given in Construction~\ref{const:569}.

Suppose $a_2,a_3 \equiv 2\bmod{3}$.  If $a_2,a_3 \ge 8$ and $a_2 \not \equiv a_3 \bmod{2}$,
then we apply Corollary~\ref{cor:recursive2} using the triples
\[(2,2,2),(2,a_2-2,a_3-2),(3,2,a_3-2),(3,a_2-2,2).\]
The second triple is perfect from Proposition~\ref{prop:2,2mod3,2mod3} and the last two are perfect via Corollary~\ref{cor:2,3,0mod3}. 
The same construction also works for $a_2=a_3=8$; in this case, the second triple is $(2,6,6)$, which we recall is also perfect.

On the other hand, if $a_2,a_3$ have the same parity with one strictly larger than $8$, say $a_3 \ge 11$, then we apply Corollary~\ref{cor:recursive2} using the perfect triples
\[(2,2,5),(2,a_2-2,a_3-5),(3,2,a_3-5),(3,a_2-2,5).\]
A construction for $(2,2,5)$ appears in Construction~\ref{const:225}, and otherwise the same results as above apply.  These triples also work for $a_2=5$, provided $a_3 \ge 11$.
For the remaining cases, $(5,5,5)$ is perfect by Theorem~\ref{thm:nnn}, and a construction for $(5,5,8)$ is given in Construction~\ref{const:558}.
\end{proof}

\begin{lem}
\label{lem:thickness6}
If $(6,a_2,a_3)$ is class 0 and $a_2,a_3\geq6$, then $(6,a_2,a_3)$ is perfect. 
\end{lem}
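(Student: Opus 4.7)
The plan is to apply Corollary~\ref{cor:recursive2} with $a_{1,1}=a_{1,2}=3$, so that the four recursion triples all have the form $(3,a_{2,j},a_{3,k})$, and then to appeal to the $(3,b,c)$-perfectness results from Section~\ref{sec:folding}: $(3,3,c)$ is perfect for $c\ne 2$ (Corollary~\ref{cor:3,3,c}), $(3,6,c)$ is perfect for $c\geq 2$ (Corollary~\ref{cor:3,6,c}), $(3,4,c)$ is perfect when $c\equiv 3\bmod 6$ (Proposition~\ref{prop:3,4,3mod6}), and $(3,b,c)$ is perfect whenever $b$ is odd, $b\geq 3$, and $c\equiv 3\bmod 6$ (Proposition~\ref{prop:3,3mod6,1mod2}). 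Since $a_1=6$, the class~$0$ hypothesis reduces to $a_2a_3\equiv 0\bmod 3$, and by relabelling we may assume $a_3\equiv 0\bmod 3$.

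The first task is to split $a_3$ so that one of $a_{3,1},a_{3,2}$ lies in $\{3,6\}$ and the other is at least $3$ and congruent to $3\bmod 6$: if $a_3\equiv 3\bmod 6$ (forcing $a_3\geq 9$), I will take $a_{3,1}=6$ and $a_{3,2}=a_3-6$; if $a_3\equiv 0\bmod 6$, I will take $a_{3,1}=3$ and $a_{3,2}=a_3-3$. With such a split, each triple $(3,c,a_{3,k})$ that arises will be perfect as long as $c\in\{3,4,6\}$ or $c$ is odd and at least $3$.

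The second task is to split $a_2$ into two positive integers lying in $\{3,4,6\}\cup\{\text{odd integers}\geq 3\}$. This is elementary: for even $a_2\geq 6$ use $3+(a_2-3)$, for $a_2=7$ use $4+3$, and for odd $a_2\geq 9$ use $4+(a_2-4)$. Combining the two splits and invoking Corollary~\ref{cor:recursive2} will finish the proof, since all four resulting triples are then perfect, hence optimal, and optimality coincides with perfectness under the class~$0$ hypothesis. The main obstacle is simply identifying these splits; in particular, the role of Proposition~\ref{prop:3,4,3mod6} is precisely to handle odd values of $a_2$ by peeling off a summand of~$4$, leaving an odd complementary summand. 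Once the splits are in hand, no induction, sporadic constructions, or further case analysis will be required, which makes the argument noticeably cleaner than that of Lemma~\ref{lem:thickness5}.
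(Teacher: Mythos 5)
Your argument is correct and follows essentially the same strategy as the paper's proof: apply Corollary~\ref{cor:recursive2} with the first coordinate split as $6=3+3$ and check that all four resulting triples $(3,a_{2,j},a_{3,k})$ are perfect using Corollaries~\ref{cor:3,3,c} and~\ref{cor:3,6,c} together with Propositions~\ref{prop:3,4,3mod6} and~\ref{prop:3,3mod6,1mod2}. Your decoupled splits (choosing the decomposition of $a_2$ independently of that of $a_3$, guided by $a_3\bmod 6$) package the case analysis somewhat more cleanly than the paper's branching on the parities of $a_2$ and $a_3$ with a separate case for $a_2=7$, but the recursion and the ingredient results are the same.
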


\begin{proof}
In order for $(6,a_2,a_3)$ to be class 0, at least one of $a_2$ or $a_3$ must be divisible by three. First, suppose that $a_2$ and $a_3$ are both even. Then, by Proposition~\ref{prop:3,3mod6,1mod2} and the fact that $a_2,a_3\geq6$ and at least one of $a_2$ or $a_3$ is a multiple of $6$, we get that each of the following four triples is perfect:
\[(3,3,3), (3,a_2-3,a_3-3), (3,3,a_3-3), (3,a_2-3,3).\]
It follows that $(6,a_2,a_3)$ is perfect by Corollary~\ref{cor:recursive2}. 

Next, suppose that $a_2$ is even with $a_2 \ge 6$, and $a_3$ is odd with $a_3\geq 9$.  Consider the four triples
\[(3,3,6), (3,a_2-3,a_3-6), (3,3,a_3-6), (3,a_2-3,6).\]
These are seen to be perfect from, respectively, Proposition~\ref{prop:3,3,0mod2},  Proposition~\ref{prop:3,3mod6,1mod2} (noting that at least one of $a_2-3$ or $a_3-6$ is $3 \bmod{6}$),  
Proposition~\ref{prop:3,3,0mod2}, and Corollary~\ref{cor:3,6,c}.  Again, $(6,a_2,a_3)$ is perfect by Corollary~\ref{cor:recursive2}. 

Suppose now that $a_2$ and $a_3$ are both odd and both at least 9.  We again apply Corollary~\ref{cor:recursive2}, this time on the triples
\[(3,6,6), (3,a_2-6,a_3-6), (3,6,a_3-6), (3,a_2-6,6).\]
These are seen to be perfect from the same set of results as above. 

The only cases left to consider have one parameter equal to $7$; suppose without loss of generality that $a_2=7$. Since $(6,7,a_3)$ is class 0, $a_3$ must be divisible by 3. If $a_3$ is even, then by Propositions~\ref{prop:3,3mod6,1mod2} and~\ref{prop:3,4,3mod6}, each of the following four triples is perfect:
\[(3,3,3), (3,4,a_3-3), (3,3,a_3-3), (3,4,3).\]
If $a_3$ is odd, then the triples
\[(3,3,6), (3,4,a_3-6), (3,3,a_3-6), (3,4,6)\]
are perfect by, Propositions~\ref{prop:3,3,0mod2},~\ref{prop:3,4,3mod6}, \ref{prop:3,3mod6,1mod2} and \ref{prop:3,6,0mod2}, respectively. In either case, $(6,7,a_3)$ is perfect by Corollary~\ref{cor:recursive2}. This completes the proof of the lemma.
\end{proof}

\begin{lem}
\label{lem:thickness7}
If $(7,a_2,a_3)$ is class 0 and $a_2,a_3\geq7$, then $(7,a_2,a_3)$ is perfect. 
\end{lem}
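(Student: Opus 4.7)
Since $7 \equiv 1 \pmod{3}$, the class-$0$ hypothesis for $(7, a_2, a_3)$ forces either \textbf{(A)} $a_2 \equiv a_3 \equiv 0 \pmod{3}$ (hence $a_2, a_3 \geq 9$), or \textbf{(B)} $a_2 \equiv a_3 \equiv 1 \pmod{3}$ (hence $a_2, a_3 \geq 7$). Following the template of Lemmas~\ref{lem:thickness5} and~\ref{lem:thickness6}, my plan is to invoke Corollary~\ref{cor:recursive2} with carefully chosen split matrices, one case at a time.

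For Case~A I would apply Corollary~\ref{cor:recursive2} with the split matrix
\[
\begin{bmatrix} 5 & 2 \\ 3 & a_2-3 \\ 3 & a_3-3 \end{bmatrix},
\]
placing the perfect triple $(5,3,3) \cong (3,3,5)$ (from Corollary~\ref{cor:3,3,c}) at the corner. The remaining three sub-triples are $(5, a_2-3, a_3-3)$, which is class~$0$ with both entries multiples of $3$ and at least $6$, hence perfect by Lemma~\ref{lem:thickness5}; together with $(2, 3, a_3-3)$ and $(2, a_2-3, 3)$, both perfect by Corollary~\ref{cor:2,3,0mod3}. This handles Case~A uniformly.

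For Case~B the primary split is also $7 = 5+2$, this time with
\[
\begin{bmatrix} 5 & 2 \\ 5 & a_2-5 \\ 5 & a_3-5 \end{bmatrix}
\]
and corner $(5,5,5)$ (perfect by Theorem~\ref{thm:nnn}). The sub-triple $(5, a_2-5, a_3-5)$ has class~$0$ entries congruent to $2 \pmod{3}$ and is perfect by Lemma~\ref{lem:thickness5} whenever both are at least $5$. The sub-triples $(2, 5, a_j-5)$ are class~$0$ and perfect by Proposition~\ref{prop:2,2mod3,2mod3} exactly when $a_j \equiv 1 \pmod{6}$, so that $5 \not\equiv a_j-5 \pmod{6}$. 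This settles the generic portion of Case~B: $a_2, a_3 \geq 13$ with $a_2 \equiv a_3 \equiv 1 \pmod{6}$; the edge instance $a_2=a_3=7$ is perfect by Theorem~\ref{thm:nnn}. The remaining parity sub-cases (one or both of $a_2, a_3$ congruent to $4 \pmod{6}$, or equal to $7$) require tailored splits. For example, for $(7,10,10)$ I would take corner $(3,3,4)$ with split matrix
\[
\begin{bmatrix} 3 & 4 \\ 3 & 7 \\ 4 & 6 \end{bmatrix},
\]
so the sub-triples $(3,7,6)$ and $(4,3,6)$ are perfect by Corollary~\ref{cor:3,6,c}, while $(4,4,7)$ is itself perfect by a nested application of Corollary~\ref{cor:recursive2} with corner $(2,2,5)$ (Construction~\ref{const:225}) on the split $4=2+2$, $4=2+2$, $7=5+2$.

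The main obstacle is the case analysis in Case~B. The split $7 = 5+2$ inevitably produces $(2, \cdot, \cdot)$ sub-triples, and Proposition~\ref{prop:2,2mod3,2mod3} requires the two free entries to be incongruent modulo $6$; this constraint breaks precisely when $a_2$ or $a_3$ is congruent to $4 \pmod{6}$. Navigating all parity sub-cases of Case~B therefore requires stacking Lemma~\ref{lem:thickness5}, Lemma~\ref{lem:thickness6}, Corollary~\ref{cor:3,6,c}, sporadic appendix constructions (such as Constructions~\ref{const:225} and~\ref{const:558}), and further nested applications of Corollary~\ref{cor:recursive2}, as illustrated above for $(7,10,10)$. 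Exhausting these parity combinations without an explosion of ad hoc constructions is the main work.
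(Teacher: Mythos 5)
Your Case A is complete and coincides exactly with the paper's argument: the split $7=5+2$, $a_i=3+(a_i-3)$ with corner $(5,3,3)$, invoking Corollary~\ref{cor:3,3,c}, Lemma~\ref{lem:thickness5} and Corollary~\ref{cor:2,3,0mod3}, is precisely what the paper does.

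Case B, however, has a genuine gap that you acknowledge but do not close. Your split puts the $5$ from $7=5+2$ in the first column, which forces \emph{two} sub-triples of the form $(2,5,a_j-5)$, and Proposition~\ref{prop:2,2mod3,2mod3} then demands $a_j\equiv 1\bmod 6$ for \emph{each} $j$ separately. This leaves uncovered every case where $a_2$ or $a_3$ is $\equiv 4\bmod 6$ (other than your one example $(7,10,10)$), every mixed case $a_2\equiv 1$, $a_3\equiv 4\bmod 6$, and the cases $a_2=7<a_3$. That is most of Case B, and "tailored splits" are not supplied. The paper's resolution is two concrete devices you are missing. First, it reverses the roles of the columns: with split matrix first row $(2,5)$ and corner $(2,5,5)$ (Construction~\ref{const:255}), only \emph{one} sub-triple, namely $(2,a_2-5,a_3-5)$, contains a $2$, and Proposition~\ref{prop:2,2mod3,2mod3} then needs only $a_2-5\not\equiv a_3-5\bmod 6$, i.e.\ $a_2\not\equiv a_3\bmod 2$; the other two sub-triples $(5,5,a_3-5)$ and $(5,a_2-5,5)$ are absorbed by Lemma~\ref{lem:thickness5}. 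Second, when $a_2\equiv a_3\bmod 2$, it breaks the symmetry by splitting $a_3=8+(a_3-8)$ instead of $5+(a_3-5)$, using corner $(2,5,8)$, so that $a_2-5$ and $a_3-8$ again have opposite parities. Together with a separate treatment of $a_2=7$ (splitting $7=4+3$ and using Proposition~\ref{prop:3,4,3mod6} and the perfect triples $(4,4,4)$ or $(4,4,7)$) and the single sporadic case $(7,10,10)$, this exhausts Case B. Without these ideas, or an equivalent finite list of splits covering all residues, your argument does not prove the lemma.
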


\begin{proof}
For $(7,a_2,a_3)$ to be class $0$, we must have $a_2 \equiv a_3 \equiv 0$ or $1 \bmod{3}$.  We proceed as in the proofs of the previous two lemmas.

Suppose $a_2,a_3 \equiv 0\bmod{3}$ and each is at least $9$. We apply Corollary~\ref{cor:recursive2} using the triples
\[(5,3,3),(5,a_2-3,a_3-3),(2,3,a_3-3),(2,a_2-3,3).\]
The first triple is perfect via Corollary~\ref{cor:3,3,c}, and the second is perfect from Lemma~\ref{lem:thickness5}. The third and fourth triples are perfect from Corollary~\ref{cor:2,3,0mod3}.

Next, we consider the case $a_2,a_3 \equiv 1\bmod{3}$.  
Suppose first that $a_2=7$.  If $a_3 \equiv 1 \bmod{6}$, we decompose into triples
\[(4,4,4),(3,3,4),(3,4,a_3-4),(4,3,a_3-4).\]
These are perfect from Theorem~\ref{thm:nnn},  Corollary~\ref{cor:3,3,c}, and  
Proposition~\ref{prop:3,4,3mod6} (twice), respectively.  If $a_3 \equiv 4 \bmod{6}$, we proceed similarly using
\[(4,4,7),(3,3,7),(3,4,a_3-7),(4,3,a_3-7).\]
The first of these is perfect from Corollary~\ref{cor:recursive2} applied to $(2,2,2)$ and $(2,2,5)$; the second is perfect again via Corollary~\ref{cor:3,3,c}.

In view of the above, we may assume in what follows that $a_2,a_3 \ge 10$.  If $a_2 \not\equiv a_3 \bmod{2}$, we use a decomposition into triples
\[(2,5,5),(2,a_2-5,a_3-5),(5,5,a_3-5),(5,a_2-5,5).\]
Construction~\ref{const:255} shows that $(2,5,5)$ is perfect, the second triple is perfect from Proposition~\ref{prop:2,2mod3,2mod3}, and the last two are perfect from Lemma~\ref{lem:thickness5}.
Suppose now that $a_2,a_3$ have the same parity and $(a_2,a_3) \neq (10,10)$, say without loss of generality $a_3 \ge 13$.  In this case, we use perfect triples
\[(2,5,8),(2,a_2-5,a_3-8),(5,5,a_3-8),(5,a_2-5,8),\]
with similar ingredient results as above. The remaining case $(7,10,10)$ is settled using perfect triples $(2,5,5),(5,5,5)$ (twice each).
This completes the proof of the lemma.
\end{proof}

We are now ready to establish our result for perfect triples.

\begin{proof}[Proof of Theorem~\ref{thm:divisible}]
We assume $a_1 = \min\{a_1,a_2,a_3\}$ and proceed by induction on $a_1$.  If $a_1 \in \{5,6,7\}$, we are done by Lemmas~\ref{lem:thickness5}, \ref{lem:thickness6} and \ref{lem:thickness7}.  Suppose $a_1 \ge 8$ and that the result holds for all class $0$ triples $(a_1',a_2',a_3')$ with $5 \le \min\{a_1',a_2',a_3'\} < a_1$.  In particular,
$(a_1-3,a_2-3,a_3-3)$ is perfect.  It follows from Lemma~\ref{lem:add3} that $(a_1,a_2,a_3)$ is perfect.
\end{proof}

\section{Conclusion}
\label{sec:concl}

We conjecture that Theorem~\ref{thm:largeEnough} generalizes to higher dimensions. 

\begin{conj}
\label{conj:higherDim}
For every $d\geq4$, there exists a positive integer $N_d$ such that if
\[a_1,a_2,\dots,a_d\geq N_d,\]
then 
\[m(a_1,\dots,a_d;d) =\left\lceil\frac{\sum_{j=1}^d\prod_{i\neq j}^da_i}{d}\right\rceil.\]
\end{conj}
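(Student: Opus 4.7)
The lower bound is already given by Proposition~\ref{prop:SA}, so the task is to match it from above. My plan is to adapt the two-stage strategy behind Theorem~\ref{thm:largeEnough}: first show that every class-$0$ tuple (one satisfying $\sum_{j=1}^d \prod_{i\neq j} a_i \equiv 0 \pmod d$) with sufficiently large minimum coordinate is perfect, then reduce a general tuple to the class-$0$ case by splitting off a small remainder.

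Both stages would rest on Lemma~\ref{lem:recursion} with $n=2$. This requires a percolating set of cardinality $2^{d-1}$ in $[2]^d$ for the modified bootstrap process. In $[2]^d$ each vertex has exactly one neighbour in each coordinate direction, so the modified and $d$-neighbour processes coincide there, and Theorem~\ref{thm:nnn} immediately supplies such a set. A diagonal split $a_i = a_{i,1} + a_{i,2}$ then decomposes the cube into $2^d$ sub-boxes, and as in Corollary~\ref{cor:recursive2} the bound is tight provided every sub-tuple is optimal and at least $2^d - 1$ of them are perfect.

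The engine driving the induction on $\min_i a_i$ would be the $d$-dimensional analog of Lemma~\ref{lem:add3}: if $(a_1,\ldots,a_d)$ is perfect with $a_i \neq 2$ for all $i$, then so is $(a_1+d,\ldots,a_d+d)$. To prove this, split each new coordinate as $a_i$ plus $d$. The all-$d$ sub-tuple $(d,\ldots,d)$ is perfect by Theorem~\ref{thm:nnn}, since $d \cdot d^{d-1}/d = d^{d-1}$; the remaining $2^d - 2$ sub-tuples each have some coordinates equal to $d$ and others equal to $a_i$, and I would need to prove that each such ``mostly-$d$'' tuple is perfect, analogous to Corollary~\ref{cor:3,3,c} and its relatives. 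With that analog in hand, the induction collapses the problem to verifying an optimal construction for one starting tuple in each residue class modulo $d$.

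The main obstacle is clearly the base-case library: producing perfect $d$-tuples in which one or more coordinates are fixed small values. In dimension $3$ this occupied the bulk of Sections~\ref{sec:shapes}--\ref{sec:folding} plus the sporadic examples in Appendix~\ref{app:sporadic}. The strategy there was to build a library of $2$-dimensional grid-graph infections with prescribed closures, then fold them into $3$D via local embeddings (Lemma~\ref{lem:embedding}), arguing percolation via face-separation (Lemma~\ref{lem:sidesSplit}). A direct analog in dimension $d$ requires a corresponding library of $(d-1)$-dimensional constructions together with $d$-dimensional folding arguments, and the number of ``thin-slice'' families one must treat grows rapidly with $d$. A natural but speculative shortcut is to iterate the folding recursively in $d$, combining an inductively-produced perfect $(d-1)$-tuple with a one-dimensional ``spine''; the delicate step is ensuring that the folded image separates opposite faces in the new direction uniformly across the required residue classes, which is where I expect genuinely new ideas would be needed.
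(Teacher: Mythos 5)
The statement you are addressing is Conjecture~\ref{conj:higherDim}, which the paper explicitly leaves open: there is no proof in the paper to compare against, and what you have written is a research programme rather than a proof. The parts of your plan that do transfer cleanly are the ones you identify: Lemma~\ref{lem:recursion} is already stated for general $d$; in $[2]^d$ every vertex has exactly one neighbour per direction, so the modified and $d$-neighbour processes coincide and a parity class of $2^{d-1}$ vertices percolates; and since a parity class meets every axis-parallel line of $[2]^d$ exactly once, the surface-area sum of $\prod_{i=1}^d[a_{i,1}+a_{i,2}]$ distributes exactly over the $2^{d-1}$ sub-boxes it indexes, so the analogue of Corollary~\ref{cor:recursive2} goes through. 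One correction there: the relevant condition is that, among the $2^{d-1}$ sub-tuples indexed by the percolating parity class (not $2^d-1$ out of all $2^d$ geometric sub-boxes), one is optimal and the remaining $2^{d-1}-1$ are perfect. Likewise your add-$d$ lemma is arithmetically consistent, because every non-identity vertex of the parity class has at least two coordinates equal to $2$, so every mixed sub-tuple carries at least two coordinates equal to $d$ and is automatically class $0$.

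The genuine gap is that all of the mathematical content of such a proof lives precisely in the piece you defer: the library of perfect $d$-tuples with prescribed small coordinates. Your induction engine requires that every tuple with at least two coordinates equal to $d$ and the rest sufficiently large be perfect (the analogue of Corollary~\ref{cor:3,3,c}), and your residue-class reduction requires, for each of the $d^d$ residue patterns, an optimal tuple with all coordinates in a bounded window together with perfect constructions for the associated mixed tuples (the analogue of the proof of Theorem~\ref{thm:largeEnough} from Theorem~\ref{thm:divisible}, which in dimension $3$ consumed Sections~\ref{sec:shapes}--\ref{sec:folding} and Appendix~\ref{app:sporadic}). You supply none of these constructions, and you acknowledge that genuinely new ideas are needed for them; the proposed shortcut of folding an inductively obtained $(d-1)$-dimensional perfect set around a one-dimensional spine is not accompanied by any argument that the folded image separates opposite faces in the new direction, which is exactly where the dimension-$3$ proofs required case-by-case work. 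As it stands, the proposal is a plausible skeleton for attacking the conjecture, not a proof of it.
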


Also, in the case $d=3$, we conjecture that Theorem~\ref{thm:largeEnough} can be extended to nearly any choice of parameters $a_1,a_2,a_3$. 

\begin{conj}
\label{conj:a1a2a3>=3}
If $a_1,a_2,a_3\geq3$, then
\[m(a_1,a_2,a_3;3)= \left\lceil\frac{a_1a_2 + a_1a_3+a_2a_3}{3}\right\rceil.\]
\end{conj}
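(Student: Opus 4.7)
The natural approach is to extend the framework used for Theorem~\ref{thm:largeEnough} down to the threshold $a_i \geq 3$. The plan is to proceed in three phases: (1) enlarge the library of base constructions from Sections~\ref{sec:shapes}--\ref{sec:folding} to cover additional residue classes and thinner grids; (2) apply Corollary~\ref{cor:recursive2} to reduce larger triples to these bases; and (3) dispose of a finite list of sporadic cases by direct or computer-assisted construction.

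In Phase~1, the most delicate family is $(3, a_2, a_3)$ with $a_2, a_3 \geq 3$. Because the side of length~$3$ cannot be split via Corollary~\ref{cor:recursive2} without producing a sub-triple with a side of length at most~$2$, such triples must be handled directly. I would imitate the folding strategy of Section~\ref{sec:folding}: build a percolating set in a carefully shaped planar grid graph (extending the shapes of Section~\ref{sec:shapes} to new congruence classes), then locally embed it into the $3$-dimensional grid and add a few corrective infections near the faces, invoking Lemma~\ref{lem:sidesSplit} to complete percolation. A similar programme would handle the remaining thin triples of the form $(4, a_2, a_3)$ needed as bases.

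In Phase~2, once $(3, \cdot, \cdot)$ and $(4, \cdot, \cdot)$ bases are available, most triples with $\min(a_1, a_2, a_3) \in \{3, \dots, 10\}$ can be attacked via Corollary~\ref{cor:recursive2} by splitting the two larger sides into pieces of size~$3$ and using the thin bases together with the existing class-$0$ constructions from Theorem~\ref{thm:divisible} as the four sub-triples. A companion to Lemma~\ref{lem:add3} that increases only two coordinates by~$3$ (rather than all three) would then reduce every sufficiently large triple to finitely many small cases, which are disposed of in Phase~3.

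The main obstacle is the non-class-$0$ regime. In Theorem~\ref{thm:divisible}, all four sub-triples in the recursion are class~$0$, so the ceiling in Proposition~\ref{prop:SA} is vacuous and Corollary~\ref{cor:recursive2} yields an exact match. For triples such as $(3, 4, 4)$, $(3, 5, 5)$, $(4, 4, 5)$, or $(3, 7, 11)$, the lower bound exceeds the sum of the perfect sub-bounds by~$1$, so every recursive decomposition must absorb the ceiling correction inside exactly one of the sub-problems. Finding constructions that are exactly one vertex over the ``perfect'' boundary and that still plug cleanly into the recursion is delicate; I expect it will require either a strengthened recursive lemma using a larger modified-bootstrap template than the $[2]^3$ template in Corollary~\ref{cor:recursive2}, or a substantially enlarged library of sporadic constructions to serve as bases in these residue classes.
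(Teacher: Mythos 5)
This statement is Conjecture~\ref{conj:a1a2a3>=3}: it is an \emph{open problem} in the paper, not a theorem. The authors explicitly present it as a conjectural strengthening of Theorem~\ref{thm:largeEnough} and offer no proof, so there is nothing in the paper to compare your argument against — and, more importantly, what you have written is not a proof either. It is a research plan whose essential difficulties are named but not resolved. Phase~1 asks for percolating sets in $(3,a_2,a_3)$ and $(4,a_2,a_3)$ for \emph{all} residue classes of $a_2,a_3$, whereas the paper's Section~\ref{sec:folding} only covers a handful of congruence classes (e.g.\ Propositions~\ref{prop:3,3,0mod2}, \ref{prop:3,4,3mod6}, \ref{prop:3,6,0mod2}, \ref{prop:3,6,1mod2}, \ref{prop:3,3mod6,1mod2}); you do not exhibit the missing constructions, and there is no reason to believe they all exist until they are built. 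A proof of an extremal equality requires the constructions themselves, not a description of the method by which one would hope to find them.

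The second, deeper gap is the one you concede in your final paragraph: the non-class-$0$ regime. Corollary~\ref{cor:recursive2} only yields optimality when three of the four sub-triples are perfect (i.e.\ class $0$ \emph{and} optimal) and the fourth is optimal; when the target triple is not class $0$, exactly one sub-triple must be non-class-$0$ and must still achieve the ceiling exactly. The paper's only sources of such optimal non-class-$0$ triples are the ad hoc, computer-found Constructions~\ref{const:344}--\ref{const:577}, and nothing in your plan generates these systematically for the infinitely many residue patterns that arise once the minimum side drops to $3$. Saying that one ``expects'' a strengthened recursive lemma or an enlarged library to suffice is an honest assessment of the state of the problem, but it leaves the conjecture exactly as open as the paper leaves it. Note also that the boundary cases are genuinely delicate — $(2,3,3)$ and $(3,3,2)$ fail to be perfect, and the paper could not even settle $(2,5,17)$ — so the threshold $a_i\geq 3$ cannot be taken for granted without a complete argument.
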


When one or more of the parameters are very small, the conclusion of Conjecture~\ref{conj:a1a2a3>=3} no longer holds. For example, Theorem~\ref{thm:thickness1} implies that, if $a_1=1$, then it it fails to hold for a vast majority of $a_2,a_3\geq 1$. It is also not hard to show that any percolating set for the $3$-neighbour process in $[2]\times[2]\times[n]$ must contain at least three infected points on any two consecutive levels. Therefore,
\[m(2,2,n;3) = 3n/2 -O(1)\]
whereas the lower bound from Proposition~\ref{prop:SA} is $4n/3+O(1)$, and so $(2,2,n)$ is not optimal for large $n$. At the same time, it seems that the conclusion of Conjecture~\ref{conj:a1a2a3>=3} could be true when $a_1=2$, $a_2>2$ and $a_3$ is relatively large. The smallest class $0$ tuple of the form $(2,a_2,a_3)$ which we have been unable to show is perfect is $(2,5,17)$. We will not speculate on the exact characterization of optimal triples $(a_1,a_2,a_3)$, but note that it would be interesting to determine it. 

While Corollary~\ref{cor:sqGrids} determines $m(n,n;3)$ exactly for all $n$, it leaves open the problem of determining the optimal size of a percolating set in a general $2$-dimensional rectangular grid. 

\begin{prob}
\label{prob:ab3}
Determine $m(a_1,a_2;3)$ for all $a_1,a_2\geq1$. 
\end{prob}

In~\cite{ShapiroStephens91}, it was shown that the number of optimal percolating sets for the modified bootstrap process in $[n]^2$ is counted by the Schr\"oder Numbers (OEIS A006318~\cite{SchroederOEIS}). The related problem of bounding the number of optimal percolating sets for the $2$-neighbour process in $[n]^2$ is an intriguing open problem which, to our knowledge, was first posed by Bollob\'as (unpublished). In another direction, it would be interesting to bound the number of optimal percolating sets for the modified bootstrap process in $[n]^3$.

\begin{prob}
\label{prob:countModified}
Enumerate or approximate the number percolating sets of cardinality $n^2$ for the modified bootstrap process in $[n]^3$. 
\end{prob}

Of course, the generalization of Problem~\ref{prob:countModified} to $[n]^d$ for all $d\geq3$ is also interesting. In another direction, one could consider the asymptotics for a fixed value of $n$ as $d\to\infty$. Note that every percolating set of order $n^2$ for the modified bootstrap process in $[n]^3$ contains exactly one point in every line parallel to any of the three axes. That is, each percolating set corresponds to a Latin square. This is analogous to the $2$-dimensional case in which each optimal percolating set for the modified process corresponds to a permutation matrix. However, if the two dimensional case is anything to go by, then one should expect the number of percolating sets to be far smaller than the number of Latin squares (the growth rate of the Schr\"oder Numbers is $(3+2\sqrt{2})^{(1+o(1))n}$~\cite{SchroederOEIS}).  Table~\ref{table:modified} lists the number of optimal percolating sets for the modified process in $[n]^3$ for small values of $n$; these were found via an exhaustive computer search. For comparison, we also include the number of Latin squares of order $n$, as reported in OEIS A002860~\cite{LatinOEIS}.

\begin{table}[htbp]
\begin{center}
\begin{tabular}{l|l|l}
$n$ & \# of percolating sets & \# of Latin squares\\\hline 
1 & 1 & 1\\
2 & 2 & 2\\
3 & 12 & 12\\
4 & 256 & 576\\
5 & 2688 & 161280\\
6 & 148958 & 812851200
\end{tabular}
\end{center}
\caption{The number of percolating sets of cardinality $n^2$ for the modified bootstrap process in $[n]^3$ and the number of Latin squares of order $n$ for $n\in[6]$.}
\label{table:modified}
\end{table}

Let us conclude by mentioning a consequence of Theorem~\ref{thm:largeEnough} and one more open problem. For $a_1,\dots,a_d\geq3$, define $T(a_1,\dots,a_d)$ to be the graph obtained from $\prod_{i=1}^d[a_i]$ by adding an edge from $u=(u_1,\dots,u_d)$ to $v=(v_1,\dots,v_d)$ if there exists $i\in[d]$ such that $u_i=1$, $v_i=a_i$ and $u_j=v_j$ for all $j\neq i$. The graph $T(a_1,\dots,a_d)$ is referred to as a \emph{$d$-dimensional torus}. In~\cite[Theorem~2]{Benevides+21+}, it is shown that
\begin{equation}\label{eq:nntorus}m(T(n,n);3)=\left\lceil\frac{n^2+1}{3}\right\rceil\end{equation}
for all $n\geq3$. A general lower bound on $m(T(a_1,\dots,a_d);r)$ can be derived from the combination of~\cite[Equation~(1.5)]{MorrisonNoel18} and~\cite[Theorem~8]{HambardzumyanHatamiQian20}. In the case $d=r=3$, this bound reduces to
\begin{equation}\label{eq:torusLower}m(T(a_1,a_2,a_3);3)\geq \left\lceil\frac{(a_1-1)(a_2-1) + (a_1-1)(a_3-1)+(a_2-1)(a_3-1)}{3}\right\rceil+1\end{equation}
for all $a_1,a_2,a_3\geq3$. As a corollary of Theorem~\ref{thm:largeEnough}, we show that this bound is within 1 of the truth, provided that $a_1,a_2,a_3$ are large enough.

\begin{cor}
\label{cor:torusUpper}
If $a_1,a_2,a_3\geq12$, then
\[m(T(a_1,a_2,a_3);3)\leq \left\lceil\frac{(a_1-1)(a_2-1) + (a_1-1)(a_3-1)+(a_2-1)(a_3-1)}{3}\right\rceil+2.\]
\end{cor}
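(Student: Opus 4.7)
The plan is to apply Theorem~\ref{thm:largeEnough} to the ``inner'' subgrid $G' := [a_1-1] \times [a_2-1] \times [a_3-1]$ (valid since each $a_i - 1 \geq 11$) to obtain a percolating set $A_0' \subseteq G'$ for the $3$-neighbour process in the grid $G'$ of size $N := \lceil \frac{(a_1-1)(a_2-1) + (a_1-1)(a_3-1) + (a_2-1)(a_3-1)}{3} \rceil$, and then to show that $A_0 := A_0' \cup \{(a_1,a_2,1),\,(a_1,1,a_3)\}$, which has cardinality $N + 2$, percolates in the torus $T(a_1,a_2,a_3)$.

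First I would observe that no wraparound edge of $T(a_1,a_2,a_3)$ has both endpoints in $G'$, because $a_i \notin [a_i-1]$; therefore the subgraph of $T(a_1,a_2,a_3)$ induced on $G'$ equals the grid $G'$. Applying Lemma~\ref{lem:embedding} to the inclusion $G' \hookrightarrow T(a_1,a_2,a_3)$ shows that $G' \subseteq [A_0']_{3,T}$, so $G'$ eventually becomes fully infected when we run the process in the torus starting from $A_0'$ (and hence also from $A_0$).

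Next I would decompose $T(a_1,a_2,a_3) \setminus G'$ into the three \emph{bulks} $B_i := L_{i,a_i} \setminus \bigcup_{j \neq i} L_{j,a_j}$, the three \emph{edges} $E_{ij} := L_{i,a_i} \cap L_{j,a_j} \setminus L_{k,a_k}$ (with $\{i,j,k\}=\{1,2,3\}$), and the corner $(a_1,a_2,a_3)$. Once $G'$ is infected, every vertex of $B_i$ has its two direction-$i$ neighbours (which lie in $G'$) already infected, so it needs only one additional infected neighbour; consequently, as soon as any single vertex of $B_i$ becomes infected, the resulting $1$-neighbour-style cascade through the connected $(a_j-1)\times(a_k-1)$ grid $B_i$ infects all of $B_i$. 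The key feature of $(a_1,a_2,1) \in E_{12}$ is that it is adjacent in the torus to two vertices of $B_1$ (namely $(a_1,1,1)$ via direction-$2$ wraparound and $(a_1,a_2-1,1)$ via a grid edge) and to two vertices of $B_2$ (namely $(1,a_2,1)$ and $(a_1-1,a_2,1)$); each of these four neighbours then immediately reaches the infection threshold, igniting cascades throughout both $B_1$ and $B_2$. Analogously, $(a_1,1,a_3) \in E_{13}$ ignites cascades in $B_1$ and $B_3$. So all three bulks become fully infected.

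The finishing steps are then automatic: each edge vertex of $E_{ij}$ has two direction-$i$ neighbours in $B_j$ and two direction-$j$ neighbours in $B_i$ and so becomes infected once both adjacent bulks are done, and the corner has two neighbours in each of the three edges and is infected once those are. The main conceptual obstacle is the choice of the two extra vertices: adding no extra vertex leaves every bulk vertex with only two infected neighbours, while a single extra vertex can trigger at most two bulks. Placing the two extras at two of the three edge intersections $E_{12}, E_{13}, E_{23}$ succeeds precisely because any vertex of $E_{ij}$ simultaneously touches $B_i$ and $B_j$ via the torus wraparound, so two such vertices can collectively ignite all three bulks.
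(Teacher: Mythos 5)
Your proof is correct and follows essentially the same strategy as the paper's: seed an optimal percolating set in the inner grid $[a_1-1]\times[a_2-1]\times[a_3-1]$, note that every remaining vertex already has both of its neighbours in one direction inside that grid (thanks to the wraparound edges), and use two extra infected vertices to ignite cascades through the three outer faces. The only difference is cosmetic: the paper places its two extras at $(a_1,1,1)$ and the corner $(a_1,a_2,a_3)$ and infects the faces sequentially, whereas you place them on two of the three ``edge'' lines and ignite all three bulks at once; both choices work for the same reason.
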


\begin{proof}
By Theorem~\ref{thm:largeEnough}, there is a percolating set $A_0'$ of cardinality
\[\left\lceil\frac{(a_1-1)(a_2-1) + (a_1-1)(a_3-1)+(a_2-1)(a_3-1)}{3}\right\rceil\]
for the $3$-neighbour process in $[a_1-1]\times[a_2-1]\times[a_3-1]$. We let $A_0$ be the set obtained from $A_0'$ by adding the points $(a_1,1,1)$ and $(a_1,a_2,a_3)$. We show that $A_0$ percolates in $T(a_1,a_2,a_3)$. See Figure~\ref{fig:torus} for an illustration of four key stages of the infection. 

Using $A_0'$, we see that all vertices of $[a_1-1]\times[a_2-1]\times[a_3-1]$ eventually become infected. Now, each vertex of the form $(a_1,i,j)$ for $1\leq i\leq a_2-1$ and $1\leq j\leq a_3-1$ is adjacent to the infected vertices $(a_1-1,i,j)$ and $(1,i,j)$ in the torus. Additionally, $(a_1,1,1)$ is, itself, infected. So, the infection eventually spreads to every vertex of the form $(a_1,i,j)$ for $1\leq i\leq a_2-1$ and $1\leq j\leq a_3-1$. 

At this point, each vertex of the form $(a_1,a_2,j)$ for $1\leq j\leq a_3-1$ has infected neighbours $(a_1,a_2-1,j)$ and $(a_1,1,j)$. Additionally, $(a_1,a_2,a_3-1)$ is adjacent to the vertex $(a_1,a_2,a_3)$, which is in $A_0$. So, each vertex of the form $(a_1,a_2,j)$ for $1\leq j\leq a_3-1$ becomes infected. Likewise, each vertex of the form $(a_1,i,a_3)$ for $1\leq i\leq a_2-1$ becomes infected. 

Now, each vertex of the form $(i,a_2,j)$ for $1\leq i\leq a_1-1$ and $1\leq j\leq a_3-1$ has two infected neighbours, namely $(i,a_2-1,j)$ and $(i,1,j)$. Moreover, each of the vertices $(a_1-1,a_2,j)$ for $1\leq j\leq a_3-1$ has another infected neighbour, namely $(a_1,a_2,j)$. So, all vertices of the form $(i,a_2,j)$ for $1\leq i\leq a_1-1$ and $1\leq j\leq a_3-1$ become infected. Symmetrically, every vertex of the form $(i,j,a_3)$ for $1\leq i\leq a_1-1$ and $1\leq j\leq a_2-1$ gets infected. The remaining healthy vertices are $(i,a_2,a_3)$ for $1\leq i\leq a_1-1$. Each of these vertices currently has more than three infected neighbours and so they all become infected. 

\begin{figure}
\begin{center}
\includegraphics[scale=0.25]{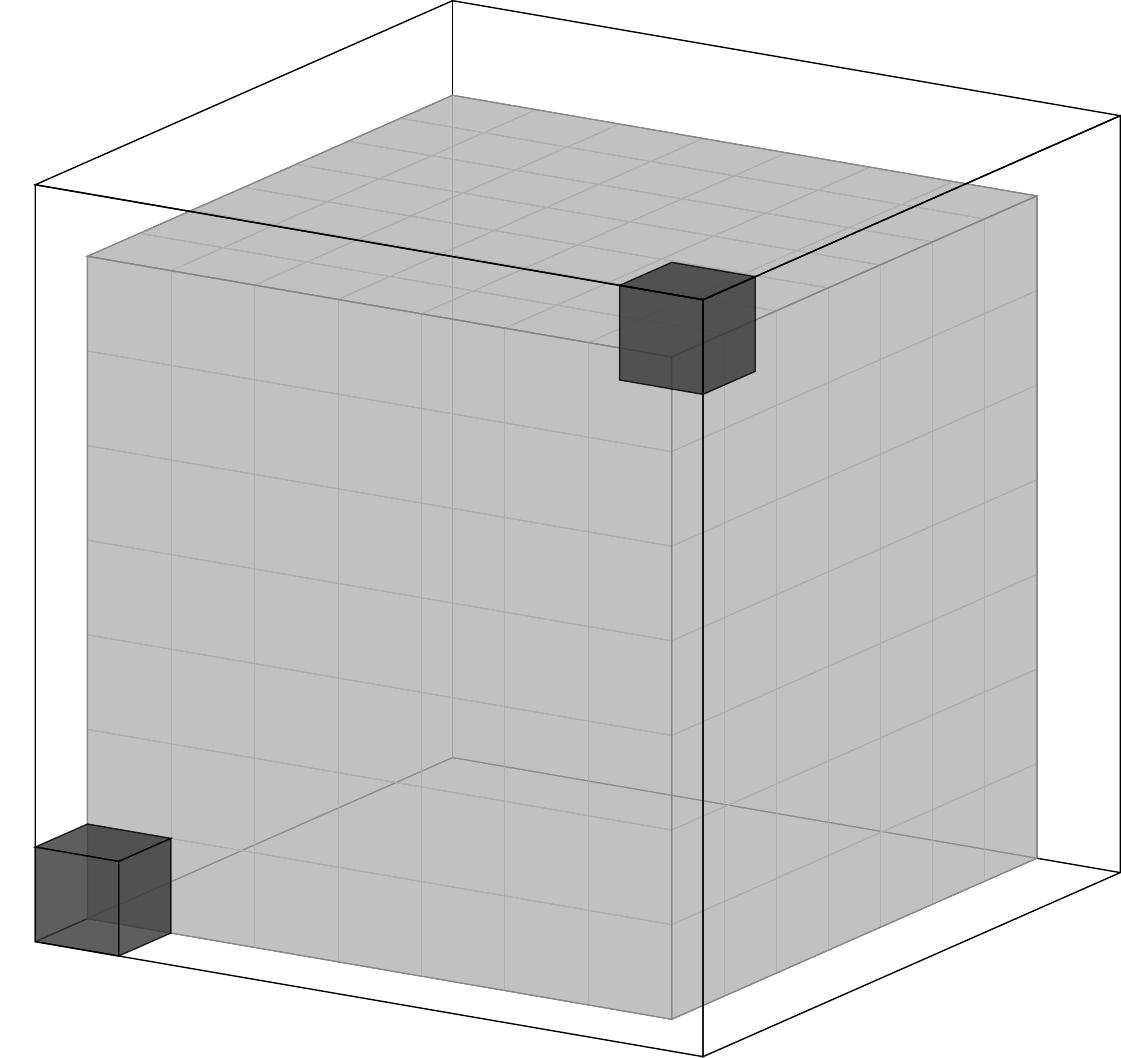}\hspace{1cm}
\includegraphics[scale=0.25]{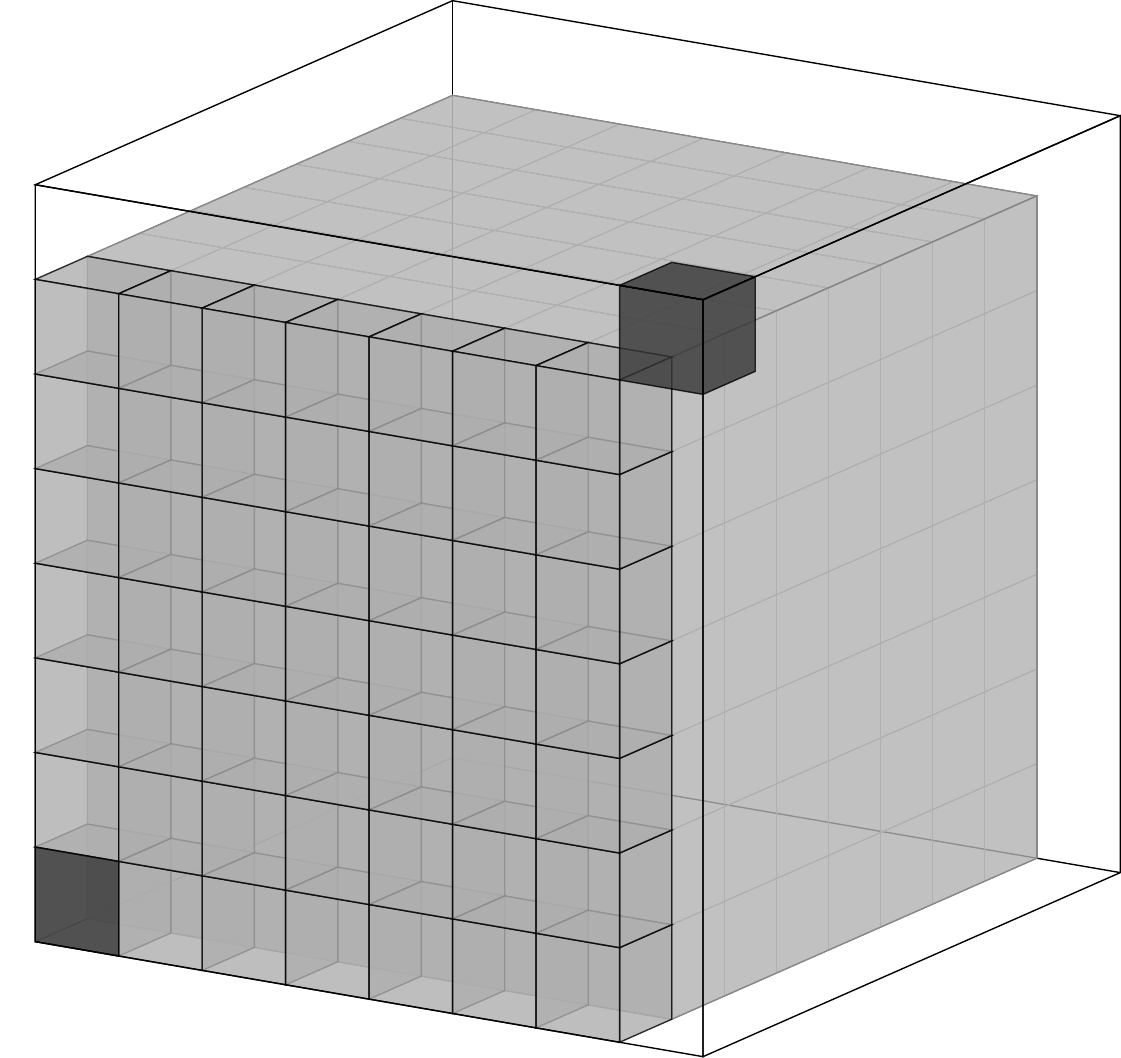}\hspace{1cm}
\includegraphics[scale=0.25]{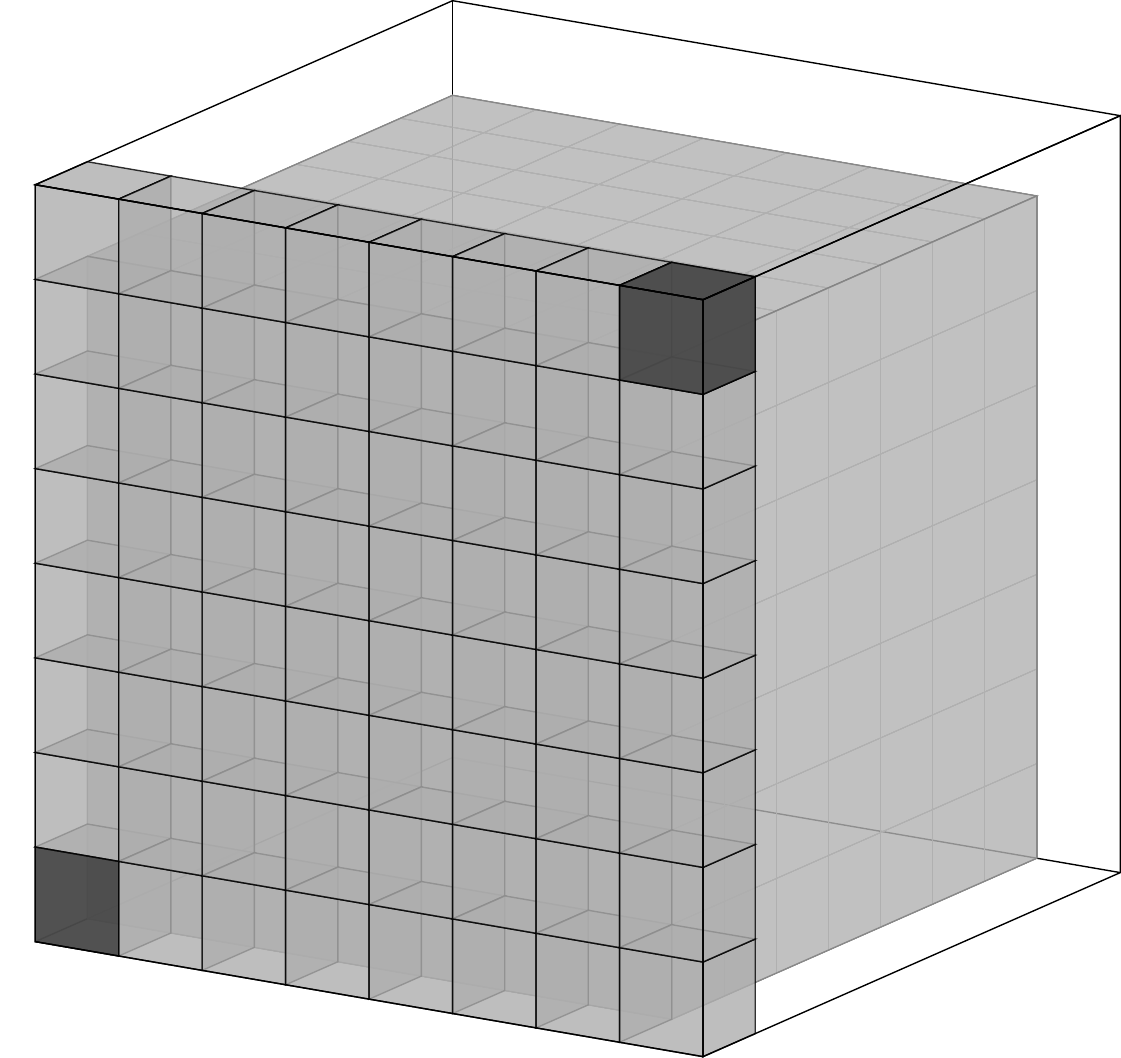}\hspace{1cm}
\includegraphics[scale=0.25]{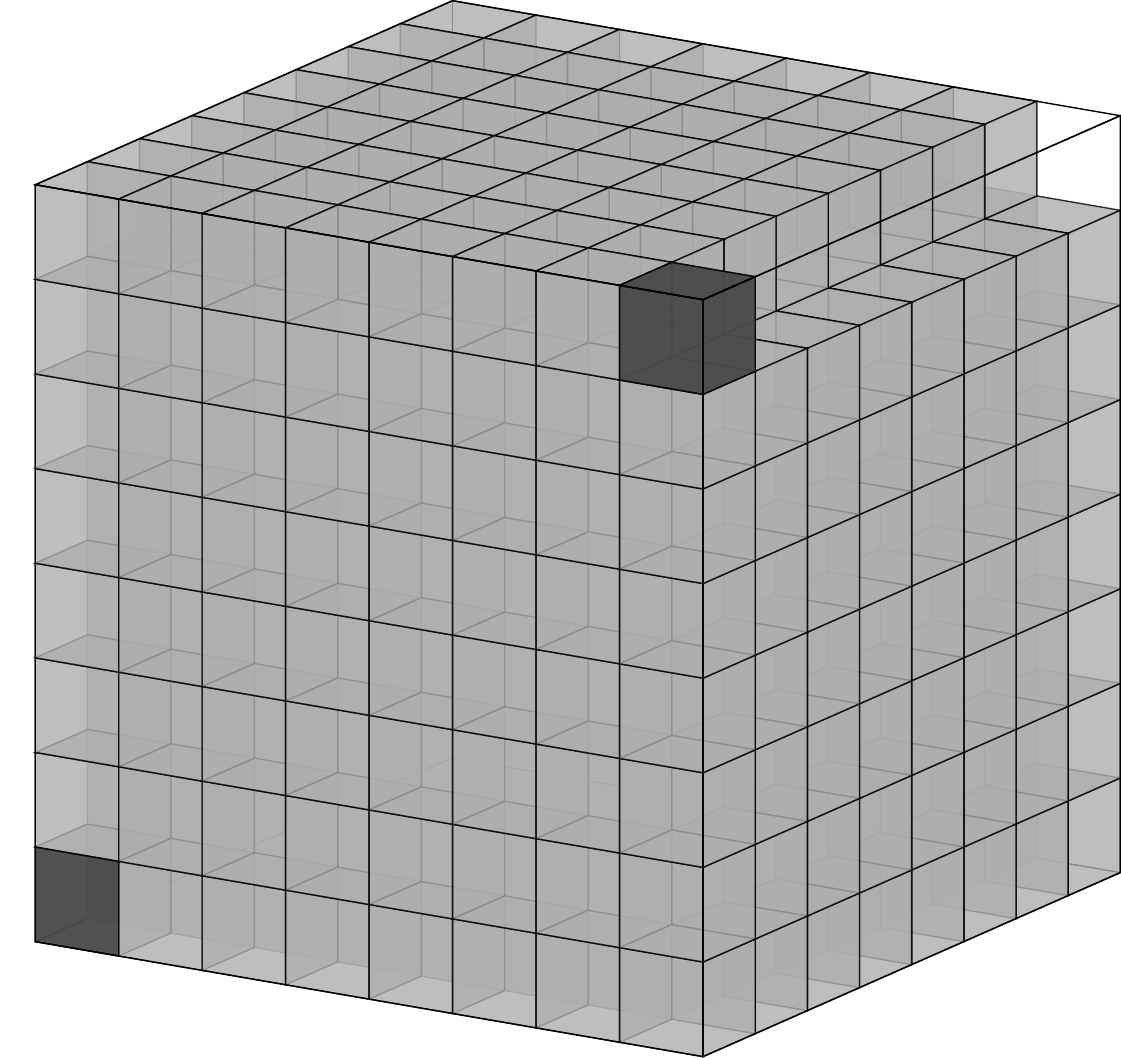}
\end{center}
    \caption{Four stages of infection in the torus $T(a_1,a_2,a_3)$. The elements of $[a_1-1]\times[a_2-1]\times[a_3-1]$ are depicted by light grey cubes outlined in grey, $(a_1,1,1)$ and $(a_1,a_2,a_3)$ are depicted by dark grey cubes and the rest of the vertices are depicted by light grey cubes outlined in black.}
    \label{fig:torus}
\end{figure}
\end{proof}

Thus, \eqref{eq:torusLower} and Corollary~\ref{cor:torusUpper} narrow down $m(T(a_1,a_2,a_3);3)$ to only two possible values when $a_1,a_2,a_3$ are at least 12. It would be interesting to determine which of these two values is correct for large $a_1,a_2,a_3$.

\begin{prob}
Determine $m(T(a_1,a_2,a_3);3)$ for all $a_1,a_2,a_3$ sufficiently large. 
\end{prob}

Of course, it would be even more interesting to determine $m(T(a_1,a_2,a_3);3)$ exactly for all choices of $a_1,a_2$ and $a_3$. Additional open problems can be found in the Master's Thesis of the third author~\cite{RomerMSc}. 

\bibliographystyle{plain}
\bibliography{bootstrap}

\appendix

\section{Constructions in Small Grids}
\label{app:sporadic}
The purpose of this appendix is to gather up several ad hoc optimal constructions for small choices of parameters that we use in our proof of Theorem~\ref{thm:largeEnough}. Some of these were found by hand, while others were found using a heuristic computer search.

\subsection{Perfect Constructions}
\label{app:perfect}

\begin{const}
\label{const:225}
$(2,2,5)$ is perfect.
\end{const}

\begin{proof}
The construction below shows that $(2,2,5)$ is perfect, since $\frac{2\cdot 2+2\cdot 5+2\cdot 5}{3}=8$.

\begin{center}
% [inline block 0: 9 envs, 55576 chars -> data_tex | \begin{tikzpicture}[scale=0.7] \begin{scope}[xshift=0.0cm]...]

\end{center}
\end{proof}

\end{document}